\documentclass[11pt]{amsart}

\usepackage{a4wide}
\usepackage{amssymb}
\usepackage{mathrsfs}
\usepackage{enumerate}
\usepackage{esint}
\usepackage{titletoc}
\usepackage[colorlinks=true, urlcolor=red, linkcolor=red, citecolor=blue]{hyperref}
\usepackage[nameinlink]{cleveref}

\theoremstyle{plain}
\newtheorem{theorem}{Theorem}[section]
\newtheorem{lemma}[theorem]{Lemma}
\newtheorem{corollary}[theorem]{Corollary}

\theoremstyle{definition}
\newtheorem{definition}[theorem]{Definition}
\newtheorem{remark}[theorem]{Remark}

\numberwithin{equation}{section}


\DeclareMathOperator*{\esssup}{ess\,sup}
\DeclareMathOperator*{\essinf}{ess\,inf}
\DeclareMathOperator*{\essliminf}{ess\,liminf}

\makeatletter
\@namedef{subjclassname@2020}{\textup{2020} Mathematics Subject Classification}
\makeatother


\title[Wolff potential estimates and Wiener criterion for nonlocal equations]{Wolff potential estimates and Wiener criterion for nonlocal equations with Orlicz growth}

\author{Minhyun Kim}
\address{Department of Mathematics \& Research Institute for Natural Sciences, Hanyang University, 04763 Seoul, Republic of Korea}
\email{minhyun@hanyang.ac.kr}

\author{Ki-Ahm Lee}
\address{Department of Mathematical Sciences \& Research Institute of Mathematics, Seoul National University, 08826 Seoul, Republic of Korea}
\email{kiahm@snu.ac.kr}

\author{Se-Chan Lee}
\address{Research Institute of Mathematics, Seoul National University, Seoul 08826, Republic of Korea}
\email{dltpcks1@snu.ac.kr}

\subjclass[2020]{31C45, 31B25, 31B15, 35R11}
\keywords{Wiener criterion, harmonic function, Wolff potential, nonlocal equation}
\thanks{The research of K.-A. Lee is supported by the National Research Foundation of Korea (NRF) grant funded by the Korea government (MSIP): NRF-2020R1A2C1A01006256. M. Kim is supported by the research fund of Hanyang University (HY-202300000001143) and the National Research Foundation of Korea (NRF) grant funded by the Korea government (MSIT) (RS-2023-00252297). S.-C. Lee is supported by Basic Science Research Program through the National Research Foundation of Korea (NRF) funded by the Ministry of Education (2022R1A6A3A01086546).}


\begin{document}

\begin{abstract}
We prove the Wolff potential estimates for nonlocal equations with Orlicz growth. As an application, we obtain the Wiener criterion in this framework, which provides a necessary and sufficient condition for boundary points to be regular. Our approach relies on the fine analysis of superharmonic functions in view of nonlocal nonlinear potential theory.
\end{abstract}

\maketitle

\section{Introduction}

Nonlinear potential theory studies harmonic functions associated with nonlinear, possibly degenerate or singular equations. Its origin can be traced to the foundational research of Havin--Maz'ya~\cite{MH72}, where a systematic investigation of various potentials is carried out in order to analyze fine properties of harmonic functions. They introduced what is nowadays called the Wolff potential, which plays a fundamental role in the potential theory. One of the most powerful applications of Wolff potentials is the Wiener criterion for the $p$-Laplace equation. In \cite{Maz70}, Maz'ya obtained the sufficiency part of the Wiener criterion by using some quantities related to Wolff potentials. After a long while, Kilpel\"ainen--Mal\'y~\cite{KM94} proved pointwise estimates for solutions of $p$-Laplace-type equation in terms of the Wolff potential, finally establishing the necessity part of the Wiener criterion. See also \cite{KK10,TW02} for different approaches. We refer the reader to books by Heinonen--Kilpel\"ainen--Mal\'y~\cite{HKM06}, Mal\'y--Ziemer~\cite{MZ97}, and Adams--Hedberg~\cite{AH96} for a comprehensive description of nonlinear potential theory.

As a nonlocal counterpart of nonlinear potential theory, nonlocal nonlinear potential theory has recently begun to be actively studied. It was initiated in Kuusi--Mingione--Sire~\cite{KMS15}, where the pointwise bounds by Wolff potentials for SOLA (solutions obtained as limits of approximations) of fractional $p$-Laplace-type equations are obtained for $p \in (2-s/n, n/s)$, where $s \in (0,1)$ is the order of differentiability. The restriction $p>2-s/n$ is natural when we study SOLA. The range of $p \in (1, n/s]$ was recently covered in Kim--Lee--Lee~\cite{KLL23} for the pointwise upper estimate by using superharmonic functions instead of SOLA as in \cite{KM94}. This upper bound, together with several fine properties of superharmonic functions developed in Korvenp\"a\"a--Kussi--Palatucci~\cite{KKP17}, led us to the Wiener criterion for the fractional $p$-Laplace-type equations in \cite{KLL23}. However, the pointwise lower bound for the full range of $p>1$ was not known and it is provided here in a more general framework; see \Cref{thm-Wolff-lower}.

The aforementioned papers focus on the standard nonlinearity described by a polynomial of degree $p>1$. In this paper, we aim to extend the theory to a general class of nonlocal nonlinear operators with Orlicz growth. More precisely, building upon the results of Kim--Lee~\cite{KL23}, we prove pointwise upper and lower bounds of superharmonic functions associated with the operator \eqref{eq-op} below. We then establish the Wiener criterion, which characterizes a regular boundary point in terms of nonlocal nonlinear potential theory.

Let $G: [0, \infty)\to[0,\infty)$ be given by
\begin{equation*}
G(t) = \int_0^t g(\tau) \,\mathrm{d}\tau
\end{equation*}
for some non-decreasing, left-continuous function $g:[0, \infty) \to [0,\infty)$ which is not identically zero. Such a function is called a \emph{Young function}. We assume throughout the paper that $G$ is differentiable, $g=G'$ is strictly increasing, and there exist $1<p\leq q$ such that
\begin{equation}\label{eq-pq}
pG(t) \leq tg(t) \leq qG(t) \quad\text{for all}~ t \geq 0
\end{equation}
and
\begin{equation}\label{eq-q}
t \mapsto g(t)/t^{q-1}\quad\text{is non-increasing}.
\end{equation}
We also assume without loss of generality that $G(1)=1$.

We consider an operator
\begin{equation}\label{eq-op}
\mathcal{L}u(x) = 2\,\mathrm{p.v.} \int_{\mathbb{R}^n} g(|D^su|) \frac{D^su}{|D^su|} \frac{k(x, y)}{|x-y|^{n+s}} \,\mathrm{d}y,
\end{equation}
where
\begin{equation*}
D^su=D^su(x, y) = \frac{u(x)-u(y)}{|x-y|^s}
\end{equation*}
denotes a \emph{fractional gradient} of order $s \in (0,1)$ and $k: \mathbb{R}^n \times \mathbb{R}^n \to [0, \infty]$ is a measurable kernel satisfying the ellipticity condition $\Lambda^{-1} \leq k(x, y) \leq \Lambda$ a.e.\ in $\mathbb{R}^n \times \mathbb{R}^n$ for some $\Lambda \geq 1$.

Let us provide the definition of the Wolff potential which takes the Orlicz growth into account. It will play a crucial role in our analysis.

\begin{definition}[Wolff potential]
Let $\mu$ be a Borel measure with finite total mass in $\mathbb{R}^{n}$. The \emph{Wolff potential ${\bf W}^{\mu}_{s, G}$ of the measure $\mu$} is defined by 
\begin{equation}\label{eq-Wolff}
{\bf W}^{\mu}_{s, G}(x_0, R) = \int_0^R \rho^s g^{-1} \left( \frac{|\mu|(B_\rho(x_0))}{\rho^{n-s}} \right) \frac{\mathrm{d}\rho}{\rho}
\end{equation}
for $x_0 \in \mathbb{R}^n$ and $0<R\leq \infty$.
\end{definition}

The Wolff potential ${\bf W}^\mu_{s, G}$ was first introduced in Fiorenza--Prignet~\cite{FP03} as a generalization of the classical Wolff potential
\begin{equation*}
{\bf W}^{\mu}_{s, p}(x_0, R) = \int_0^R \left( \frac{|\mu|(B_\rho(x_0))}{\rho^{n-sp}} \right)^{1/(p-1)} \frac{\mathrm{d}\rho}{\rho}.
\end{equation*}

Our first main results are pointwise upper and lower bounds of superharmonic functions involving the Wolff potential \eqref{eq-Wolff}. See \Cref{def-superharmonic} for the definition of a superharmonic function.

\begin{theorem}[Pointwise upper bound]\label{thm-Wolff-upper}
Let $s \in (0,1)$. Let $u$ be a superharmonic function in $B_R(x_0)$ such that $u \geq 0$ in $B_R(x_0)$. Then $\mu := \mathcal{L}u$ is a nonnegative Borel measure and
\begin{equation}\label{eq-Wolff-upper}
u(x_0) \leq C \left( \inf_{B_{R/2}(x_0)}u + {\bf W}_{s, G}^{\mu}(x_0, R) + \mathrm{Tail}_g(u; x_0, R/2) \right)
\end{equation}
for some $C = C(n, p, q, s, \Lambda) > 0$.
\end{theorem}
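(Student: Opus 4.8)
The plan is to run the Kilpel\"ainen--Mal\'y iteration in the nonlocal Orlicz setting, in the spirit of \cite{KM94} and of \cite{KLL23} for the fractional $p$-Laplacian, using the analytic toolkit for $\mathcal{L}$ (Caccioppoli estimates, local boundedness, the basic theory of $(s,G)$-superharmonic functions and of $\mathcal{L}u$ as a measure) developed in \cite{KL23,KKP17}. We may take $x_0=0$ and write $B_\rho=B_\rho(0)$. First one reduces to the case of a \emph{bounded} nonnegative weak supersolution in $B_R$: for a general $(s,G)$-superharmonic $u\ge 0$ the truncations $u_k=\min\{u,k\}$ are bounded weak supersolutions, their operators $\mathcal{L}u_k$ are nonnegative Radon measures by the Riesz-type representation, they converge weakly-$\ast$ to a nonnegative Borel measure which one identifies with $\mu=\mathcal{L}u$ (with $\mathcal{L}u_k(B_\rho)\to\mu(B_\rho)$ for all but countably many $\rho$), while $\essinf_{B_{R'}}u_k\uparrow\essinf_{B_{R'}}u$, $\mathrm{Tail}_g(u_k;0,R')\uparrow\mathrm{Tail}_g(u;0,R')$, and $u_k(0)\uparrow u(0)$; so it suffices to establish \eqref{eq-Wolff-upper} on $B_{R'}$ for each $u_k$ and then send $k\to\infty$, $R'\uparrow R$, using $u(0)=\essliminf_{x\to 0}u(x)=\lim_{\rho\downarrow 0}\essinf_{B_\rho}u$.

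The core is a measure--density (De Giorgi--type) lemma: there are $\delta_\ast\in(0,1)$ and $C_\ast\ge 1$, depending only on $n,p,q,s,\Lambda$, such that whenever $B_{2\rho}\subset B_R$, $\lambda>0$, and
\[
|\{x\in B_{2\rho}:u(x)\le\lambda\}|\le\delta_\ast|B_{2\rho}|,
\]
then
\[
\essinf_{B_\rho}u\;\ge\;\tfrac{\lambda}{2}-C_\ast\rho^s g^{-1}\!\left(\tfrac{\mu(B_{2\rho})}{\rho^{n-s}}\right)-C_\ast\,\mathcal{T}(\lambda,\rho),
\]
where $\mathcal{T}(\lambda,\rho)$ is the nonlocal tail of $(\lambda-u)_+$ at scale $\rho$. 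To prove it one runs a De Giorgi iteration: test $\mathcal{L}u=\mu$ against functions $(k_j-u)_+\eta^q$ with cutoffs $\eta$ between $B_{\rho_{j+1}}$ and $B_{\rho_j}$, $\rho_j=(1+2^{-j})\rho$, and levels $k_j=\tfrac{\lambda}{2}(1+2^{-j})\downarrow\tfrac{\lambda}{2}$; the Caccioppoli inequality for supersolutions of \cite{KL23} bounds the nonlocal Orlicz energy of $(k_j-u)_+\eta$ by $\int(k_j-u)_+\,\mathrm{d}\mu\le k_j\,\mu(B_{2\rho})$ plus long-range terms, a fractional Sobolev/Poincar\'e inequality together with Chebyshev's inequality turns this into a recursive inequality for the superlevel masses $A_j=|\{u<k_j\}\cap B_{\rho_j}|$, and an Orlicz version of the fast-geometric-convergence lemma (exploiting \eqref{eq-pq}--\eqref{eq-q} and the monotonicity of $g^{-1}$) forces $A_\infty=0$ once the $\mu$- and tail-terms have been subtracted and $A_0/|B_{2\rho}|\le\delta_\ast$.

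To assemble the pointwise bound, fix $\sigma=\tfrac12$, put $r_j=\sigma^jR$, $B_j=B_{r_j}$, and $\gamma_j:=r_j^s g^{-1}(\mu(B_{r_j})/r_j^{n-s})$; comparing the series with the defining integral \eqref{eq-Wolff} and using the $\Delta_2$-type bounds for $g,g^{-1}$ implied by \eqref{eq-pq}--\eqref{eq-q} gives $\sum_{j\ge0}\gamma_j\le C\,\mathbf{W}^\mu_{s,G}(0,R)$. Following \cite{KM94,KLL23}, one defines a nondecreasing sequence of levels $\lambda_j$, each a lower bound for $u$ on ``most of'' $B_j$, and runs a scale-by-scale dichotomy: at step $j$, either the portion of $B_{j+1}$ on which $u<\lambda_j$ is $\le\delta_\ast|B_{j+1}|$, so that the key lemma upgrades $\lambda_j$ --- up to an error $C(\gamma_j+\text{tail}_j)$ --- to a genuine lower bound on a slightly smaller ball, and one keeps $\lambda_{j+1}$ comparable to $\lambda_j$; or that portion exceeds $\delta_\ast|B_{j+1}|$, in which case a Chebyshev argument forces $\gamma_j$ to be comparable to $\lambda_j$ and the jump $\lambda_{j+1}-\lambda_j$ is charged against $\gamma_j$ and $\text{tail}_j$. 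Summing the jumps over $j$ yields $\sum_j(\lambda_{j+1}-\lambda_j)\le C(\mathbf{W}^\mu_{s,G}(0,R)+\mathrm{Tail}_g(u;0,R/2))$, while $\lambda_0$ can be taken comparable to $\essinf_{B_{R/2}}u$; letting $j\to\infty$ and using $u(0)=\lim_j\essinf_{B_j}u\le C\sup_j\lambda_j$ gives \eqref{eq-Wolff-upper}.

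The step I expect to be the main obstacle is the control of the nonlocal tail terms throughout this iteration in the Orlicz framework. Unlike in the local case, the De Giorgi step at scale $r_j$ generates a long-range contribution which is a priori comparable to the current level $\lambda_j$ rather than small; taming it requires simultaneously tracking that $u\ge 0$ in $B_R$, that $u$ stays large on most of every inner ball (so that $(\lambda_j-u)_+$ has small tail across the inner annuli), and that the remaining exterior contribution is exactly $\mathrm{Tail}_g(u;0,R/2)$, all while keeping every estimate homogeneous with respect to $g$ rather than to a fixed power $p$. The local ingredients --- the De Giorgi iteration and the dyadic dichotomy --- are by now classical; the real work is reconciling them at once with the fractional tails and the Orlicz inhomogeneity.
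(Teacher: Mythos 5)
Your first paragraph (existence of the Riesz measure via truncations, identification of $\mu=\mathcal L u$, and reduction to bounded supersolutions) is in the right spirit and roughly parallels what the paper does in \Cref{thm-measure}, though the paper proves the claim directly — it never reduces to bounded $u$, instead using the local integrability $u\in W^{\sigma,g^\alpha}_{\mathrm{loc}}$ of superharmonic functions and weak $L^\alpha$ convergence of $g(|D^su_j|)D^su_j/|D^su_j|$ to make sense of $\mathcal E(u,\varphi)$ as a nonnegative distribution. That is a cleaner route because it avoids having to pass $\mathbf W^{\mu_k}_{s,G}\to\mathbf W^{\mu}_{s,G}$ through a weak-$\ast$ limit, which you only sketch.

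The real gap is the key lemma and the assembly. Your ``measure--density'' lemma has the form: \emph{if the sublevel set $\{u\le\lambda\}$ has small density in $B_{2\rho}$, then $\essinf_{B_\rho}u\ge\lambda/2-C\rho^s g^{-1}(\mu(B_{2\rho})/\rho^{n-s})-C\,\mathcal T$.} This is a lower bound on $\essinf u$, i.e.\ a weak-Harnack-with-measure statement. But the quantity you need to control from above is precisely $\lim_j\essinf_{B_j}u=u(x_0)$, equivalently the growth of your levels $\lambda_j$. Your lemma only ever produces \emph{lower} bounds on these quantities: in your Case~A it gives $\essinf_{B_{j+2}}u\ge\lambda_j/2-\cdots$, which does nothing to justify ``keeps $\lambda_{j+1}$ comparable to $\lambda_j$,'' and in Case~B the claimed ``Chebyshev argument forces $\gamma_j$ comparable to $\lambda_j$'' is unsubstantiated (Chebyshev controls superlevel sets of $u$ through integral averages of $g(u)$, not sublevel sets, and there is no energy inequality in sight that would relate $\mu(B_{r_j})$ to a quantile of $u$). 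Try running your scheme with the $\delta_\ast$-quantile $\nu_j$ of $u$ on $B_j$: the lemma gives $\nu_j\le 2(\essinf_{B_{j+1}}u+E_j)\le 2(\nu_{j+1}+E_j)$, which, since $\nu_{j+1}\ge\nu_j$ anyway, is vacuous. A lower-bound lemma of this type simply cannot sum to an upper bound on $u(x_0)$.

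What the paper uses instead (\Cref{lem-DG}) is a De Giorgi-type \emph{reverse H\"older} estimate with measure data and an explicit density factor: for $v=u_+/R^s$ and $\theta_R=|B_R\cap\{u>0\}|/|B_R|$,
\[
\Bigl(\fint_{B_{R/2}}\bar g^{\delta}\bigl(\tfrac{u_+}{(R/2)^s}\bigr)\,\mathrm dx\Bigr)^{1/\delta}
\le C\theta_R^{\rho_1}\Bigl(\fint_{B_R}\bar g^\delta\bigl(\tfrac{u_+}{R^s}\bigr)\,\mathrm dx\Bigr)^{1/\delta}
 + C\theta_R^{\rho_2}\Bigl(\tfrac{\mu(B_R)}{R^{n-s}}+T(u_+;x_0,R)\Bigr),
\]
with $\rho_1,\rho_2>0$. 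Proving it requires the nontrivial change of variables $w=(\bar g(\lambda+v)/\bar g(\lambda))^{\delta/\tilde\delta}-1$, the algebraic inequalities in \Cref{lem-ineq1}--\Cref{lem-ineq2}, the fractional Sobolev inequality, and a careful choice of $\lambda$. The levels $l_j$ in the iteration are then \emph{defined} by averaging, $l_{j+1}=l_j+R_j^s\bar g^{-1}\bigl(\varepsilon^{-1}(\fint_{B^j}\bar g^\delta((u-l_j)_+/R_j^s))^{1/\delta}\bigr)$, so the superlevel density $\theta_j$ is $\varepsilon^\delta$ \emph{by construction} (Chebyshev applied to the superlevel set, not the sublevel set), and the reverse H\"older estimate then produces the geometric decay $C\varepsilon^{\rho_1\delta}$ that lets the increments $l_{j+1}-l_j$ sum to $C\mathbf W^\mu_{s,G}+C\,\mathrm{Tail}_g$. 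The long-range (tail) terms generated at each scale are handled by the telescoping trick with \Cref{lem-g-dyda}, which is exactly the point you flag as the main obstacle but do not resolve. So the difficulty you anticipated is real, but your proposal does not contain the tool (the reverse H\"older estimate with a density exponent, and the averaged choice of $l_j$) that the paper uses to overcome it, and the lemma you do propose cannot substitute for it.
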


Note that \Cref{thm-Wolff-upper} includes the existence of the measure $\mu=\mathcal{L}u$, which is called the \emph{Riesz measure}.

\begin{theorem}[Pointwise lower bound]\label{thm-Wolff-lower}
Let $s \in (0,1)$. Let $u$ be a superharmonic function in $B_R(x_0)$ such that $u \geq 0$ in $B_R$ and let $\mu = \mathcal{L}u$ be the Riesz measure. Then
\begin{equation}\label{eq-Wolff-lower}
 \mathbf{W}^{\mu}_{s, G}(x_0, R/2) \leq C(u(x_0) + \mathrm{Tail}_g(u_-; x_0, R))
\end{equation}
for some $C = C(n, p, q, s, \Lambda) > 0$.
\end{theorem}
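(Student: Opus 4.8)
The plan is to discretise the Wolff potential dyadically and to dominate each dyadic block by the increment, across consecutive scales, of the $\mathcal{L}$-harmonic replacements of $u$; summing these increments telescopes to $u(x_0)$ modulo a tail term. Fix $r_j=2^{-j-1}R$ and $B_j=B_{r_j}(x_0)$ for $j\ge 0$. Since $g^{-1}$ is strictly increasing and, by \eqref{eq-pq}--\eqref{eq-q}, satisfies a doubling bound $g^{-1}(\lambda t)\le C(\lambda)\,g^{-1}(t)$ for $\lambda\ge 1$, the integrand of $\mathbf{W}^\mu_{s,G}$ is, on each interval $\rho\in[r_{j+1},r_j]$, comparable to
\begin{equation*}
A_j:=r_j^{\,s}\,g^{-1}\!\left(\frac{\mu(\overline{B_j})}{r_j^{\,n-s}}\right),
\end{equation*}
so that $\mathbf{W}^\mu_{s,G}(x_0,R/2)\le C\sum_{j\ge 0}A_j$, and it suffices to bound $\sum_j A_j$.

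For each $j$ let $w_j$ be the $\mathcal{L}$-harmonic function in $B_j$ with $w_j=u$ in $\mathbb{R}^n\setminus B_j$. As $u$ is superharmonic, the comparison principle gives $w_j\le u$ in $B_j$; since $B_{j+1}\subset B_j$, on $B_j\setminus B_{j+1}$ we have $w_j\le u=w_{j+1}$ while $w_j=u=w_{j+1}$ outside $B_j$, so (both $w_j$ and $w_{j+1}$ being $\mathcal{L}$-harmonic in $B_{j+1}$) comparison yields $w_j\le w_{j+1}$ in $B_{j+1}$, hence $w_j(x_0)\le w_{j+1}(x_0)$. Thus $(w_j(x_0))_{j\ge 0}$ is non-decreasing and bounded above by $u(x_0)$. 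Because $u\ge 0$ in $B_R(x_0)$, only $u_-$ outside $B_R$ enters the exterior datum of $w_0$ negatively, so the lower bound for $\mathcal{L}$-harmonic functions gives $w_0(x_0)\ge -C\,\mathrm{Tail}_g(u_-;x_0,R)$. Telescoping,
\begin{equation*}
\sum_{j\ge 0}\bigl(w_{j+1}(x_0)-w_j(x_0)\bigr)=\lim_{j\to\infty}w_j(x_0)-w_0(x_0)\le u(x_0)+C\,\mathrm{Tail}_g(u_-;x_0,R).
\end{equation*}

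The crux is the per-scale inequality
\begin{equation*}
A_j\le C\bigl(w_{j+1}(x_0)-w_j(x_0)\bigr)+C\,T_j,\qquad \sum_{j\ge 0}T_j\le C\,\mathrm{Tail}_g(u_-;x_0,R),
\end{equation*}
from which \eqref{eq-Wolff-lower} follows by summing and combining with the two displays above. I would establish it in two steps. First, a measure-to-growth comparison estimate: $u-w_j\ge 0$ satisfies, in $B_j$, the comparison identity $\langle\mathcal{L}u,\varphi\rangle-\langle\mathcal{L}w_j,\varphi\rangle=\int\varphi\,\mathrm{d}\mu$ for test functions $\varphi$ supported in $B_j$; exploiting the monotonicity of the nonlinearity, the Caccioppoli and Sobolev--Poincar\'e inequalities for the Orlicz energy, and \eqref{eq-pq}--\eqref{eq-q}, together with an expansion-of-positivity/De Giorgi iteration in which $\mu$ acts as a favourable source, one obtains $u-w_j\ge c\,A_j-C\,T_j$ on a definite sub-annulus of $B_j\setminus B_{j+1}$. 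Second, a nonlocal propagation step: there $w_{j+1}=u$ while $w_j$ is $\mathcal{L}$-harmonic in $B_{j+1}$, so comparing $w_{j+1}$ with $w_j$ in $B_{j+1}$---where the long-range (tail) part of $\mathcal{L}$ at $x_0$ registers the gap $u-w_j\gtrsim c\,A_j$ on the surrounding annulus---forces $w_{j+1}(x_0)-w_j(x_0)\ge c\,A_j-C\,T_j$. Finally, since $u\ge 0$ in $B_R$, each $T_j$ is a tail at scale $r_j$ that only sees $u_-$ outside $B_R$, so the $\Delta_2$-type control on $G$ from \eqref{eq-pq} makes $\sum_j T_j$ geometrically summable by $\mathrm{Tail}_g(u_-;x_0,R)$.

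I expect the main obstacle to be the measure-to-growth estimate of the first step in the Orlicz framework, and especially its interplay with the telescoping: producing the sharp form $r^s g^{-1}(\mu(B_r)/r^{n-s})$---rather than a weaker bound whose dyadic sum diverges, which is precisely why the full-range lower bound was previously open in the regime $1<p<2$, where $g^{-1}$ grows superlinearly---requires running the nonlinear iteration with careful bookkeeping of $g$, $g^{-1}$, and the tail, and gaining a fixed fraction of the deficit $u-w_j$ at $x_0$ through the nonlocal term rather than merely through the local structure. Both steps are the Orlicz-growth, nonlocal counterparts of estimates in \cite{KLL23,KMS15}, built on the machinery of \cite{KL23,KKP17}.
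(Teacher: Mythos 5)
Your outline pursues a genuinely different route from the paper's. You discretize the Wolff potential and aim to control each dyadic block $A_j$ by the increment $w_{j+1}(x_0)-w_j(x_0)$ of $\mathcal{L}$-harmonic replacements, a strategy in the spirit of Kuusi--Mingione--Sire. The paper instead follows Kilpel\"ainen--Mal\'y~\cite{KM92}: setting $a_j=\inf_{B^j}u$, it proves the single-scale inequality (Lemma~4.3)
\[
\frac{\mu(B_{R/2})}{R^{n-s}}\le C\,g\!\left(\frac{\inf_{B_{R/2}}u-\inf_{B_R}u}{R^s}\right)+C\,g\!\left(\frac{\mathrm{Tail}_g((u-\inf_{B_R}u)_-;x_0,R)}{R^s}\right)
\]
by testing $\mathcal{L}u=\mu$ directly with $\varphi=(u\wedge\inf_{B_{R/2}}u)\eta^q$, together with a Caccioppoli estimate (Lemma~4.1) and an auxiliary ``gradient'' bound (Lemma~4.2) driven by the weak Harnack inequality. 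Inverting $g$ and telescoping the $a_j$'s (using lower semicontinuity so that $a_k\to u(x_0)$) gives the result. This avoids harmonic replacements altogether, and that is exactly what makes the full range $p>1$ accessible.

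The genuine gap in your proposal is the ``measure-to-growth'' pointwise lower bound $u-w_j\ge cA_j-CT_j$ on a sub-annulus. This is not a corollary of standard machinery: since $\mathcal{L}$ is nonlinear, the identity $\langle\mathcal{L}u,\varphi\rangle-\langle\mathcal{L}w_j,\varphi\rangle=\int\varphi\,\mathrm{d}\mu$ does not make $u-w_j$ a supersolution with source $\mu$, and the comparison estimates that convert measure mass into a pointwise lower bound for the deficit are precisely what restricted previous nonlocal results to SOLA with $p>2-s/n$. You acknowledge the obstacle but offer no mechanism to circumvent it in the sharp form $r_j^s g^{-1}(\mu(\overline{B_j})/r_j^{n-s})$; a weaker bound would not sum. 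The subsequent ``nonlocal propagation'' step, asserting $w_{j+1}(x_0)-w_j(x_0)\ge cA_j-CT_j$ from the deficit on an annulus, also needs a quantitative comparison/Harnack argument for two $\mathcal{L}$-harmonic functions with ordered exterior data, which is again nontrivial in the nonlinear Orlicz setting and is not sketched. By contrast, the paper's Lemma~4.3 produces the same scale-by-scale information with one test-function computation, with all the $p<2$ and Orlicz difficulties absorbed into algebraic manipulations of $g$, $\bar g$ and the weak Harnack inequality, which is why it covers the full range $p>1$. The monotonicity $w_j(x_0)\uparrow$ and the lower bound for $w_0(x_0)$ in your telescoping setup are fine, and the tail bookkeeping via the Dyda-type inequality matches what the paper does; the issue is solely the two central steps.
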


In the standard growth case $G(t)=t^p$, on the one hand, the pointwise upper bound \eqref{eq-Wolff-upper} was proved by Kuusi--Mingione--Sire~\cite{KMS15} for SOLA when $p>2-s/n$ and Kim--Lee--Lee~\cite{KLL23} for superharmonic functions when $p \in (1, n/s]$ under the assumption that $\mu=\mathcal{L}u$ exists. On the other hand, the pointwise lower bound \eqref{eq-Wolff-lower} was only known for SOLA when $p \in (2-s/n, n/s)$. Hence, \Cref{thm-Wolff-upper} and \Cref{thm-Wolff-lower} are new even for the standard growth case because the existence of $\mu$ is proved and the full range of $p>1$ is covered. We refer the reader to Mal\'y~\cite{Mal03} and Chlebicka--Giannetti--Zatorska-Goldstein~\cite{CGZG20} for the Wolff potential estimates for local operators with Orlicz growth.




Let us next move on to the Wiener criterion, which provides a necessary and sufficient condition for a boundary point to be regular. It was first provided for the Laplacian by Wiener in his pioneering work \cite{Wie24}, and then extended in Littman--Stampacchia--Weinberger~\cite{LSW63} to general second-order linear operators in divergence form with measurable coefficients. The generalization to quasilinear operators of second-order has been done by Maz'ya~\cite{Maz70}, Gariepy--Ziemer~\cite{GZ77}, Lindqvist--Martio~\cite{LM85}, and Kilpel\"ainen--Mal\'y~\cite{KM94}. Further generalizations can be found in Labutin~\cite{Lab02} for $k$-Hessian operators, Alkhutov--Krasheninnikova~\cite{AK04} for $p(x)$-Laplacian, and Lee--Lee~\cite{LL21} for operators with Orlicz growth.

For nonlocal operators, the Wiener criterion has been established by Bliedtner--Hansen~\cite{BH86}, Hoh--Jacob~\cite{HJ96}, Landkof~\cite{Lan72}, and Bj\"orn~\cite{Bjo24} for the fractional Laplacian, and by Kim--Lee--Lee~\cite{KLL23} for nonlocal nonlinear operators with the standard $p$-growth. As an application of the Wolff potential estimates, we obtain the Wiener criterion for nonlocal Dirichlet problems associated with $\mathcal{L}$. To describe this, let $\Omega \subset \mathbb{R}^n$ be open and bounded. We say that a boundary point $x_0 \in \partial \Omega$ is \emph{regular} (with respect to $\mathcal{L}$) if, for each function $\vartheta \in V^{s, G}(\Omega) \cap C(\mathbb{R}^n)$, the unique harmonic function $u$ (with respect to $\mathcal{L}$) in $\Omega$ with $u-\vartheta \in V^{s, G}_0(\Omega)$ satisfies
\begin{equation}\label{eq-regular}
\lim_{\Omega \ni x \to x_0} u(x) = \vartheta(x_0).
\end{equation}
See \Cref{sec-spaces} for the definition of function spaces $V^{s, G}(\Omega)$ and $ V^{s, G}_0(\Omega)$, and \Cref{def-harmonic} for harmonic function. Note that the Dirichlet data $\vartheta$ is imposed in the complement of $\Omega$ because we are concerned with the nonlocal operator.

Our second main result is the following. The definition of $(s, G)$-capacity is given in \Cref{sec-sG-cap}.

\begin{theorem}[Wiener criterion] \label{thm-Wiener}
Let $s \in (0,1)$. Then, a boundary point $x_0 \in \partial \Omega$ is regular if and only if 
	\begin{equation}\label{eq-Wiener}
		\int_0^1 \rho^s g^{-1}\left(\frac{\mathrm{cap}_{s, G}(\overline{B_{\rho}(x_0)} \setminus \Omega, B_{2\rho}(x_0))}{\rho^{n-s}}\right) \frac{\mathrm{d}\rho}{\rho}=\infty.
	\end{equation}
\end{theorem}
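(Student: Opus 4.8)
The plan is to deduce the Wiener criterion from the two-sided Wolff potential estimates (\Cref{thm-Wolff-upper}, \Cref{thm-Wolff-lower}), following the scheme of Kilpeläinen--Malý~\cite{KM94} and its nonlocal adaptation in Kim--Lee--Lee~\cite{KLL23}, with the necessary modifications coming from the Orlicz growth. The bridge between the abstract notion of regularity in \eqref{eq-regular} and the analytic condition \eqref{eq-Wiener} is provided by reducing everything to a single model obstacle-type problem. First I would fix $x_0 \in \partial\Omega$, and it suffices to work with the barrier built from the $(s,G)$-capacitary potential: for small $R$ let $u_R$ be the harmonic function in $\Omega \cap B_R(x_0)$ whose boundary values are $1$ on $\overline{B_{R/2}(x_0)}\setminus\Omega$ and $0$ on the rest of the complement (more precisely, $u_R$ is the $\mathcal{L}$-harmonic function with $u_R - \vartheta_R \in V^{s,G}_0(\Omega\cap B_R(x_0))$ for a suitable cutoff $\vartheta_R$). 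Standard comparison arguments show that $x_0$ is regular if and only if $\lim_{x\to x_0} u_R(x) = 1$ for one (equivalently every small) $R$; one direction is immediate from the definition, and the other uses $u_R$ as a barrier to control an arbitrary continuous datum $\vartheta$ near $x_0$ via the oscillation of $\vartheta$.

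Next I would connect $1-u_R$ (which is $\mathcal{L}$-superharmonic and nonnegative in $B_R(x_0)$ after an even reflection/extension, with Riesz measure $\mu_R$ supported near the complement of $\Omega$) to the capacity. The key comparison is a two-sided estimate
\[
\mathrm{cap}_{s,G}(\overline{B_\rho(x_0)}\setminus\Omega, B_{2\rho}(x_0)) \;\lesssim\; \frac{\mu_R(B_\rho(x_0))}{\rho^s\, g\!\left(\text{something}\right)} \;\lesssim\; \mathrm{cap}_{s,G}(\overline{B_\rho(x_0)}\setminus\Omega, B_{2\rho}(x_0)),
\]
obtained, as in \cite{KKP17,KLL23}, by testing the weak formulation of $\mathcal{L}u_R = \mu_R$ against the capacitary potential and using the energy bounds from \Cref{sec-sG-cap}; the growth condition \eqref{eq-pq} is what lets one pass between $g(D^s u_R)$-type quantities and the Luxemburg-normalized capacity. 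This identifies the Wolff potential $\mathbf{W}^{\mu_R}_{s,G}(x_0, r)$, up to constants depending only on $n,p,q,s,\Lambda$, with the truncated Wiener integral $\int_0^r \rho^s g^{-1}\!\big(\mathrm{cap}_{s,G}(\overline{B_\rho(x_0)}\setminus\Omega, B_{2\rho}(x_0))/\rho^{n-s}\big)\,\mathrm{d}\rho/\rho$.

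With this dictionary in hand, the two implications follow. For sufficiency (Wiener integral diverges $\Rightarrow$ regular): apply the pointwise lower bound \eqref{eq-Wolff-lower} to $v = 1-u_R$ near $x_0$ — more precisely along a sequence $x\to x_0$ — to get $\mathbf{W}^{\mu_R}_{s,G}(x,r) \lesssim v(x) + \mathrm{Tail}_g((v)_-;x,\cdot)$; since the tail term stays bounded while the Wolff potential blows up as $x\to x_0$ by divergence of \eqref{eq-Wiener} together with a lower semicontinuity/quasi-additivity argument for the capacity, we force $v(x)\to\infty$ in a renormalized sense, which after the correct scaling forces $u_R(x)\to 1$. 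For necessity (regular $\Rightarrow$ Wiener integral diverges), I argue by contraposition: if \eqref{eq-Wiener} converges, apply the pointwise upper bound \eqref{eq-Wolff-upper} to $v=1-u_R$ to bound $v(x_0)$ by a constant times $\mathbf{W}^{\mu_R}_{s,G}(x_0,R)$ plus the tail; by choosing $R$ small the convergent Wolff tail is $<1/2$, so $v(x_0)$ is bounded away from $1$, whence $u_R$ does not tend to $1$ at $x_0$ and $x_0$ is irregular. The main obstacle I anticipate is the capacity--measure comparison in the second paragraph: making the testing argument rigorous requires careful handling of the nonlocal bilinear form, the correct truncation of the capacitary potential across the annulus $B_{2\rho}\setminus B_\rho$, and, crucially, keeping all constants in the comparison independent of $\rho$ despite the non-homogeneity of $G$ — this is exactly where hypotheses \eqref{eq-pq} and \eqref{eq-q} are used, and it is the step that genuinely goes beyond the $p$-growth case of \cite{KLL23}. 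A secondary technical point is justifying that the even reflection of $1-u_R$ (or an equivalent device) is genuinely superharmonic across $\partial\Omega$ so that \Cref{thm-Wolff-upper} and \Cref{thm-Wolff-lower} apply with the stated measure.
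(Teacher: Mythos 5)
Your necessity argument is essentially the paper's: apply the pointwise upper bound \eqref{eq-Wolff-upper} to the ($\mathrm{lsc}$-regularized) $\mathcal{L}$-potential $u_\rho=\widehat{\mathfrak{R}}(D_\rho,B_{8\rho})$, bound the Riesz measure by capacity (eq.\ \eqref{eq-mecap}) so that the Wolff potential is controlled by the truncated Wiener integral, and conclude $u_\rho(x_0)<1$ for some small $\rho$, which is incompatible with regularity. However, you omit the step that makes ``$u_\rho(x_0)<1 \Rightarrow$ irregular'' rigorous: the paper's \Cref{lem-irr} requires a case distinction on whether a single point has positive $(s,G)$-capacity, and when it does, the $(s,G)$-quasicontinuity of $\mathcal{L}$-potentials (\Cref{lem-quasirepresentative}, built in \Cref{sec-capacity}) is precisely what shows \eqref{eq-irr} can never hold. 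This is flagged in \Cref{rmk-wiener}~(iii) as the genuinely new ingredient replacing the $p\leq n/s$ vs.\ $p>n/s$ dichotomy of \cite{KLL23}, and its absence is a real gap, not a cosmetic one.

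Your sufficiency argument has a more serious problem. First, it is not the paper's approach: the paper proves sufficiency by a direct Maz'ya-type iteration (\Cref{lem-grad-est} and \Cref{lem-key}) applied to the solution $u$ for an arbitrary datum $\vartheta$, bounding $\mathrm{cap}_{s,G}(D_\rho,B_{2\rho})$ by a telescoping quantity $g\bigl((M_l(4\rho)-M_l(\rho))/\rho^s\bigr)$ plus a tail, and then summing over dyadic scales — it never invokes \Cref{thm-Wolff-lower}. More importantly, the route you propose does not seem to close. The ``two-sided dictionary'' you rely on,
\begin{equation*}
\mathrm{cap}_{s,G}(D_\rho,B_{2\rho}) \lesssim \frac{\mu_R(B_\rho)}{\rho^s g(\cdot)} \lesssim \mathrm{cap}_{s,G}(D_\rho,B_{2\rho}),
\end{equation*}
is only one-sided in the paper and in general: the upper bound $\mu_\rho(E)\lesssim\mathrm{cap}_{s,G}(D_\rho\cap E,B_{8\rho})$ (eq.\ \eqref{eq-mecap}) holds, and there is a single-scale estimate $\rho^n G(\rho^{-s})\lesssim \lambda_\rho^{-(q-1)}\mu_\rho(\overline{B_\rho})$ (eq.\ \eqref{eq-mu-rho}) at the scale of the potential itself, but a multi-scale lower bound $\mu_R(B_\rho)\gtrsim\mathrm{cap}_{s,G}(D_\rho,B_{2\rho})$ for all $\rho< R$ is not available — the Riesz measure of the potential of the fixed compact set $D_R$ has no reason to charge $B_\rho$ in proportion to the capacity of the much smaller set $D_\rho$. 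Without such a lower bound you cannot convert divergence of the Wiener integral into divergence of $\mathbf{W}^{\mu_R}_{s,G}(x_0,\cdot)$, and the argument ``$v(x)\to\infty$ in a renormalized sense'' cannot be made sound — note $v=1-u_R\in[0,1]$ is bounded, so it literally cannot blow up. The paper in fact proves \Cref{thm-Wolff-lower} as a result of independent interest and does not use it anywhere in the proof of \Cref{thm-Wiener}.
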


\Cref{thm-Wiener} characterizes a regular boundary point in terms of the $(s, G)$-capacity, which is defined in \Cref{sec-capacity}. It says that a boundary point is regular if and only if the complement of $\Omega$ near the boundary point has enough $(s, G)$-capacity. Any domain satisfying the exterior ball condition, exterior cone condition, exterior $(\delta, R)$-Reifenberg flat condition, or exterior corkscrew condition satisfies \eqref{eq-Wiener}, since in all cases we can find a ball in $\overline{B_{\rho}(x_0)} \setminus \Omega$ with a radius comparable to $\rho$. Moreover, \Cref{thm-Wiener} implies that the regularity of a boundary point is completely determined by the local geometry of $\partial \Omega$. This phenomenon for nonlocal equations shows a clear contrast with what is observed in nonlocal minimal surface equations. In the case of nonlocal minimal surface equations, the boundary regularity is deeply influenced by the nonlocality of the equations; see Dipierro--Savin--Valdinoci~\cite{DSV17,DSV20a,DSV20b} for instance.

\begin{remark}\label{rmk-wiener}
Let us make further remarks on \Cref{thm-Wiener}.
\begin{enumerate}[(i)]
\item
\Cref{thm-Wiener} shows that the regularity of a boundary point depends only on $n$, $s$, and $g$, not on the ellipticity $\Lambda$.
\item
In the definition of regular boundary point, a Dirichlet data $\vartheta$ is not necessarily bounded nor compactly supported. In fact, the proof of \Cref{thm-Wiener} reveals that a boundary point $x_0$ is regular if and only if for each bounded $\vartheta \in V^{s, G}(\Omega) \cap C(\mathbb{R}^n)$ the unique harmonic function $u$ with $u-\vartheta \in V^{s, G}_0(\Omega)$ satisfies \eqref{eq-regular}. See \Cref{sec-spaces} for the definition of the function spaces.
\item
In our previous work \cite{KLL23}, we proved \Cref{thm-Wiener} when $G(t)=t^p$ by dividing the cases into two cases $p\leq n/s$ and $p>n/s$. However, such distinction is not available in the current framework since we treat operators with Orlicz growth. Instead, we consider two cases based on whether a single point has $(s, G)$-capacity zero or not. For the latter case, we introduce the notion of $(s, G)$-quasicontinuity to describe the regular behavior of $\mathcal{L}$-potential; see \Cref{sec-wiener} for details. Roughly speaking, this alternative viewpoint presents an implicit way to understand the `effective dimension', which is strongly related to $(s, G)$-capacity. Along the way, we also study several properties of potential theoretical tools such as $(s, G)$-capacity, $\mathcal{L}$-potential, and $(s, G)$-quasicontinuity in the Orlicz framework in \Cref{sec-capacity}, which are of independent interest.

We also point out that the case $p>n/s$ in \cite{KLL23} was treated incorrectly. The Sobolev embedding theorem was used to conclude that the harmonic function with $u-\vartheta \in V^{s, p}_0(\Omega)$ is continuous up to the boundary in this case, but the Sobolev embedding theorem requires some regularity of the boundary of $\Omega$. We correct this mistake here in this paper. Note that there is the same mistake in \cite{LL21}.
\end{enumerate}
\end{remark}

The paper is organized as follows. In \Cref{sec-preliminaries}, we provide several inequalities, introduce some function spaces, define supersolutions and superharmonic functions, and collect local estimates of supersolutions. \Cref{sec-upper} and \Cref{sec-lower} are devoted to the proofs of \Cref{thm-Wolff-upper} and \Cref{thm-Wolff-lower}, respectively. The methods are based on Kilpel\"ainen--Mal\'y~\cite{KM92,KM94} and Lukkari--Maeda--Marola~\cite{LMM10}. In \Cref{sec-capacity}, we define the $(s, G)$-capacity and $\mathcal{L}$-potential, which play a crucial role in potential theory, and provide several properties of them. Finally, in \Cref{sec-wiener}, we establish the Wiener criterion. Some algebraic inequalities can be found in \Cref{sec-ineq}.

\section{Preliminaries}\label{sec-preliminaries}

Recall that we assume that $G$ and $g$ satisfy \eqref{eq-pq} and \eqref{eq-q} throughout the paper. However, in this section, we are going to specify the conditions required in the lemmas and theorems. In fact, \Cref{thm-relation} (ii) and \Cref{thm-pointwise} are the only results in this preliminary section that require the assumption \eqref{eq-q}.

\subsection{Growth function}

In this section, we provide several algebraic inequalities involving growth functions $G$ and $g$, under the assumption \eqref{eq-pq}. One of the simplest inequalities is the following: for any $a, b \geq 0$ and $\varepsilon > 0$,
\begin{equation}\label{eq-alg}
	g(a)b \leq \varepsilon g(a)a + g(b/\varepsilon)b.
\end{equation}
The inequality \eqref{eq-alg} follows by splitting the cases $b \leq \varepsilon a$ and $b > \varepsilon a$.

It turned out in Kim--Lee~\cite{KL23} that it is convenient to introduce a function $\bar{g}(t)=G(t)/t$ because it satisfies
\begin{equation}\label{eq-bar-g-comp}
p\bar{g}(t) \leq g(t) \leq q\bar{g}(t), \quad p^{1/(q-1)} g^{-1}(t) \leq \bar{g}^{-1}(t) \leq q^{1/(p-1)} g^{-1}(t),
\end{equation}
and
\begin{equation}\label{eq-bar-g-pq}
(p-1)\bar{g}(t) \leq t\bar{g}'(t) \leq (q-1)\bar{g}(t).
\end{equation}
It is worth noticing that the condition \eqref{eq-bar-g-pq} with $\bar{g}$ replaced by $g$ does not follow from the assumption \eqref{eq-pq}. We import the following lemma from \cite{KL23}.

\begin{lemma}\label{lem-G}
Assume that $G$ and $g$ satisfy \eqref{eq-pq}. Let $t, t' \geq 0$ and $c=p^{1/(q-1)}/q^{1/(p-1)}$.
	\begin{enumerate}[(i)]
		\item
		For all $\lambda \geq 1$, it holds that
		\begin{alignat*}{2}
			&\lambda^p G(t) \leq G(\lambda t) \leq \lambda^q G(t),
			&\quad&\lambda^{1/q} G^{-1}(t) \leq G^{-1}(\lambda t) \leq \lambda^{1/p}G^{-1}(t), \\
			&\lambda^{p-1} \bar{g}(t) \leq \bar{g}(\lambda t) \leq \lambda^{q-1} \bar{g}(t),
			&&\lambda^{1/(q-1)} \bar{g}^{-1}(t) \leq \bar{g}^{-1}(\lambda t) \leq \lambda^{1/(p-1)} \bar{g}^{-1}(t), \\
			&\frac{p}{q} \lambda^{p-1} g(t) \leq g(\lambda t) \leq \frac{q}{p} \lambda^{q-1} g(t),
			&&c \lambda^{1/(q-1)} g^{-1}(t) \leq g^{-1}(\lambda t) \leq c^{-1} \lambda^{1/(p-1)} g^{-1}(t).
		\end{alignat*}
		\item
		For all $\lambda \leq 1$, it holds that
		\begin{alignat*}{2}
			&\lambda^q G(t) \leq G(\lambda t) \leq \lambda^p G(t),
			&\quad&\lambda^{1/p} G^{-1}(t) \leq G^{-1}(\lambda t) \leq \lambda^{1/q}G^{-1}(t), \\
			&\lambda^{q-1} \bar{g}(t) \leq \bar{g}(\lambda t) \leq \lambda^{p-1} \bar{g}(t),
			&&\lambda^{1/(p-1)} \bar{g}^{-1}(t) \leq \bar{g}^{-1}(\lambda t) \leq \lambda^{1/(q-1)} \bar{g}^{-1}(t), \\
			&\frac{p}{q} \lambda^{q-1} g(t) \leq g(\lambda t) \leq \frac{q}{p} \lambda^{p-1} g(t),
			&&c \lambda^{1/(p-1)} g^{-1}(t) \leq g^{-1}(\lambda t) \leq c^{-1} \lambda^{1/(q-1)} g^{-1}(t).
		\end{alignat*}
		\item
		It holds that
		\begin{alignat*}{2}
			&G(t+t') \leq 2^q(G(t)+G(t')),
			&\quad&G^{-1}(t+t') \leq 2^{1/p}(G^{-1}(t)+G^{-1}(t')), \\
			&\bar{g}(t+t') \leq 2^{q-1}(\bar{g}(t)+\bar{g}(t')),
			&&\bar{g}^{-1}(t+t') \leq 2^{1/(p-1)} (\bar{g}^{-1}(t) + \bar{g}^{-1}(t')), \\
			&g(t+t') \leq \frac{q}{p}2^{q-1}(g(t)+g(t')),
			&&g^{-1}(t+t') \leq c^{-1} 2^{1/(p-1)} (g^{-1}(t)+g^{-1}(t')).
		\end{alignat*}
	\end{enumerate}
\end{lemma}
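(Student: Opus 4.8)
The plan is to reduce every inequality to the scaling behaviour of $G$ itself, which one reads off from \eqref{eq-pq} by integrating a logarithmic derivative, and then to transfer these bounds to $G^{-1}$, $\bar{g}$, $\bar{g}^{-1}$, $g$, $g^{-1}$ by purely algebraic manipulations; part (iii) will then drop out of (i)--(ii) via the trivial bound $t+t'\le 2\max\{t,t'\}$.

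Before starting I would record that $g=G'$ is continuous (a derivative has the Darboux property, and a monotone function with the Darboux property has no jumps), so that, being strictly increasing and not identically zero, $G$ and hence $\bar{g}(t)=G(t)/t$ and $g$ are continuous strictly increasing bijections of $[0,\infty)$ onto $[0,\infty)$ --- in particular \eqref{eq-pq} forces $g(0)=0$, since otherwise $tg(t)/G(t)\to1<p$ as $t\to0^+$ --- and the inverses $G^{-1},\bar{g}^{-1},g^{-1}$ are well defined. For (i) and (ii) I fix $t>0$, put $\phi(\lambda)=G(\lambda t)$, and note that $\phi$ is $C^1$ and positive on $(0,\infty)$ with $\lambda\phi'(\lambda)=\lambda t\,g(\lambda t)$; applying \eqref{eq-pq} at the point $\lambda t$ gives $p\phi(\lambda)\le\lambda\phi'(\lambda)\le q\phi(\lambda)$, i.e.\ $p/\lambda\le(\log\phi)'(\lambda)\le q/\lambda$. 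Integrating this over $[1,\lambda]$ when $\lambda\ge1$ (resp.\ over $[\lambda,1]$ when $\lambda\le1$) and exponentiating yields the two $G$-inequalities. The $G^{-1}$-inequalities follow by writing $s=G(t)$ and using, for $\lambda\ge1$, $G(\lambda^{1/q}G^{-1}(s))\le\lambda s\le G(\lambda^{1/p}G^{-1}(s))$ together with the monotonicity of $G^{-1}$; dividing the $G$-inequalities by $\lambda t$ gives the $\bar{g}$-inequalities, and the $\bar{g}^{-1}$-inequalities follow from the $\bar{g}$-ones exactly as the $G^{-1}$-ones followed from the $G$-ones. Finally the $g$- and $g^{-1}$-inequalities come by sandwiching: from $p\bar{g}\le g\le q\bar{g}$ in \eqref{eq-bar-g-comp} one gets, e.g.\ for $\lambda\ge1$, $g(\lambda t)\le q\bar{g}(\lambda t)\le q\lambda^{q-1}\bar{g}(t)\le(q/p)\lambda^{q-1}g(t)$ and similarly from below, and from $p^{1/(q-1)}g^{-1}\le\bar{g}^{-1}\le q^{1/(p-1)}g^{-1}$ in \eqref{eq-bar-g-comp} together with the $\bar{g}^{-1}$-bounds one gets the $g^{-1}$-inequalities, the constants collapsing exactly to $c=p^{1/(q-1)}/q^{1/(p-1)}$.

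For (iii) I may assume $t\ge t'$, so $t+t'\le 2t$; applying the $\lambda=2$ case of the relevant inequality from (i) gives $F(t+t')\le F(2t)\le C_F\,F(t)\le C_F(F(t)+F(t'))$ for each $F\in\{G,G^{-1},\bar{g},\bar{g}^{-1},g,g^{-1}\}$ with $C_F$ the constant displayed in (iii). None of this is hard; the only care points are (a) the preliminary continuity/bijectivity remarks that legitimise the inverses and the integration of $(\log\phi)'$, and (b) the constant bookkeeping in the $g$- and $g^{-1}$-inequalities, where one must use \eqref{eq-bar-g-comp} at both ends and check that the factors combine to exactly $q/p$, $p/q$, and $c^{\pm1}$. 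I expect (b) to be the fiddliest point, though still entirely routine; alternatively, since the lemma is quoted from \cite{KL23}, one may simply refer there.
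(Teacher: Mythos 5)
Your proof is correct. The paper does not prove this lemma but imports it from \cite{KL23}; your argument (integrating $p/\lambda\le(\log G(\lambda t))'\le q/\lambda$ to get the $G$-bounds, then transferring to $G^{-1}$, $\bar g$, $\bar g^{-1}$ by substitution and to $g$, $g^{-1}$ by sandwiching via \eqref{eq-bar-g-comp}, and finally obtaining (iii) from $t+t'\le 2\max\{t,t'\}$ with $\lambda=2$) is the standard route and the constants work out exactly as you computed. The preliminary remarks that $g$ is continuous (Darboux plus monotonicity) and that $g(0)=0$ (else $tg(t)/G(t)\to 1<p$ as $t\to 0^+$) are a nice touch, since they legitimise both the bijectivity of $G,\bar g,g$ and the fundamental-theorem-of-calculus step; they are often left implicit in the literature.
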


We close this section with the following lemma, which is a generalization of Lemma~3 in Dyda~\cite{Dyd06}.

\begin{lemma}\label{lem-g-dyda}
Assume that $G$ and $g$ satisfy \eqref{eq-pq}. The following hold:
	\begin{enumerate}[(i)]
		\item 
		For any $m \in \mathbb{N}$ and $a_i \geq 0$ with $i=1, \dots, m$, we have
		\begin{equation*}
			\bar{g}^{-1}\left(\sum_{i=1}^m a_i\right) \leq m^{1/(p-1)} \sum_{i=1}^m \bar{g}^{-1}(a_i).
		\end{equation*}
	
		\item Let $\beta>1$. There exists a constant $C=C(p, q, \beta)>0$ such that for any $a_i \geq 0$ with $i=1, \dots$, we have
		\begin{equation*}
			g^{-1}\left(\sum_{i=1}^{\infty} a_i\right) \leq C\sum_{i=1}^{\infty}\beta^i g^{-1}(a_i).
		\end{equation*}
	\end{enumerate}
\end{lemma}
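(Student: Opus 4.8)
For part~(i) the idea is simply to bound the sum by $m$ times its largest term and then use that $\bar g^{-1}$ grows at most like a power with exponent $1/(p-1)$. Set $A:=\max_{1\le i\le m}a_i$, so that $\sum_{i=1}^m a_i\le mA$. Since $m\ge 1$, the inequality $\bar g^{-1}(\lambda t)\le\lambda^{1/(p-1)}\bar g^{-1}(t)$ for $\lambda\ge 1$ from \Cref{lem-G}(i), applied with $\lambda=m$ and $t=A$, yields $\bar g^{-1}\big(\sum_{i=1}^m a_i\big)\le\bar g^{-1}(mA)\le m^{1/(p-1)}\bar g^{-1}(A)$. Finally, since $\bar g^{-1}$ is non-decreasing (by \eqref{eq-bar-g-pq}) and $A=a_{i_0}$ for some index $i_0$, we have $\bar g^{-1}(A)\le\sum_{i=1}^m\bar g^{-1}(a_i)$, and combining the two bounds proves (i). I do not expect any difficulty in this part.

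For part~(ii), I would first dispose of the degenerate cases: if $S:=\sum_{i=1}^\infty a_i=0$ there is nothing to prove, and if $S=\infty$ the inequality follows by applying the finite-index case to the truncations $(a_1,\dots,a_N,0,0,\dots)$ and letting $N\to\infty$, using the continuity of $g^{-1}$ together with $g^{-1}(t)\to\infty$ as $t\to\infty$ (which follows from \eqref{eq-pq}). So assume $0<S<\infty$ and normalize by setting $\lambda_i:=a_i/S\in[0,1]$, so that $\sum_{i=1}^\infty\lambda_i=1$. Because $\lambda_i\le 1$, the lower scaling bound $g^{-1}(\lambda t)\ge c\,\lambda^{1/(p-1)}g^{-1}(t)$ for $\lambda\le 1$ from \Cref{lem-G}(ii), with $c=p^{1/(q-1)}/q^{1/(p-1)}$, gives $g^{-1}(a_i)=g^{-1}(\lambda_i S)\ge c\,\lambda_i^{1/(p-1)}g^{-1}(S)$ for every $i$. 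Summing this against $\beta^i$ reduces the whole statement to the elementary claim
\begin{equation*}
\sum_{i=1}^\infty \beta^i\lambda_i^{1/(p-1)}\ \ge\ \big(\beta^{p-1}-1\big)^{1/(p-1)}\qquad\text{whenever }\lambda_i\ge 0,\ \sum_{i=1}^\infty\lambda_i=1 ,
\end{equation*}
after which one concludes with $C=\big(c\,(\beta^{p-1}-1)^{1/(p-1)}\big)^{-1}=C(p,q,\beta)$.

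The elementary claim is the only genuine point of the argument, and it is where I would spend the care, although it is short. Let $\delta$ denote the left-hand side of the claimed inequality. Then $\beta^i\lambda_i^{1/(p-1)}\le\delta$, hence $\lambda_i^{1/(p-1)}\le\delta\beta^{-i}$, for each $i$; raising to the power $p-1>0$ gives $\lambda_i\le\delta^{p-1}\beta^{-i(p-1)}$. Summing over $i\ge 1$ and evaluating the geometric series $\sum_{i\ge 1}\beta^{-i(p-1)}=(\beta^{p-1}-1)^{-1}$ yields $1=\sum_{i\ge 1}\lambda_i\le\delta^{p-1}(\beta^{p-1}-1)^{-1}$, i.e.\ $\delta\ge(\beta^{p-1}-1)^{1/(p-1)}$, which is the claim. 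The remaining things to watch are only bookkeeping: that entries with $a_i=0$ cause no trouble (there the displayed lower bound just reads $g^{-1}(0)\ge 0$), and that the passage from truncated to full sums in the case $S=\infty$ is justified by monotone convergence and the continuity of $g^{-1}$.
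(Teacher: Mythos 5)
Your proof of part (i) is the paper's own, word for word in substance (take the largest term, pull out the factor $m$ via the scaling of $\bar g^{-1}$, bound by the sum). Part (ii), however, you prove by a genuinely different method, and it is correct. The paper first iterates part (i) with $m=2$ to get a geometric weight $2^{(k+1)/(p-1)}$ on dyadic blocks, then picks $m$ so large that $2^{1/(m(p-1))}\le\beta$ and groups the $a_i$ into blocks of length $m$; the constant that comes out is $(2m)^{1/(p-1)}$. You instead normalize by the total mass $S$, use the \emph{lower} scaling bound $g^{-1}(\lambda t)\ge c\,\lambda^{1/(p-1)}g^{-1}(t)$ for $\lambda\le1$ from \Cref{lem-G}(ii), and reduce everything to the purely combinatorial inequality $\sum_i\beta^i\lambda_i^{1/(p-1)}\ge(\beta^{p-1}-1)^{1/(p-1)}$ on the simplex $\sum\lambda_i=1$, which you verify by bounding each $\lambda_i$ by the total and summing a geometric series. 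This avoids both the iteration and the block-splitting trick, yields the explicit constant $C=c^{-1}(\beta^{p-1}-1)^{-1/(p-1)}$ directly (of the same asymptotic strength as the paper's as $\beta\downarrow1$), and makes part (ii) independent of part (i). Your handling of the degenerate cases $S=0$ and $S=\infty$ is fine; the only small point worth making explicit if you write this up is that $g^{-1}(0)=0$, which holds because \eqref{eq-pq} forces $g(0)=0$. The geometric series evaluation $\sum_{i\ge1}\beta^{-i(p-1)}=(\beta^{p-1}-1)^{-1}$ is correct since $\beta>1$ and $p>1$.
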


\begin{proof}
(i) We may assume without loss of generality that $a_1 \geq a_i$ for all $i=2, \dots, m$. By using \Cref{lem-G}~(i), we obtain
		\begin{equation*}
				\bar{g}^{-1}\left(\sum_{i=1}^m a_i\right) \leq \bar{g}^{-1}(ma_1) \leq m^{1/(p-1)}\bar{g}^{-1}(a_1) \leq m^{1/(p-1)} \sum_{i=1}^m \bar{g}^{-1}(a_i).
		\end{equation*}
(ii) By iterating the part (i) with $m=2$, we have
		\begin{equation}\label{eq-dyda}
				\bar{g}^{-1}\left(\sum_{k=0}^{\infty} c_k\right) \leq \sum_{k=0}^{\infty} \left( 2^{1/(p-1)} \right)^{k+1} \bar{g}^{-1}(c_k)
		\end{equation}
		wherever $c_k \geq 0$ with $k=0, 1, \dots$\,. Let $m \in \mathbb{N}$ be sufficiently large so that $2^{\frac{1}{m(p-1)}} \leq \beta$. Then \eqref{eq-dyda} and the part (i) imply
		\begin{align*}
			\bar{g}^{-1}\left(\sum_{i=1}^{\infty} a_i\right)=\bar{g}^{-1}\left(\sum_{k=0}^{\infty} \sum_{j=1}^ma_{km+j}\right) &\leq \sum_{k=0}^{\infty} \left( 2^{1/(p-1)} \right)^{k+1} \bar{g}^{-1} \left(\sum_{j=1}^m a_{km+j}\right)\\
			&\leq (2m)^{1/(p-1)} \sum_{k=0}^{\infty} 2^{k/(p-1)} \sum_{j=1}^m \bar{g}^{-1} \left(a_{km+j}\right)\\
			&\leq (2m)^{1/(p-1)} \sum_{i=1}^{\infty} \beta^{i} \bar{g}^{-1} (a_i).
		\end{align*}
	The desired inequality follows from \eqref{eq-bar-g-comp}.
\end{proof}

\subsection{Function spaces}\label{sec-spaces}

Assume that $G$ and $g$ satisfy \eqref{eq-pq}. Let $\Omega \subset \mathbb{R}^n$ be open. We define
\begin{align*}
	\varrho_{L^G(\Omega)}(u) &= \int_\Omega G(|u|) \,\mathrm{d}x, \\
	\varrho_{W^{s, G}(\Omega)}(u) &= \int_\Omega \int_\Omega G(|D^s u|) \frac{\mathrm{d}y \,\mathrm{d}x}{|x-y|^n}, \\
	\varrho_{V^{s, G}(\Omega)}(u) &= \int_\Omega \int_{\mathbb{R}^n} G(|D^s u|) \frac{\mathrm{d}y \,\mathrm{d}x}{|x-y|^n}.
\end{align*}
Then the \emph{Orlicz} and \emph{(fractional) Orlicz--Sobolev spaces} defined by
\begin{align*}
	L^G(\Omega) &= \left\{ u: \Omega \to \mathbb{R} ~\text{measurable}: \varrho_{L^G(\Omega)}(u) < \infty \right\}, \\
	W^{s, G}(\Omega) &= \left\{ u \in L^G(\Omega): \varrho_{W^{s, G}(\Omega)}(u) < \infty \right\}, \\
	V^{s, G}(\Omega) &= \left\{ u: \mathbb{R}^n \to \mathbb{R} ~\text{measurable}: u|_\Omega \in L^G(\Omega), \varrho_{V^{s, G}(\Omega)}(u) < \infty \right\},
\end{align*}
are Banach spaces with the norms
\begin{align*}
	\|u\|_{L^G(\Omega)} &:= \inf \left\{ \lambda>0: \varrho_{L^G(\Omega)}(u/\lambda) \leq 1 \right\}, \\
	\|u\|_{W^{s, G}(\Omega)} &:= \|u\|_{L^G(\Omega)} + [u]_{W^{s, G}(\Omega)} := \|u\|_{L^G(\Omega)} + \inf \left\{ \lambda>0: \varrho_{W^{s, G}(\Omega)}(u/\lambda) \leq 1 \right\}, \\
	\|u\|_{V^{s, G}(\Omega)} &:= \|u\|_{L^G(\Omega)} + [u]_{V^{s, G}(\Omega)} := \|u\|_{L^G(\Omega)} + \inf \left\{ \lambda>0: \varrho_{V^{s, G}(\Omega)}(u/\lambda) \leq 1 \right\},
\end{align*}
respectively. By $W^{s, G}_{\text{loc}}(\Omega)$ we denote the space of functions that belong to $W^{s, G}(D)$ for each open set $D \Subset \Omega$. We also define the space
\begin{equation*}
	 V^{s, G}_0(\Omega) = \overline{C_c^{\infty}(\Omega)}^{V^{s, G}(\Omega)},
\end{equation*}
which was denoted by $W^{s, p}_0(\Omega)$ in Kim--Lee--Lee~\cite{KLL23}. By \Cref{thm-Poincare} below, $[\cdot]_{V^{s, G}(\Omega)}$ is a norm on $V^{s, G}_0(\Omega)$. We refer the reader to \cite[Remark~2.2]{KLL23} for some remarks on these spaces.

\begin{theorem}[Fractional Poincar\'e inequality]\cite[Proposition~3.2]{Sal20}\label{thm-Poincare}
Assume that $G$ and $g$ satisfy \eqref{eq-pq}. Let $\Omega \subset \mathbb{R}^n$ be open and bounded. There exists a constant $C = C(n, p, q, s) > 0$ such that
\begin{equation*}
\int_\Omega G(|u|) \,\mathrm{d}x \leq C \int_{\Omega} \int_{\mathbb{R}^n} G(\mathrm{diam}^s(\Omega) |D^su|) \frac{\mathrm{d}y \,\mathrm{d}x}{|x-y|^n}
\end{equation*}
for all $u \in V_0^{s, G}(\Omega)$.
\end{theorem}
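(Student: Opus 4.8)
The plan is to prove the inequality pointwise in $x \in \Omega$, exploiting the fact that every $u \in V^{s,G}_0(\Omega)$ vanishes almost everywhere in $\mathbb{R}^n \setminus \overline{\Omega}$. (This is a standard consequence of the definition of $V^{s,G}_0(\Omega)$ as the $V^{s,G}$-closure of $C_c^\infty(\Omega)$, together with the $\Delta_2$-type inequalities $G(\lambda t) \le \lambda^q G(t)$ for $\lambda \ge 1$ and $G(\lambda t)\le\lambda^p G(t)$ for $\lambda\le 1$ from \Cref{lem-G}; alternatively one proves the inequality first for $u \in C_c^\infty(\Omega)$ and then passes to the limit, the convergence of the modulars being ensured by the same $\Delta_2$ property.) Write $d = \mathrm{diam}(\Omega)$ and fix $x \in \Omega$. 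Since $|x-z| \le d$ for every $z \in \Omega$, we have $\Omega \subseteq \overline{B_d(x)}$, hence $u$ vanishes a.e.\ on the annulus $A_x := B_{2d}(x) \setminus \overline{B_d(x)}$, on which moreover $d \le |x-y| \le 2d$.

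For a.e.\ such $x$ and a.e.\ $y \in A_x$ one then has $D^s u(x,y) = u(x)/|x-y|^s$, so that
\[
|u(x)| = |x-y|^s\,|D^s u(x,y)| \le (2d)^s\,|D^s u(x,y)| = 2^s\bigl(\mathrm{diam}^s(\Omega)\,|D^s u(x,y)|\bigr).
\]
Applying the nondecreasing function $G$ and then the bound $G(\lambda t) \le \lambda^q G(t)$ with $\lambda = 2^s \in [1,2]$ from \Cref{lem-G}~(i), I get $G(|u(x)|) \le 2^q\,G\bigl(\mathrm{diam}^s(\Omega)\,|D^s u(x,y)|\bigr)$ for a.e.\ $y \in A_x$. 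Averaging this over $y \in A_x$ and using that $|A_x| = \omega_n(2^n-1)d^n$ while $|x-y| \le 2d$ on $A_x$ (so that $|A_x|^{-1} \le \frac{2^n}{\omega_n(2^n-1)}\,|x-y|^{-n}$ there), I obtain
\[
G(|u(x)|) = \frac{1}{|A_x|}\int_{A_x} G(|u(x)|)\,\mathrm{d}y \le C(n,q)\int_{A_x} G\bigl(\mathrm{diam}^s(\Omega)\,|D^s u(x,y)|\bigr)\frac{\mathrm{d}y}{|x-y|^n} \le C(n,q)\int_{\mathbb{R}^n} G\bigl(\mathrm{diam}^s(\Omega)\,|D^s u(x,y)|\bigr)\frac{\mathrm{d}y}{|x-y|^n},
\]
where in the last step I used $G \ge 0$ and $A_x \subseteq \mathbb{R}^n$. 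Integrating in $x$ over $\Omega$ yields the asserted inequality with $C = C(n,q)$, which in particular is of the form $C(n,p,q,s)$.

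I do not expect a genuine obstacle here. The two points requiring (minor) care are: first, justifying that elements of $V^{s,G}_0(\Omega)$ vanish a.e.\ outside $\overline{\Omega}$ — equivalently, carrying out the density reduction and checking modular convergence, which relies on the $\Delta_2$ condition that \eqref{eq-pq} forces on $G$; and second, keeping the dilation factor $\mathrm{diam}^s(\Omega)$ inside the argument of $G$ throughout, so that the superlinear growth estimate of \Cref{lem-G} absorbs the harmless constant $2^s$ without touching it. Everything else is elementary geometry of balls together with a single use of Fubini's theorem.
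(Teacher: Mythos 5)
The paper does not actually prove this statement; it imports it wholesale from Salort \cite[Proposition~3.2]{Sal20}. So there is no ``paper's own proof'' to compare against, and your blind proposal has to be judged on its own.

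Your argument is correct and self-contained, and it is exactly the standard ``annulus'' proof for a fractional Poincar\'e inequality of this type: since $u \in V^{s,G}_0(\Omega)$ vanishes a.e.\ outside $\overline{\Omega}$, and $\Omega \subseteq \overline{B_d(x)}$ for each $x\in\Omega$ with $d=\mathrm{diam}(\Omega)$, you can reconstruct $u(x)$ from $D^su(x,y)$ for $y$ in the annulus $B_{2d}(x)\setminus\overline{B_d(x)}$ where $u$ is zero and $|x-y|\asymp d$, then average in $y$, reinsert the $|x-y|^{-n}$ weight, and integrate in $x$. This gives precisely the claimed modular inequality with the dilation $\mathrm{diam}^s(\Omega)$ placed inside $G$, as required, and the constant is $C(n,q)$, a special case of $C(n,p,q,s)$.

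Two minor remarks, neither affecting correctness. First, you write $G(2^s \cdot t)\le 2^q G(t)$; what \Cref{lem-G}~(i) gives with $\lambda=2^s$ is $G(2^s t)\le (2^s)^q G(t)=2^{sq}G(t)$, which is smaller and of course still harmless. Second, the preliminary fact that every $u\in V^{s,G}_0(\Omega)$ vanishes a.e.\ on $\mathbb{R}^n\setminus\overline{\Omega}$ is correctly flagged as needing justification: the seminorm $[\,\cdot\,]_{V^{s,G}(\Omega)}$ does not directly control $u$ off $\Omega$, so one must either argue via modular convergence (using the $\Delta_2$ bounds from \eqref{eq-pq} and an a.e.\ convergent subsequence plus Fatou) that the $L^G$-limit of compactly supported smooth functions still vanishes off $\overline{\Omega}$, or prove the inequality for $\varphi\in C_c^\infty(\Omega)$ and pass to the limit in both modulars, again invoking $\Delta_2$. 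You sketch both routes rather than carrying one out, which is an acceptable level of detail for a ``preliminaries'' statement but worth being aware of as the only non-elementary step in the proof.
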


In order to study nonlocal equations, it is now standard to introduce the \emph{tail space}, which is defined by
\begin{equation*}
	L^g_s(\mathbb{R}^n) = \left\{ u \in L^g_{\text{loc}}(\mathbb{R}^n): \int_{\mathbb{R}^n} g\left(\frac{|u(x)|}{(1+|x|)^s}\right) \frac{\mathrm{d}x}{ (1+|x|)^{n+s} } <\infty \right\}
\end{equation*}
in the Orlicz setting. Equivalently, we can define
\begin{equation*}
	L^g_s(\mathbb{R}^n) = \left\{ u \in L^g_{\text{loc}} (\mathbb{R}^n): \mathrm{Tail}_g(u; x_0, R) <\infty ~\text{for some $x_0 \in \mathbb{R}^n$ and $R>0$} \right\},
\end{equation*}
where
\begin{equation*}
	\mathrm{Tail}_g(u; x_0, R) = R^s g^{-1} \left( R^s \int_{\mathbb{R}^n \setminus B_R(x_0)} g \left( \frac{|u(x)|}{|x-x_0|^s} \right) \frac{\mathrm{d}x}{|x-x_0|^{n+s}} \right)
\end{equation*}
is a \emph{(nonlocal) tail}. Note that $\mathrm{Tail}_g(u; x_0, R)$ is finite for any $x_0 \in \mathbb{R}^n$ and $R>0$ if $u \in L^g_s(\mathbb{R}^n)$. As we often analyze the integral in the tail, we denote
\begin{equation*}
	T(u; x_0, R) := g\left( \frac{\mathrm{Tail}_g(u; x_0, R)}{R^s} \right) = R^s \int_{\mathbb{R}^n \setminus B_R(x_0)} g \left( \frac{|u(x)|}{|x-x_0|^s} \right) \frac{\mathrm{d}x}{|x-x_0|^{n+s}}
\end{equation*}
for notational convenience.

\subsection{Supersolutions and superharmonic functions}\label{sec-super}

We recall the definitions and some properties of supersolutions and superharmonic functions defined in Kim--Lee~\cite{KL23}. Assume that $G$ and $g$ satisfy \eqref{eq-pq}. Let us begin with supersolutions. For measurable functions $u, v: \mathbb{R}^n \to \mathbb{R}$, we consider a quantity
\begin{equation*}
	\mathcal{E}(u, v) = \int_{\mathbb{R}^n} \int_{\mathbb{R}^n} g(|D^su|) \frac{D^su}{|D^su|} D^sv \frac{k(x, y)}{|x-y|^n} \,\mathrm{d}y \,\mathrm{d}x.
\end{equation*}

\begin{definition}\label{def-supersolution}
	A function $u \in W_{\text{loc}}^{s, G}(\Omega)$ with $u_- \in L^g_s(\mathbb{R}^n)$ is a \emph{(weak) supersolution} of $\mathcal{L}u=0$ in $\Omega$ if
	\begin{equation}\label{eq-supersolution}
		\mathcal{E}(u, \varphi) \geq 0
	\end{equation}
	for all nonnegative functions $\varphi \in C_c^\infty(\Omega)$. A function $u$ is a \textit{(weak) subsolution} if $-u$ is a supersolution, and $u$ is a \textit{(weak) solution} if it is both subsolution and supersolution.
\end{definition}

In \Cref{def-supersolution}, test functions are required to be in $C^\infty_c(\Omega)$, but a standard approximation argument shows the following result.

\begin{lemma}
    Assume that $G$ and $g$ satisfy \eqref{eq-pq}. If $u \in V^{s, G}(\Omega)$ is a supersolution of $\mathcal{L}u=0$ in $\Omega$, then \eqref{eq-supersolution} holds for all nonnegative functions $\varphi \in V^{s, G}_0(\Omega)$.
\end{lemma}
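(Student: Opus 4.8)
The plan is to use a density/approximation argument to upgrade the class of admissible test functions from $C_c^\infty(\Omega)$ to nonnegative elements of $V^{s,G}_0(\Omega)$, exploiting continuity of the bilinear-type form $\mathcal{E}(u,\cdot)$ in its second argument. First I would fix a supersolution $u \in V^{s,G}(\Omega)$ and a nonnegative $\varphi \in V^{s,G}_0(\Omega)$. By definition of $V^{s,G}_0(\Omega)$ as the closure of $C_c^\infty(\Omega)$ in the $V^{s,G}(\Omega)$-norm, there is a sequence $\varphi_j \in C_c^\infty(\Omega)$ with $\|\varphi_j - \varphi\|_{V^{s,G}(\Omega)} \to 0$. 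The standard truncation trick $\varphi_j \mapsto \varphi_j^+ = \max\{\varphi_j,0\}$ (mollified if one wants smoothness, or simply kept as a Lipschitz-with-compact-support function, which is still admissible after mollification) produces nonnegative test functions still converging to $\varphi$ in $V^{s,G}(\Omega)$, since $t \mapsto t^+$ is $1$-Lipschitz and hence $|D^s \varphi_j^+| \le |D^s \varphi_j|$ pointwise while $\varphi^+ = \varphi$; thus $\mathcal{E}(u,\varphi_j^+) \ge 0$ for each $j$ by \Cref{def-supersolution}, and it remains to pass to the limit.

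The key step is therefore to show $\mathcal{E}(u,\varphi_j^+) \to \mathcal{E}(u,\varphi)$. Writing out the difference, one must control
\begin{equation*}
\left| \int_{\mathbb{R}^n}\int_{\mathbb{R}^n} g(|D^su|)\frac{D^su}{|D^su|}\, D^s(\varphi_j^+ - \varphi)\, \frac{k(x,y)}{|x-y|^n}\,\mathrm{d}y\,\mathrm{d}x \right|,
\end{equation*}
and since $|g(|D^su|) D^su/|D^su|| = g(|D^su|)$ and $0 \le k \le \Lambda$, this is bounded by $\Lambda \int\int g(|D^su|)\,|D^s(\varphi_j^+-\varphi)|\,|x-y|^{-n}\,\mathrm{d}y\,\mathrm{d}x$. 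I would estimate this via the Hölder inequality in Orlicz spaces with the conjugate Young function $G^*$ of $G$: the factor $g(|D^su|)$ lies in $L^{G^*}$ with respect to the measure $|x-y|^{-n}\,\mathrm{d}y\,\mathrm{d}x$ on $\mathbb{R}^n \times \mathbb{R}^n$, with norm controlled by $\varrho_{V^{s,G}(\mathbb{R}^n)}$-type quantities (here one uses $G^*(g(t)) \le (q-1)G(t)$, a standard consequence of \eqref{eq-pq}, to bound $\int\int G^*(g(|D^su|))$), while $|D^s(\varphi_j^+-\varphi)|$ lies in $L^G$ with the same measure and its norm tends to $0$ because $\varphi_j^+ \to \varphi$ in $V^{s,G}$. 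A subtlety: $u$ is only in $V^{s,G}(\Omega)$, not globally, so the double integral over $\mathbb{R}^n \times \mathbb{R}^n$ must be split — the contribution from $x,y$ both outside $\mathrm{supp}\,\varphi \cup \mathrm{supp}\,\varphi_j$ vanishes since then $D^s(\varphi_j^+-\varphi)=0$; the region where at least one of $x,y$ lies in a fixed compact $K \Subset \Omega$ containing all the supports is where the finiteness of $\varrho_{V^{s,G}(\Omega)}(u)$ together with $u_- \in L^g_s(\mathbb{R}^n)$ (and the fact that $u \in L^G(\Omega)$, so $u$ is finite a.e.\ on $\Omega$) gives the needed integrability of $g(|D^su|)$ against $|x-y|^{-n}$ on $(K\times\mathbb{R}^n)\cup(\mathbb{R}^n\times K)$.

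The main obstacle I anticipate is precisely this tail bookkeeping: showing that $g(|D^su|)$ has finite $L^{G^*}$-norm over the relevant cross regions $K \times \mathbb{R}^n$ and $\mathbb{R}^n \times K$, not just over $\Omega \times \Omega$. On $K \times \Omega$ (and $\Omega \times K$) this follows directly from $\varrho_{V^{s,G}(\Omega)}(u) < \infty$; on $K \times (\mathbb{R}^n \setminus \Omega)$ one writes $|D^s u(x,y)| \le (|u(x)| + |u(y)|)/|x-y|^s$, splits $g$ of a sum using \Cref{lem-G}~(iii), bounds the $|u(x)|$-part using boundedness of $u$ on $K$ (finite a.e., and one may first assume $u$ bounded on $K$ by another truncation in $u$, or use that $D^su \in L^G$ forces local control), and bounds the $|u(y)|$-part, for $y$ far from $x_0$, by the tail term $T(u_-; x_0, R) < \infty$ coming from $u_- \in L^g_s(\mathbb{R}^n)$ together with geometric series estimates on dyadic annuli — exactly the kind of computation encapsulated in the tail space definition in \Cref{sec-spaces}. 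Once this uniform integrability is in hand, dominated convergence (or the Orlicz–Hölder estimate above) closes the argument and yields $\mathcal{E}(u,\varphi) = \lim_j \mathcal{E}(u,\varphi_j^+) \ge 0$.
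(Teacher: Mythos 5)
Your overall strategy is the right one and is what the paper alludes to by ``a standard approximation argument'': approximate $\varphi$ by nonnegative smooth test functions and pass to the limit using the fact that $\mathcal{E}(u,\cdot)$ is a bounded linear functional on $V^{s,G}_0(\Omega)$, the boundedness coming from Young's/H\"older's inequality in the Orlicz setting together with $G^*(g(t))=tg(t)-G(t)\le(q-1)G(t)$, which is exactly right. The paper does not print a proof, so there is nothing to compare against line by line; your plan is a reasonable fleshing-out of what they had in mind.

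Two points deserve tightening. First, the claim that $\varphi_j^+ \to \varphi$ in $V^{s,G}(\Omega)$ is not established by the $1$-Lipschitz observation alone: $|D^s\varphi_j^+|\le|D^s\varphi_j|$ gives a uniform \emph{bound} on $\varrho_{V^{s,G}(\Omega)}(\varphi_j^+)$, not convergence of $\varphi_j^+$ to $\varphi$. You need either the standard continuity-of-truncation result for fractional Orlicz--Sobolev spaces, or to argue via uniform integrability/Vitali, or (cleanest for this purpose) to note that since $\mathcal{E}(u,\cdot)$ is a bounded linear functional on $V^{s,G}_0(\Omega)$ it suffices that $\varphi_j^+ \rightharpoonup \varphi$ weakly, which follows from the uniform bound on $\varrho_{V^{s,G}(\Omega)}(\varphi_j^+)$ plus $\varphi_j^+\to\varphi$ a.e.\ (along a subsequence) and reflexivity (valid since $G,G^*$ satisfy $\Delta_2$ by \eqref{eq-pq}). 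This is a fixable omission, but as written it is not a complete argument.

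Second, the ``tail bookkeeping'' is more elaborate than the situation requires and contains a small misstatement. Since $\varphi$ and $\varphi_j^+$ both vanish a.e.\ outside $\Omega$, $D^s(\varphi_j^+-\varphi)$ is supported in $(\Omega\times\mathbb{R}^n)\cup(\mathbb{R}^n\times\Omega)$, and the integrand is symmetric, so the whole estimate reduces to $\int_\Omega\int_{\mathbb{R}^n}$. But $u\in V^{s,G}(\Omega)$ means precisely $\int_\Omega\int_{\mathbb{R}^n}G(|D^su|)\,|x-y|^{-n}\,\mathrm{d}y\,\mathrm{d}x<\infty$, which already provides the $L^{G^*}$-control of $g(|D^su|)$ over this region via $G^*(g(t))\le(q-1)G(t)$. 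The separate dyadic annulus estimate using $u_-\in L^g_s(\mathbb{R}^n)$ and ``$u$ bounded on $K$'' is therefore unnecessary; moreover, $\varphi\in V^{s,G}_0(\Omega)$ need not have compact support in $\Omega$, so there is in general no ``fixed compact $K\Subset\Omega$ containing all the supports''. Replacing $K$ by $\Omega$ and appealing directly to the definition of $V^{s,G}(\Omega)$ both simplifies and corrects this step.
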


The following theorem shows the solvability of the nonlocal Dirichlet problem for the operator $\mathcal{L}$.

\begin{theorem}\label{thm-DP}
Assume that $G$ and $g$ satisfy \eqref{eq-pq}. Let $\Omega \subset \mathbb{R}^n$ be open and bounded. Let $f \in (V^{s, G}_0(\Omega))^\ast$ and $\vartheta \in V^{s, G}(\Omega)$. There exists a unique solution $u \in V^{s, G}(\Omega)$ of $\mathcal{L}u=f$ in $\Omega$ with $u-\vartheta \in V^{s, G}_0(\Omega)$.
\end{theorem}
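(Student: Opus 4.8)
The approach is the direct method of the calculus of variations, after reducing to homogeneous boundary data; uniqueness will follow from strict monotonicity. Write $\Omega^{c}:=\mathbb{R}^{n}\setminus\Omega$ and look for $u$ in the form $u=\vartheta+w$ with $w\in V^{s,G}_{0}(\Omega)$. Reading $\mathcal{L}u=f$ in the weak sense as the identity $\mathcal{E}(u,\varphi)=\langle f,\varphi\rangle$ for all $\varphi\in V^{s,G}_{0}(\Omega)$, the problem becomes: find $w\in V^{s,G}_{0}(\Omega)$ such that $\mathcal{E}(\vartheta+w,\varphi)=\langle f,\varphi\rangle$ for all $\varphi\in V^{s,G}_{0}(\Omega)$. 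Since $D^{s}\varphi$ vanishes on $\Omega^{c}\times\Omega^{c}$ for every $\varphi\in V^{s,G}_{0}(\Omega)$, so does $G(|D^{s}(\vartheta+w)|)-G(|D^{s}\vartheta|)$, and I would consider the functional
\begin{equation*}
J(w)=\iint_{\mathcal{O}}\left(G(|D^{s}(\vartheta+w)|)-G(|D^{s}\vartheta|)\right)\frac{k(x,y)}{|x-y|^{n}}\,\mathrm{d}y\,\mathrm{d}x-\langle f,w\rangle,\qquad \mathcal{O}:=(\Omega\times\mathbb{R}^{n})\cup(\Omega^{c}\times\Omega).
\end{equation*}
Subtracting $G(|D^{s}\vartheta|)$ is essential, because the full double integral of $G(|D^{s}(\vartheta+w)|)\,k(x,y)|x-y|^{-n}$ over $\mathbb{R}^{2n}$ may be infinite for every $w$. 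That $J$ is finite and real-valued on $V^{s,G}_{0}(\Omega)$ follows from the convexity of $G$, the elementary bound $|G(a)-G(b)|\le g(a+b)|a-b|$, Young's inequality \eqref{eq-alg}, and the $\Delta_{2}$-type estimates of \Cref{lem-G}, which together give $|G(|D^{s}(\vartheta+w)|)-G(|D^{s}\vartheta|)|\le C(G(|D^{s}\vartheta|)+G(|D^{s}w|))$; integrating over $\mathcal{O}$, using the symmetry of the integrand under swapping $x$ and $y$ together with $\vartheta,w\in V^{s,G}(\Omega)$, bounds the double integral, while $|\langle f,w\rangle|\le\|f\|_{(V^{s,G}_{0}(\Omega))^{\ast}}\|w\|_{V^{s,G}(\Omega)}$.

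Next I would establish the three ingredients of the direct method. Convexity of $w\mapsto J(w)$ is immediate, since $w\mapsto D^{s}(\vartheta+w)$ is affine and $G$ is convex; together with strong continuity (again via \eqref{eq-alg} and \Cref{lem-G}) this makes $J$ weakly lower semicontinuous. For coercivity I would use the subgradient inequality $G(|D^{s}(\vartheta+w)|)\ge G(|D^{s}w|)-g(|D^{s}w|)|D^{s}\vartheta|$, absorb $g(|D^{s}w|)|D^{s}\vartheta|$ via \eqref{eq-alg} and $tg(t)\le qG(t)$, and use the ellipticity of $k$ to obtain $J(w)\ge c\,\varrho_{V^{s,G}(\Omega)}(w)-C\varrho_{V^{s,G}(\Omega)}(\vartheta)-\|f\|_{(V^{s,G}_{0}(\Omega))^{\ast}}\|w\|_{V^{s,G}(\Omega)}$; the fractional Poincar\'e inequality (\Cref{thm-Poincare}) and the modular--norm comparisons of \Cref{lem-G} then yield $J(w)\to\infty$ as $\|w\|_{V^{s,G}(\Omega)}\to\infty$. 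Finally, since $1<p\le q<\infty$, the Young function $G$ satisfies both the $\Delta_{2}$ and the $\nabla_{2}$ condition, so $L^{G}(\Omega)$ is reflexive; hence $V^{s,G}(\Omega)$ and its closed subspace $V^{s,G}_{0}(\Omega)$ are reflexive. A minimizing sequence $\{w_{j}\}$ for $J$ is bounded by coercivity, so after passing to a subsequence $w_{j}\rightharpoonup w$ in $V^{s,G}_{0}(\Omega)$, and weak lower semicontinuity gives $J(w)\le\liminf_{j}J(w_{j})=\inf J$; thus $w$ minimizes $J$.

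It remains to derive the Euler--Lagrange equation and to prove uniqueness. For fixed $\varphi\in V^{s,G}_{0}(\Omega)$, the map $t\mapsto J(w+t\varphi)$ is finite and convex, and it is differentiable at $t=0$: the difference quotients of the integrand converge pointwise to $g(|D^{s}(\vartheta+w)|)\frac{D^{s}(\vartheta+w)}{|D^{s}(\vartheta+w)|}D^{s}\varphi$ and, for $|t|\le 1$, are dominated by an integrable majorant of the form $C(G(|D^{s}(\vartheta+w)|)+G(|D^{s}\varphi|))$ coming from \eqref{eq-alg} and \Cref{lem-G}. Hence $\frac{\mathrm{d}}{\mathrm{d}t}J(w+t\varphi)\big|_{t=0}=\mathcal{E}(\vartheta+w,\varphi)-\langle f,\varphi\rangle$, which vanishes because $w$ is a minimizer. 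Therefore $u:=\vartheta+w$ solves $\mathcal{L}u=f$ in $\Omega$ and $u-\vartheta=w\in V^{s,G}_{0}(\Omega)$. For uniqueness, if $u_{1},u_{2}$ are two such solutions then $u_{1}-u_{2}\in V^{s,G}_{0}(\Omega)$, and testing the difference of the two weak equations with $\varphi=u_{1}-u_{2}$ gives
\begin{equation*}
\int_{\mathbb{R}^{n}}\int_{\mathbb{R}^{n}}\left(g(|D^{s}u_{1}|)\frac{D^{s}u_{1}}{|D^{s}u_{1}|}-g(|D^{s}u_{2}|)\frac{D^{s}u_{2}}{|D^{s}u_{2}|}\right)(D^{s}u_{1}-D^{s}u_{2})\frac{k(x,y)}{|x-y|^{n}}\,\mathrm{d}y\,\mathrm{d}x=0.
\end{equation*}
The integrand is nonnegative since $\tau\mapsto g(|\tau|)\tau/|\tau|$ is monotone, and it is strictly positive wherever $D^{s}u_{1}\ne D^{s}u_{2}$ because $g$ is strictly increasing; hence $D^{s}(u_{1}-u_{2})=0$ a.e., so $u_{1}-u_{2}$ is a.e.\ constant, and \Cref{thm-Poincare} (with $\Omega$ bounded) forces this constant to be $0$.

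The genuinely delicate points are, first, the correct normalization of $J$ noted above: the unnormalized energy over $\mathbb{R}^{2n}$ may diverge, which is why one subtracts the $\vartheta$-energy and restricts to $\mathcal{O}$; and second, the Orlicz functional analysis --- the reflexivity of $V^{s,G}_{0}(\Omega)$ and the modular--norm comparisons of \Cref{lem-G} that underlie both coercivity and the dominated-convergence step in the Euler--Lagrange computation, all of which use $1<p\le q<\infty$ in an essential way. The monotonicity and uniqueness part is routine given the algebraic inequalities already at hand. An alternative that avoids the variational functional is to apply the Browder--Minty surjectivity theorem to the operator $A\colon V^{s,G}_{0}(\Omega)\to(V^{s,G}_{0}(\Omega))^{\ast}$, $Aw:=\mathcal{E}(\vartheta+w,\cdot)$, which is monotone, coercive, and hemicontinuous; this trades the lower-semicontinuity/differentiability bookkeeping for a hemicontinuity check but needs the same reflexivity and coercivity inputs.
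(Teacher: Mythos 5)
Your argument is correct, and it is a genuinely self-contained alternative to the paper's own proof. You run the direct method on the normalized convex energy $J$ over $V^{s,G}_0(\Omega)$: reflexivity of $V^{s,G}_0(\Omega)$ follows from $1<p\le q<\infty$, coercivity from the subgradient inequality together with \eqref{eq-alg} and \Cref{lem-G}, weak lower semicontinuity from convexity, the Euler--Lagrange identity from a dominated-convergence computation of the G\^ateaux derivative, and uniqueness from strict monotonicity of $\tau\mapsto g(|\tau|)\tau/|\tau|$ plus \Cref{thm-Poincare}. The paper, by contrast, disposes of the theorem in a single sentence: it invokes the obstacle-problem solvability result of \cite[Theorem~4.9]{KL23} with obstacle $\psi\equiv-\infty$ and, for $f\ne 0$, replaces the operator $\mathcal{A}$ there by $\langle\mathcal{B}u,w\rangle=\mathcal{E}(u,w)-\int_\Omega fw\,\mathrm{d}x$. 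That reference proceeds by a monotone-operator/variational-inequality argument, which is precisely the Browder--Minty alternative you sketch at the end; so the two proofs live in the same circle of ideas, but you supply what the paper delegates. One small clarification on your normalization of $J$: you call subtracting $G(|D^s\vartheta|)$ \emph{essential}, but once the integral is restricted to $\mathcal{O}$, the term $G(|D^s(\vartheta+w)|)$ alone already integrates finitely against $k(x,y)|x-y|^{-n}$ there, being controlled by $\varrho_{V^{s,G}(\Omega)}(\vartheta)+\varrho_{V^{s,G}(\Omega)}(w)$; the true role of the subtraction is to make the integrand vanish identically on $\Omega^c\times\Omega^c$, so that $J$ can equivalently be written as an integral over all of $\mathbb{R}^{2n}$ --- a cosmetic convenience rather than a prerequisite for $J$ to be well-defined.
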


\Cref{thm-DP} with $f=0$ follows from Theorem~4.9 in \cite{KL23} by setting $\psi \equiv -\infty$. The general case with $f$ can be obtained by modifying the proof of \cite[Theorem~4.9]{KL23}. Indeed, one can consider the mapping $\mathcal{B}: \mathcal{K}_{\psi, \vartheta}(\Omega) \to (V^{s, G}_0(\Omega))^\ast$ defined by $\langle \mathcal{B}u, w \rangle=\mathcal{E}(u, w)-\int_\Omega fw \,\mathrm{d}x$ instead of $\mathcal{A}$ and follow the lines of the proof.

Let us next provide the definition of a superharmonic function, a subharmonic function, and a harmonic function.

\begin{definition}\label{def-superharmonic}
	A measurable function $u: \mathbb{R}^n \to [-\infty, \infty]$ is a ($\mathcal{L}$-)\emph{superharmonic function} in $\Omega$ if it satisfies the following:
	\begin{enumerate}[(i)]
		\item $u < \infty$ a.e.\ in $\mathbb{R}^n$ and $u>-\infty$ everywhere in $\Omega$,
		\item $u$ is lower semicontinuous in $\Omega$,
		\item if $D \Subset \Omega$ is an open set and $v: \mathbb{R}^n \to [-\infty, \infty]$ is a solution of $\mathcal{L}v=0$ in $D$ with $v \in C(\overline{D})$ and $v_+ \in L^{\infty}(\mathbb{R}^n)$ such that $u \geq v$ on $\partial D$ and almost everywhere on $\mathbb{R}^n \setminus D$, then $u \geq v$ in $D$,
		\item $u_- \in L_{s}^{g}(\mathbb{R}^n)$.
	\end{enumerate}
A function $u$ is said to be ($\mathcal{L}$-)\emph{subharmonic in $\Omega$} if $-u$ is superharmonic in $\Omega$.
\end{definition}

\begin{definition}\label{def-harmonic}
A function $u$ is said to be ($\mathcal{L}$-)\emph{harmonic in $\Omega$} if it is a solution of $\mathcal{L}u=0$ in $\Omega$ which is continuous in $\Omega$.
\end{definition}

We finally present some properties of supersolutions and superharmonic functions obtained in \cite{KL23}.

\begin{theorem}[{\cite[Theorem~5.1]{KL23}}]\label{thm-lsc}
Assume that $G$ and $g$ satisfy \eqref{eq-pq}. Let $u$ be a supersolution of $\mathcal{L}u=0$ in $\Omega$. Then $u(x)=\essliminf_{y \to x}u(y)$ for a.e.\ $x \in \Omega$. In particular, $u$ has a representative that is lower semicontinuous in $\Omega$.
\end{theorem}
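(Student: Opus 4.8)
The plan is to prove, at Lebesgue points of $u$, the two inequalities $\essliminf_{y\to x}u(y)\le u(x)$ and $u(x)\le\essliminf_{y\to x}u(y)$ separately. Recall that $u\in W^{s,G}_{\text{loc}}(\Omega)\subset L^1_{\text{loc}}(\Omega)$ by \eqref{eq-pq}, so almost every $x\in\Omega$ is a Lebesgue point of $u$; write $m(x):=\essliminf_{y\to x}u(y)=\sup_{\rho>0}\essinf_{B_\rho(x)}u$, the supremum appearing because $\rho\mapsto\essinf_{B_\rho(x)}u$ is non-increasing. The first inequality needs no information about the equation: since $\essinf_{B_r(x)}u\le\fint_{B_r(x)}u\,\mathrm{d}y$ for every $r$ and the right-hand side tends to $u(x)$ at a Lebesgue point, letting $r\to0$ gives $m(x)=\lim_{r\to0}\essinf_{B_r(x)}u\le u(x)$.

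For the reverse inequality I would bootstrap the weak Harnack inequality for supersolutions of $\mathcal{L}u=0$, which is part of the local De Giorgi--Nash--Moser theory developed in \cite{KL23}: in its standard form, a supersolution $w\ge0$ a.e.\ in $B_{2r}(x)$ satisfies $(\fint_{B_r(x)}w^t\,\mathrm{d}y)^{1/t}\le C\big(\essinf_{B_r(x)}w+\mathrm{Tail}_g(w_-;x,r)\big)$ for some small $t=t(n,p,q,s)>0$. Fix a Lebesgue point $x$ and $\delta>0$, and choose $\rho>0$ so small that $B_\rho(x)\Subset\Omega$ and $\ell:=\essinf_{B_\rho(x)}u\ge m(x)-\delta$. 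Then $w:=u-\ell$ is a supersolution in $\Omega$ (shifting by a constant does not change $D^sw$) with $w\ge0$ a.e.\ in $B_\rho(x)$, so for $r<\rho/2$ the weak Harnack inequality on $B_{2r}(x)$ yields
\[
\Big(\fint_{B_r(x)}w^t\,\mathrm{d}y\Big)^{1/t}\le C\Big(\essinf_{B_r(x)}u-\ell+\mathrm{Tail}_g(w_-;x,r)\Big).
\]
The key point is that the tail term is negligible as $r\to0$: since $w_-=0$ a.e.\ on $B_\rho(x)$, for $r<\rho$ the integral appearing inside $\mathrm{Tail}_g(w_-;x,r)$ reduces to the $r$-independent quantity $A:=\int_{\mathbb{R}^n\setminus B_\rho(x)}g(w_-(y)/|x-y|^s)|x-y|^{-n-s}\,\mathrm{d}y$, which is finite because $w_-\le u_-+|\ell|$, $u_-\in L^g_s(\mathbb{R}^n)$, $|x-y|\ge\rho$ on the domain of integration, and $g$ decays at the origin by \Cref{lem-G}; hence $\mathrm{Tail}_g(w_-;x,r)=r^sg^{-1}(r^sA)\to0$. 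Moreover $(\fint_{B_r(x)}w^t\,\mathrm{d}y)^{1/t}\to w(x)=u(x)-\ell$, by Jensen's inequality at a Lebesgue point together with $w\ge0$ a.e.\ near $x$ and the subadditivity $|a^t-b^t|\le|a-b|^t$ for $0<t\le1$. Letting $r\to0$ and using $\ell\le m(x)$ and $m(x)-\ell\le\delta$ gives $u(x)-\ell\le C(m(x)-\ell)\le C\delta$, i.e.\ $u(x)\le m(x)+C\delta$; letting $\delta\to0$ yields $u(x)\le m(x)$. Together with the first inequality this gives $u(x)=\essliminf_{y\to x}u(y)$ for a.e.\ $x\in\Omega$.

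For the ``in particular'' statement, set $\tilde u(x):=\essliminf_{y\to x}u(y)$ for every $x\in\Omega$. If $x_k\to x$ in $\Omega$, then $B_{\rho/2}(x_k)\subset B_\rho(x)$ for $k$ large, so $\tilde u(x_k)\ge\essinf_{B_{\rho/2}(x_k)}u\ge\essinf_{B_\rho(x)}u$; taking $\liminf_k$ and then $\sup_{\rho>0}$ shows $\liminf_k\tilde u(x_k)\ge\tilde u(x)$, so $\tilde u$ is lower semicontinuous in $\Omega$, and $\tilde u=u$ a.e.\ by the previous step, so $\tilde u$ is the desired representative.

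I expect the only genuinely substantial ingredient to be the weak Harnack inequality itself; granting that, the delicate points are the bootstrap device---comparing $u(x)$ not with the essential infimum over one fixed ball (which would leave the harmless but lossy constant $C$ in the estimate) but with a sequence of essential infima converging to $m(x)$---and the observation that subtracting the localized constant $\ell$ makes the nonlocal tail term vanish in the limit $r\to0$.
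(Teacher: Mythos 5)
Your overall strategy --- establish $\essliminf_{y\to x}u(y)\le u(x)$ at Lebesgue points by Lebesgue differentiation, establish the reverse inequality by applying a weak Harnack inequality to the shifted supersolution $u-\ell$ with $\ell$ a localized essential infimum, and note that localizing kills the tail --- is the standard and correct route, and it is surely the approach behind the cited \cite[Theorem~5.1]{KL23} (the paper itself does not reprove the statement). The first inequality, the choice of $\ell$ approximating $m(x)$, the reduction of the nonlocal tail $\mathrm{Tail}_g(w_-;x,r)=r^sg^{-1}(r^sA)\to0$ once $w_-$ vanishes on a fixed neighborhood of $x$, and the lower semicontinuity of $\tilde u(x)=\essliminf_{y\to x}u(y)$ are all argued correctly.

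There is, however, a genuine gap in the central step. You invoke the weak Harnack inequality ``in its standard form'' $\bigl(\fint_{B_r}w^t\,\mathrm{d}y\bigr)^{1/t}\le C\bigl(\essinf_{B_r}w+\mathrm{Tail}_g(w_-;x,r)\bigr)$ for a small power $t>0$, and then pass to the limit at a Lebesgue point using the subadditivity $|a^t-b^t|\le|a-b|^t$. But the weak Harnack inequality this paper has access to (\Cref{thm-WHI}, imported from \cite[Theorem~3.4]{KL23}) is not of power form; it reads
\begin{equation*}
\fint_{B_{\tau_1 R}}g^\delta\!\left(\frac{w}{R^s}\right)\mathrm{d}x\le C\,g^\delta\!\left(\essinf_{B_{\tau_2 R}}\frac{w}{R^s}\right)+C\,g^\delta\!\left(\frac{\mathrm{Tail}_g(w_-;x_0,R)}{R^s}\right),
\end{equation*}
with the normalization by $R^s$ appearing inside $g^\delta$ on both sides. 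Your subadditivity trick is specific to the map $t\mapsto t^\alpha$, and it does not transfer to $g^\delta$: indeed $g^\delta$ need not be convex (already $g(t)=pt^{p-1}$ is concave for $1<p<2$, and the admissible range of $\delta$ need not reach $1/(p-1)$), so neither Jensen in the direction you need nor a clean power-type corollary is immediate. You cannot simply cite a power-form weak Harnack as available.

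The argument can be repaired, but it requires an explicit replacement of that step. For instance, a Chebyshev/level-set argument works: since $x$ is a Lebesgue point of $w$, $|\{|w-w(x)|>\varepsilon\}\cap B_{\tau_1 R}|/|B_{\tau_1 R}|\to0$; on the other hand, from \Cref{thm-WHI} together with \Cref{lem-G} one gets, for any fixed $\kappa\ge1$ large enough (depending only on $n,p,q,s,\Lambda$), that $|\{w>\kappa(\essinf_{B_{\tau_2 R}}w+\mathrm{Tail}_g(w_-;x,R))\}\cap B_{\tau_1 R}|\le\tfrac12|B_{\tau_1 R}|$. These two density statements force $w(x)-\varepsilon\le\kappa(\essinf_{B_{\tau_2 R}}w+\mathrm{Tail}_g(w_-;x,R))$ for $R$ small; letting $R\to0$, then $\varepsilon\to0$, then $\delta\to0$ (through $\ell\uparrow m(x)$) gives $u(x)\le m(x)$. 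As written, your proof asserts the conclusion of this step without carrying it out in the Orlicz setting, which is where the real work lies. A secondary, minor point is that \Cref{thm-WHI} is stated for $u\in V^{s,G}(\Omega)$, whereas a supersolution lies only in $W^{s,G}_{\mathrm{loc}}(\Omega)$; one should record that the interior version applies after localization, as the paper's remark following \Cref{thm-WHI} indicates.
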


\begin{theorem}[{\cite[Theorem~6.1]{KL23}}]\label{thm-relation}
	Assume that $G$ and $g$ satisfy \eqref{eq-pq}.
    \begin{enumerate}[(i)]
		\item
		Let $u$ be a supersolution of $\mathcal{L}u=0$ in $\Omega$ that is lower semicontinuous in $\Omega$ and satisfies
		\begin{equation*}
			u(x)=\essliminf_{y \to x}u(y) \quad \text{for every}~ x \in \Omega.
		\end{equation*}
		Then $u$ is superharmonic in $\Omega$.
		\item
		Assume in addition that $g$ satisfies \eqref{eq-q}. Let $u$ be a superharmonic function in $\Omega$. If $u$ is locally bounded from above in $\Omega$ or $W^{s, G}_{\mathrm{loc}}(\Omega)$, then $u$ is a supersolution of $\mathcal{L}u=0$ in $\Omega$.
	\end{enumerate}
\end{theorem}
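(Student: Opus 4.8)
I would verify the four defining properties of a superharmonic function in \Cref{def-superharmonic}. Properties (ii) and (iv) are among the hypotheses; property (i) follows from $u \in W^{s,G}_{\mathrm{loc}}(\Omega) \subset L^1_{\mathrm{loc}}(\Omega)$ together with a weak Harnack inequality for supersolutions, which keeps $\essinf_B u$ finite on balls $B \Subset \Omega$ and hence, via $u(x) = \essliminf_{y \to x} u(y)$, forces $u > -\infty$ everywhere in $\Omega$. The substance is the comparison property (iii). Given $D \Subset \Omega$ and a solution $v$ of $\mathcal{L}v = 0$ in $D$ with $v \in C(\overline{D})$, $v_+ \in L^\infty(\mathbb{R}^n)$, and $u \geq v$ on $\partial D$ and a.e.\ on $\mathbb{R}^n \setminus D$, I would fix $\varepsilon > 0$ and test against $w := (v - u - \varepsilon)_+$. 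Lower semicontinuity of $u$, continuity of $v$, and the boundary inequality make $\{w > 0\}$ relatively compact in $D$, so $w$ is an admissible nonnegative test function (the relevant tails being finite since $u_- \in L^g_s(\mathbb{R}^n)$ and $v_+ \in L^\infty(\mathbb{R}^n)$). Subtracting $\mathcal{E}(v, w) = 0$ from $\mathcal{E}(u, w) \geq 0$ gives
\begin{equation*}
\int_{\mathbb{R}^n} \int_{\mathbb{R}^n} \left( g(|D^su|) \frac{D^su}{|D^su|} - g(|D^sv|) \frac{D^sv}{|D^sv|} \right) D^sw \, \frac{k(x,y)}{|x-y|^n} \,\mathrm{d}y \,\mathrm{d}x \geq 0.
\end{equation*}
Since $t \mapsto (t - \varepsilon)_+$ is non-decreasing, $D^sw$ has the same sign as $D^sv - D^su$ wherever it does not vanish, while the strict monotonicity of $t \mapsto g(|t|)\operatorname{sgn}t$ (this uses that $g$ is strictly increasing) forces the first factor to have the opposite sign; hence the integrand is $\leq 0$ pointwise, with equality only where $D^sw = 0$. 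It follows that $D^sw = 0$ a.e., so $w$ is constant and, being compactly supported, $w \equiv 0$; thus $v \leq u + \varepsilon$ a.e.\ in $D$. Letting $\varepsilon \downarrow 0$ and using $u(x) = \essliminf_{y\to x} u(y) \geq \essliminf_{y\to x} v(y) = v(x)$ for $x \in D$ yields $u \geq v$ in $D$, which completes (iii).

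\textbf{Part (ii): reduction.} Assume \eqref{eq-q} and let $u$ be superharmonic in $\Omega$. I would reduce to the case where $u$ is bounded above. The function $u_j := \min(u, j)$ is again superharmonic (the minimum of $u$ with the $\mathcal{L}$-harmonic constant $j$), is bounded above by $j$, and satisfies $(u_j)_- \leq u_- \in L^g_s(\mathbb{R}^n)$. Granting that every superharmonic function bounded above is a supersolution lying in $W^{s,G}_{\mathrm{loc}}$, I would argue as follows. If $u$ is locally bounded above — hence, being lower semicontinuous, bounded on compact subsets of $\Omega$ — then on $D' \Subset D \Subset \Omega$ the Caccioppoli inequality bounds $[u_j]_{W^{s,G}(D')}$ in terms of $\sup_D u < \infty$ and the tail of $u_-$, uniformly in $j$; since $u_j \uparrow u$, this gives $u \in W^{s,G}_{\mathrm{loc}}(\Omega)$, and I would pass to the limit in $\mathcal{E}(u_j, \varphi) \geq 0$, which is legitimate because $D^su_j \to D^su$ a.e.\ and $G$ satisfies the $\Delta_2$ and $\nabla_2$ conditions (a consequence of $1 < p \leq q < \infty$), so that $W^{s,G}$ is reflexive and the nonlinearity converges in the appropriate dual. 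If instead $u \in W^{s,G}_{\mathrm{loc}}(\Omega)$ is assumed, the same truncation works directly, now with $|D^su_j| \leq |D^su|$ providing a $G$-integrable majorant, so $u_j \to u$ in $W^{s,G}_{\mathrm{loc}}(\Omega)$ and $\mathcal{E}(u, \varphi) \geq 0$.

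\textbf{Part (ii): the bounded case.} It remains to show that a superharmonic $u$ that is bounded above lies in $W^{s,G}_{\mathrm{loc}}(\Omega)$ and is a supersolution; subtracting a constant (harmless, since constants are $\mathcal{L}$-harmonic) I may assume $u \leq 0$. Fix a ball $D \Subset \Omega$, choose continuous $\psi_i \uparrow u$ in $D$ with $\psi_i \leq 0$ and with the negative parts controlled in $L^g_s$ (possible since $u_- \in L^g_s$), and let $v_i$ solve the obstacle problem in $D$ with obstacle and exterior datum $\psi_i$ (\cite{KL23}). By the obstacle-problem theory for $\mathcal{L}$ — the step where \eqref{eq-q} enters — $v_i$ is a supersolution in $D$, solves $\mathcal{L}v_i = 0$ in $\{v_i > \psi_i\}$, and, by comparison, $\psi_i \leq v_i \leq 0$, $v_i \leq v_{i+1}$, and $v_i \leq u$ in $D$. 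Using property (iii) of \Cref{def-superharmonic} and the fine properties of superharmonic functions (in the spirit of \Cref{thm-lsc} and part (i) above), I would show $v_i \uparrow u$ in $D$. The Caccioppoli inequality then bounds $[v_i]_{W^{s,G}(D')}$ for $D' \Subset D$, uniformly in $i$, so passing to the weak limit identifies $u \in W^{s,G}_{\mathrm{loc}}(D)$ and, from $\mathcal{E}(v_i, \varphi) \geq 0$, gives $\mathcal{E}(u, \varphi) \geq 0$ for all nonnegative $\varphi \in C_c^\infty(D)$; as $D$ was arbitrary, $u$ is a supersolution in $\Omega$.

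\textbf{Expected main obstacle.} The crux is the claim $v_i \uparrow u$ in the bounded case: it is the bridge between the comparison (potential-theoretic) and variational (weak) notions of supersolution, and the genuinely delicate part is not the pointwise monotone convergence but controlling the mode of convergence of the $v_i$ so that $\mathcal{E}(v_i, \varphi) \to \mathcal{E}(u, \varphi)$. This rests on the obstacle-problem machinery together with a Perron-type argument, and it is precisely here that the extra structure \eqref{eq-q} is needed, ensuring that solvability, interior regularity, and comparison for $\mathcal{L}$ carry over to the Orlicz framework.
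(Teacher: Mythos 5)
The paper does not actually prove this theorem --- it is cited verbatim from \cite[Theorem~6.1]{KL23}, which adapts the machinery of Korvenp\"a\"a--Kuusi--Palatucci \cite{KKP17} from $p$-growth to the Orlicz setting --- so there is no internal proof here to compare against; your outline follows essentially the same standard strategy as that reference. A few remarks. In part~(i), vanishing of the integrand gives, pointwise, $D^su=D^sv$ or $D^sw=0$, and you should note that the first alternative already forces $D^s(v-u)=0$, hence $D^sw=0$ since $w$ is a Lipschitz function of $v-u$; ``equality only where $D^sw=0$'' quietly skips this step. You should also record that $w=(v-u-\varepsilon)_+$ is not a $C_c^\infty(\Omega)$ test function, so a density argument is needed before $\mathcal{E}(u,w)\geq 0$ can be invoked for a supersolution that is merely in $W^{s,G}_{\mathrm{loc}}(\Omega)$; this is routine given $\{w>0\}\Subset D$ (from upper semicontinuity of $v-u$ on $\overline{D}$), $u_-\in L^g_s(\mathbb{R}^n)$ and $v_+\in L^\infty(\mathbb{R}^n)$, but should be stated. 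In part~(ii), the uniform-in-$j$ tail term in the Caccioppoli estimate holds because $(u_j)_-=u_-$ for $j\geq 0$, which you use but do not say. Your identification of the crux --- proving $v_i\uparrow u$ for the obstacle-problem solutions and passing $\mathcal{E}(v_i,\varphi)\geq 0$ to the limit, which is exactly where \eqref{eq-q} enters through the fine pointwise behavior recorded in \Cref{thm-pointwise} --- is accurate and is precisely the hard part of \cite[Theorem~6.1]{KL23}.
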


Note that \Cref{thm-relation} implies that $u$ is harmonic in $\Omega$ if and only if it is both superharmonic and subharmonic in $\Omega$.

\begin{theorem}[{\cite[Theorem~6.5]{KL23}}]\label{thm-pointwise}
    Assume that $G$ and $g$ satisfy \eqref{eq-pq} and \eqref{eq-q}. Let $u$ be superharmonic in $\Omega$, then
	\begin{equation*}
		u(x)=\liminf_{y \to x}u(y)=\essliminf_{y \to x}u(y) \quad \text{for every}~ x\in \Omega.
	\end{equation*}
	In particular, $\inf_Du=\essinf_Du$ for any open set $D \Subset \Omega$.
\end{theorem}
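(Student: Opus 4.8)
My plan is to reduce the whole statement to a single pointwise lower bound and then establish that bound by a comparison argument on small balls. First, the chain $u(x_0)\le\liminf_{y\to x_0}u(y)\le\essliminf_{y\to x_0}u(y)$ comes for free: the first inequality is lower semicontinuity of $u$, and the second holds because the essential infimum over each punctured ball dominates the ordinary infimum. So everything reduces to proving $u(x_0)\ge\essliminf_{y\to x_0}u(y)$ for every $x_0\in\Omega$; granting this, the last assertion follows at once, since for open $D\Subset\Omega$ one has $\essinf_D u\le\essliminf_{y\to x}u(y)=u(x)$ for every $x\in D$, hence $\essinf_D u\le\inf_D u$, while $\inf_D u\le\essinf_D u$ always. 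Moreover, if $u(x_0)=+\infty$ there is nothing to prove, so it suffices to fix $\lambda<\essliminf_{y\to x_0}u(y)$ and show $u(x_0)\ge\lambda$.

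Fix such a $\lambda$ and choose $\rho_0>0$ with $\overline{B_{2\rho_0}(x_0)}\subset\Omega$ and $u\ge\lambda$ a.e.\ in $B_{2\rho_0}(x_0)$. The idea is to feed into the comparison property \Cref{def-superharmonic}(iii) the $\mathcal L$-harmonic functions on small balls $B_\rho(x_0)$ whose exterior datum pins them to $\lambda$ near the centre. Put $w:=\min\{u,\lambda\}$; one checks from \Cref{def-superharmonic} (using the comparison principle for solutions against the constant $\lambda$) that $w$ is again superharmonic in $\Omega$, and since $w\le\lambda$ it is a supersolution of $\mathcal L w=0$ in $\Omega$ by \Cref{thm-relation}(ii), with $w_-\in L^g_s(\mathbb{R}^n)$ and $w=\lambda$ a.e.\ in $B_{2\rho_0}(x_0)$. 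For $0<\rho<\rho_0$ let $v_\rho$ be the $\mathcal L$-harmonic function with $v_\rho-w\in V^{s,G}_0(B_\rho(x_0))$ furnished by \Cref{thm-DP} (its hypothesis $w\in V^{s,G}(B_\rho(x_0))$ holds because $w$ is a supersolution, bounded above, with $w_-\in L^g_s$). Comparison with the constant $\lambda$ gives $v_\rho\le\lambda$, so $(v_\rho)_+\in L^\infty(\mathbb{R}^n)$; and since the datum $w$ equals the constant $\lambda$ a.e.\ on the annulus $B_{2\rho}(x_0)\setminus B_\rho(x_0)$ that surrounds $\partial B_\rho(x_0)$, and every boundary point of a ball is regular (by a standard barrier for the exterior ball condition), $v_\rho$ extends continuously to $\overline{B_\rho(x_0)}$ with $v_\rho\equiv\lambda$ on $\partial B_\rho(x_0)$. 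Hence $v_\rho$ is an admissible competitor in \Cref{def-superharmonic}(iii) for $u$ with $D=B_\rho(x_0)$ — indeed $v_\rho=w\le u$ a.e.\ on $\mathbb{R}^n\setminus B_\rho(x_0)$ and $v_\rho=\lambda\le u$ on $\partial B_\rho(x_0)$ — so $u(x_0)\ge v_\rho(x_0)$.

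It remains to push $v_\rho(x_0)$ up to $\lambda$. A local lower (boundary-type) estimate for $\mathcal L$-harmonic functions, of the kind available among the local estimates for supersolutions recalled in \Cref{sec-preliminaries}, gives $v_\rho(x_0)\ge\lambda-C\,\mathrm{Tail}_g\big((\lambda-u)_+;x_0,\rho\big)$. Since $(\lambda-u)_+$ vanishes a.e.\ in $B_{2\rho_0}(x_0)$ we get $T\big((\lambda-u)_+;x_0,\rho\big)=\rho^s\int_{\mathbb{R}^n\setminus B_{2\rho_0}(x_0)}g\big(\tfrac{(\lambda-u)_+}{|y-x_0|^s}\big)\tfrac{\mathrm{d}y}{|y-x_0|^{n+s}}=C_1\rho^s$ with $C_1<\infty$, thanks to $u_-\in L^g_s(\mathbb{R}^n)$. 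Since $g(0)=0$ — otherwise $tg(t)/G(t)\to1$ as $t\to0^+$, contradicting \eqref{eq-pq} — the inverse $g^{-1}$ is continuous at $0$ with $g^{-1}(0)=0$, so $\mathrm{Tail}_g((\lambda-u)_+;x_0,\rho)=\rho^s g^{-1}(C_1\rho^s)\to0$ as $\rho\to0$. Letting $\rho\to0$ in $u(x_0)\ge v_\rho(x_0)\ge\lambda-C\,\mathrm{Tail}_g(\cdots)$ yields $u(x_0)\ge\lambda$, which is what we needed.

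The step I expect to be the genuine obstacle is the pointwise boundary inequality $u\ge v_\rho$ on $\partial B_\rho(x_0)$ used above: the hypothesis only provides $u\ge\lambda$ \emph{almost everywhere} near $x_0$, and in general no radius $\rho$ has a sphere disjoint from the (null) set $\{u<\lambda\}$. This gap should be bridged by working with the lower-semicontinuous essliminf-representative of $u$ — which, by \Cref{thm-lsc}, is the representative governing the supersolution $w$ — and by using the barrier characterization that the boundary values of $v_\rho$ at regular points are controlled by the \emph{essential} lower limit of the exterior datum (so the pointwise inequality on $\partial B_\rho(x_0)$ may be replaced by the a.e.\ one), together with monotone stability of the Dirichlet problem under approximation of the datum by continuous functions; equivalently, one can first prove the statement for supersolutions and then transfer it to $u$ along the truncations $\min\{u,k\}$ via \Cref{thm-relation}(i). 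The remaining ingredients — continuity of $v_\rho$ up to $\partial B_\rho(x_0)$ and the local lower estimate — are routine consequences of the regularity theory in the Orlicz setting, and the overall architecture follows Kilpel\"ainen--Mal\'y and its adaptation to operators with Orlicz growth.
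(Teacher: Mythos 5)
Your reduction to proving $u(x_0)\ge\essliminf_{y\to x_0}u(y)$ and the derivation of $\inf_D u=\essinf_D u$ from it are both fine, and comparison against $\mathcal L$-harmonic competitors on small balls is the right architecture. But the argument as written has three genuine gaps, and the one you flagged yourself is the decisive one.

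First, the comparison step. Condition (iii) of \Cref{def-superharmonic} requires $u\ge v_\rho$ \emph{at every point} of $\partial B_\rho(x_0)$, while the hypothesis only gives $u\ge\lambda$ a.e.\ in $B_{2\rho_0}(x_0)$; since $\partial B_\rho(x_0)$ is Lebesgue-null, no choice of $\rho$ avoids the exceptional set $\{u<\lambda\}$. You acknowledge this, but the suggested repairs are not repairs: ``working with the lsc essliminf-representative of $u$'' is circular (that representative is exactly what the theorem is characterizing), and ``boundary values controlled by the essential lower limit of the exterior datum'' invokes a boundary-regularity statement for the nonlocal Dirichlet problem that is nowhere in the paper and is in fact a nontrivial part of the Wiener theory being built here. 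The standard way to close this gap is a Poisson modification: choose continuous $\psi_j\nearrow w$ (using lower semicontinuity of the superharmonic truncation $\min\{u,j\}$), solve the Dirichlet problem with datum $\psi_j$, apply the comparison property of \Cref{def-superharmonic}(iii) with these genuinely continuous competitors, and then pass to the limit in $j$. None of this is carried out.

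Second, the invocation of \Cref{thm-DP} with datum $w=\min\{u,\lambda\}$ is unjustified. You need $w\in V^{s,G}(B_\rho(x_0))$, i.e.\ $\int_{B_\rho}\int_{\mathbb R^n}G(|D^sw|)\,|x-y|^{-n}\,\mathrm dy\,\mathrm dx<\infty$, but the only global control available on a superharmonic $u$ is $u_-\in L^g_s(\mathbb R^n)$, which is a $g$-type (not $G$-type) tail condition. Even for $G(t)=t^p$ these differ: the tail condition is $\int u_-^{p-1}(1+|y|)^{-n-sp}\,\mathrm dy<\infty$ while $V^{s,p}$ needs $\int u_-^{p}(1+|y|)^{-n-sp}\,\mathrm dy<\infty$, and one can easily arrange the first without the second. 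So $w$ need not be admissible datum for \Cref{thm-DP}. (This is again repaired by approximating the datum by bounded continuous functions, which the Poisson-modification route does automatically.) Finally, the ``local lower estimate'' $v_\rho(x_0)\ge\lambda-C\,\mathrm{Tail}_g((\lambda-u)_+;x_0,\rho)$ is asserted but not derivable from the results in \Cref{sec-preliminaries}: applying \Cref{thm-loc-bdd} to $h=\lambda-v_\rho$ with $\Omega=B_\rho$ and a larger reference ball $B_{\rho_0}$ bounds $\esssup h$ at scale $\rho_0$, not with the $\rho^s$ decay your final limit $\rho\to0$ requires, and getting the correct small-scale decay needs either a barrier argument or an iteration not supplied here. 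So the proof is not complete as written.
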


\subsection{Local behavior of supersolutions}

In this section, we provide local estimates up to the boundary for solutions; more precisely, the local boundedness for subsolutions and the weak Harnack inequality for supersolutions. Such results, which are key ingredients for the proof of both Wolff potential estimates and the Wiener criterion, were developed in \cite{KL23} based on Moser's iteration technique.

\begin{theorem}[{\cite[Theorem~3.3]{KL23}}]\label{thm-loc-bdd}
	Assume that $G$ and $g$ satisfy \eqref{eq-pq}. Let $\Omega \subset \mathbb{R}^n$ be open and let $B_R=B_R(x_0)$ satisfy $B_R \cap \Omega \neq \emptyset$. Let $\varepsilon, p_0 > 0$. If $u \in V^{s, G}(\Omega)$ is a subsolution of $\mathcal{L}u=0$ in $\Omega$, then
	\begin{equation*}
		\esssup_{B_{R/2}} u_M^+ \leq \varepsilon \,\mathrm{Tail}_g(u_M^+; x_0, R/2) + C \left( \fint_{B_R} (u_M^+)^{p_0} \,\mathrm{d}x \right)^{1/p_0},
	\end{equation*}
	where
	\begin{equation*}
		M=\esssup_{B_R \setminus \Omega} u_+, \quad u_M^+(x) = \max\{u_+(x), M\},
	\end{equation*}
	and $C=C(n, p, p_0, q, s, \Lambda, \varepsilon)>0$.
\end{theorem}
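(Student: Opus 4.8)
The plan is to prove \Cref{thm-loc-bdd} by a De~Giorgi--Moser iteration carried out up to the boundary, with the truncation level $M=\esssup_{B_R\setminus\Omega}u_+$ serving to make truncated functions admissible test functions. Write $w:=u_M^+$; the decisive point is that, for every $k\ge M$, the function $(w-k)_+$ vanishes a.e.\ on $B_R\setminus\Omega$, so that for any cutoff $\eta\in C_c^\infty(B_\rho)$ with $B_\rho=B_\rho(x_0)\subset B_R$ the product $\eta^q(w-k)_+$ is supported (up to a null set) in $B_\rho\cap\Omega$ and is an admissible test function for the subsolution $u\in V^{s,G}(\Omega)$, by approximation using the density of $C_c^\infty(\Omega)$ in $V^{s,G}_0(\Omega)$ (cf.\ the lemma following \Cref{def-supersolution} and the density results of \cite{KL23}). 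If $u$ is not locally bounded from above, I would first replace $w$ by $\min\{w,L\}$, run the estimates below with constants independent of $L$, and let $L\to\infty$ by monotone convergence at the end.

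The first and principal step is a Caccioppoli-type inequality for the truncations $(w-k)_+$, $k\ge M$, on concentric balls $B_{\rho'}\subset B_\rho\subset B_R$. Testing $\mathcal{E}(u,\eta^q(w-k)_+)\le 0$ with $\eta$ a cutoff between $B_{\rho'}$ and $B_\rho$ and decomposing the increment $D^s\!\big(\eta^q(w-k)_+\big)$ into the part carried by $D^s(w-k)_+$ and the commutator part carried by $D^s\eta$, I would absorb the commutator contribution via \eqref{eq-alg} and the $\Delta_2$-type scaling bounds of \Cref{lem-G}; the nonlocal contribution coming from $\{y\notin B_\rho\}$, where $(w-k)_+(y)$ need not vanish, is controlled after splitting off the annulus $B_\rho\setminus B_{\rho'}$ by a $T(w;x_0,\cdot)$-type tail quantity. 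Since the parameter in \eqref{eq-alg} is free, this tail contribution can be made as small as we wish at the cost of enlarging the remaining constants --- this is the origin of the factor $\varepsilon$ in the statement. The outcome is a bound of the Gagliardo--Orlicz energy of $\eta(w-k)_+$ on $B_\rho$ by $C(\rho-\rho')^{-\sigma}\int_{B_\rho}G\!\big((w-k)_+/(\rho-\rho')^s\big)\,\mathrm{d}x$ plus a small tail term, carrying the quantitative information that these integrals are supported on the superlevel set $\{w>k\}\cap B_\rho$.

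Next, I would turn the energy bound into a gain of integrability. Via the fractional Sobolev--Poincar\'e inequality for $W^{s,G}_0(B_\rho)$ --- or, after the elementary comparison $\min(t^p,t^q)\le G(t)\le\max(t^p,t^q)$ provided by \Cref{lem-G}, for $W^{s,p}_0(B_\rho)$ --- the Caccioppoli estimate yields a reverse-H\"older-type inequality
\begin{equation*}
\left(\fint_{B_{\rho'}}\big((w-k)_+\big)^{\kappa p_\ast}\,\mathrm{d}x\right)^{1/\kappa}\le\frac{C}{(\rho-\rho')^{\sigma}}\fint_{B_\rho}\big((w-k)_+\big)^{p_\ast}\,\mathrm{d}x+(\text{small tail term})
\end{equation*}
for some fixed $\kappa=\kappa(n,p,s)>1$ and fixed exponents $p_\ast,\sigma$ depending only on $n,p,q,s$. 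Iterating this along geometric sequences of radii $\rho_j\downarrow R/2$ and levels $k_j\uparrow$, in the usual De~Giorgi fashion, produces
\begin{equation*}
\esssup_{B_{R/2}}w\le\varepsilon\,\mathrm{Tail}_g(w;x_0,R/2)+C\left(\fint_{B_R}w^{p_\ast}\,\mathrm{d}x\right)^{1/p_\ast},
\end{equation*}
where along the way one compares $\mathrm{Tail}_g(w;x_0,\rho)$ for $\rho\in[R/2,R]$ with $\mathrm{Tail}_g(w;x_0,R/2)$ modulo a local term that is absorbed on the right.

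Finally, to replace the fixed exponent $p_\ast$ by an arbitrary $p_0>0$, I would use the standard interpolation--absorption device: applying the previous estimate on every ball $B_r=B_r(x_0)$ with $R/2\le r\le R$, interpolating $\|w\|_{L^{p_\ast}(B_r)}\le\|w\|_{L^\infty(B_r)}^{1-p_0/p_\ast}\|w\|_{L^{p_0}(B_r)}^{p_0/p_\ast}$, and applying Young's inequality, one obtains for $\Phi(r):=\esssup_{B_r}w$ and $R/2\le r<r'\le R$
\begin{equation*}
\Phi(r)\le\tfrac12\Phi(r')+\frac{C}{(r'-r)^{\tau}}\left(\int_{B_R}w^{p_0}\,\mathrm{d}x\right)^{1/p_0}+\varepsilon\,\mathrm{Tail}_g(w;x_0,R/2);
\end{equation*}
a standard iteration lemma for functions of the radius then removes the first term on the right, and renormalizing $\varepsilon$ gives exactly the claimed inequality with $C=C(n,p,p_0,q,s,\Lambda,\varepsilon)$. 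The main obstacle is the Caccioppoli step: the lack of homogeneity of $G$ forces a careful use of the Orlicz arithmetic of \Cref{lem-G} and of \eqref{eq-alg} throughout, the nonlocal tail terms and the superlevel-set measures must be tracked precisely, and the boundary truncation at level $M$ has to be set up so that the test functions are genuinely admissible and the passage $L\to\infty$ is justified.
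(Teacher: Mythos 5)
This result is not proved in the paper: it is stated only as a citation to Kim--Lee \cite{KL23} (Theorem 3.3 there, specialized by taking $\Omega'=\mathbb{R}^n$), and the paper records that the source argument is a Moser-type iteration. So there is no proof in the present paper to compare against; I assess your sketch against the strategy the source must contain. Your outline reproduces that strategy faithfully: (i) the truncation level $M=\esssup_{B_R\setminus\Omega}u_+$ makes $(u_M^+-k)_+$ vanish a.e.\ on $B_R\setminus\Omega$ for $k\ge M$, so $\eta^q(u_M^+-k)_+$ is supported in $\Omega$ and admissible after approximation in $V^{s,G}_0$; (ii) a Caccioppoli estimate for these truncations with the nonlocal contribution absorbed via \eqref{eq-alg}, whose free parameter is precisely what yields the factor $\varepsilon$ in front of $\mathrm{Tail}_g$; (iii) a reverse H\"older gain from the fractional Sobolev inequality; (iv) De Giorgi iteration on levels and radii; (v) interpolation plus a radius-iteration lemma to lower the exponent to an arbitrary $p_0>0$. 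As a plan this is correct and consistent with the cited proof, including the preliminary truncation $\min\{w,L\}$ to justify the unbounded case.

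One detail deserves more care than your sketch gives it: the passage from the $G$-energy to a fixed-power $W^{s,p}$-norm via $\min(t^p,t^q)\le G(t)\le\max(t^p,t^q)$. The crossover at $t=1$ is scale-dependent, and a naive replacement of $G$ by a single power produces constants that depend on the size of $w/R^s$, which the iteration cannot afford. In \cite{KL23} --- and this is visible in the form of the companion weak Harnack inequality \Cref{thm-WHI}, which is stated in terms of $g^\delta(u/R^s)$ rather than plain powers --- the iteration is carried in terms of $g$ or $\bar g$ of the scale-normalized quantities, with the homogeneity extracted through \Cref{lem-G}; the conversion to a $p_0$-power integral happens only at the very end. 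Your plan survives with that bookkeeping, but it should not fix a polynomial exponent $p_\ast$ mid-argument before the Orlicz scaling has been handled.
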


\begin{theorem}[{\cite[Theorem~3.4]{KL23}}]\label{thm-WHI}
	Assume that $G$ and $g$ satisfy \eqref{eq-pq}. Let $\tau_1, \tau_2 \in (0,1)$. Let $\delta \in (0, \frac{n}{n-sp})$ if $sp<n$ and $\delta \in (0,\infty)$ if $sp \geq n$. Let $\Omega \subset \mathbb{R}^n$ be open and let $B_R=B_R(x_0)$ satisfy $B_R \cap \Omega \neq \emptyset$. If $u \in V^{s, G}(\Omega)$ is a supersolution of $\mathcal{L}u=0$ in $\Omega$ such that $u\geq0$ in $B_R$, then
	\begin{equation*}
		\fint_{B_{\tau_1R}} g^\delta \left( \frac{u_m^-}{R^s} \right) \,\mathrm{d}x \leq C g^\delta\left( \essinf_{B_{\tau_2R}} \frac{u_m^-}{R^s} \right) + C g^\delta \left( \frac{\mathrm{Tail}_g((u_m^-)_-; x_0, R)}{R^s} \right),
	\end{equation*}
	where
	\begin{equation*}
		m=\essinf_{B_R \setminus \Omega} u, \quad u_m^-(x) = \min\{u(x), m\},
	\end{equation*}
	and $C = C(n, p, q, s, \Lambda, \tau_1, \tau_2, \delta)>0$.
\end{theorem}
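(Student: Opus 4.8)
The plan is to prove this boundary weak Harnack inequality by Moser iteration adapted to the nonlocal operator $\mathcal L$ and to Orlicz growth, as in \cite{KL23}. First I would reduce to an interior statement: since $m=\essinf_{B_R\setminus\Omega}u$ we have $u\ge m$ a.e.\ on $B_R\setminus\Omega$, so $v:=u_m^-=\min\{u,m\}$ equals the constant $m$ a.e.\ on $B_R\setminus\Omega$ and is the minimum of a supersolution and a constant on $B_R\cap\Omega$; a standard pasting argument then shows that $v$ is a nonnegative supersolution of $\mathcal Lv=0$ in all of $B_R$, with $v_-\in L^g_s(\mathbb{R}^n)$ thanks to $u\in V^{s,G}(\Omega)$. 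Scaling reduces to $R=1$, and replacing $v$ by $v+d$ and letting $d\downarrow0$ at the end allows $v\ge d>0$ on $B_1$. Setting $w:=g(v)$ — so that $\essinf w=g(\essinf v)$ since $g$ is increasing — the target becomes the weak Harnack inequality $\big(\fint_{B_{\tau_1}}w^\delta\big)^{1/\delta}\le C\,\essinf_{B_{\tau_2}}w+C\,T(v_-;x_0,1)$ for $w$; running the iteration on $w=g(v)$ rather than on $v$ is the device that makes exponents interact cleanly with the Orlicz nonlinearity.

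Next I would derive the energy estimates by testing \eqref{eq-supersolution} with $\varphi=\eta^{q}\Psi(v)$ for monotone $\Psi$ — negative powers $\Psi(t)=t^\gamma$, $\gamma<0$, for the reverse-H\"older chains, and a choice behaving like $\Psi(t)=1/g(t)$ for the logarithmic estimate — and splitting the double integrals with \eqref{eq-alg} together with \eqref{eq-pq}, \eqref{eq-q} and \Cref{lem-G}. This produces Caccioppoli-type bounds: the fractional Orlicz energy of a power of $w$ on a smaller ball is controlled by a lower-order average of the same power on a larger ball, plus a tail term dominated by $T(v_-;x_0,1)$, the positive part of $v$ at infinity being harmless because of the truncation. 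Inserting these into a fractional Orlicz–Sobolev/Poincar\'e inequality — which, via the lower growth bound $G(t)\ge t^p$ for $t\ge1$ coming from \eqref{eq-pq} and \Cref{lem-G}, reduces to the fractional $p$-Sobolev inequality and hence yields a gain of integrability with Sobolev exponent $n/(n-sp)$ when $sp<n$ (and an arbitrary gain when $sp\ge n$) — gives the two families of reverse-H\"older inequalities that drive the iteration: one propagating positive exponents of $w$ up to any admissible $\delta$, and one in the negative-exponent regime giving $\essinf_{B_{\tau_2}}w\gtrsim\big(\fint_{B_{\tau_2'}}w^{-\delta_0}\big)^{-1/\delta_0}-C\,T(v_-;x_0,1)$ for some small $\delta_0>0$.

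The logarithmic estimate from the previous step yields a BMO bound for $\log w$ on an intermediate ball, with norm bounded by an absolute constant plus $T(v_-;x_0,1)/g(\essinf_{B_1}v)$ (finite after the shift $v\ge d$), so John–Nirenberg provides $\delta_0>0$ with $\big(\fint_{B_\tau}w^{\delta_0}\big)\big(\fint_{B_\tau}w^{-\delta_0}\big)\le C$; chaining the positive-exponent iteration, this crossover, and the negative-exponent iteration gives, for admissible $\delta$,
\[
\Big(\fint_{B_{\tau_1}}w^{\delta}\Big)^{1/\delta}\lesssim\Big(\fint w^{\delta_0}\Big)^{1/\delta_0}\lesssim\Big(\fint w^{-\delta_0}\Big)^{-1/\delta_0}\lesssim\essinf_{B_{\tau_2}}w+T(v_-;x_0,1).
\]
Raising to the $\delta$-th power, undoing $w=g(v)$ and the scaling, letting $d\downarrow0$, and merging the accumulated tail terms by the subadditivity of $g$ and $g^{-1}$ in \Cref{lem-G}~(iii) gives the stated inequality. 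I expect the main obstacle to be the tail bookkeeping: because $\mathcal L$ is nonlocal, a tail term is generated at every testing step, and one must track these carefully so that only $T((u_m^-)_-;x_0,R)$ survives at the end — this is exactly where the truncation $u_m^-$ and the finiteness of the relevant tail (from $u\in V^{s,G}(\Omega)$) are used. A secondary difficulty is that Orlicz growth prevents exponents from commuting with the nonlinearity, which is why one works with $w=g(v)$ and leans throughout on the algebraic lemmas \eqref{eq-pq}, \eqref{eq-q}, \eqref{eq-alg}, \Cref{lem-G} and \Cref{lem-g-dyda}.
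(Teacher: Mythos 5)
This theorem is not proved in the paper you are reading: it is imported wholesale from \cite[Theorem~3.4]{KL23} (the authors only observe that the choice $\Omega'=\mathbb{R}^n$ in that reference yields the stated form). So there is no ``paper's own proof'' here to compare against, only a citation. That said, your Moser-iteration outline is the expected strategy for this class of results and is broadly consistent with the techniques in \cite{KL23} (and with the nonlocal $p$-growth precursors in the Di Castro--Kuusi--Palatucci and Korvenp\"a\"a--Kuusi--Palatucci line): Caccioppoli estimates from $\eta^q\Psi(v)$ test functions with power and logarithmic $\Psi$, fractional Sobolev embedding to produce the gain, a BMO/John--Nirenberg crossover, and tail bookkeeping.

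One step you present as routine is in fact the most delicate point of your outline, and I would not accept it without proof. You claim that $v:=u_m^-$ is a supersolution of $\mathcal Lv=0$ ``in all of $B_R$'' by a ``standard pasting argument.'' In the local setting this is classical (test functions can be localized to where $v<m$ because $\nabla v=0$ a.e.\ on $\{v=m\}$), but nonlocally the bilinear form $\mathcal E(v,\varphi)$ couples $x$ and $y$ globally and $D^sv(x,y)$ does not vanish just because $v$ is constant near $x$. The usual route in the nonlocal literature is therefore \emph{not} to paste and run an interior weak Harnack on $B_R$, but to work directly with test functions $\varphi\in V^{s,G}_0(\Omega)$ supported in $\Omega\cap B_R$, exploiting the truncation $u_m^-$ precisely so that the resulting Caccioppoli estimates ``see'' all of $B_R$ even though the equation is only tested inside $\Omega$; this is also why $m=\essinf_{B_R\setminus\Omega}u$ and the tail $\mathrm{Tail}_g((u_m^-)_-;x_0,R)$ appear in the statement. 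If you insist on the pasting reduction, you need to prove a nonlocal pasting lemma (for Orlicz growth) first; otherwise, restructure the argument to test in $\Omega$ throughout, which is almost certainly what \cite{KL23} does.

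Two smaller remarks. First, the device of iterating on $w=g(v)$ is a reasonable and clean choice for Orlicz growth, but be aware that \cite{KL23} only assumes \eqref{eq-pq} for this theorem (not \eqref{eq-q}), so any step in your iteration that would implicitly need the upper homogeneity control \eqref{eq-q} must be avoided or replaced. Second, the shift $v\mapsto v+d$ is harmless because $\mathcal L$ only sees differences, but you should state this explicitly since it is what keeps the logarithmic test function and the BMO bound finite before sending $d\downarrow0$.
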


The choice $\Omega'=\mathbb{R}^n$ in \cite[Theorem~3.3~and~3.4]{KL23} give \Cref{thm-loc-bdd} and \Cref{thm-WHI}. Note that \Cref{thm-loc-bdd} and \Cref{thm-WHI} cover interior estimates as well.

\section{Potential upper bound}\label{sec-upper}

This section is devoted to the proof of \Cref{thm-Wolff-upper}. Let us first recall the following result in Kilpel\"ainen--Mal\'y~\cite{KM92}, which displays the relation between $p$-superharmonic functions and nonnegative Borel measures in the local setting ($s=1$).

\begin{theorem}[{\cite[Theorem~2.1]{KM92}}]\label{thm-km92}
Let $p>1$. Suppose that $u$ is $p$-superharmonic in $\Omega$. Then there exists a unique nonnegative Borel measure $\mu$ in $\Omega$ such that
\begin{equation*}
\int_{\Omega} |Du|^{p-2} Du \cdot \nabla \varphi \,\mathrm{d}x = \int_{\Omega} \varphi \,\mathrm{d}\mu \quad\text{for every }\varphi \in C^\infty_c(\Omega),
\end{equation*}
where $Du$ is the weak gradient of $u$ defined by
\begin{equation*}
Du=\lim_{j\to \infty} \nabla (u \land j).
\end{equation*}
\end{theorem}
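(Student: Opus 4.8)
The plan is to produce $\mu$ as a weak-$*$ limit of the (local) Riesz measures of the bounded truncations $u_j:=u\wedge j$, the decisive point being a local bound on these measures that is uniform in $j$. I would first record the basic facts of local $p$-potential theory from \cite{HKM06}: a $p$-superharmonic function is finite a.e., and, being lower semicontinuous and real-valued, is bounded below on each compact subset of $\Omega$; every truncation $u_j$ is again $p$-superharmonic, now bounded, hence a weak supersolution lying in $W^{1,p}_{\mathrm{loc}}(\Omega)$; and the truncations are compatible, $\nabla u_j=\nabla u_k$ a.e.\ on $\{u<k\}$ whenever $j\ge k$. Since $u<\infty$ a.e., the a.e.\ pointwise limit $Du:=\lim_{j\to\infty}\nabla u_j$ exists and is exactly the weak gradient in the statement. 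Because $u_j$ is a supersolution, $\varphi\mapsto\int_\Omega|\nabla u_j|^{p-2}\nabla u_j\cdot\nabla\varphi\,\mathrm{d}x$ is a nonnegative linear functional on the nonnegative elements of $C_c^\infty(\Omega)$, so the Riesz representation theorem yields a nonnegative Radon measure $\mu_j$ on $\Omega$ with $\int_\Omega|\nabla u_j|^{p-2}\nabla u_j\cdot\nabla\varphi\,\mathrm{d}x=\int_\Omega\varphi\,\mathrm{d}\mu_j$ for every $\varphi\in C_c^\infty(\Omega)$.

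The crux is the uniform estimate: for every ball $B_{2\rho}\Subset\Omega$ there is a constant $C$, independent of $j$, with $\int_{B_\rho}|\nabla u_j|^{p-1}\,\mathrm{d}x\le C$. Granting it, for a compact set $K\subset\Omega$ choose $\varphi\in C_c^\infty(\Omega)$ with $\mathbf{1}_{K}\le\varphi\le1$; then $\mu_j(K)\le\int_\Omega\varphi\,\mathrm{d}\mu_j=\int_\Omega|\nabla u_j|^{p-2}\nabla u_j\cdot\nabla\varphi\,\mathrm{d}x\le\|\nabla\varphi\|_{L^\infty}\int_{\supp\varphi}|\nabla u_j|^{p-1}\,\mathrm{d}x\le C_K$, so $(\mu_j)$ has uniformly bounded mass on compact subsets of $\Omega$. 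To prove the estimate I would follow the Boccardo--Gallou\"et-type scheme: after localizing and adding a constant so that $u\ge1$ on $B_{2\rho}$, test the supersolution inequality for $u_j$ against functions built from suitably capped negative powers of $u_j$ over the dyadic ranges $\{2^\ell\le u_j<2^{\ell+1}\}$ to obtain a Caccioppoli bound for the gradient at each level; then combine these with H\"older's inequality and the a priori integrability $u\in L^{p-1}_{\mathrm{loc}}(\Omega)$ (which controls the measure of the level sets) and sum over $\ell$. This is the technical heart of the argument; carried out carefully it in fact gives $\nabla u_j$ bounded in $L^{q}_{\mathrm{loc}}(\Omega)$, uniformly in $j$, for every $q<\frac{n(p-1)}{n-1}$, whence $u\in W^{1,q}_{\mathrm{loc}}(\Omega)$ and $|Du|^{p-1}\in L^1_{\mathrm{loc}}(\Omega)$.

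Finally I would pass to the limit. By weak-$*$ compactness of Radon measures with locally bounded mass, along a subsequence $\mu_j$ converges weakly-$*$ to a nonnegative Radon measure $\mu$ on $\Omega$. Since $\nabla u_j\to Du$ a.e., also $|\nabla u_j|^{p-2}\nabla u_j\to|Du|^{p-2}Du$ a.e.; and since $\{|\nabla u_j|^{p-1}\}_j$ is bounded in $L^{q}_{\mathrm{loc}}(\Omega)$ for some $q>1$, it is uniformly integrable on compact sets, so Vitali's convergence theorem gives $|\nabla u_j|^{p-2}\nabla u_j\to|Du|^{p-2}Du$ in $L^1_{\mathrm{loc}}(\Omega;\mathbb{R}^n)$. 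Letting $j\to\infty$ in $\int_\Omega|\nabla u_j|^{p-2}\nabla u_j\cdot\nabla\varphi\,\mathrm{d}x=\int_\Omega\varphi\,\mathrm{d}\mu_j$ for each fixed $\varphi\in C_c^\infty(\Omega)$ yields $\int_\Omega|Du|^{p-2}Du\cdot\nabla\varphi\,\mathrm{d}x=\int_\Omega\varphi\,\mathrm{d}\mu$. Uniqueness is immediate: if $\mu$ and $\mu'$ both satisfy this identity, then $\int_\Omega\varphi\,\mathrm{d}(\mu-\mu')=0$ for all $\varphi\in C_c^\infty(\Omega)$, and, being Radon measures, they coincide. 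The step I expect to be genuinely delicate is the uniform bound on $|\nabla u_j|^{p-1}$: it is precisely where the degeneracy of the $p$-Laplacian and the failure of a priori $W^{1,p}$-regularity of $u$ interact, and both the choice of test function and the summation over levels must be handled with care.
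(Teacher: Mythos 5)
The paper does not prove this theorem at all: it is recalled as a cited result of Kilpel\"ainen--Mal\'y [KM92, Theorem~2.1], purely to motivate the nonlocal analogue (\Cref{thm-measure}) which the paper then does prove. Your sketch is a faithful reconstruction of the original [KM92] argument: truncate $u_j=u\wedge j$; note each $u_j$ is a bounded supersolution in $W^{1,p}_{\mathrm{loc}}$; produce nonnegative Radon measures $\mu_j$ via Riesz representation; prove the uniform Boccardo--Gallou\"et-type local bound $\nabla u_j \in L^q_{\mathrm{loc}}$, $q<n(p-1)/(n-1)$, independent of $j$; and pass to the limit via weak-$*$ compactness of the $\mu_j$ combined with Vitali convergence of $|\nabla u_j|^{p-2}\nabla u_j$. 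All the ingredients are correctly named and in the right order (including noting that $r/(p-1)>1$ for the exponent used in Vitali, which is what makes the argument go through for all $p>1$), and your identification of the Boccardo--Gallou\"et estimate as the one genuinely delicate step is accurate. It may be worth noting the slight structural difference from the paper's proof of the nonlocal analogue \Cref{thm-measure}: there the authors never build intermediate measures $\mu_j$, but instead pass to the limit directly in the bilinear form, $\mathcal{E}(u_j,\varphi)\to\mathcal{E}(u,\varphi)$, using a priori $W^{\sigma,g^\alpha}_{\mathrm{loc}}$ integrability of superharmonic functions to get weak $L^\alpha$ convergence near the diagonal and dominated convergence (via tail integrability) for the far-field part, and then apply Riesz representation once to the limiting nonnegative distribution. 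Your extra step of compactifying the $\mu_j$ is harmless but dispensable; both routes rest on the same uniform integrability input, and the direct passage in the form is somewhat cleaner, which is why the paper adopts it in the nonlocal setting.
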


The distributional derivative $\nabla u$ of a $p$-superharmonic function does not necessarily exist when $1<p\leq 2-1/n$, and therefore the equation $-\Delta_pu=-\mathrm{div}(|\nabla u|^{p-2}\nabla u)=\mu$ cannot be understood in the distributional sense in general. This is why the weak gradient $Du$ was introduced in \Cref{thm-km92}.

We now provide the nonlocal counterpart of \Cref{thm-km92} in the Orlicz growth framework. It is noteworthy that, in the theorem below, a superharmonic function $u$ in $\Omega$ solves $\mathcal{L}u=\mu$ in $\Omega$ in the sense of distribution as the equation no longer needs gradients to be defined.

\begin{theorem}\label{thm-measure}
Let $p>1$. Suppose that $u$ is superharmonic in $\Omega$. Then there exists a unique nonnegative Borel measure $\mu$ in $\Omega$ such that $\mathcal{L}u=\mu$ in $\Omega$ in the sense of distribution, i.e.\
\begin{equation*}
\mathcal{E}(u, \varphi) = \int_{\Omega} \varphi \,\mathrm{d}\mu \quad \text{for every $\varphi \in C^\infty_c(\Omega)$}.
\end{equation*}
\end{theorem}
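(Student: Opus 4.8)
The plan is to follow the Kilpel\"ainen--Mal\'y scheme~\cite{KM92}: reduce to bounded superharmonic functions by truncation, build the Riesz measures for the truncations by the Riesz--Markov theorem, and pass to the limit; the Orlicz growth enters only through the algebraic inequalities of \Cref{lem-G} and \eqref{eq-alg}.

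\emph{Truncation and approximating measures.} For $k\in\mathbb{N}$ set $u_k=\min\{u,k\}$. Since $t\mapsto\min\{t,k\}$ is $1$-Lipschitz, $u_k$ is again superharmonic in $\Omega$: given a competitor $v$ as in \Cref{def-superharmonic}~(iii) with $u_k\ge v$ on $\partial D$ and a.e.\ on $\mathbb{R}^n\setminus D$, one has $u\ge u_k\ge v$ there, hence $u\ge v$ in $D$ by superharmonicity of $u$, while comparing the solution $v$ with the $\mathcal{L}$-harmonic constant $k$ (using $v\le u_k\le k$ on $\partial D$ and a.e.\ off $D$) gives $v\le k$ in $D$; thus $u_k=\min\{u,k\}\ge v$ in $D$. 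Being bounded from above, $u_k$ is then a supersolution of $\mathcal{L}u_k=0$ in $\Omega$ by \Cref{thm-relation}~(ii); in particular $u_k\in W^{s,G}_{\mathrm{loc}}(\Omega)$, $(u_k)_-=u_-\in L^g_s(\mathbb{R}^n)$, and $\varphi\mapsto\mathcal{E}(u_k,\varphi)$ is a well-defined linear functional on $C_c^\infty(\Omega)$ that is nonnegative on nonnegative $\varphi$, hence a positive distribution of order zero. By the Riesz--Markov theorem there is a unique nonnegative Radon measure $\mu_k$ on $\Omega$ with $\mathcal{E}(u_k,\varphi)=\int_\Omega\varphi\,\mathrm{d}\mu_k$ for all $\varphi\in C_c^\infty(\Omega)$.

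\emph{Uniform local mass bound (the crux).} The main point is to show $\sup_k\mu_k(D)<\infty$ for every $D\Subset\Omega$. Fix $B_{2R}=B_{2R}(x_0)\Subset\Omega$; since $\mathcal{L}$ depends on $u$ only through $D^su$, subtracting the finite constant $\inf_{B_{2R}}u$ leaves each $\mu_k$ unchanged, so we may assume $u\ge0$, hence $u_k\ge0$, on $B_{2R}$. Testing the identity for $\mu_k$ against $\eta^q$ with a cutoff $\eta$ supported in $B_{3R/2}$ and $\eta\equiv1$ on $B_R$ gives $\mu_k(B_R)\le\mathcal{E}(u_k,\eta^q)$, and the right-hand side should be estimated using \eqref{eq-alg} to peel off the gradient term and a Caccioppoli-type bound for the nonnegative supersolution $u_k$ (whose nonnegativity on $B_{2R}$, as in the weak Harnack inequality \Cref{thm-WHI}, is what controls the local energy), the tail contributions being handled through $(u_k)_-=u_-\in L^g_s$ and the truncation; the genuine difficulty is to carry out this absorption with all constants \emph{independent of $k$}. (Equivalently, once the bounded case of the pointwise lower bound \Cref{thm-Wolff-lower} is available for $u_k$, it yields $\mathbf{W}^{\mu_k}_{s,G}(x_0,R)\le C(u_k(x_0)+\mathrm{Tail}_g((u_k)_-;x_0,2R))\le C(u(x_0)+\mathrm{Tail}_g(u_-;x_0,2R))$, and bounding the Wolff potential below by a single annulus turns this into a $k$-uniform bound on $\mu_k(B_{R/2}(x_0))$.)

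\emph{Passage to the limit, uniqueness, positivity.} By the uniform mass bound and weak-$*$ sequential compactness of locally bounded Radon measures, a subsequence $\mu_{k_j}$ converges weakly-$*$ to a nonnegative Radon measure $\mu$ on $\Omega$. For $\varphi\in C_c^\infty(\Omega)$ one verifies that $\mathcal{E}(u,\varphi)$ is a well-defined integral: the smoothness of $\varphi$ gives $|D^s\varphi(x,y)|\lesssim_\varphi\min\{|x-y|^{1-s},|x-y|^{-s}\}$, which together with the local integrability and tail control of the superharmonic function $u$ makes $g(|D^su|)\,|D^s\varphi|$ integrable against $k(x,y)|x-y|^{-n}\,\mathrm{d}x\,\mathrm{d}y$ (alternatively one may simply set $\mathcal{E}(u,\varphi):=\lim_k\mathcal{E}(u_k,\varphi)$, in the spirit of the weak gradient in \Cref{thm-km92}). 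Since $|D^su_k|\le|D^su|$ by $1$-Lipschitzness of the truncation and $D^su_k\to D^su$ a.e., dominated convergence gives $\mathcal{E}(u_k,\varphi)\to\mathcal{E}(u,\varphi)$; combined with $\mathcal{E}(u_{k_j},\varphi)=\int_\Omega\varphi\,\mathrm{d}\mu_{k_j}\to\int_\Omega\varphi\,\mathrm{d}\mu$ this proves $\mathcal{E}(u,\varphi)=\int_\Omega\varphi\,\mathrm{d}\mu$. Finally, a Radon measure on $\Omega$ is determined by its action on $C_c^\infty(\Omega)$, which gives uniqueness of $\mu$, forces the whole sequence $\mu_k$ to converge weakly-$*$ to $\mu$, and yields nonnegativity of $\mu$ from $\mu_k\ge0$.
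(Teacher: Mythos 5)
Your overall scheme (truncate, represent each $\mathcal{E}(u_k,\cdot)$ by a measure via Riesz--Markov, pass to the limit) is in the right family, but it detours through a step --- the uniform local mass bound --- that is both the weakest link in your argument (you yourself flag it as ``the crux'' and do not complete it) and, as it turns out, not needed. The moment you know that $\mathcal{E}(u,\varphi)$ is an absolutely convergent integral and that $\mathcal{E}(u_k,\varphi)\to\mathcal{E}(u,\varphi)$, you have for free that $\varphi\mapsto\mathcal{E}(u,\varphi)$ is a nonnegative linear functional on $C^\infty_c(\Omega)$ (as a limit of the nonnegative $\mathcal{E}(u_k,\varphi)$), and a \emph{single} application of the Riesz representation theorem produces the nonnegative Borel measure $\mu$. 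There is no need to introduce $\mu_k$, to prove a $k$-uniform bound on $\mu_k(D)$, or to invoke weak-$*$ compactness. This is exactly how the paper proceeds, which is why it never has to face the uniform-constant absorption problem at all. (Your alternative route to the mass bound via \Cref{thm-Wolff-lower} applied to $u_k$ is logically admissible since that lemma's proof does not presuppose the present theorem, but it re-orders the paper's development and is a substantially longer road to an unnecessary intermediate fact.)

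The genuine gap, however, is in your dominated-convergence step. You assert that $g(|D^su|)\,|D^s\varphi|$ is integrable against $k(x,y)|x-y|^{-n}\,\mathrm{d}y\,\mathrm{d}x$ ``by the local integrability and tail control of the superharmonic function $u$,'' but the tail hypothesis $u_-\in L^g_s(\mathbb{R}^n)$ controls only the far range, and lower semicontinuity gives no fractional regularity whatsoever; in general a superharmonic $u$ does \emph{not} lie in $W^{s,G}_{\mathrm{loc}}(\Omega)$, so there is no a priori reason for the local double integral to converge. The input you are missing is the regularity result from \cite[Theorem~6.6]{KL23}: every superharmonic function lies in $W^{\sigma,g^\alpha}_{\mathrm{loc}}(\Omega)$ for every $\sigma\in(0,s)$ and some $\alpha>1$. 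Combining $g(|D^su|)\le C\,|x-y|^{-(s-\sigma)(q-1)}\,g(|D^\sigma u|)$ (from \Cref{lem-G}) with $|D^s\varphi|\lesssim |x-y|^{1-s}$ and H\"older's inequality with exponents $\alpha$, $\alpha/(\alpha-1)$, choosing $\sigma$ close enough to $s$ so that $s+(s-\sigma)(q-1)<1$, then gives the local integrability (equivalently, the paper packages the same regularity input as weak $L^\alpha$-convergence of the truncated integrands and tests against a function in the conjugate exponent, using dominated convergence only for the off-diagonal range). Without citing and using this fractional Sobolev estimate for superharmonic functions, your dominated-convergence argument does not close, and the alternative of \emph{defining} $\mathcal{E}(u,\varphi)$ as the limit changes the meaning of the theorem rather than proving it.
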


The measure $\mu$ given in \Cref{thm-measure} is called the \emph{Riesz measure}.

\begin{proof}
Let $\varphi \in C_c^\infty(\Omega)$ be nonnegative and let $D \Subset D' \Subset \Omega$ be such that $\mathrm{supp}\,\varphi \subset D$. Moreover, we define $u_j=u \land j$ for each $j \in \mathbb{N}$. Then since $u_j$ is a supersolution of $\mathcal{L}u_j=0$ in $\Omega$ by \Cref{thm-relation}~(ii), we observe that $\mathcal{E}(u_j, \varphi)$ is well defined and nonnegative.

We now claim that we can pass to the limit, i.e.
\begin{equation}\label{eq-claim-passlimit}
    \mathcal{E}(u, \varphi)=\lim_{j \to \infty}\mathcal{E}(u_j, \varphi).
\end{equation}
For this purpose, we first note that superharmonic functions do not belong to $W^{s, G}_{\mathrm{loc}}(\Omega)$ in general, but they have some integrability. Indeed, \cite[Theorem~6.6]{KL23} shows that $u \in L^{g^\delta}_{\mathrm{loc}}(\Omega) \cap W^{\sigma, g^\alpha}_{\mathrm{loc}}(\Omega)$ for any
\begin{equation*}
\delta \in 
\begin{cases}
(0, \frac{n}{n-sp}) &\text{if}~ sp<n, \\
(0, \infty) &\text{if}~ sp \geq n,
\end{cases}
\quad \sigma \in (0, s), \quad\text{and}\quad \alpha \in (0, \min\{\tfrac{n}{n-sp/q}, \tfrac{q}{q-1} \}).
\end{equation*}
We fix some $\alpha \in (1, \min\{\tfrac{n}{n-sp/q}, \tfrac{q}{q-1} \})$. Then functions $v_j:=g(|D^su_j|)\frac{D^su_j}{|D^su_j|} |x-y|^{(s-\sigma)(q-1)}$ form a bounded sequence in $L^\alpha(D' \times D', \frac{k(x, y)}{|x-y|^n}\,\mathrm{d}y\,\mathrm{d}x)$ since
\begin{equation*}
\iint_{D' \times D'} |v_j|^\alpha \frac{k(x, y)}{|x-y|^n} \,\mathrm{d}y \,\mathrm{d}x \leq C \iint_{D' \times D'} g^\alpha(|D^\sigma u_j|) \frac{\mathrm{d}y \,\mathrm{d}x}{|x-y|^n} \leq C \varrho_{W^{\sigma, g^\alpha}(D')}(u) \leq C,
\end{equation*}
where $C$ is independent of $j$. This together with the convergence
\begin{equation*}
v_j \to v:= g(|D^su|)\frac{D^su}{|D^su|} |x-y|^{(s-\sigma)(q-1)} \quad\text{a.e.\ in }D' \times D'
\end{equation*}
shows that $v_j$ converges weakly to $v$ in $L^\alpha(D' \times D', \frac{k(x, y)}{|x-y|^n}\,\mathrm{d}y\,\mathrm{d}x)$ as $j \to \infty$. If we take $\sigma \in (0, s)$ sufficiently close to $s$ so that $s+(s-\sigma)(q-1)<1$, then we have
\begin{equation*}
\iint_{D' \times D'} \left( |D^s\varphi| |x-y|^{-(s-\sigma)(q-1)} \right)^{\frac{\alpha}{\alpha-1}} \frac{k(x, y)}{|x-y|^n} \,\mathrm{d}y \,\mathrm{d}x \leq C \|\varphi\|_{W^{s+(s-\sigma)(q-1), \frac{\alpha}{\alpha-1}}(D')}^{\frac{\alpha}{\alpha-1}} < \infty,
\end{equation*}
and hence
\begin{align}\label{eq-DD}
\begin{split}
&\int_{D'} \int_{D'} g(|D^su|) \frac{D^su}{|D^su|} D^s\varphi \frac{k(x, y)}{|x-y|^n} \,\mathrm{d}y\,\mathrm{d}x \\
&= \lim_{j \to \infty} \int_{D'} \int_{D'} g(|D^su_j|) \frac{D^su_j}{|D^su_j|} D^s\varphi \frac{k(x, y)}{|x-y|^n} \,\mathrm{d}y\,\mathrm{d}x.
\end{split}
\end{align}
On the other hand, we observe that
\begin{equation*}
\left| g(|D^su_j|) \frac{D^su_j}{|D^su_j|} \frac{\varphi(x)}{|x-y|^s} \right| \leq \left( g\left( \frac{|u_j(x)|}{|x-y|^s} \right) + g\left( \frac{|u_j(y)|}{|x-y|^s} \right) \right) \frac{\varphi(x)}{|x-y|^s}.
\end{equation*}
Since
\begin{align*}
\int_{D} \int_{\mathbb{R}^n \setminus D'} g\left( \frac{|u_j(x)|}{|x-y|^s} \right) \varphi(x) \frac{k(x, y)}{|x-y|^{n+s}} \,\mathrm{d}y\,\mathrm{d}x
&\leq C \int_D g(|u(x)|) \,\mathrm{d}x \quad\text{and} \\
\int_{D} \int_{\mathbb{R}^n \setminus D'} g\left( \frac{|u_j(y)|}{|x-y|^s} \right) \varphi(x) \frac{k(x, y)}{|x-y|^{n+s}} \,\mathrm{d}y\,\mathrm{d}x
&\leq C \int_{\mathbb{R}^n} g\left( \frac{|u(y)|}{(1+|y|)^s} \right) \frac{\mathrm{d}y}{(1+|y|)^{n+s}},
\end{align*}
the dominated convergence theorem yields
\begin{align}\label{eq-DDc}
\begin{split}
&\int_{D'} \int_{\mathbb{R}^n \setminus D'} g(|D^su|) \frac{D^su}{|D^su|} D^s\varphi \frac{k(x, y)}{|x-y|^n} \,\mathrm{d}y\,\mathrm{d}x \\
&= \lim_{j \to \infty} \int_{D'} \int_{\mathbb{R}^n \setminus D'} g(|D^su_j|) \frac{D^su_j}{|D^su_j|} D^s\varphi \frac{k(x, y)}{|x-y|^n} \,\mathrm{d}y\,\mathrm{d}x.
\end{split}
\end{align}
Thus, we deduce from \eqref{eq-DD} and \eqref{eq-DDc} that the claim \eqref{eq-claim-passlimit} holds. It implies that the assignment $\varphi \mapsto \mathcal{E}(u, \varphi)$ is a nonnegative distribution. Therefore, it can be represented by a nonnegative Borel measure $\mu$.
\end{proof}

We next provide a pointwise upper bound of superharmonic functions in terms of the Wolff potential. We modify the methods used by Kilpel\"ainen--Mal\'y~\cite{KM94} and Lukkari--Maeda--Marola~\cite{LMM10} in proving potential upper bounds for local equations. The following estimate can be understood as a De Giorgi-type reverse H\"older inequality with measure data. Note that $p_{\sigma}^{\ast}$ denotes the fractional Sobolev exponent $p_{\sigma}^\ast:=np/(n-\sigma p)$.

\begin{lemma}\label{lem-DG}
Let $\sigma \in (0, \min\{s, n/p\})$ and assume
\begin{equation}\label{eq-delta}
1<\delta< \frac{(p-1)^{\ast}_{\sigma}}{p-1} = \frac{n}{n-\sigma(p-1)}.
\end{equation}
Let $u$ be a superharmonic function in $B_{R}=B_{R}(x_{0})$ and let $\mu=\mathcal{L}u$ be the Riesz measure. Then there exists a constant $C_{0}=C_0(n, p, q, s, \sigma, \Lambda, \delta) > 0$ such that
\begin{align}\label{eq-DG-claim}
	\begin{split}
		\left( \fint_{B_{R/2}} \left( \frac{\bar{g}(v)}{\bar{g}(\lambda)} \right)^{\delta} \,\mathrm{d}x \right)^{p/p^{\ast}_{\sigma}}
		&\leq C_{0} \left( \theta_{R}^{p/p^{\ast}_{\sigma}} + \fint_{B_{R}} \left( \frac{\bar{g}(v)}{\bar{g}(\lambda)} \right)^{\delta} \,\mathrm{d}x \right) \\
		&\quad + \frac{C_{0}}{\bar{g}(\lambda)} \left( \frac{\mu(B_{R})}{R^{n-s}} + T(u_+; x_0, R) \right)
	\end{split}
\end{align}
for any $\lambda > 0$, where $v=u_{+}/R^{s}$.

Moreover, there exists a constant $C=C(n, p, q, s, \sigma, \Lambda, \delta) > 0$ such that
\begin{align}\label{eq-DG}
\begin{split}
\left( \fint_{B_{R/2}} \bar{g}^{\delta}\left( \frac{u_{+}}{(R/2)^{s}} \right) \,\mathrm{d}x \right)^{1/\delta}
&\leq C \theta_{R}^{\rho_{1}} \left( \fint_{B_{R}} \bar{g}^{\delta}\left( \frac{u_{+}}{R^{s}} \right) \,\mathrm{d}x \right)^{1/\delta} \\
&\quad + C\theta_{R}^{\rho_{2}} \left( \frac{\mu(B_{R})}{R^{n-s}} + T(u_+; x_0, R) \right),
\end{split}
\end{align}
where $\rho_1=\sigma p/(n\delta)$, $\rho_2=1/\delta-p/p^{\ast}_{\sigma}$, and $\theta_{R} = |B_{R} \cap \lbrace u>0 \rbrace| / |B_{R}|$.
\end{lemma}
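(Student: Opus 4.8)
The plan is to first establish the key inequality \eqref{eq-DG-claim}, which is a Caccioppoli-type estimate with measure data against which Sobolev embedding is applied, and then to derive \eqref{eq-DG} from it by an iteration/optimization in the free parameter $\lambda$. For \eqref{eq-DG-claim}, I would work with the truncated supersolutions $u_j = u \wedge j$, which by \Cref{thm-relation}~(ii) are supersolutions and lie in $W^{s,G}_{\mathrm{loc}}$, so Caccioppoli estimates apply to them; the passage $j \to \infty$ is justified exactly as in the proof of \Cref{thm-measure} (weak convergence of the flux and dominated convergence on the tail part), and produces the measure term $\mu(B_R)/R^{n-s}$ on the right-hand side from $\mathcal{E}(u,\varphi) = \int \varphi \,\mathrm{d}\mu$. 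The test function will be built from a cutoff $\eta$ supported in $B_R$, equal to $1$ on $B_{R/2}$, multiplied by a power of $\bar g(v)/\bar g(\lambda)$ (here the truncation level $\lambda$ enters, as in Kilpeläinen--Malý~\cite{KM94} and Lukkari--Maeda--Marola~\cite{LMM10}); the algebraic inequalities of \Cref{lem-G} together with \eqref{eq-alg}, \eqref{eq-bar-g-comp}, and \eqref{eq-bar-g-pq} are used to absorb the `wrong-sign' terms coming from $D^s$ applied to a product, turning the energy into a lower bound for $\int \eta^{?} \bar g(v)^\delta$-type quantities. The nonlocal tail contribution $T(u_+;x_0,R)$ appears when one estimates the long-range interaction $\int_{B_{R/2}}\int_{\mathbb{R}^n\setminus B_R}$, using that $u$ need not be nonnegative outside $B_R$.

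The analytical heart of \eqref{eq-DG-claim} is then the fractional Sobolev inequality at order $\sigma < s$: from the Caccioppoli bound on $[\,\cdot\,]_{W^{\sigma,G}}$ one passes to an $L^{p^\ast_\sigma/p}$-type bound, which is where the exponent $p/p^\ast_\sigma$ on the left and the constraint \eqref{eq-delta} on $\delta$ (so that $\delta < (p-1)^\ast_\sigma/(p-1)$, equivalently $\delta(p-1)$ is subcritical at order $\sigma$) come in; the restriction $\sigma \in (0,\min\{s,n/p\})$ guarantees $p^\ast_\sigma$ is finite and positive. I would carry this out on $\bar g(v)$ rather than $v$ because \eqref{eq-bar-g-pq} makes $\bar g$ behave like a power under $D^s$, which is what the fractional Sobolev machinery needs; the comparability $\bar g \sim g$ from \eqref{eq-bar-g-comp} then lets one translate freely between $\bar g$ and $g$ at the end.

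For the second assertion \eqref{eq-DG}, the strategy is standard: in \eqref{eq-DG-claim} one chooses $\lambda$ so that the self-improving term $\fint_{B_R}(\bar g(v)/\bar g(\lambda))^\delta$ is balanced against the measure-plus-tail term — concretely, take $\bar g(\lambda)^\delta \sim \fint_{B_R}\bar g(v)^\delta \,\mathrm{d}x \big/ (\text{measure}+\text{tail})$ up to constants, or run a short iteration over dyadic radii between $R/2$ and $R$ absorbing the $C_0 \fint_{B_R}$ term — and then uses the volume fraction $\theta_R$ together with the scaling relations $G(\lambda t)$ vs.\ $\lambda^p G(t)$, $\lambda^q G(t)$ from \Cref{lem-G} to convert powers of $\bar g(\lambda)$ back into powers of $\fint_{B_R}\bar g^\delta(u_+/R^s)$ and into the exponents $\rho_1 = \sigma p/(n\delta)$ and $\rho_2 = 1/\delta - p/p^\ast_\sigma$; note $\rho_1 - \rho_2 = p/p^\ast_\sigma - (1/\delta - \sigma p/(n\delta))$ is arranged precisely so the bookkeeping closes. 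The main obstacle I anticipate is the first step: producing \eqref{eq-DG-claim} cleanly with the measure on the right, i.e.\ controlling all the error terms generated when $D^s$ hits the product $\eta^\beta (\bar g(v)/\bar g(\lambda))^\gamma$ in a purely nonlocal setting and simultaneously keeping the dependence on $\lambda$ homogeneous enough that the final optimization yields exactly the stated exponents — the local proofs of \cite{KM94,LMM10} use the Leibniz rule for $\nabla$, which here must be replaced by the more delicate $D^s$-product estimates and the Orlicz algebraic lemmas, and the tail terms have no local analogue.
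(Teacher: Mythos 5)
Your plan follows the same overall strategy as the paper: a Caccioppoli-type estimate with measure data combined with the fractional Sobolev inequality at a subcritical order $\sigma < s$ for \eqref{eq-DG-claim}, then an optimization in $\lambda$ to pass to \eqref{eq-DG}. The role of the constraint \eqref{eq-delta}, the exponent $p/p^\ast_\sigma$, and the arithmetic behind $\rho_1, \rho_2$ are all correctly identified, and your instinct to work with $\bar g$ rather than $v$ via \eqref{eq-bar-g-pq} is exactly right. For the second part, the paper does not iterate over dyadic radii; it picks $\lambda$ directly so that $1/\bar g(\lambda)$ is proportional to $\theta_R^{1/\delta}A_{R/2}^{-1}$ with $A_r = (\fint_{B_r}\bar g^\delta(u_+/r^s))^{1/\delta}$, which absorbs the $\theta_R^{p/p^\ast_\sigma}$ term on the left and splits the remaining inequality into a maximum of three comparable cases.

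Where the plan leaves a real gap is precisely the step you flag as the main obstacle. You describe the test function as ``a cutoff $\eta$ multiplied by a power of $\bar g(v)/\bar g(\lambda)$,'' but such a function is unbounded from above, so testing $\mathcal{L}u=\mu$ against it does not produce the clean term $\mu(B_R)/R^{n-s}$ — one would be left with a $\lambda$-dependent weight inside the integral against $\mu$. The paper works with \emph{two distinct} auxiliary objects. The fractional Sobolev inequality is applied to $w = (\bar g(\lambda+v)/\bar g(\lambda))^{\delta/\tilde\delta} - 1$ with $\tilde\delta = p(q-1)\delta/(q-\delta)$ chosen so that $p < \tilde\delta < p^\ast_\sigma$, while the actual test function in $\mathcal{L}u=\mu$ is $\varphi\eta^q$ with $\varphi = 1 - (\bar g(\lambda)/\bar g(\lambda+v))^{(\delta-1)/(q-1)}$, which satisfies $0 \leq \varphi \leq 1$ and hence gives $\int\varphi\eta^q\,\mathrm{d}\mu \leq \mu(B_R)$ immediately. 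The algebraic link — showing that $R^{\sigma p}\iint |D^\sigma w|^p$ on one side and $\mathcal{E}(u,\varphi\eta^q)$ on the other bound the same quantity up to admissible errors — is supplied by two pointwise inequalities (the paper's \Cref{lem-ineq1} and \Cref{lem-ineq2}), which are the nonlocal Orlicz substitute for the chain-rule manipulations of Kilpel\"ainen--Mal\'y. Your plan does not distinguish $w$ from $\varphi$ nor specify the exponents $\delta/\tilde\delta$ and $(\delta-1)/(q-1)$, and without that distinction the Caccioppoli step does not close with the stated right-hand side.
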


\begin{proof}
Since $p < n/\sigma$, we have $p^{\ast}_{\sigma}=np/(n-\sigma p) < \infty$. We first prove the estimate \eqref{eq-DG-claim}. For
\begin{equation*}
\tilde{\delta} = \frac{p(q-1)\delta}{q-\delta},
\end{equation*}
it is easy to check that $p<\tilde{\delta}<p^{\ast}_{\sigma}$. We define a nonnegative function
\begin{equation*}
w= \left( \frac{\bar{g}(\lambda+v)}{\bar{g}(\lambda)} \right)^{\delta/\tilde{\delta}}-1.
\end{equation*}
Note that we have
\begin{align*}
\left( \frac{\bar{g}(v)}{\bar{g}(\lambda)} \right)^{\delta}
&= \left( \frac{\bar{g}(v)}{\bar{g}(\lambda)} \right)^{\delta} {\bf 1}_{\lbrace 0<v<\lambda \rbrace} + \left( \frac{\bar{g}(v)}{\bar{g}(\lambda)} \right)^{\delta} {\bf 1}_{\lbrace v\geq \lambda \rbrace} \\
&\leq {\bf 1}_{\lbrace u>0 \rbrace} + \left( \frac{\bar{g}(v)}{\bar{g}(\lambda)} \right)^{\delta p^{\ast}_{\sigma}/\tilde{\delta}} {\bf 1}_{\lbrace v\geq \lambda \rbrace} \\
&\leq {\bf 1}_{\lbrace u>0 \rbrace} + (w+1)^{p^{\ast}_{\sigma}} {\bf 1}_{\lbrace u > 0 \rbrace} \\
&\leq C {\bf 1}_{\lbrace u>0 \rbrace} + C w^{p^{\ast}_{\sigma}},
\end{align*}
and hence
\begin{equation}\label{eq-DG-w}
\fint_{B_{R/2}} \left( \frac{\bar{g}(v)}{\bar{g}(\lambda)} \right)^{\delta} \,\mathrm{d}x \leq C\theta_{R} + C \fint_{B_{R/2}} w^{p^{\ast}_{\sigma}} \,\mathrm{d}x
\end{equation}
for some $C = C(n, p, \sigma) > 0$.

We apply the fractional Sobolev inequality to have
\begin{equation}\label{eq-DG-I1I2}
\left( \fint_{B_{R/2}} w^{p^{\ast}_{\sigma}} \,\mathrm{d}x \right)^{p/p^{\ast}_{\sigma}} \leq C \fint_{B_{R/2}} w^{p} \,\mathrm{d}x + C R^{\sigma p} \fint_{B_{R/2}} \int_{B_{R/2}} |D^\sigma w|^p \,\mathrm{d}y \,\mathrm{d}x =: I_{1} + I_{2}.
\end{equation}
Since $w \leq (\bar{g}(2\lambda)/\bar{g}(\lambda))^{\delta/\tilde{\delta}} \leq C$ on $\lbrace 0<v<\lambda \rbrace$ and $w \leq C(\bar{g}(v)/\bar{g}(\lambda))^{\delta/\tilde{\delta}}$ on $\lbrace v \geq \lambda \rbrace$ by \Cref{lem-G}, we obtain
\begin{equation}\label{eq-DG-I1}
I_{1} \leq C \theta_{R} + C \fint_{B_{R/2}} \left( \frac{\bar{g}(v)}{\bar{g}(\lambda)} \right)^{\delta} \,\mathrm{d}x.
\end{equation}
For $I_{2}$, we apply \Cref{lem-ineq1} with $\alpha=\delta/\tilde{\delta}$ to have
\begin{align*}
I_{2}
&\leq C \fint_{B_{R/2}} \int_{B_{R/2}} \left( \frac{|x-y|}{R} \right)^{(s-\sigma)p} \left( \fint_{v(y)}^{v(x)} \left( \frac{\bar{g}(\lambda+t)}{\bar{g}(\lambda)} \right)^{\delta p/\tilde{\delta}} \,\mathrm{d}t \right) {\bf 1}_{\{v(x) \neq v(y)\}}(x, y) \frac{\mathrm{d}y \,\mathrm{d}x}{|x-y|^{n}} \\
&\quad + C \fint_{B_{R/2}} \int_{B_{R/2}} G(|D^su_+|) \left( \fint_{v(y)}^{v(x)} \left( \frac{\bar{g}(\lambda+t)}{\bar{g}(\lambda)} \right)^{\delta p/\tilde{\delta}} \frac{\mathrm{d}t}{G(\lambda +t)} \right) \frac{\mathrm{d}y \,\mathrm{d}x}{|x-y|^{n}} \\
&=: I_{2,1} + I_{2,2}.
\end{align*}
Since ${\bf 1}_{\lbrace v(x) > v(y) \rbrace}(x, y) \leq {\bf 1}_{\lbrace u(x) >0 \rbrace}(x)$, we obtain
\begin{align}\label{eq-DG-I21}
\begin{split}
I_{2,1}
&\leq C \fint_{B_{R/2}} \left( \frac{\bar{g}(\lambda +v(x))}{\bar{g}(\lambda)} \right)^{\delta p/\tilde{\delta}} {\bf 1}_{\lbrace u(x) > 0 \rbrace}(x) \int_{B_{R/2}} \left( \frac{|x-y|}{R} \right)^{(s-\sigma)p} \frac{\mathrm{d}y \,\mathrm{d}x}{|x-y|^{n}} \\
&\leq C \fint_{B_{R/2}} \left( \frac{\bar{g}(\lambda +v(x))}{\bar{g}(\lambda)} \right)^{\delta p/\tilde{\delta}} {\bf 1}_{\lbrace u(x) > 0 \rbrace}(x) \,\mathrm{d}x \\
&\leq C \theta_{R} + C \fint_{B_{R/2}} \left( \frac{\bar{g}(v)}{\bar{g}(\lambda)} \right)^{\delta} \,\mathrm{d}x.
\end{split}
\end{align}
To estimate $I_{2,2}$, we define an auxiliary function
\begin{equation*}
\varphi = 1-\left( \frac{\bar{g}(\lambda)}{\bar{g}(\lambda+v)} \right)^{\frac{\delta-1}{q-1}}
\end{equation*}
and let $\eta \in C^{\infty}_{c}(B_{3R/4})$ be a cut-off function satisfying $0\leq \eta \leq 1$, $\eta \equiv 1$ on $B_{R/2}$, and $|\nabla \eta| \leq C/R$. Since $\delta p/\tilde{\delta}-1 = -(\delta-1)/(q-1)$, we have
\begin{equation*}
I_{2,2} \leq \frac{C}{\bar{g}(\lambda)} \fint_{B_{R}} \int_{B_{R}} G(|D^su_+|) \left( \fint_{v(y)}^{v(x)} \left( \frac{\bar{g}(\lambda)}{\bar{g}(\lambda+t)} \right)^{\frac{\delta-1}{q-1}} \frac{\mathrm{d}t}{\lambda +t} \right) (\eta(x) \land \eta(y))^{q} \frac{\mathrm{d}y \,\mathrm{d}x}{|x-y|^{n}}.
\end{equation*}
By using $\varphi\eta^{q}$ as a test function in $\mathcal{L}u=\mu$ and applying \Cref{lem-ineq2}, we obtain
\begin{align*}
I_{2,2}
&\leq C \fint_{B_{R}} \left( \frac{\bar{g}(\lambda+v(x))}{\bar{g}(\lambda)} \right)^{\delta} {\bf 1}_{\lbrace u(x) > 0 \rbrace}(x) \,\mathrm{d}x + \frac{C}{\bar{g}(\lambda)} \frac{\mu(B_{R})}{R^{n-s}}\\
&\quad -C\frac{R^{s}}{\bar{g}(\lambda)} \fint_{B_R} \int_{\mathbb{R}^n \setminus B_R} g(|D^su|) \frac{D^su}{|D^su|} \frac{\varphi(x)\eta^q(x)}{|x-y|^s} \frac{\mathrm{d}y \,\mathrm{d}x}{|x-y|^n} \\
&\leq C \theta_{R} + C \fint_{B_{R}} \left( \frac{\bar{g}(v)}{\bar{g}(\lambda)} \right)^{\delta} \,\mathrm{d}x + \frac{C}{\bar{g}(\lambda)} \frac{\mu(B_{R})}{R^{n-s}}\\
&\quad + C\frac{R^{s}}{\bar{g}(\lambda)} \fint_{B_{3R/4}} \int_{\mathbb{R}^n \setminus B_R} g\left(\frac{u_{+}(y)}{|x-y|^s}\right) {\bf 1}_{\lbrace u(x) > 0 \rbrace}(x) \frac{\mathrm{d}y \,\mathrm{d}x}{|x-y|^{n+s}}.
\end{align*}
Since $|y-x_{0}| \leq |y-x|+3R/4 \leq 4|x-y|$ for $x \in B_{3R/4}(x_0)$ and $y \in \mathbb{R}^n \setminus B_R(x_0)$, we have
\begin{equation}\label{eq-DG-I22}
I_{2, 2} \leq C \theta_{R} + C \fint_{B_{R}} \left( \frac{\bar{g}(v)}{\bar{g}(\lambda)} \right)^{\delta} \,\mathrm{d}x + \frac{C}{\bar{g}(\lambda)} \left( \frac{\mu(B_{R})}{R^{n-s}} + T(u_+; x_0, R) \right).
\end{equation}
Therefore, the claim \eqref{eq-DG-claim} follows by combining \eqref{eq-DG-w}--\eqref{eq-DG-I22} and using $\theta_R \leq \theta_R^{p/p^{\ast}_{\sigma}}$.

Let us next deduce \eqref{eq-DG} from \eqref{eq-DG-claim}. We set 
\begin{align*}
	A_R=\left(\fint_{B_R} \bar{g}^{\delta}\left(\frac{u_+}{R^s}\right)\,\mathrm{d}x\right)^{1/\delta}
\end{align*}
and $M=1+C_0$. If we choose $\lambda$ so that
\begin{equation*}
	\frac{1}{\bar{g}(\lambda)}=2^{s(q-1)}M^{p^{\ast}_{\sigma}/(\delta p)}\theta_R^{1/\delta}A_{R/2}^{-1},
\end{equation*}
then it follows from \Cref{lem-G} and \eqref{eq-DG-claim} that
\begin{align*}
	\theta_R^{p/p^{\ast}_{\sigma}}&=\left(2^{s(1-q)}\frac{A_{R/2}}{\bar{g}(\lambda)}\right)^{\delta p/p^{\ast}_{\sigma}}-C_0\theta_R^{p/p^{\ast}_{\sigma}} \\
	&\leq \left(\fint_{B_{R/2}} \left(\frac{\bar{g}(v)}{\bar{g}(\lambda)}\right)^{\delta} \, \mathrm{d}x\right)^{p/p^{\ast}_{\sigma}}-C_0\theta_R^{p/p^{\ast}_{\sigma}}\\
	&\leq C_{0} \left( \left(\frac{A_R}{\bar{g}(\lambda)}\right)^{\delta}+ \frac{1}{\bar{g}(\lambda)} \frac{\mu(B_{R})}{R^{n-s}} +\frac{T(u_+; x_0, R)}{\bar{g}(\lambda)}  \right)\\
	&\leq C_1 \left(\theta_R \left(\frac{A_R}{A_{R/2}}\right)^{\delta}+\theta_R^{1/\delta}A_{R/2}^{-1}\frac{\mu(B_R)}{R^{n-s}}+\theta_R^{1/\delta}A_{R/2}^{-1} T(u_+; x_0, R) \right).
\end{align*}
This implies that
\begin{equation*}
\frac{1}{3}\theta_R^{p/p^{\ast}_{\sigma}} \leq \left( C_{1} \theta_R \left(\frac{A_R}{A_{R/2}}\right)^{\delta} \right) \lor \left( C_{1} \theta_R^{1/\delta}A_{R/2}^{-1}\frac{\mu(B_R)}{R^{n-s}} \right) \lor \left( C_{1} \theta_R^{1/\delta}A_{R/2}^{-1} T(u_+; x_0, R) \right).
\end{equation*}
In other words, we have
\begin{equation*}
A_{R/2} \leq \left( (3C_1)^{1/\delta} \theta_R^{\rho_1}A_R \right) \lor \left( 3C_1 \theta_R^{\rho_2} \frac{\mu(B_R)}{R^{n-s}} \right) \lor \left( 3C_1 \theta_R^{\rho_2} T(u_+; x_0, R)\right),
\end{equation*}
where $\rho_1=(1-p/p^{\ast}_{\sigma})/\delta = \sigma p/(n\delta)>0$ and $\rho_2=1/\delta-p/p^{\ast}_{\sigma}>0$. Hence, the desired estimate \eqref{eq-DG} follows.
\end{proof}

We prove \Cref{thm-Wolff-upper} by iterating \Cref{lem-DG} and using the weak Harnack inequality (\Cref{thm-WHI}).

\begin{proof}[Proof of \Cref{thm-Wolff-upper}]
By \Cref{thm-measure} there exists a unique nonnegative Borel measure $\mu$ in $\Omega$ such that $\mathcal{L}u=\mu$ in $\Omega$ in the sense of distribution.

	For $j=0, 1, \cdots$, we set $R_j=2^{-j-1}R$ and $B^j=B_{R_j}$. Let $\sigma \in (0,\min\{s, n/p\})$ and let $\delta$ be any constant satisfying \eqref{eq-delta}. For $\varepsilon \in (0,1)$ to be determined later, we define $l_0=0$ and 
	\begin{equation*}
		l_{j+1}=l_j+R_j^s\bar{g}^{-1}\left(\frac{1}{\varepsilon}\left(\fint_{B^j}\bar{g}^{\delta}\left(\frac{(u-l_j)_+}{R_j^s}\right)\,\mathrm{d}x\right)^{1/\delta}\right) \quad \text{for}~ j=0,1, \cdots,
	\end{equation*}
and set $\theta_j=|B^{j-1}\cap \{u>l_j\}|/|B^{j-1}|$ for $j=1,2, \cdots.$ Note that we have
\begin{align}\label{eq-theta-cond}
	\begin{split}
	\theta_j &\leq \frac{1}{|B^{j-1}|} \int_{B^{j-1} \cap \{u>l_j\}} \left(\frac{\bar{g}((u-l_{j-1})_+/R^s_{j-1})}{\bar{g}((l_j-l_{j-1})/R^s_{j-1})}\right)^{\delta} \,\mathrm{d}x \\
	&\leq \frac{1}{|B^{j-1}|} \int_{B^{j-1}} \left(\frac{\bar{g}((u-l_{j-1})_+/R^s_{j-1})}{\bar{g}((l_j-l_{j-1})/R^s_{j-1})}\right)^{\delta} \,\mathrm{d}x=\varepsilon^{\delta}.
	\end{split}
\end{align}
By applying \Cref{lem-DG} with $u-l_j$ in $B^{j-1}$ for $j \geq 1$ and using \eqref{eq-theta-cond}, we obtain
\begin{align*}
\bar{g}\left( \frac{l_{j+1}-l_{j}}{R_{j}^{s}} \right) 
&\leq C \varepsilon^{\rho_{1}\delta} \bar{g}\left( \frac{l_{j}-l_{j-1}}{R_{j-1}^{s}} \right) + C \varepsilon^{\rho_{2}\delta - 1} \left( \frac{\mu(B^{j-1})}{R_{j-1}^{n-s}} + T((u-l_j)_+; x_0, R_{j-1}) \right) \\
&\leq C \varepsilon^{\rho_{1}\delta} \bar{g}\left( \frac{l_{j}-l_{j-1}}{R_{j-1}^{s}} \right) + C \varepsilon^{\rho_{2}\delta - 1} \left( \frac{\mu(B^{j-1})}{R_{j-1}^{n-s}} + 2^{-sj} T(u_+; x_0,  R_0)+T_{j} \right),
\end{align*}
where
\begin{equation*}
T_{j} = R_{j-1}^s \int_{B^0 \setminus B^{j-1}} g\left(\frac{(u(x)-l_j)_+}{|x-x_0|^s}\right) \frac{\mathrm{d}x}{|x-x_0|^{n+s}}.
\end{equation*}
Note that we have
\begin{align*}
T_{j}
&\leq \sum_{i=2}^{j} \frac{R_{j-1}^{s}}{R_{j-i+1}^{n+s}} \int_{B^{j-i} \setminus B^{j-i+1}} g\left(\frac{(u(x)-l_j)_+}{R_{j-i+1}^s}\right) \,\mathrm{d}x \\
&\leq C \sum_{i=2}^{j} 2^{-si} \fint_{B^{j-i}} g\left(\frac{(u(x)-l_j)_+}{R_{j-i}^s}\right) \,\mathrm{d}x \\
&\leq C \sum_{i=2}^{j} 2^{-si} \left(\fint_{B^{j-i}} g^{\delta}\left(\frac{(u(x)-l_j)_+}{R_{j-i}^{s}}\right) \mathrm{d}x\right)^{1/\delta} \left(\frac{|B^{j-i} \cap \{u>l_j\}|}{|B^{j-i}|}\right)^{1-1/\delta} \\
&\leq C \sum_{i=2}^{j} 2^{-si} \varepsilon \bar{g}\left( \frac{l_{j-i+1}-l_{j-i}}{R_{j-i}^{s}} \right) \theta_{j-i+1}^{1-1/\delta} \\
&\leq C \varepsilon^{\delta} \sum_{i=2}^{j} 2^{-si} g\left( \frac{l_{j-i+1}-l_{j-i}}{R_{j-i}^{s}} \right)
\end{align*}
by using H\"older's inequality, \eqref{eq-bar-g-comp}, and \eqref{eq-theta-cond}. We apply \Cref{lem-g-dyda}~(ii) with $\beta=2^{\frac{s}{2(q-1)}}$ and use \Cref{lem-G} to deduce
\begin{align*}
T_{j}
\leq C \varepsilon^{\delta} g\left( \sum_{i=2}^{j} \beta^{i} g^{-1} \left( 2^{-si} g\left( \frac{l_{j-i+1}-l_{j-i}}{R_{j-i}^{s}} \right) \right) \right) \leq C \varepsilon^{\delta} g\left( \sum_{i=2}^{j} 2^{-\frac{s}{2(q-1)}i} \frac{l_{j-i+1}-l_{j-i}}{R_{j-i}^{s}} \right).
\end{align*}
We have obtained
\begin{align*}
\bar{g}\left( \frac{l_{j+1}-l_{j}}{R_{j}^{s}} \right)
&\leq C \varepsilon^{\rho_{1}\delta} \bar{g}\left( \frac{l_{j}-l_{j-1}}{R_{j-1}^{s}} \right) + C \varepsilon^{\rho_{2}\delta-1+\delta} g\left( \sum_{i=2}^{j} 2^{-\frac{s}{2(q-1)}i} \frac{l_{j-i+1}-l_{j-i}}{R_{j-i}^{s}} \right) \\
&\quad + C \varepsilon^{\rho_{2}\delta - 1} \left( \frac{\mu(B^{j-1})}{R_{j-1}^{n-s}} + 2^{-sj} T(u_+; x_0, R_0) \right).
\end{align*}
Recall that $\rho_{2}\delta-1+\delta=\delta(1-p/p^{\ast}_{\sigma}) > 0$. By taking $\bar{g}^{-1}$ on both sides and using \Cref{lem-G}, we have
\begin{align*}
l_{j+1} - l_{j}
&\leq C \varepsilon^{\frac{\rho_1 \delta}{q-1}}(l_{j}-l_{j-1}) + C \varepsilon^{\frac{\rho_{2}\delta-1+\delta}{q-1}} \sum_{i=2}^j 2^{-\frac{s}{2(q-1)}i} (l_{j-i+1}-l_{j-i}) \\
&\quad + C(\varepsilon) \left( R_{j}^{s} \bar{g}^{-1} \left( \frac{\mu(B^{j-1})}{R_{j-1}^{n-s}} \right) + 2^{-\frac{sq}{q-1}j} \mathrm{Tail}_{g}(u_{+}; x_{0}, R_{0}) \right),
\end{align*}
which implies that for $k \geq 2$
\begin{align*}
	l_k&=l_1+\sum_{j=1}^{k-1}(l_{j+1}-l_j)\\
	&\leq l_1+C\varepsilon^{\frac{\rho_1 \delta}{q-1}}l_{k-1}+C\varepsilon^{\frac{\rho_{2}\delta-1+\delta}{q-1}} \sum_{j=1}^{k-1}\sum_{i=2}^j 2^{-\frac{s}{2(q-1)}i} (l_{j-i+1}-l_{j-i})\\
	&\quad + C(\varepsilon) \sum_{j=1}^{k-1} \left( R^s_{j}\bar{g}^{-1}\left(\frac{\mu(B^{j-1})}{R_{j-1}^{n-s}}\right)+ \mathrm{Tail}_g(u_+; x_0, R_{0}) \right).
\end{align*}
Since
\begin{align*}
	 \sum_{j=1}^{k-1}\sum_{i=2}^j 2^{-\frac{s}{2(q-1)}i} (l_{j-i+1}-l_{j-i})= \sum_{i=2}^{k-1} 2^{-\frac{s}{2(q-1)}i}\sum_{j=i}^{k-1} (l_{j-i+1}-l_{j-i}) \leq \sum_{i=2}^{k-1} 2^{-\frac{s}{2(q-1)}i}l_k \leq Cl_k
\end{align*}
and $\sum_{j} 2^{-\frac{sq}{q-1}j} \leq C$, we obtain
\begin{equation*}
l_k \leq l_1+C \left( \varepsilon^{\frac{\rho_1 \delta}{q-1}}+\varepsilon^{\frac{\rho_{2}\delta-1+\delta}{q-1}} \right) l_{k} +C(\varepsilon) \left( {\bf W}^{\mu}_{s, G}(x_{0}, R) + \mathrm{Tail}_g(u_+; x_0, R_{0}) \right).
\end{equation*}
We now take $\varepsilon \in (0,1)$ sufficiently small so that $C(\varepsilon^{\frac{\rho_1 \delta}{q-1}}+\varepsilon^{\frac{\rho_{2}\delta-1+\delta}{q-1}}) <1/2$, then
\begin{equation*}
l_{k} \leq 2l_{1} + C\, {\bf W}^{\mu}_{s, G}(x_{0}, R) + C\, \mathrm{Tail}_g(u_+; x_0, R_{0}).
\end{equation*}
Note that \eqref{eq-theta-cond} implies $\inf_{B^{k-1}}u \leq l_k$ since $\varepsilon^{\delta}<1$. By the lower semicontinuity of $u$, we have
\begin{align*}
	u(x_0)
	&\leq \liminf_{k \to \infty}l_k \\
	&\leq CR^s\bar{g}^{-1}\left(\left(\fint_{B_{R/2}}\bar{g}^{\delta}\left(\frac{u_+}{R^s}\right)\,\mathrm{d}x\right)^{1/\delta}\right)+C\,\mathbf{W}_{s, G}^{\mu}(x_0, R)+C\,\mathrm{Tail}_g(u_+; x_0, R_{0}).
\end{align*}
Finally, since $\delta<n/(n-\sigma(p-1))<n/(n-sp)$, an application of \Cref{thm-WHI} together with \Cref{lem-G} finishes the proof.
\end{proof}

\section{Potential lower bound}\label{sec-lower}

In this section, we prove the lower bound of the Wolff potential estimate. We adapt the method used by Kilpel\"ainen--Mal\'y~\cite{KM92} to nonlocal equations with Orlicz growth.

Let us first provide some lemmas. The first lemma is a Caccioppoli-type estimate, which is well known in the literature. See Byun--Kim--Ok~\cite{BKO23} and Chaker--Kim--Weidner~\cite{CKW22} for instance.

\begin{lemma}\label{lem-Caccioppoli}
Let $\eta \in C^{\infty}_{c}(B_{R})$ be nonnegative and let $u$ be a supersolution of $\mathcal{L}u=0$ in $B_{R}=B_{R}(x_{0})$ such that $u \geq 0$ in $B_{R}$. There exists a constant $C = C(p, q, \Lambda) > 0$ such that for any $l \in \mathbb{R}$,
\begin{align*}
&\int_{B_{R}} \int_{B_{R}} G(|D^sw_-|) (\eta(x) \lor \eta(y))^{q} \frac{\mathrm{d}y \,\mathrm{d}x}{|x-y|^{n}} \\
&\leq C \int_{B_{R}} \int_{B_{R}} G\left( \frac{w_{-}(x) \lor w_{-}(y)}{R^{s}} \right) (R^{s} |D^s\eta|)^{q} \frac{\mathrm{d}y \,\mathrm{d}x}{|x-y|^{n}} \\
&\quad + C \left( \int_{B_{R}} w_{-}(x) \,\mathrm{d}x \right) \sup_{x \in \mathrm{supp}\,\eta} \int_{\mathbb{R}^{n} \setminus B_{R}} g\left( \frac{w_{-}(y)}{|x-y|^{s}} \right) \frac{\mathrm{d}y}{|x-y|^{n+s}},
\end{align*}
where $w = u-l$.
\end{lemma}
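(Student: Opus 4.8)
The natural route is to use $\varphi:=w_-\eta^q$, with $w:=u-l$, as a test function in the weak supersolution inequality, and then to separate the local interactions from those with the complement of $B_R$. First I would check admissibility: since $u\ge 0$ in $B_R$ we have $0\le w_-\le l_+$ there, so $\varphi$ is bounded, nonnegative, supported in $\supp\eta\Subset B_R$, and --- as $w_-=(u-l)_-$ inherits the $W^{s,G}_{\mathrm{loc}}(B_R)$-regularity of $u$ --- it lies in $V^{s,G}_0(B_R)$ after the usual mollification and cutoff; moreover Young's inequality $g(a)b\le qG(a)+G(b)$ (from $ag(a)\le qG(a)$ and convex duality) together with $u\in W^{s,G}_{\mathrm{loc}}(B_R)$ and $u_-\in L^g_s(\mathbb{R}^n)$ makes all the integrals below absolutely convergent. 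Since $D^s$ kills constants, the supersolution inequality gives $\mathcal{E}(w,\varphi)=\mathcal{E}(u,\varphi)\ge 0$. (If $l\le 0$ then $w_-\equiv 0$ on $B_R$ and the left-hand side of the claim vanishes; so assume $l>0$.) Splitting $\mathbb{R}^n\times\mathbb{R}^n$ into $B_R\times B_R$, the two mixed regions, and $(\mathbb{R}^n\setminus B_R)^2$: the last contributes nothing because $\varphi$ is supported in $B_R$, and relabelling $x\leftrightarrow y$ merges the two mixed regions, so that $0\le\mathcal{E}(w,\varphi)=\mathrm{I}+\mathrm{II}$, where $\mathrm{I}$ denotes the integral over $B_R\times B_R$ and
\begin{equation*}
\mathrm{II}=\int_{\supp\eta}\int_{\mathbb{R}^n\setminus B_R} g(|D^sw|)\frac{D^sw}{|D^sw|}\,\frac{w_-(x)\eta^q(x)}{|x-y|^s}\,\frac{k(x,y)+k(y,x)}{|x-y|^n}\,\mathrm{d}y\,\mathrm{d}x.
\end{equation*}

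For $\mathrm{II}$, the part where $u(x)\le u(y)$ (i.e.\ $D^sw\le 0$) has a nonpositive integrand and is dropped. On the rest one may assume $w_-(x)>0$, hence $u(y)<u(x)<l$, so $0<w_-(x)\le w_-(y)$ and $|D^sw|=(w_-(y)-w_-(x))/|x-y|^s\le w_-(y)/|x-y|^s$; monotonicity of $g$ then gives $g(|D^sw|)\le g(w_-(y)/|x-y|^s)$. Using $k(x,y)+k(y,x)\le 2\Lambda$, $\eta^q\le 1$, and pulling the supremum over $x\in\supp\eta$ outside the inner integral yields
\begin{equation*}
\mathrm{II}\le 2\Lambda\left(\int_{B_R} w_-\,\mathrm{d}x\right)\sup_{x\in\supp\eta}\int_{\mathbb{R}^n\setminus B_R} g\!\left(\frac{w_-(y)}{|x-y|^s}\right)\frac{\mathrm{d}y}{|x-y|^{n+s}},
\end{equation*}
which is the second term on the right-hand side of the claim.

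The heart of the matter --- and the step I expect to be the main obstacle --- is the estimate of $\mathrm{I}$. It rests on a pointwise algebraic inequality of Caccioppoli type, the Orlicz analogue of the lemmas in Byun--Kim--Ok~\cite{BKO23} and Chaker--Kim--Weidner~\cite{CKW22}: for all $x,y\in B_R$,
\begin{equation*}
g(|D^sw|)\frac{D^sw}{|D^sw|}\,D^s(w_-\eta^q)\le -c\,G(|D^sw_-|)\,(\eta(x)\lor\eta(y))^q+C\,G\!\left(\frac{w_-(x)\lor w_-(y)}{|x-y|^s}\right)|\eta(x)-\eta(y)|^q,
\end{equation*}
with $c=c(p,q)>0$ and $C=C(p,q)>0$. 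Proving this inequality is the delicate point: one argues by cases on the signs of $w(x)$ and $w(y)$ and on the ordering of $\eta(x)$ and $\eta(y)$, writes $w_-(x)\eta^q(x)-w_-(y)\eta^q(y)$ in terms of the increments of $w_-$ and of $\eta^q$, uses the left-hand bound $tg(t)\ge pG(t)$ of \eqref{eq-pq} to produce the good term, and absorbs the cross terms via the Young-type inequality \eqref{eq-alg} and the doubling estimates of \Cref{lem-G}; in contrast with the pure power case, each of these algebraic steps must be redone with a general Young function while keeping all constants structural. Multiplying the pointwise inequality by $k(x,y)/|x-y|^n$, integrating over $B_R\times B_R$, using $\Lambda^{-1}\le k\le\Lambda$, and --- since $|x-y|\le 2R$ there --- converting $G(\,\cdot\,/|x-y|^s)$ into $G(\,\cdot\,/R^s)$ by \Cref{lem-G}, which gives $G\!\big((w_-(x)\lor w_-(y))/|x-y|^s\big)|\eta(x)-\eta(y)|^q\le 2^{sq}\,G\!\big((w_-(x)\lor w_-(y))/R^s\big)(R^s|D^s\eta|)^q$, I would obtain
\begin{equation*}
-\mathrm{I}\ge c'\int_{B_R}\int_{B_R} G(|D^sw_-|)(\eta(x)\lor\eta(y))^q\,\frac{\mathrm{d}y\,\mathrm{d}x}{|x-y|^n}-C'\int_{B_R}\int_{B_R} G\!\left(\frac{w_-(x)\lor w_-(y)}{R^s}\right)(R^s|D^s\eta|)^q\,\frac{\mathrm{d}y\,\mathrm{d}x}{|x-y|^n}.
\end{equation*}
Since $0\le\mathrm{I}+\mathrm{II}$ forces $-\mathrm{I}\le\mathrm{II}$, combining the last display with the bound for $\mathrm{II}$ gives the assertion, with the constant depending only on $p,q,\Lambda$ (the factor $2^{sq}$ from the rescaling being absorbed into a $q$-dependent constant).
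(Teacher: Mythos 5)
Your proof is correct and follows the same route as the references the paper itself cites for this lemma (Byun--Kim--Ok and Chaker--Kim--Weidner); the paper does not give its own proof. Two small remarks are in order. First, the choice $\varphi=w_-\eta^q$, the decomposition into $B_R\times B_R$ and mixed regions, and the tail estimate $\mathrm{II}\le 2\Lambda\bigl(\int_{B_R}w_-\bigr)\sup_{x\in\supp\eta}\int_{\mathbb{R}^n\setminus B_R}g(w_-(y)/|x-y|^s)\,|x-y|^{-n-s}\,\mathrm{d}y$ implicitly use $\eta\le 1$ (otherwise one would get $\int w_-\eta^q$ in place of $\int w_-$); this normalization is not written into the lemma's hypotheses but is tacitly assumed, and is satisfied wherever the lemma is invoked. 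Second, the pointwise Caccioppoli inequality you state is indeed correct; the proof is a case analysis on $\operatorname{sgn}w(x)$, $\operatorname{sgn}w(y)$ after expanding $w_-(x)\eta^q(x)-w_-(y)\eta^q(y)$ as $D^sw_-\cdot\eta^q$ plus a cross term in $D^s\eta^q$, producing the good term via $tg(t)\ge pG(t)$ from \eqref{eq-pq} and absorbing the cross term via \eqref{eq-alg} together with the power bounds of \Cref{lem-G}; no dichotomy on the relative size of $\eta(x)$ and $\eta(y)$ is strictly necessary if one expands $\eta^q(x)-\eta^q(y)$ by the mean value inequality $|\eta^q(x)-\eta^q(y)|\le q(\eta(x)\vee\eta(y))^{q-1}|\eta(x)-\eta(y)|$ and applies the Young-type estimate to the resulting product, though the comparable/non-comparable split used in the paper's own \Cref{lem-ineq2} also works. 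Your conversion of $G\bigl((w_-(x)\vee w_-(y))/|x-y|^s\bigr)|\eta(x)-\eta(y)|^q$ into $G\bigl((w_-(x)\vee w_-(y))/R^s\bigr)(R^s|D^s\eta|)^q$ via \Cref{lem-G}~(i)--(ii) using $|x-y|\le 2R$ is also fine, with the factor $2^{sq}$ absorbed into $C(p,q)$.
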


The next lemma is a nonlocal analogue of \cite[Lemma~3.4]{KM92}.

\begin{lemma}\label{lem-aux}
Let $u$ be a supersolution of $\mathcal{L}u=0$ in $B_{R}=B_{R}(x_{0})$ such that $u \geq 0$ in $B_{R}$. Let $\eta \in C_{c}^{\infty}(B_{R})$ be a function satisfying $|\nabla \eta| \leq c/R$. Then
\begin{equation}\label{eq-aux}
\fint_{B_{3R/4}} \int_{B_{3R/4}} g(|D^su|) R^{s} |D^s\eta| \frac{\mathrm{d}y\,\mathrm{d}x}{|x-y|^{n}} \leq C g\left( \inf_{B_{R/2}} \frac{u}{R^{s}} \right) + C\, T(u_-; x_0, R)
\end{equation}
for some constant $C=C(n, p, q, s, \Lambda, c) > 0$.
\end{lemma}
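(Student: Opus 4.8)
The plan is to adapt the proof of \cite[Lemma~3.4]{KM92} to the present nonlocal Orlicz framework: derive a weighted Caccioppoli estimate by testing the equation against a negative power of $\bar g(u/R^{s})$, and then combine it with a H\"older/Young argument and the weak Harnack inequality \Cref{thm-WHI}. It is crucial that the weight be built from $\bar g$ rather than $g$, since only $\bar g$ enjoys the chain rule \eqref{eq-bar-g-pq}.

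\emph{Reductions.} By \Cref{thm-lsc}, \Cref{thm-relation}~(i) and \Cref{thm-pointwise} we may assume $u$ is superharmonic, so that $\inf_{B_{R/2}}u=\essinf_{B_{R/2}}u$. Because $D^{s}$ kills additive constants, $u+\epsilon$ is again a supersolution for each $\epsilon>0$, with $(u+\epsilon)_{-}\le u_{-}$ (hence $T((u+\epsilon)_{-};x_{0},R)\le T(u_{-};x_{0},R)$) and $\inf_{B_{R/2}}(u+\epsilon)=\inf_{B_{R/2}}u+\epsilon$; since $g$ is continuous, it suffices to prove \eqref{eq-aux} under the extra assumption $u\ge\epsilon_{0}>0$ in $B_{R}$ and then let $\epsilon_{0}\to0$. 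Finally, as $R^{s}|D^{s}\eta|\le c\,(|x-y|/R)^{1-s}$, it is enough to bound
\begin{equation*}
J:=\fint_{B_{3R/4}}\int_{B_{3R/4}} g(|D^{s}u|)\left(\frac{|x-y|}{R}\right)^{1-s}\frac{\mathrm{d}y\,\mathrm{d}x}{|x-y|^{n}}.
\end{equation*}

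\emph{Step 1: a weighted Caccioppoli estimate.} Fix a small $\gamma>0$ (with $\gamma<p-1$ and, if $sp<n$, also $\gamma$ small relative to $\tfrac{n}{n-sp}-1$). Put $\Psi(t):=\bar g(t/R^{s})^{-\gamma}$ --- positive, decreasing, and, thanks to $u\ge\epsilon_{0}$, bounded and Lipschitz on the relevant range --- and take $\zeta\in C^{\infty}_{c}(B_{7R/8})$ with $0\le\zeta\le1$, $\zeta\equiv1$ on $B_{3R/4}$ and $|\nabla\zeta|\le C/R$. Then $\varphi:=\Psi(u)\zeta^{q}$ is an admissible nonnegative test function in $V^{s,G}_{0}(B_{R})$, so $\mathcal E(u,\varphi)\ge0$. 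Expanding $D^{s}\varphi$ by the discrete Leibniz rule, the term carrying $\Psi(u(x))-\Psi(u(y))$ has the sign opposite to $D^{s}u$ and is therefore favourable. Estimating it from below by means of \eqref{eq-bar-g-pq}, \eqref{eq-bar-g-comp}, \Cref{lem-G} and the algebraic inequalities of \Cref{sec-ineq} (cf.\ \Cref{lem-ineq1}, \Cref{lem-ineq2}), controlling the interior $\zeta$-gradient term by \eqref{eq-alg} (as in the unweighted prototype \Cref{lem-Caccioppoli}), and handling the interaction with $\mathbb{R}^{n}\setminus B_{R}$ through $g(|D^{s}u|)\,\mathrm{sgn}(u(x)-u(y))\le g\big((u(x)-u(y))_{+}/|x-y|^{s}\big)\lesssim g(u(x)/|x-y|^{s})+g(u_{-}(y)/|x-y|^{s})$ together with $|x-y|\simeq|x-x_{0}|$ for $x\in\supp\zeta$, $y\notin B_{R}$ (so the $u(x)$-part is reabsorbed and only $u_{-}$ enters the tail), one arrives at an estimate of the form
\begin{equation*}
\fint_{B_{3R/4}}\int_{B_{3R/4}}G(|D^{s}u|)\,\bar g\!\left(\tfrac{u(x)\lor u(y)}{R^{s}}\right)^{-1-\gamma}\!\chi(x,y)\,\frac{\mathrm{d}y\,\mathrm{d}x}{|x-y|^{n}}\ \lesssim\ \fint_{B_{7R/8}}\bar g\!\left(\tfrac{u}{R^{s}}\right)^{1-\gamma}\mathrm{d}x+T(u_{-};x_{0},R)
\end{equation*}
(up to powers of $R$), where $\chi$ is a harmless favourable factor generated by the difference quotient of $\Psi$, and the implicit constant depends only on $n,p,q,s,\Lambda,\gamma$.

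\emph{Step 2: H\"older/Young and weak Harnack.} Using \eqref{eq-alg} with a gauge adapted to $\bar g((u(x)\lor u(y))/R^{s})$ --- the Orlicz counterpart of splitting $|\nabla u|^{p-1}$ in \cite{KM92} --- the integrand of $J$ is dominated by $\theta$ times the integrand on the left of the Step~1 estimate, plus $C(\theta)$ times a term of the form $\bar g((u(x)\lor u(y))/R^{s})^{1+(p-1)\gamma}(|x-y|/R)^{(1-s)p}$. Integrating over $B_{3R/4}\times B_{3R/4}$, bounding the first contribution by the right-hand side of Step~1, and integrating out the polynomial weight $(|x-y|/R)^{(1-s)p}$ in the second (it is integrable and $O(1)$ after normalising by $|B_{R}|$), one is left with a bound for $J$ by a combination of averages over $B_{7R/8}$ of powers of $\bar g(u/R^{s})$ (all with exponents below the weak Harnack threshold $n/(n-sp)$, provided $\gamma$ was chosen small enough; no restriction if $sp\ge n$) and of $T(u_{-};x_{0},R)$. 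Applying \Cref{thm-WHI} --- with $\Omega=B_{R}$, whence $m=+\infty$, $u_{m}^{-}=u$, $(u_{m}^{-})_{-}=u_{-}$, $\tau_{1}=7/8$, $\tau_{2}=1/2$ --- to each such average replaces $u$ by $\inf_{B_{R/2}}u$ and contributes $\mathrm{Tail}_{g}(u_{-};x_{0},R)$; just as in \cite{KM92}, the various exponents enter symmetrically and recombine (with the help of Young's inequality on the cross terms) to yield the single clean exponent $1$, and with $\bar g\simeq g$ from \eqref{eq-bar-g-comp} this gives $J\le C\,g(\inf_{B_{R/2}}u/R^{s})+C\,T(u_{-};x_{0},R)$. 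Letting $\epsilon_{0}\to0$ completes the proof.

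\emph{Main obstacle.} The technical heart is the interplay, in Step~1, between the difference quotient of $\Psi$ and $G(|D^{s}u|)$: since $\bar g$ is not a pure power (and since no chain rule of the type \eqref{eq-bar-g-pq} is available for $g$ itself), one must carefully exploit the $\bar g$-algebra of \Cref{sec-ineq} to recognize the favourable term as controlling a genuine weighted energy and to pin down the exponents produced by the $\zeta$-gradient and exterior terms; keeping these exponents within the weak Harnack range forces the (essentially harmless) smallness restriction on $\gamma$. The remaining pieces --- the symmetric recombination of exponents à la \cite{KM92}, and the uniformity of all constants in $\epsilon_{0}$ (automatic once one tests with $\Psi(u)\zeta^{q}$ rather than a fixed truncation, and because the single $u_{+}$-contribution from the exterior carries a favourable sign and is discarded) --- are routine.
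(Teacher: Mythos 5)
Your overall template --- a weighted Caccioppoli estimate with a $\bar g^{-\gamma}$-type weight, the Young inequality \eqref{eq-alg} with a tailored gauge, and the weak Harnack inequality applied to several nearby exponents --- is indeed the one the paper uses; the paper just reverses the order, applying \eqref{eq-alg} first with $\varepsilon=\bar g^{\tau}(m)\fint_{v(y)}^{v(x)}\frac{\mathrm{d}t}{t\bar g^{\tau}(t)}$ so that the resulting $\varepsilon\,G(|D^s u|)$ piece is precisely the quantity controlled by the weighted Caccioppoli estimate of \cite[Lemma~3.6(i)]{KL23}, which it then cites rather than re-derives. That reordering is cosmetic.

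The genuine gap is in your reduction and in the final recombination. You shift $u\mapsto u+\epsilon_0$ only to make the weight $\Psi(u)$ bounded, and then assert that the exponents ``recombine à la KM92 with the help of Young's inequality on the cross terms'' and that uniformity in $\epsilon_0$ is automatic. Neither holds. The paper instead shifts by $d=\mathrm{Tail}_g(u_-;x_0,R)$, setting $v=(u+d)/R^s$ and $m=\inf_{B_{R/2}}v$, and this choice does two things your $\epsilon_0$-shift does not. First, the weighted Caccioppoli estimate produces a \emph{multiplicative} tail factor of the form $1+T((u+d)_-)/g(d/R^s)\lesssim 1$; with an $\epsilon_0$-shift this factor becomes $1+T(u_-)/g(\epsilon_0/R^s)$, which blows up as $\epsilon_0\to 0$, so the constant is not uniform. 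Second, after weak Harnack one is left with ratios $\bar g^{\delta}(T)/\bar g^{\delta}(m)$ for $\delta\in\{1-\tau,\,1+\tau(p-1),\,1+\tau(q-1)\}$, where $T=R^{-s}\mathrm{Tail}_g((u+d)_-;x_0,R)$; the shift guarantees $T\le d/R^s\le m$, so all these ratios are $\le 1$ and the conclusion $A\lesssim\bar g(m)$ is immediate. Without that a priori domination of $T$ by $m$, the terms with $\delta>1$ give products $\bar g^{-\tau(p-1)}(m)\,\bar g^{1+\tau(p-1)}(T)$ and $\bar g^{-\tau(q-1)}(m)\,\bar g^{1+\tau(q-1)}(T)$ in which $\bar g(m)$ appears with a \emph{negative} exponent; Young's inequality only recombines products of nonnegative powers summing to one and cannot help here, and these terms indeed diverge when $\inf_{B_{R/2}}u$ is small while $\mathrm{Tail}_g(u_-)$ is not. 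In \cite{KM92} this issue simply does not arise because the equation is local and there is no tail; in the nonlocal setting the tail must be forced beneath $m$, and that is exactly what the $d$-shift accomplishes. Your proposal is missing this device.
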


\begin{proof}
For
\begin{equation}\label{eq-d}
d=\mathrm{Tail}_{g}(u_{-}; x_{0}, R),
\end{equation}
we define a function $v=(u+d)/R^s$ and set $m=\inf_{B_{R/2}}v$. Let $A$ denote the left-hand side of \eqref{eq-aux} and let
\begin{equation}\label{eq-tau}
0 < \tau < 1 \land \frac{sp}{(q-1)(n-sp)_+}.
\end{equation}
Here, $1/(n-sp)_+$ is understood as $\infty$ when $sp \geq n$. By applying the inequality \eqref{eq-alg} with
\begin{equation*}
a=|D^su|, \quad b= R^{s} |D^s\eta|, \quad \varepsilon = \bar{g}^{\tau}(m) \fint_{v(y)}^{v(x)} \frac{\mathrm{d}t}{t\bar{g}^{\tau}(t)},
\end{equation*}
and using \eqref{eq-pq}, we have
\begin{align*}
A
&\leq q \fint_{B_{3R/4}} \int_{B_{3R/4}} \varepsilon G(|D^su|) \frac{\mathrm{d}y\,\mathrm{d}x}{|x-y|^{n}} + \fint_{B_{3R/4}} \int_{B_{3R/4}} g\left( \frac{1}{\varepsilon} R^{s} |D^s\eta| \right) R^{s} |D^s\eta| \frac{\mathrm{d}y\,\mathrm{d}x}{|x-y|^{n}} \\
&=: I_{1} + I_{2}.
\end{align*}
The Caccioppoli estimate (see \cite[Lemma~3.6~(i)]{KL23} for instance) shows that
\begin{equation*}
I_{1} \leq C \bar{g}^{\tau}(m) \left( 1+\frac{T((u+d)_{-}; x_{0}, 7R/8)}{g(d/(7R/8)^{s})} \right) \fint_{B_{7R/8}} \bar{g}^{1-\tau}(v) \,\mathrm{d}x
\end{equation*}
for some $C = C(n, p, q, s, \Lambda, \tau) > 0$. Note that the dependence of $C$ on $\tau$ can be removed by the choice of $\tau$. Since $(u+d)_{-} \leq u_{-}$ and $u \geq 0$ in $B_{R}$, we obtain by using \eqref{eq-bar-g-comp} and recalling \eqref{eq-d} that
\begin{equation*}
T((u+d)_-; x_0, 7R/8) \leq C T(u_-; x_0, R) \leq C \bar{g}(d/R^s).
\end{equation*}
Thus, we have
\begin{equation}\label{eq-aux-I1}
I_{1} \leq C\bar{g}^{\tau}(m) \fint_{B_{7R/8}} \bar{g}^{1-\tau}(v) \,\mathrm{d}x.
\end{equation}

Let us next estimate $I_{2}$. Since
\begin{equation*}
R^{s}|D^s\eta| \leq R^{s} |\nabla \eta| |x-y|^{1-s} \leq c \frac{|x-y|^{1-s}}{R^{1-s}} \leq 2c,
\end{equation*}
we have by using \Cref{lem-G}
\begin{equation*}
I_{2} \leq C \fint_{B_{3R/4}} \int_{B_{3R/4}} g\left( \frac{1}{\varepsilon} \right) \left( \frac{R^{s}}{2c} |D^s\eta| \right)^{p} \frac{\mathrm{d}y\,\mathrm{d}x}{|x-y|^{n}}
\end{equation*}
for some $C=C(p, q, c)>0$. Note that we have
\begin{align*}
\varepsilon \geq \frac{\bar{g}^{\tau}(m)}{v(x) \bar{g}^{\tau}(v(x)) \lor v(y) \bar{g}^{\tau}(v(y))} \geq \frac{1}{v(x) \lor v(y)} \frac{\bar{g}^{\tau}(m)}{\bar{g}^{\tau}(v(x)) \lor \bar{g}^{\tau}(v(y))},
\end{align*}
and hence
\begin{equation*}
g\left( \frac{1}{\varepsilon} \right) \leq C \bar{g}(v(x) \lor v(y)) \left( \left( \frac{\bar{g}^{\tau}(v(x)) \lor \bar{g}^{\tau}(v(y))}{\bar{g}^{\tau}(m)} \right)^{p-1} \lor \left( \frac{\bar{g}^{\tau}(v(x)) \lor \bar{g}^{\tau}(v(y))}{\bar{g}^{\tau}(m)} \right)^{q-1} \right)
\end{equation*}
by \eqref{eq-bar-g-comp} and \Cref{lem-G}. This leads us to
\begin{align}\label{eq-aux-I2}
\begin{split}
I_{2}
&\leq C \fint_{B_{3R/4}} \left( \frac{\bar{g}^{1+\tau(p-1)}(v(x))}{\bar{g}^{\tau(p-1)}(m)} \lor \frac{\bar{g}^{1+\tau(q-1)}(v(x))}{\bar{g}^{\tau(q-1)}(m)} \right) \int_{B_{3R/4}} R^{sp}|D^s\eta|^p \frac{\mathrm{d}y \,\mathrm{d}x}{|x-y|^n} \\
&\leq \frac{C}{\bar{g}^{\tau(p-1)}(m)} \fint_{B_{7R/8}} \bar{g}^{1+\tau(p-1)}(v) \,\mathrm{d}x + \frac{C}{\bar{g}^{\tau(q-1)}(m)} \fint_{B_{7R/8}} \bar{g}^{1+\tau(q-1)}(v) \,\mathrm{d}x.
\end{split}
\end{align}
Combining \eqref{eq-aux-I1} and \eqref{eq-aux-I2} yields
\begin{equation*}
A \leq C \left( \frac{\fint_{B_{7R/8}} \bar{g}^{1-\tau}(v) \,\mathrm{d}x}{\bar{g}^{1-\tau}(m)} + \frac{\fint_{B_{7R/8}} \bar{g}^{1+\tau(p-1)}(v) \,\mathrm{d}x}{\bar{g}^{1+\tau(p-1)}(m)} + \frac{\fint_{B_{7R/8}} \bar{g}^{1+\tau(q-1)}(v) \,\mathrm{d}x}{\bar{g}^{1+\tau(q-1)}(m)} \right) \bar{g}(m).
\end{equation*}
We want to apply the weak Harnack inequality (\Cref{thm-WHI}) to $u+d$ with $\delta = 1-\tau$, $\delta=1+\tau(p-1)$, and $\delta = 1+\tau(q-1)$. Indeed, these values are admissible by the choice \eqref{eq-tau} of $\tau$. Therefore, \Cref{thm-WHI} with $\tau_{1}=7/8$ and $\tau_{2}=1/2$ yields
\begin{equation*}
A \leq C \left( 1 + \frac{\bar{g}^{1-\tau}(T)}{\bar{g}^{1-\tau}(m)} + \frac{\bar{g}^{1+\tau(p-1)}(T)}{\bar{g}^{1+\tau(p-1)}(m)} + \frac{\bar{g}^{1+\tau(q-1)}(T)}{\bar{g}^{1+\tau(q-1)}(m)} \right) \bar{g}(m),
\end{equation*}
where $T = R^{-s} \mathrm{Tail}_{g}((u+d)_{-}; x_{0}, R)$. Since $T \leq d/R^{s} \leq m$, we conclude
\begin{equation*}
A \leq C \bar{g}(m) \leq C g\left( \inf_{B_{R/2}} \frac{u}{R^{s}} \right) + C g\left( \frac{d}{R^{s}} \right)
\end{equation*}
by using \eqref{eq-bar-g-comp} and \Cref{lem-G}.
\end{proof}

We now take the measure data into account.

\begin{lemma}\label{lem-lower-key}
Let $u$ be a superharmonic function in $B_{R}=B_{R}(x_{0})$ such that $u \geq 0$ in $B_{R}$ and let $\mu=\mathcal{L}u$ be the Riesz measure, then
\begin{equation*}
	\frac{\mu(B_{R/2})}{R^{n-s}} \leq C g \left( \frac{\inf_{B_{R/2}}u-\inf_{B_R}u}{R^{s}} \right) + Cg\left(\frac{\mathrm{Tail}_g((u-\inf_{B_R}u)_-; x_0, R)}{R^s}\right)
\end{equation*}
for some $C=C(n, p, q, s, \Lambda) > 0$.
\end{lemma}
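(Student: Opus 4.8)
The plan is to test the distributional identity $\mathcal{L}u=\mu$ against a power of a cut-off function, estimate the resulting energy $\mathcal{E}(u,\varphi)$ from above, and split it into a local part governed by \Cref{lem-aux} and a nonlocal tail part governed by the weak Harnack inequality \Cref{thm-WHI}.

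First I would normalize. Since adding a constant changes neither $D^su$ nor the Riesz measure, replacing $u$ by $u-\inf_{B_R}u$ we may assume $\inf_{B_R}u=0$, so that $u_-=0$ a.e.\ in $B_R$; it then suffices to show $\mu(B_{R/2})/R^{n-s}\leq Cg(\inf_{B_{R/2}}u/R^s)+Cg(\mathrm{Tail}_g(u_-;x_0,R)/R^s)$. Fix $\eta\in C_c^\infty(B_{5R/8})$ with $\eta\equiv1$ on $B_{R/2}$, $0\leq\eta\leq1$, and $|\nabla\eta|\leq C/R$, and set $\varphi=\eta^q$. Because $q>1$, the map $t\mapsto t^q$ is $C^1$ on $[0,\infty)$, so $\varphi\in C_c^1(B_{5R/8})$ is an admissible test function and $\mathbf{1}_{B_{R/2}}\leq\varphi$, whence $\mu(B_{R/2})\leq\int_{B_R}\varphi\,\mathrm{d}\mu=\mathcal{E}(u,\varphi)$ by \Cref{thm-measure}. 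To keep this rigorous and to be able to invoke \Cref{lem-aux} and \Cref{thm-WHI}, which are phrased for supersolutions, I would run the estimate with the bounded supersolutions $u_j=u\wedge j$ and use $\mathcal{E}(u,\varphi)=\lim_j\mathcal{E}(u_j,\varphi)$ as in the proof of \Cref{thm-measure}; since $(u_j)_-=u_-$ (truncation from above does not affect the negative part, $j\geq1$) and $\inf_{B_{R/2}}u_j\leq\inf_{B_{R/2}}u$, any bound for $\mathcal{E}(u_j,\varphi)$ uniform in $j$ transfers to $\mathcal{E}(u,\varphi)$.

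Next, decompose $\mathcal{E}(u_j,\varphi)=E_1+E_2$, where $E_1$ is the integral over $B_{3R/4}\times B_{3R/4}$ and $E_2$ collects the contributions in which one variable lies outside $B_{3R/4}$, where $\varphi$ vanishes. For $E_1$, the elementary bound $|D^s\varphi|=|D^s\eta^q|\leq q|D^s\eta|$ (valid since $0\leq\eta\leq1$) together with $k\leq\Lambda$ gives $|E_1|\leq q\Lambda R^{-s}|B_{3R/4}|\fint_{B_{3R/4}}\int_{B_{3R/4}}g(|D^su_j|)R^s|D^s\eta|\,\mathrm{d}y\,\mathrm{d}x/|x-y|^n$, and \Cref{lem-aux} applied to $u_j$ bounds this by $CR^{n-s}(g(\inf_{B_{R/2}}u_j/R^s)+T((u_j)_-;x_0,R))$. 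For $E_2$, since $\varphi\geq0$ only the part of the integrand with $D^su_j>0$ contributes positively and may be kept; there $u_j(x)-u_j(y)\leq u_j(x)+(u_j)_-(y)$ with $u_j(x)\geq0$ for $x\in B_{5R/8}\subset B_R$, so \Cref{lem-G}\,(iii) yields $g((u_j(x)-u_j(y))/|x-y|^s)\leq Cg(u_j(x)/|x-y|^s)+Cg((u_j)_-(y)/|x-y|^s)$. For $x\in B_{5R/8}$ and $y\notin B_{3R/4}$ one has $|x-y|\geq R/8$ and $|x-y|\approx|x_0-y|$; together with $(u_j)_-=u_-=0$ on $B_R\setminus B_{3R/4}$, the $(u_j)_-(y)$-term integrates to $\leq CR^{n-s}T((u_j)_-;x_0,R)$, while the $u_j(x)$-term, after $g(u_j(x)/|x-y|^s)\leq Cg(u_j(x)/R^s)$ and $\int_{\mathbb{R}^n\setminus B_{3R/4}}|x-y|^{-n-s}\,\mathrm{d}y\leq CR^{-s}$, is $\leq CR^{n-s}\fint_{B_{5R/8}}g(u_j/R^s)\,\mathrm{d}x$; the weak Harnack inequality (\Cref{thm-WHI}) applied to $u_j$ with $\delta=1$, $\tau_1=5/8$, $\tau_2=1/2$ (here $B_R\setminus\Omega=\emptyset$, so the truncation level in \Cref{thm-WHI} is $+\infty$) bounds this last average by $Cg(\inf_{B_{R/2}}u_j/R^s)+Cg(\mathrm{Tail}_g((u_j)_-;x_0,R)/R^s)$. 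Adding the bounds for $E_1$ and $E_2$, dividing by $R^{n-s}$, letting $j\to\infty$, and recalling $T(w;x_0,R)=g(\mathrm{Tail}_g(w;x_0,R)/R^s)$ together with the normalization gives the claim.

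The main obstacle is not the two estimates but the scaffolding around them: verifying that $\varphi=\eta^q$ is an admissible test function and that $\mu(B_{R/2})\leq\mathcal{E}(u,\varphi)$ holds rigorously, and arranging the truncation so that \Cref{lem-aux} and \Cref{thm-WHI}, both formulated for supersolutions in $V^{s,G}$, can be applied to $u_j$ with constants independent of $j$ (and so that the resulting infima over $B_{R/2}$ are controlled by $\inf_{B_{R/2}}u$). Once this is in place, the decomposition $\mathcal{E}(u_j,\varphi)=E_1+E_2$ and the bounds for $E_1$ and $E_2$ are essentially dictated by \Cref{lem-aux} and \Cref{thm-WHI}, respectively.
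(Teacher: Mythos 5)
Your proposal is correct and takes a genuinely simpler route than the paper. The paper tests the identity $\mathcal{L}u=\mu$ against $\varphi = v\eta^q$ with $v=u\wedge b$, $b=\inf_{B_{R/2}}u$, which forces a three-term decomposition $I_1+I_2+I_3$: the extra term $I_1$, involving $D^sv$, is controlled by the Caccioppoli estimate (\Cref{lem-Caccioppoli}) applied to $w=u-b$, while $I_2$ uses \Cref{lem-aux} and $I_3$ the weak Harnack inequality; on top of this, a common scalar factor $b$ appears on both sides (from $\varphi\geq b$ on $B_{R/2}$ and $v\leq b$) and is divided out at the end, which tacitly assumes $b>0$. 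Your choice $\varphi=\eta^q$ eliminates the $D^sv$ term and the $b$-factor simultaneously: the local part $E_1$ is controlled directly by \Cref{lem-aux} (the same lemma the paper uses for $I_2$, but now without the extra $v(y)\leq b$ multiplier), the nonlocal part $E_2$ by \Cref{thm-WHI} exactly as for the paper's $I_3$, and \Cref{lem-Caccioppoli} is not needed in the main argument at all. The remaining scaffolding --- the truncations $u_j=u\wedge j$, the limit $\mathcal{E}(u,\varphi)=\lim_j\mathcal{E}(u_j,\varphi)$, and the fact that $\eta^q$ is only $C^1_c$ rather than $C^\infty_c$ --- is present equally in the paper's proof (whose test function $v\eta^q$ is not $C^\infty_c$ either) and is handled the same way via \Cref{thm-measure}. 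In short, both proofs rest on \Cref{lem-aux} and \Cref{thm-WHI}, but your decomposition drops one of the three estimates and the auxiliary truncation $v$ entirely.
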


\begin{proof}
By considering $\tilde{u}=u-\inf_{B_R}u$, we may assume $\inf_{B_R}u=0$. Let $b=\inf_{B_{R/2}} u$ and define $v=u\land b$, then
\begin{equation}\label{eq-v-bounds}
0 \leq v \leq b \quad\text{in}~B_{R} \quad\text{and}\quad v=b \quad\text{on}~B_{R/2}.
\end{equation}
Let $\eta \in C^{\infty}_{c}(B_{5R/8})$ be a cut-off function such that $0 \leq \eta \leq 1$, $\eta \equiv 1$ on $B_{R/2}$, and $|\nabla \eta| \leq C/R$. By using $\varphi = v\eta^{q}$ as a test function, we obtain
\begin{align*}
\int_{B_{R}} \varphi \,\mathrm{d}\mu
&= \int_{B_{3R/4}} \int_{B_{3R/4}} g(|D^su|) \frac{D^su}{|D^su|} D^sv \, \eta^{q}(x) \frac{k(x, y)}{|x-y|^{n}} \,\mathrm{d}y \,\mathrm{d}x \\
&\quad + \int_{B_{3R/4}} \int_{B_{3R/4}} g(|D^su|) \frac{D^su}{|D^su|} D^s\eta^q \, v(y) \frac{k(x, y)}{|x-y|^{n}} \,\mathrm{d}y \,\mathrm{d}x \\
&\quad + 2\int_{B_{3R/4}} \int_{\mathbb{R}^{n} \setminus B_{3R/4}} g(|D^su|) \frac{D^su}{|D^su|} D^s\varphi \frac{k(x, y)}{|x-y|^{n}} \,\mathrm{d}y \,\mathrm{d}x \\
&=: I_{1} + I_{2} + I_{3}.
\end{align*}

For $I_{1}$, we observe that
\begin{equation}\label{eq-lower-I1}
g(|D^su|) \frac{D^su}{|D^su|} D^sv \leq g(|D^su|)|D^su|.
\end{equation}
Indeed, one may assume $u(x) > u(y)$ and easily check the inequality \eqref{eq-lower-I1} for the cases $u(x), u(y) > b$, $u(x) > b \geq u(y)$, and $u(x), u(y) \leq b$. By using \eqref{eq-lower-I1} and \eqref{eq-pq}, we obtain
\begin{equation*}
I_{1} \leq q\Lambda \int_{B_{3R/4}} \int_{B_{3R/4}} G(|D^sw_-|) \eta^q(x) \frac{\mathrm{d}y \,\mathrm{d}x}{|x-y|^{n}},
\end{equation*}
where $w(x) = u(x)-b$. We apply \Cref{lem-Caccioppoli} to $w$ with $l=b$ and use \Cref{lem-G}, then
\begin{align*}
I_{1}
&\leq C \int_{B_{3R/4}} \int_{B_{3R/4}} G\left( \frac{w_{-}(x) \lor w_{-}(y)}{(3R/4)^{s}} \right) ((3R/4)^s |D^s\eta|)^q \frac{\mathrm{d}y\,\mathrm{d}x}{|x-y|^{n}} \\
&\quad + C \left( \int_{B_{3R/4}} w_{-}(x) \,\mathrm{d}x \right) \sup_{x \in \mathrm{supp}\,\eta} \int_{\mathbb{R}^{n} \setminus B_{3R/4}} g\left( \frac{w_{-}(y)}{|x-y|^{s}} \right) \frac{\mathrm{d}y}{|x-y|^{n+s}} \\
&\leq C \int_{B_{R}} G\left( \frac{w_{-}}{R^{s}} \right) \,\mathrm{d}x + \frac{C}{R^{s}} \left( \int_{B_{R}} w_{-} \,\mathrm{d}x \right)T(w_-; x_0, R).
\end{align*}
Since $w_-=b-v \leq b$ in $B_{R}$ and
\begin{align*}
T(w_-; x_0, R)
&\leq T(u_-; x_0, R) + R^{s} \int_{\mathbb{R}^{n} \setminus B_{R}(x_{0})} g\left( \frac{b}{|y-x_{0}|^{s}} \right) \frac{\mathrm{d}y}{|y-x_{0}|^{n+s}} \\
&\leq T(u_-; x_0, R)+ Cg\left( \frac{b}{R^{s}} \right),
\end{align*}
we derive
\begin{equation*}
I_{1} \leq Cb R^{n-s} \left( g\left( \frac{b}{R^{s}} \right) + T(u_-; x_0, R) \right).
\end{equation*}

For $I_{2}$, we apply \Cref{lem-aux} to $u$, which is nonnegative in $B_{R}$, and use \eqref{eq-v-bounds} to have
\begin{align*}
I_{2}
&\leq C \frac{b}{R^{s}} \int_{B_{3R/4}} \int_{B_{3R/4}} g(|D^su|) R^{s} |D^s\eta| \frac{\mathrm{d}y \,\mathrm{d}x}{|x-y|^{n}} \\
&\leq Cb R^{n-s} \left( g\left( \frac{b}{R^{s}} \right) + T(u_-; x_0, R)\right).
\end{align*}
Finally, we estimate $I_3$ as follows:
\begin{align*}
	I_3 &\leq C\int_{B_{3R/4}} \int_{\mathbb{R}^{n} \setminus B_{3R/4}} g\left( \frac{u(x)}{|x-y|^{s}} \right) \frac{v(x)\eta^q(x)}{|x-y|^{s}} \frac{\mathrm{d}y \,\mathrm{d}x}{|x-y|^{n}} \\
	&\quad + C\int_{B_{3R/4}} \int_{\mathbb{R}^{n} \setminus B_{3R/4}} g\left( \frac{u_-(y)}{|x-y|^{s}} \right) \frac{v(x)\eta^q(x)}{|x-y|^{s}} \frac{\mathrm{d}y \,\mathrm{d}x}{|x-y|^{n}} \\
	&\leq CbR^{n-s}\fint_{B_{3R/4}}  g\left(\frac{u(x)}{R^s}\right) \,\mathrm{d}x+Cb R^{n-s} T(u_-; x_0, R)\\
	&\leq Cb R^{n-s} T(u_-; x_0, R),
\end{align*}
where we used \eqref{eq-v-bounds}, \eqref{eq-bar-g-comp}, the weak Harnack inequality (\Cref{thm-WHI}), and $\inf_{B_R}u=0$.

We now combine all these estimates to deduce
\begin{equation*}
b \mu(B_{R/2}) \leq \int_{B_{R}} \varphi \,\mathrm{d}\mu \leq CbR^{n-s} \left(g\left( \frac{b}{R^{s}} \right) +T(u_-; x_0, R)\right),
\end{equation*}
which leads us to
\begin{equation*}
\frac{\mu(B_{R/2})}{R^{n-s}} \leq C g \left( \frac{b}{R^{s}} \right) + CT(u_-; x_0, R),
\end{equation*}
where $C=C(n, p, q, s, \Lambda) > 0$.
\end{proof}

Let us prove \Cref{thm-Wolff-lower} by applying \Cref{lem-lower-key} iteratively.

\begin{proof}[Proof of \Cref{thm-Wolff-lower}]
Let $R_{j}=2^{-j}R$, $B^{j}=B_{R_{j}}$, and $a_{j} = \inf_{B^{j}} u$ for $j=0,1, \dots$. By applying \Cref{lem-lower-key}, we have
\begin{equation*}
	a_k \geq \sum_{j=1}^{k} (a_j-a_{j-1}) \geq c\sum_{j=1}^{k} R_j^s g^{-1}\left(\frac{\mu(B^{j})}{R_j^{n-s}}\right)-C\sum_{j=1}^{k} T_{j}
\end{equation*}
for some $c, C>0$ depending only on $n$, $p$, $q$, $s$, and $\Lambda$, where
\begin{equation*}
T_{j} = \mathrm{Tail}_g((u-a_{j-1})_-; x_0, R_{j-1}).
\end{equation*}
To estimate the tail terms, we repeat the method exploited in the proof of \Cref{thm-Wolff-upper}. Indeed, we observe that for $j=1, \cdots, k$,
\begin{align*}
g\left( \frac{T_{j}}{R_{j-1}^{s}} \right)
&= R_{j-1}^{s} \int_{\mathbb{R}^{n} \setminus B_{R}} g\left( \frac{(u(y)-a_{j-1})_{-}}{|y-x_{0}|^{s}} \right) \frac{\mathrm{d}y}{|y-x_{0}|^{n+s}} \\
&\quad + R_{j-1}^{s} \sum_{i=2}^{j} \int_{B^{j-i} \setminus B^{j-i+1}} g\left( \frac{(u(y)-a_{j-1})_{-}}{|y-x_{0}|^{s}} \right) \frac{\mathrm{d}y}{|y-x_{0}|^{n+s}} \\
&\leq C 2^{-sj} g\left( \frac{\mathrm{Tail}_g(u_-; x_0, R)}{R^{s}} \right) + C2^{-sj} g\left( \frac{a_{j-1}}{R^{s}} \right) + C \sum_{i=2}^{j} 2^{-si} g\left( \frac{a_{j-1}-a_{j-i}}{R_{j-i+1}^{s}} \right).
\end{align*}
Hence, by applying \Cref{lem-g-dyda}~(i) and then (ii) with $\beta=2^{\frac{s}{2(q-1)}}$, we have
\begin{align*}
T_{j}
&\leq C 2^{-\frac{sq}{q-1}j} (\mathrm{Tail}_{g}(u_{-}; x_{0}, R) + a_{k}) + CR_{j}^{s} g^{-1} \left( \sum_{i=2}^{j} 2^{-si} g\left( \frac{a_{j-1}-a_{j-i}}{R_{j-i+1}^{s}} \right) \right) \\
&\leq C 2^{-\frac{sq}{q-1}j} (\mathrm{Tail}_{g}(u_{-}; x_{0}, R) + a_{k}) + C \sum_{i=2}^{j} 2^{-\frac{s}{2(q-1)}i-si} (a_{j-1}-a_{j-i}).
\end{align*}
Since
\begin{equation*}
\sum_{j=1}^{k} \sum_{i=2}^{j} 2^{-\frac{s}{2(q-1)}i-si} (a_{j-1}-a_{j-i}) = \sum_{i=2}^{k} 2^{-\frac{s}{2(q-1)}i-si} \sum_{j=i}^{k} (a_{j-1}-a_{j-i}) \leq Ca_{k},
\end{equation*}
we obtain
\begin{equation*}
\sum_{j=1}^{k}T_{j} \leq C a_{k} + C\, \mathrm{Tail}_{g}(u_{-}; x_{0}, R),
\end{equation*}
which leads us to
\begin{equation*}
\sum_{j=1}^{k} R_j^s g^{-1}\left(\frac{\mu(B^{j})}{R_j^{n-s}}\right) \leq C a_{k} + C\, \mathrm{Tail}_{g}(u_{-}; x_{0}, R).
\end{equation*}
By taking the limit $k \to \infty$, we arrive at
\begin{align*}
{\bf W}^{\mu}_{s, G}(x_{0}, R/2)
&= \sum_{j=1}^{\infty} \int_{R_{j+1}}^{R_{j}} g^{-1} \left( \frac{\mu(B_{\rho}(x_0))}{\rho^{n-s}} \right) \,\frac{\mathrm{d}\rho}{\rho^{1-s}} \\
&\leq C \sum_{j=1}^{\infty} R_{j}^{s} g^{-1} \left( \frac{\mu(B^{j})}{R_{j}^{n-s}} \right) \\
&\leq C \lim_{k \to \infty} a_{k} + C \,\mathrm{Tail}_{g}(u_{-}; x_{0}, R) \\
&= Cu(x_{0}) + C \,\mathrm{Tail}_{g}(u_{-}; x_{0}, R),
\end{align*}
where $C = C(n, p, q, s, \Lambda) > 0$.
\end{proof}

\section{\texorpdfstring{$(s, G)$}{(s, G)}-capacity and \texorpdfstring{$\mathcal{L}$}{L}-potential}\label{sec-capacity}

We introduce several potential theoretical tools such as $(s, G)$-capacity and the $\mathcal{L}$-potential in our framework. They play a crucial role in the development of nonlocal nonlinear potential theory with Orlicz growth. We provide several fundamental properties of them.

\subsection{\texorpdfstring{$(s, G)$}{(s, G)}-capacity}\label{sec-sG-cap}

We begin with the definition of $(s, G)$-capacity.

\begin{definition}\label{def-capacity}
Let $\Omega \subset \mathbb{R}^n$ be an open set and let $K\subset \Omega$ be a compact set. The \emph{$(s, G)$-capacity of $K$ with respect to $\Omega$} is defined by
\begin{equation*}
\mathrm{cap}_{s, G}(K, \Omega) = \inf_{v \in W(K, \Omega)} \varrho_{W^{s, G}(\mathbb{R}^n)}(v),
\end{equation*}
where $W(K, \Omega):=\{v \in C_c^{\infty}(\Omega) : \text{$v \geq 1$ on $K$} \}$. If $U \subset \Omega$ is open, we define
	\begin{equation*}
		\mathrm{cap}_{s, G}(U, \Omega)=\sup_{K \subset U~\text{compact}}\mathrm{cap}_{s, G}(K, \Omega),
	\end{equation*}
and for an arbitrary set $E \subset \Omega$,
\begin{equation*}
		\mathrm{cap}_{s, G}(E, \Omega)=\inf_{E \subset U \subset \Omega, \ \text{$U$ open}}\mathrm{cap}_{s, G}(U, \Omega).
\end{equation*}
\end{definition}

Obviously, $\mathrm{cap}_{s, G}(\cdot, \cdot)$ is increasing in the first argument and decreasing in the second argument with respect to the set inclusion. Moreover, by approximation, the set $W(K, \Omega)$ in \Cref{def-capacity} can be replaced by
\begin{equation*}
	W_0(K, \Omega) := \overline{W(K, \Omega)}^{W^{s, G}(\mathbb{R}^n)}.
\end{equation*}
A function in $W_0(K, \Omega)$ is said to be {\it admissible for $\mathrm{cap}_{s, G}(K, \Omega)$}.

Recall that the $(s, p)$-capacity in the standard $p$-growth case inherits some properties of $p$-capacity with weight $|t|^{p-1-sp}$ by an identification of these two capacities; see Bj\"orn~\cite[Lemma~2.2]{Bjo24}, Kim--Lee--Lee~\cite[Lemma~2.17]{KLL23}, and Maz'ya~\cite[Theorem~1,~p.~512]{Maz11} for instance. However, to the best of our knowledge, such an identification is not available for the $(s, G)$-capacity in the literature. Therefore, we develop the theory by taking a more elementary and direct approach.

\begin{theorem}[Basic properties of $(s, G)$-capacity]\label{thm-capacity}
The following hold.
	\begin{enumerate}[(i)]		
		\item If $K_1$ and $K_2$ are compact subsets of $\Omega$, then 
		\begin{equation*}
			\mathrm{cap}_{s, G}(K_1 \cup K_2, \Omega)+\mathrm{cap}_{s, G}(K_1 \cap K_2, \Omega) \leq \mathrm{cap}_{s, G}(K_1, \Omega)+\mathrm{cap}_{s, G}(K_2, \Omega).
		\end{equation*}
		
		\item If $\{K_i\}_i$ is a decreasing sequence of compact subsets of $\Omega$ with $K=\cap_i K_i$, then
		\begin{equation*}
			\mathrm{cap}_{s, G}(K, \Omega)=\lim_{i \to \infty}\mathrm{cap}_{s, G}(K_i, \Omega).
		\end{equation*}
	
		\item (Subadditivity) If $E=\cup_i E_i \subset \Omega$, then
		\begin{equation*}
			\mathrm{cap}_{s, G}(E, \Omega) \leq \sum_i \mathrm{cap}_{s, G}(E_i, \Omega).
		\end{equation*}
	\end{enumerate}
\end{theorem}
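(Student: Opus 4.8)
The plan is to follow the classical template of nonlinear potential theory (cf.\ \cite{HKM06}); the only genuinely new ingredient is an algebraic inequality adapted to the Orlicz modular $\varrho_{W^{s,G}}$. For part~(i), I would fix $\varepsilon>0$, choose almost optimal competitors $v_i\in W(K_i,\Omega)$ with $\varrho_{W^{s,G}(\mathbb{R}^n)}(v_i)\le\mathrm{cap}_{s,G}(K_i,\Omega)+\varepsilon$, and use $v_1\lor v_2$ to test $\mathrm{cap}_{s,G}(K_1\cup K_2,\Omega)$ and $v_1\land v_2$ to test $\mathrm{cap}_{s,G}(K_1\cap K_2,\Omega)$. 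Both functions are Lipschitz with compact support in $\Omega$ and satisfy the right pointwise constraints, so a routine mollification---convergence of mollifications in $W^{s,G}$ being available thanks to the $\Delta_2$-condition \eqref{eq-pq}---turns them into admissible competitors with modular arbitrarily close to $\varrho_{W^{s,G}(\mathbb{R}^n)}(v_1\lor v_2)$ and $\varrho_{W^{s,G}(\mathbb{R}^n)}(v_1\land v_2)$, respectively. Letting $\varepsilon\to0$, the claim then reduces to the pointwise inequality
\[
G(|D^s(v_1\lor v_2)|)+G(|D^s(v_1\land v_2)|)\le G(|D^sv_1|)+G(|D^sv_2|)\quad\text{a.e.\ on }\mathbb{R}^n\times\mathbb{R}^n.
\]
Writing $a=v_1(x)$, $b=v_1(y)$, $c=v_2(x)$, $d=v_2(y)$ and $\alpha=(a\lor c)-(b\lor d)$, $\beta=(a\land c)-(b\land d)$, a short case analysis---which, after normalizing $a\ge c$, reduces to the monotonicity of $f\mapsto|a-f|-|c-f|$---gives both $|\alpha|+|\beta|\le|a-b|+|c-d|$ and $\max(|\alpha|,|\beta|)\le\max(|a-b|,|c-d|)$. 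Since $G$ is convex, non-decreasing and $G(0)=0$, these two facts force $G(|\alpha|)+G(|\beta|)\le G(|a-b|)+G(|c-d|)$; this is an instance of the Hardy--Littlewood--P\'olya inequality, which I would record separately among the algebraic lemmas in \Cref{sec-ineq}.

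For part~(ii), monotonicity of $\mathrm{cap}_{s,G}(\cdot,\Omega)$ already gives $\mathrm{cap}_{s,G}(K,\Omega)\le\mathrm{cap}_{s,G}(K_i,\Omega)$ for every $i$, hence $\mathrm{cap}_{s,G}(K,\Omega)\le\lim_i\mathrm{cap}_{s,G}(K_i,\Omega)$ as the sequence is non-increasing. For the reverse inequality I would fix $\varepsilon\in(0,1)$ and $v\in W(K,\Omega)$: since $v$ is continuous and $v\ge1$ on $K=\bigcap_iK_i$, the open set $\{v>1-\varepsilon\}$ contains $K$, so the finite intersection property applied to the decreasing compact sets $K_i\cap\{v\le1-\varepsilon\}$ produces an $i_0$ with $K_{i_0}\subset\{v>1-\varepsilon\}$. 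Then $v/(1-\varepsilon)\in W(K_{i_0},\Omega)$, and \Cref{lem-G}~(i) with $\lambda=(1-\varepsilon)^{-1}\ge1$ gives
\[
\lim_i\mathrm{cap}_{s,G}(K_i,\Omega)\le\mathrm{cap}_{s,G}(K_{i_0},\Omega)\le\varrho_{W^{s,G}(\mathbb{R}^n)}\!\left(\tfrac{v}{1-\varepsilon}\right)\le(1-\varepsilon)^{-q}\,\varrho_{W^{s,G}(\mathbb{R}^n)}(v).
\]
Taking the infimum over $v\in W(K,\Omega)$ and then letting $\varepsilon\to0$ completes part~(ii).

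For part~(iii), I would first upgrade~(i) by induction to finite subadditivity on compact sets, $\mathrm{cap}_{s,G}(\bigcup_{i=1}^NK_i,\Omega)\le\sum_{i=1}^N\mathrm{cap}_{s,G}(K_i,\Omega)$, discarding the intersection terms since the capacity is nonnegative. For open sets $U=\bigcup_iU_i\subset\Omega$ with each $U_i$ open, and a compact $K\subset U$, I would pass to a finite subcover $K\subset\bigcup_{i=1}^NU_i$, decompose $K=\bigcup_{i=1}^NK_i$ with $K_i\subset U_i$ compact (a standard covering argument with closed balls), and combine finite subadditivity with monotonicity to obtain $\mathrm{cap}_{s,G}(K,\Omega)\le\sum_{i=1}^\infty\mathrm{cap}_{s,G}(U_i,\Omega)$; taking the supremum over compact $K\subset U$ settles the open case. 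Finally, for an arbitrary $E=\bigcup_iE_i\subset\Omega$ and $\varepsilon>0$, I would choose open sets $E_i\subset U_i\subset\Omega$ with $\mathrm{cap}_{s,G}(U_i,\Omega)\le\mathrm{cap}_{s,G}(E_i,\Omega)+2^{-i}\varepsilon$, apply the open case to $U:=\bigcup_iU_i\supset E$, and let $\varepsilon\to0$.

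I expect the main obstacle to be the convexity inequality in part~(i): the heart of strong subadditivity is that replacing $(v_1,v_2)$ by the lattice pair $(v_1\lor v_2,\,v_1\land v_2)$ does not increase the total $(s,G)$-Gagliardo energy, and this is essentially the only place where the Orlicz structure genuinely enters---through convexity and monotonicity of $G$ alone, so that, reassuringly, \eqref{eq-q} plays no role here. The approximation step turning the Lipschitz lattice competitors into $C_c^\infty$ ones is routine but must be carried out with some care, since admissibility is built into \Cref{def-capacity}.
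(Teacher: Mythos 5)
Your proposal is correct, and for parts (ii) and (iii) it simply spells out the classical Choquet-capacity arguments that the paper delegates to \cite{HKM06}, so there is nothing to flag there. The interesting divergence is in part (i). Both you and the paper reduce strong subadditivity to the pointwise lattice inequality
\begin{equation*}
G\bigl(|a_1 \lor a_2 - b_1 \lor b_2|\bigr) + G\bigl(|a_1 \land a_2 - b_1 \land b_2|\bigr) \leq G(|a_1-b_1|) + G(|a_2-b_2|),
\end{equation*}
but the proofs are genuinely different. The paper's \Cref{lem-ineq3} normalizes $a_1 \geq a_2$, disposes of the case $b_1 \geq b_2$ by equality, and in the remaining case checks directly that $\varphi(t) = G(|t-b_1|) - G(|t-b_2|)$ is non-decreasing by computing $\varphi'$ in three regimes and using only that $g = G'$ is non-decreasing; this is short and self-contained. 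You instead observe the two submajorization inequalities $\max(|\alpha|,|\beta|) \leq \max(|a_1-b_1|,|a_2-b_2|)$ and $|\alpha|+|\beta| \leq |a_1-b_1|+|a_2-b_2|$ and then appeal to Hardy--Littlewood--P\'olya for convex non-decreasing $G$. Your route is conceptually cleaner in that it isolates exactly what is used (convexity and monotonicity of $G$, nothing from \eqref{eq-q}), and it would generalize immediately to more than two competitors; the paper's route is more elementary and avoids invoking majorization theory. You are also more explicit than the paper about the mollification step needed to pass from the Lipschitz functions $v_1 \lor v_2$, $v_1 \land v_2$ to admissible competitors in $W_0$; the paper absorbs this into its standing remark that $W(K,\Omega)$ may be replaced by $W_0(K,\Omega)$. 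One small caution for your mollification: the mollified function is only $\geq 1-\epsilon$ on $K$, so you should rescale by $(1-\epsilon)^{-1}$ (or work with $(1+\epsilon)v_i$ from the start so that the lattice function exceeds $1$ on a neighborhood of $K$) and let $\epsilon \to 0$ at the end, exactly as you already do in your part (ii).
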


\Cref{thm-capacity} says that $\mathrm{cap}_{s, G}$ is a \emph{Choquet capacity}.

\begin{proof}
(i)		Let $u_1 \in W(K_1, \Omega)$ and $u_2 \in W(K_2, \Omega)$. An application of \Cref{lem-ineq3} yields
		\begin{align*}
			&\int_{\mathbb{R}^n}\int_{\mathbb{R}^n}G(|D^s(u_1 \lor u_2)|) \,\frac{\mathrm{d}y\,\mathrm{d}x}{|x-y|^n}+\int_{\mathbb{R}^n}\int_{\mathbb{R}^n}G(|D^s(u_1 \land u_2)|) \,\frac{\mathrm{d}y\,\mathrm{d}x}{|x-y|^n} \\
			&\leq \int_{\mathbb{R}^n}\int_{\mathbb{R}^n}G(|D^su_1|) \,\frac{\mathrm{d}y\,\mathrm{d}x}{|x-y|^n}+\int_{\mathbb{R}^n}\int_{\mathbb{R}^n}G(|D^su_2|)\, \frac{\mathrm{d}y\,\mathrm{d}x}{|x-y|^n}.
		\end{align*}
	Since $u_1 \lor u_2$ and $u_1 \land u_2$ are admissible for $\mathrm{cap}_{s, G}(K_1 \cup K_2, \Omega)$ and $\mathrm{cap}_{s, G}(K_1 \cap K_2, \Omega)$, respectively, we have that
	\begin{align*}
		&\mathrm{cap}_{s, G}(K_1 \cup K_2, \Omega)+\mathrm{cap}_{s, G}(K_1 \cap K_2, \Omega) \\
		&\leq \int_{\mathbb{R}^n}\int_{\mathbb{R}^n}G(|D^su_1|) \,\frac{\mathrm{d}y\,\mathrm{d}x}{|x-y|^n}+\int_{\mathbb{R}^n}\int_{\mathbb{R}^n}G(|D^su_2|) \,\frac{\mathrm{d}y\,\mathrm{d}x}{|x-y|^n}.
	\end{align*}
		The desired inequality follows by taking the infimum over all admissible functions $u_1$ and $u_2$.
		
	Since the proofs of (ii) and (iii) are essentially independent of the nonlocal structure of operators, we refer the reader to the proofs of Theorem~2.2 and Lemma~2.3 in Heinonen--Kilpel\"ainen--Martio~\cite{HKM06}.
\end{proof}

The following lemma provides an estimate of the $(s, G)$-capacity of a closed ball $\overline{B_r(x_0)}$ with respect to a concentric ball $B_R(x_0)$.

\begin{lemma}\label{lem-cap}
	Let $R/8\leq r \leq R/2$. There exists a constant $C=C(n, p, q, s)\geq 1$ such that
	\begin{equation*}
		C^{-1} r^{n}G(r^{-s}) \leq \mathrm{cap}_{s, G}(\overline{B_{r}(x_0)}, B_{R}(x_0)) \leq C r^{n}G(r^{-s}).
	\end{equation*}
\end{lemma}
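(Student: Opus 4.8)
The plan is to prove the two-sided bound by exhibiting an explicit test function for the upper bound and using a Poincar\'e-type argument for the lower bound, exploiting the scale invariance of the situation. Since $R/8 \le r \le R/2$, the ratio $R/r$ is bounded above and below by absolute constants, so all radii in sight are comparable to $r$; this means the dependence on $R$ can be absorbed into the constant $C=C(n,p,q,s)$.

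For the \textbf{upper bound}, I would take the standard cut-off function: fix $\varphi \in C_c^\infty(B_R(x_0))$ with $0 \le \varphi \le 1$, $\varphi \equiv 1$ on $\overline{B_r(x_0)}$, and $|\nabla \varphi| \le C/r$ (which is possible since $R - r \ge R/2 \ge r$), so that $|D^s\varphi(x,y)| \le \min\{2 r^{-s}, |\nabla\varphi|_\infty |x-y|^{1-s}\} \le C\min\{r^{-s}, |x-y|^{1-s}/r\}$. Then $\varphi \in W(\overline{B_r(x_0)}, B_R(x_0))$ is admissible, and I estimate $\varrho_{W^{s,G}(\mathbb{R}^n)}(\varphi) = \int\!\!\int G(|D^s\varphi|)\,|x-y|^{-n}\,\mathrm{d}y\,\mathrm{d}x$. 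The integrand is supported where $x \in B_R(x_0)$ or $y \in B_R(x_0)$; splitting into the region $|x-y| \le r$ (where $G(|D^s\varphi|) \le G(C r^{-s}) \le C G(r^{-s})$ by \Cref{lem-G}~(i) and the integral of $|x-y|^{-n}$ over an annulus structure gives a factor comparable to $r^n$) and the region $|x-y| > r$ (where $|D^s\varphi| \le C|x-y|^{1-s}/r$, and using the monotonicity $g(t)/t^{q-1}$ non-increasing together with \eqref{eq-q}/\Cref{lem-G}~(ii) to control $G(C|x-y|^{1-s}/r)$ by $C(|x-y|/r)^{?}G(r^{-s})$ with a summable power), the tail integral converges and contributes $\le C r^n G(r^{-s})$. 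Summing gives $\mathrm{cap}_{s,G}(\overline{B_r(x_0)}, B_R(x_0)) \le C r^n G(r^{-s})$.

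For the \textbf{lower bound}, I would argue that any admissible $v \in W_0(\overline{B_r(x_0)}, B_R(x_0))$ satisfies $v \ge 1$ on $\overline{B_r(x_0)}$, hence $\int_{B_r(x_0)} G(|v|)\,\mathrm{d}x \ge G(1)|B_r(x_0)| = c\, r^n$ using the normalization $G(1)=1$. Since $v$ extends by zero outside $B_R(x_0)$ and $B_R(x_0)$ has diameter comparable to $r$, applying the fractional Poincar\'e inequality \Cref{thm-Poincare} on $B_R(x_0)$ (with $v \in V_0^{s,G}(B_R(x_0))$, which follows from $v \in C_c^\infty(B_R(x_0))$, and then passing to the closure) gives
\begin{equation*}
c\, r^n \le \int_{B_r(x_0)} G(|v|)\,\mathrm{d}x \le \int_{B_R(x_0)} G(|v|)\,\mathrm{d}x \le C \int_{B_R(x_0)} \int_{\mathbb{R}^n} G(\mathrm{diam}^s(B_R) |D^sv|) \frac{\mathrm{d}y\,\mathrm{d}x}{|x-y|^n}.
\end{equation*}
Now $\mathrm{diam}^s(B_R) = (2R)^s \le C r^s$, so by \Cref{lem-G}~(i) we pull out the factor: $G(C r^s |D^sv|) \le C r^{sq} G(|D^sv|)$ if $r^s|D^sv| \ge$ something — more carefully, scaling $G(\lambda t) \le \lambda^q G(t)$ for $\lambda \ge 1$ and $\le \lambda^p G(t)$ for $\lambda \le 1$, so in either case $G(C r^s |D^sv|) \le C(r^{sp}+r^{sq}) (1 + |D^sv|^?)\cdots$; the cleanest route is $G(C r^s t) \le C \max\{r^{sp},r^{sq}\} G(t) = C r^{sp} G(t)$ when $r \le 1$ (and a symmetric statement otherwise), giving $c\, r^n \le C r^{sp} \varrho_{W^{s,G}(\mathbb{R}^n)}(v)$, i.e. $\varrho_{W^{s,G}(\mathbb{R}^n)}(v) \ge c\, r^{n-sp} = c\, r^n \cdot r^{-sp} \ge c\, r^n G(r^{-s})$, where the last step uses $r^{-sp} \le G(r^{-s}) \le r^{-sq}$ (resp.\ the reverse for $r \ge 1$) from \Cref{lem-G}. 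Taking the infimum over admissible $v$ yields the lower bound. The \textbf{main obstacle} will be bookkeeping the powers of $r$ correctly in the two regimes $r \le 1$ and $r \ge 1$ — since $G$ is merely trapped between $t^p$ and $t^q$, one must consistently use \Cref{lem-G}~(i) versus (ii) depending on whether the dilation factor exceeds $1$, and check that the exponents of $r$ coming from the volume $r^n$, the Poincar\'e diameter $r^s$, and the rescaling of $G$ all combine to reproduce exactly $r^n G(r^{-s})$; the normalization $G(1)=1$ is what makes both sides genuinely comparable rather than off by a fixed power.
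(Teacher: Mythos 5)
Your upper bound is correct and coincides with the paper's construction: a cutoff $\eta$ with $|\nabla\eta|\le C/r$ satisfies $|D^s\eta|\le C\min\{r^{-s},|x-y|^{1-s}/r\}$, and splitting the integral at $|x-y|\sim r$ together with \Cref{lem-G} gives both contributions $\lesssim r^nG(r^{-s})$.

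Your lower bound, however, has a genuine gap: the final inequality runs in the wrong direction. After Poincar\'e you pull the dilation out of $G$ via $G(Cr^st)\le C\max\{r^{sp},r^{sq}\}G(t)$, which yields $\varrho_{W^{s,G}(\mathbb{R}^n)}(v)\ge cr^n\min\{r^{-sp},r^{-sq}\}$. But \Cref{lem-G} gives $\min\{r^{-sp},r^{-sq}\}\le G(r^{-s})\le\max\{r^{-sp},r^{-sq}\}$ (for example, $r^{-sp}\le G(r^{-s})$ when $r\le1$), so what you proved is strictly \emph{weaker} than the required $\varrho\ge cr^nG(r^{-s})$. The step ``$r^{n-sp}\ge r^nG(r^{-s})$'' asserts $r^{-sp}\ge G(r^{-s})$, which is the reverse of what holds; concretely, if $G(t)=t^q$ near $t=\infty$ with $p<q$, your bound $cr^{n-sp}$ misses the truth $cr^{n-sq}$ by a diverging power of $r$. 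The root cause is that, for a non-power $G$, one cannot have $G(\lambda t)\,G(\lambda^{-1})\le CG(t)$ uniformly in $t$, so ``extracting'' $r^s$ from $G$ and then reinstating $G(r^{-s})$ necessarily loses a factor $r^{-s(q-p)}$. The Poincar\'e route can be rescued, but only by rescaling the \emph{Young function} and not merely the domain: apply \Cref{thm-Poincare} with $G_r(t):=G(r^{-s}t)/G(r^{-s})$ (which has the same $(p,q)$-growth and $G_r(1)=1$, so the constant $C(n,p,q,s)$ is uniform in $r$) to the unit-scale dilate $\tilde v(z)=v(x_0+rz)$; then the change of variables returns exactly $\varrho_{W^{s,G}(\mathbb{R}^n)}(v)\ge cr^nG(r^{-s})$. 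The paper instead proceeds quite differently: it splits the energy into near and far parts, applies Jensen's inequality with the convex function $H(t)=G(t^{1/p})$ against suitable probability measures, compares with the classical bound $\mathrm{cap}_{s,p}(\overline{B_r},B_R)\ge cR^{n-sp}$, and uses continuity in the fractional order to conclude. Both the corrected Poincar\'e route and the paper's Jensen route work, but your argument as written does not.
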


\begin{proof}
	For simplicity, we write $B_{r}=B_r(x_0)$. For the upper bound, we choose a cut-off function $\eta \in C_c^{\infty}(B_{2r})$ such that $0 \leq \eta \leq 1$, $\eta \equiv 1$ on $B_r$, and $|D \eta| \leq C/r$. Since $\eta$ is admissible for $\mathrm{cap}_{s, G}(\overline{B_r}, B_R)$, we have
	\begin{equation*}
	\mathrm{cap}_{s, G}(\overline{B_r}, B_R) \leq \int_{\mathbb{R}^n}\int_{\mathbb{R}^n}G(|D^s\eta|) \,\frac{\mathrm{d}y\,\mathrm{d}x}{|x-y|^n}.
	\end{equation*}
	Moreover, the right-hand side of the display above is estimated as
	\begin{equation*}
	\int_{\mathbb{R}^n}\int_{\mathbb{R}^n}G(|D^s\eta|) \,\frac{\mathrm{d}y\,\mathrm{d}x}{|x-y|^n} \leq C r^n G(r^{-s})
	\end{equation*}
	by using $|D^s\eta| \leq C (|x-y|/r)^{1-s} r^{-s}$, $|\eta(x)-\eta(y)|\leq 1$, and \Cref{lem-G}. This implies the upper bound.

For the lower bound, we let $u_j \in W(\overline{B_r}, B_R)$ be such that
\begin{equation*}
	\mathrm{cap}_{s, G}(\overline{B_r}, B_R)+1/j \geq \int_{\mathbb{R}^n}\int_{\mathbb{R}^n}G(|D^su_j|) \,\frac{\mathrm{d}y\,\mathrm{d}x}{|x-y|^n}
\end{equation*}
for each $j \in \mathbb{N}$. Then we obtain, for any $\sigma \in (0,s)$,
\begin{align*}
\mathrm{cap}_{s, G}(\overline{B_r}, B_R)+1/j
&\geq \int_{B_R} \int_{B_{2R}} G(|D^su_j|) \frac{|x-y|^{\sigma p}}{(3R)^{\sigma p}} \frac{\mathrm{d}y \,\mathrm{d}x}{|x-y|^n} \\
&\quad + \int_{B_R} \int_{\mathbb{R}^n \setminus B_{2R}} G(|D^su_j|) \frac{R^{\sigma p}}{|x-y|^{\sigma p}} \frac{\mathrm{d}y \,\mathrm{d}x}{|x-y|^n} \\
&= C R^n \iint_{B_R \times B_{2R}} H(|D^su_j|^p) \,\mathrm{d}\mu_1 + R^n \iint_{B_R \times B_{2R}^c} H(|D^su_j|^p) \,\mathrm{d}\mu_2,
\end{align*}
where we set
\begin{equation*}
\mathrm{d}\mu_1 = \frac{|x-y|^{\sigma p-n}}{R^{n+\sigma p}} \,\mathrm{d}x \,\mathrm{d}y, \quad \mathrm{d}\mu_2 = \frac{R^{\sigma p-n}}{|x-y|^{n+\sigma p}} \,\mathrm{d}x \,\mathrm{d}y,
\end{equation*}
and $H(t):=G(t^{1/p})$. Note that $\mu_1(B_R \times B_{2R}), \mu_2(B_R \times B_{2R}^c) < \infty$. Therefore, by applying the Jensen's inequality, we have
\begin{align*}
\mathrm{cap}_{s, G}(\overline{B_r}, B_R)+1/j
&\geq CR^n H\left( \iint_{B_R \times B_{2R}} |D^su_j|^p \,\mathrm{d}\mu_1 \right) + CR^n H\left( \iint_{B_R \times B_{2R}^c} |D^s u_j|^p \,\mathrm{d}\mu_2 \right) \\
&\geq CR^n H \left(R^{-n-\sigma p} \int_{B_R}\int_{B_{2R}}|D^{s-\sigma}u_j|^p \frac{\mathrm{d}y\,\mathrm{d}x}{|x-y|^n} \right) \\
&\quad + CR^n H \left(R^{-n+\sigma p} \int_{B_R}\int_{B_{2R}^c}|D^{s+\sigma}u_j|^p \frac{\mathrm{d}y\,\mathrm{d}x}{|x-y|^n} \right).
\end{align*}

On the other hand, since $u_j$ is admissible for $\mathrm{cap}_{s, p}(\overline{B_r}, B_R)$, we obtain
\begin{align*}
CR^{n-sp}
&\leq \mathrm{cap}_{s, p}(\overline{B_r}, B_R) \leq \int_{\mathbb{R}^n}\int_{\mathbb{R}^n}|D^su_j|^p \frac{\mathrm{d}y\,\mathrm{d}x}{|x-y|^n} \\
&\leq 2 \int_{B_R} \int_{B_{2R}} |D^s u_j|^p \frac{\mathrm{d}y\,\mathrm{d}x}{|x-y|^n} + 2 \int_{B_R} \int_{B_{2R}^c} |D^s u_j|^p \frac{\mathrm{d}y\,\mathrm{d}x}{|x-y|^n}.
\end{align*}
The first inequality follows from Lemma~2.17 in Kim--Lee--Lee~\cite{KLL23}. Hence, at least one of the following holds:
\begin{equation}\label{eq-alt1}
		\int_{B_R}\int_{B_{2R}} |D^s u_j|^p \frac{\mathrm{d}y\,\mathrm{d}x}{|x-y|^n} \geq \frac{C}{4}R^{n-sp}
\end{equation}
or
\begin{equation}\label{eq-alt2}
	\int_{B_R}\int_{B_{2R}^c} |D^su_j|^p \frac{\mathrm{d}y\,\mathrm{d}x}{|x-y|^n} \geq \frac{C}{4}R^{n-sp}.
\end{equation}
Suppose that \eqref{eq-alt1} holds. Since the map
\begin{equation*}
	\theta \mapsto R^{\theta p-n} \int_{B_R}\int_{B_{2R}} |D^\theta u_j|^p \frac{\mathrm{d}y\,\mathrm{d}x}{|x-y|^n} 
\end{equation*}
is continuous at $\theta=s$, we deduce that, for sufficiently small $\sigma>0$,
\begin{equation*}
	\int_{B_R}\int_{B_{2R}} |D^{s-\sigma}u_j|^p \frac{\mathrm{d}y\,\mathrm{d}x}{|x-y|^n} \geq \frac{C}{8}R^{n-(s-\sigma)p},
\end{equation*}
which implies that
\begin{equation*}
	\mathrm{cap}_{s, G}(\overline{B_r}, B_R)+1/j \geq CR^n H(R^{-sp})=CR^nG(R^{-s}) \geq Cr^nG(r^{-s}).
\end{equation*}
The case when \eqref{eq-alt2} holds can be considered in a similar way. Letting $j \to \infty$ finishes the proof.
\end{proof}

Sets having $(s, G)$-capacity zero play a crucial role in the development of the theory. Let us provide the precise definition.

\begin{definition}
	A set $E$ is said to be of \emph{$(s, G)$-capacity zero}, or to \emph{have $(s, G)$-capacity zero}, if 
	\begin{equation*}
		\mathrm{cap}_{s, G}(E \cap \Omega, \Omega)=0
	\end{equation*}
for all open sets $\Omega \subset \mathbb{R}^n$. In this case, we write $\mathrm{cap}_{s, G}E=0$. We write $\mathrm{cap}_{s, G}E>0$ if $E$ is not of capacity zero.
\end{definition}

The next lemma shows that one needs to test only a single bounded open set $\Omega$ containing $E$ in showing that $E$ has $(s, G)$-capacity zero.

\begin{lemma}
	Let $E$ be bounded and suppose that there exists a bounded neighborhood $\Omega$ of $E$ with $\mathrm{cap}_{s, G}(E, \Omega)=0$. Then $E$ has $(s, G)$-capacity zero.
\end{lemma}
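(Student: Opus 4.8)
The plan is to reduce the statement about arbitrary open sets to the single given neighborhood $\Omega$ by a scaling-plus-covering argument. Fix an arbitrary open set $U \subset \mathbb{R}^n$; we must show $\mathrm{cap}_{s, G}(E \cap U, U) = 0$. Since $E$ is bounded, $E \cap U$ is bounded, so by the definition of $(s, G)$-capacity via open supersets it suffices to produce, for each $\varepsilon > 0$, an open set $V$ with $E \cap U \subset V \subset U$ and $\mathrm{cap}_{s, G}(V, U) < \varepsilon$; equivalently, by inner regularity on open sets, to control $\mathrm{cap}_{s, G}(K, U)$ for compact $K \subset E \cap U$. The essential point is that $\mathrm{cap}_{s, G}(K, \Omega) = 0$ gives, for every $\varepsilon$, an admissible $v \in W(K', \Omega)$ (for a suitable compact $K' \supset K$) with $\varrho_{W^{s, G}(\mathbb{R}^n)}(v)$ arbitrarily small, and we want to turn such a $v$ into a test function admissible for $\mathrm{cap}_{s, G}(K, U)$.

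The main steps I would carry out are as follows. First, reduce to the case $U$ bounded: if $U$ is unbounded, intersect with a large ball containing $E$, using monotonicity of the capacity in the second argument together with the bounded case. Second, by monotonicity in the first and second arguments it is enough to treat a single bounded open set; the subtlety is that $U$ and $\Omega$ need not be nested. To handle this, cover the compact set $\overline{E}$ (or a compact $K \subset E \cap U$) by finitely many small balls $B_{r_i}(x_i)$ with $2B_{r_i}(x_i) \subset U \cap \Omega$ when $x_i \in E \cap U$; this is possible because $E \cap U$ is a subset of the open set $U \cap \Omega$. Third, apply subadditivity (\Cref{thm-capacity}(iii)) to write $\mathrm{cap}_{s, G}(E \cap U, U) \leq \sum_i \mathrm{cap}_{s, G}(E \cap B_{r_i}(x_i), U)$, and on each piece use that a test function $v$ admissible for the capacity of $E \cap \overline{B_{r_i}}$ relative to $\Omega$, multiplied by a cutoff $\eta_i \in C_c^\infty(2B_{r_i})$ with $\eta_i \equiv 1$ on $B_{r_i}$, yields a function $\eta_i v \in W_0(\,\cdot\,, U)$; a product/Leibniz estimate together with \Cref{lem-G} bounds $\varrho_{W^{s, G}(\mathbb{R}^n)}(\eta_i v)$ by a constant (depending on the fixed finite cover) times $\varrho_{W^{s, G}(\mathbb{R}^n)}(v)$ plus a lower-order term involving $\int G(|v|/r_i^s)$ which also tends to $0$ along a minimizing sequence. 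Since $\mathrm{cap}_{s, G}(E \cap \overline{B_{r_i}}, \Omega) \leq \mathrm{cap}_{s, G}(E, \Omega) = 0$, each term can be made arbitrarily small, hence so can the finite sum.

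The hard part will be the localization estimate: controlling $\varrho_{W^{s, G}(\mathbb{R}^n)}(\eta v)$ in terms of $\varrho_{W^{s, G}(\mathbb{R}^n)}(v)$. Because $W^{s, G}$ is a Gagliardo-type seminorm, $D^s(\eta v)$ does not split cleanly; one writes $D^s(\eta v)(x,y) = \eta(x) D^s v(x,y) + v(y) D^s \eta(x,y)$, uses $|D^s\eta| \le C(|x-y|/r)^{1-s} r^{-s}$ and the quasi-triangle inequality for $G$ from \Cref{lem-G}(iii), and must absorb the cross term $\iint G(|v(y)||D^s\eta(x,y)|)\,|x-y|^{-n}\,\mathrm{d}y\,\mathrm{d}x$. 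Splitting this integral over $|x-y| \lesssim r$ and $|x-y| \gtrsim r$ and using that $\eta$ is supported in $2B_r$, one bounds it by $C\int_{\mathbb{R}^n} G(|v|/r^s)\,\mathrm{d}x$, which is finite since $v \in W^{s,G}(\mathbb{R}^n)$ has compact support (so by a Poincaré-type or direct estimate it is $L^G$-controlled, ultimately again by the seminorm up to the fixed geometry of the cover). A clean way to avoid extra machinery is to note that along a minimizing sequence $v_j$ for $\mathrm{cap}_{s, G}(E \cap \overline{B_{r_i}}, \Omega)$ one may arrange $v_j \to 0$ in $W^{s, G}(\mathbb{R}^n)$ and hence in $L^G$ on bounded sets, so the cross term vanishes in the limit; this is exactly the type of argument used for the analogous classical statements, and I would cite \cite[Theorem~2.2~and~Lemma~2.3]{HKM06} for the parts that are insensitive to the nonlocal structure. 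Once the localization estimate is in hand the rest is bookkeeping with subadditivity and monotonicity.
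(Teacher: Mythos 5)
Your proposal is correct in spirit and shares the central mechanism with the paper's proof — multiply a minimizing sequence from $\Omega$ by something compactly supported in the new domain, apply the Leibniz rule for $D^s$, bound the cross term via the fractional Poincar\'e inequality (\Cref{thm-Poincare}) — but it routes through machinery the paper avoids. The paper does not cover by balls or invoke subadditivity at all. Given an arbitrary open $\Omega'$, it reduces to $K := E \cap \Omega'$ compact (citing \cite{HKM06}), picks a \emph{single fixed} $v \in W(K, \Omega')$, and for any $u \in W(K, \Omega)$ notes that $uv \in W(K, \Omega')$. The Leibniz identity $D^s(uv) = u(x)D^sv + v(y)D^su$ and \Cref{lem-G}(iii) then give
\[
\mathrm{cap}_{s,G}(K, \Omega') \le C \int_{\Omega'} G(|u|)\,\mathrm{d}x + C\int_{\Omega'}\int_{\mathbb{R}^n} G(|D^su|)\,\frac{\mathrm{d}y\,\mathrm{d}x}{|x-y|^n},
\]
with $C$ depending on the fixed $v$ but not on $u$; Poincar\'e absorbs the first term, and both vanish along a minimizing sequence $u_j$ for $\mathrm{cap}_{s,G}(K,\Omega) = 0$. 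What the single multiplication buys is exactly what your covering buys — a product that is admissible for the target capacity and whose energy is controlled by that of $u$ alone — but without having to piece together balls $B_{r_i}$ with $2B_{r_i}\subset U\cap\Omega$, sum with \Cref{thm-capacity}(iii), or track constants through a finite cover. Your version does work, modulo two small points: the reduction to bounded $U$ should intersect $U$ with a large ball (so you are applying the bounded case to $U \cap B_R$, not to $B_R$), which you in fact do; and the cutoffs $\eta_i$ need to equal $1$ on $\overline{B_{r_i}}$, not just $B_{r_i}$, to keep $\eta_i v_j \ge 1$ on $K\cap\overline{B_{r_i}}$. Neither is hard to repair, but the single-cutoff argument makes them moot.
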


\begin{proof}
	 Let $\Omega'$ be any open subset of $\mathbb{R}^n$. By following the argument in the proof of Lemma~2.9 in Heinonen--Kilpel\"ainen--Martio~\cite{HKM06}, we may assume that $E \cap \Omega'$ is compact. Let $u \in W(E \cap \Omega', \Omega)$ and $v \in W(E \cap \Omega', \Omega')$, then $uv \in W(E \cap \Omega', \Omega')$, and so
\begin{align*}
\mathrm{cap}_{s, G}(E \cap \Omega', \Omega')
&\leq 2\int_{\Omega'} \int_{\mathbb{R}^n} G(|D^s(uv)|) \frac{\mathrm{d}y\,\mathrm{d}x}{|x-y|^n} \\
&\leq C \int_{\Omega'} \int_{\mathbb{R}^n} (G(|u(x)| |D^sv|) + G(|v(y)||D^su|)) \frac{\mathrm{d}y\,\mathrm{d}x}{|x-y|^n} \\
&\leq C \int_{\Omega'} G(|u(x)|) \,\mathrm{d}x + C \int_{\Omega'} \int_{\mathbb{R}^n} G(|D^su|) \frac{\mathrm{d}y\,\mathrm{d}x}{|x-y|^n}.
\end{align*}
The fractional Poincar\'e inequality (\Cref{thm-Poincare}) shows that
\begin{equation*}
	\mathrm{cap}_{s, G}(E \cap \Omega', \Omega') \leq C\int_{\mathbb{R}^n}\int_{\mathbb{R}^n}G(|D^su|) \frac{\mathrm{d}y\,\mathrm{d}x}{|x-y|^n}.
\end{equation*}
Since the last integral can be chosen to be arbitrarily small, we conclude $\mathrm{cap}_{s, G}(E \cap \Omega', \Omega')=0$.
\end{proof}

In potential theory, the notion of capacity is used to refine the almost everywhere equivalence of Sobolev functions. For instance, for each $u \in W^{1, p}_0(\Omega)$ there exists a function $v \in W^{1, p}_0(\Omega)$ such that $v=u$ a.e.\ in $\Omega$ and that $v$ is continuous when restricted to a set whose complement has arbitrarily small $p$-capacity; see Theorem~4.5 in Heinonen--Kilpel\"ainen--Martio~\cite{HKM06}. We extend the theory to fractional Sobolev functions in $V^{s, G}_0(\Omega)$. For this purpose, we provide several definitions.

\begin{definition}\label{def-quasi}
\begin{enumerate}[(i)]
\item
A function $u: \Omega \to [-\infty, \infty]$ is said to be \emph{$(s, G)$-quasicontinuous in $\Omega$} if for every $\varepsilon>0$ there exists an open set $D\subset \Omega$ such that $\mathrm{cap}_{s, G}(D, \Omega) < \varepsilon$ and that $u|_{\Omega \setminus D}$ is finite valued and continuous.
\item
A sequence of functions $\psi_j: \Omega \to \mathbb{R}$ is said to converge \emph{$(s, G)$-quasiuniformly in $\Omega$} to a function $\psi$ if for every $\varepsilon>0$ there exists an open set $D \subset \Omega$ such that $\mathrm{cap}_{s, G}(D, \Omega) < \varepsilon$ and $\psi_j \to \psi$ uniformly in $\Omega \setminus D$ as $j \to \infty$.
\item
We say that a property holds \emph{$(s, G)$-quasieverywhere} or simply q.e.\ if it holds except on a set of $(s, G)$-capacity zero.
\end{enumerate}
\end{definition}

Notice that we do not introduce the so-called Sobolev capacity in \Cref{def-quasi}, but we use condenser capacity $\mathrm{cap}_{s, G}$. This is because we are only interested in $u \in V^{s, G}_0(\Omega)$ whose norm is comparable to $[u]_{V^{s, G}(\Omega)}$ by \Cref{thm-Poincare}. In order to establish a similar result for $u \in V^{s, G}(\Omega)$, one would need to introduce the Sobolev capacity.

\begin{theorem}\label{thm-quasi}
Let $\psi_j \in C_c^\infty(\Omega)$ be a Cauchy sequence in $V^{s, G}(\Omega)$. There exists a subsequence $\psi_{j_k}$ and a function $v \in V^{s, G}_0(\Omega)$, which is $(s, G)$-quasicontinuous in $\Omega$, such that $\psi_{j_k}$ converges $(s, G)$-quasiuniformly in $\Omega$ and pointwise q.e.\ in $\Omega$ to $v$ as $k \to \infty$.
\end{theorem}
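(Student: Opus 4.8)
The plan is to build a quasicontinuous representative from a rapidly converging subsequence, following the classical scheme of \cite{HKM06} and using the doubling-type bounds of \Cref{lem-G} to keep the relevant capacities under control. First I would pass to a subsequence, still denoted $\psi_k$, along which $[\psi_{k+1}-\psi_k]_{V^{s, G}(\Omega)}\le 2^{-(q+1)k}$ for every $k$; this is possible since $(\psi_j)$ is Cauchy. Writing $w_k=\psi_{k+1}-\psi_k\in C_c^\infty(\Omega)$ and $E_k=\{x\in\Omega:|w_k(x)|>2^{-k}\}$ (an open set with $\overline{E_k}\subset\supp w_k\Subset\Omega$), the Lipschitz function $2^k|w_k|$ is compactly supported in $\Omega$, lies in $W^{s, G}(\mathbb{R}^n)$, and exceeds $1$ on $E_k$; mollifying it produces, for each compact $K\subset E_k$, competitors in $C_c^\infty(\Omega)$ that are $\ge1$ on $K$ with $G$-energy arbitrarily close to $\varrho_{W^{s, G}(\mathbb{R}^n)}(2^k|w_k|)$, so $\mathrm{cap}_{s, G}(E_k,\Omega)\le\varrho_{W^{s, G}(\mathbb{R}^n)}(2^k|w_k|)$ by \Cref{def-capacity}. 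Using $\big||w_k(x)|-|w_k(y)|\big|\le|w_k(x)-w_k(y)|$ together with the monotonicity of $G$, then $G(2^kt)\le 2^{kq}G(t)$ from \Cref{lem-G}, then the fact that $w_k$ vanishes off $\Omega$ (so $\varrho_{W^{s, G}(\mathbb{R}^n)}(w_k)\le 2\varrho_{V^{s, G}(\Omega)}(w_k)$ after symmetrizing the double integral), and finally $\varrho_{V^{s, G}(\Omega)}(w_k)\le[w_k]_{V^{s, G}(\Omega)}$ (convexity of the modular, since the seminorm is $\le1$), I would arrive at
\[
\mathrm{cap}_{s, G}(E_k,\Omega)\ \le\ 2^{kq}\,\varrho_{W^{s, G}(\mathbb{R}^n)}(w_k)\ \le\ 2\cdot 2^{kq}\,[w_k]_{V^{s, G}(\Omega)}\ \le\ 2^{1-k}.
\]

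Next I would set $F_m=\bigcup_{k\ge m}E_k$, so that subadditivity (\Cref{thm-capacity}) gives $\mathrm{cap}_{s, G}(F_m,\Omega)\le 2^{2-m}\to0$. On $\Omega\setminus F_m$ one has $|w_k|\le 2^{-k}$ for all $k\ge m$, hence $\sum_kw_k$ converges uniformly there and $\psi_k$ converges uniformly on $\Omega\setminus F_m$ to a finite continuous limit; these limits are consistent and define a function $v$ on $\Omega\setminus N$ with $N=\bigcap_mF_m$, which I extend by $0$ on $N$. Since $N\subset F_m$ for all $m$, $\mathrm{cap}_{s, G}(N,\Omega)=0$, so $\psi_k\to v$ pointwise q.e., the convergence is $(s, G)$-quasiuniform (take $D=F_m$ with $2^{2-m}<\varepsilon$), and $v$ is $(s, G)$-quasicontinuous, all in the sense of \Cref{def-quasi}. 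Finally, because $V^{s, G}(\Omega)$ is Banach and $\psi_k\in C_c^\infty(\Omega)$, the sequence converges in $V^{s, G}(\Omega)$ — hence in $L^G(\Omega)$, hence a.e.\ along a further subsequence — to some $\tilde v\in V^{s, G}_0(\Omega)$; and since a set of $(s, G)$-capacity zero has Lebesgue measure zero (for compact $K\subset\Omega$ and $v\in W(K,\Omega)$, the normalization $G(1)=1$ and \Cref{thm-Poincare} give $|K|\le\int_\Omega G(|v|)\,\mathrm{d}x\le C\varrho_{W^{s, G}(\mathbb{R}^n)}(v)$, so $|K|\le C\,\mathrm{cap}_{s, G}(K,\Omega)$, which passes to $N$ by inner regularity), the q.e.\ convergence $\psi_k\to v$ is in particular a.e.; thus $v=\tilde v$ a.e.\ and $v\in V^{s, G}_0(\Omega)$, finishing the proof.

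The main obstacle is the weak-type capacitary estimate of the first step: the non-homogeneity of $G$ forces the loss $2^{kq}$ in passing from $\varrho_{W^{s, G}(\mathbb{R}^n)}(2^kw_k)$ to $\varrho_{W^{s, G}(\mathbb{R}^n)}(w_k)$, and this must be absorbed by a sufficiently fast decay of $[\psi_{k+1}-\psi_k]_{V^{s, G}(\Omega)}$ along the subsequence — possible precisely because the original sequence is Cauchy, which is why the rate $2^{-(q+1)k}$ is built into the choice of subsequence. A secondary technical point is the admissibility of the merely Lipschitz competitor $2^k|w_k|$ for the condenser capacity, handled through mollification and the truncation inequality $\big||a|-|b|\big|\le|a-b|$, neither of which raises the $G$-energy.
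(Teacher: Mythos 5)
Your proof is correct and follows essentially the same route as the paper's: pass to a subsequence so that the modular of the rescaled differences is summable (your explicit seminorm rate $2^{-(q+1)k}$ combined with the doubling bound $G(2^kt)\le 2^{kq}G(t)$ and the modular--seminorm comparison is precisely what makes the paper's series $\sum_j \varrho_{V^{s,G}(\Omega)}(2^j(\psi_j-\psi_{j+1}))$ converge), estimate the capacity of the exceptional sets via admissibility and the symmetrization bound $\varrho_{W^{s,G}(\mathbb{R}^n)}\le 2\varrho_{V^{s,G}(\Omega)}$, and conclude quasiuniform convergence by subadditivity. The only cosmetic differences are that you use the Lipschitz competitor $2^k|w_k|$ plus mollification where the paper splits $E_j=E_j^+\cup E_j^-$ and tests with the smooth functions $\pm\varphi_j$ directly, and that you explicitly justify $v\in V^{s,G}_0(\Omega)$ by showing $(s,G)$-capacity-zero sets are Lebesgue-null via \Cref{thm-Poincare} (a worthwhile step the paper leaves implicit, though it tacitly assumes $\Omega$ bounded).
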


\begin{proof}
	Since $\psi_j$ is a Cauchy sequence in $V^{s, G}(\Omega)$, there exists a subsequence, which we still denote by $\psi_j$, such that the series
	\begin{equation*}
		\sum_{j=1}^{\infty} \int_\Omega \int_{\mathbb{R}^n} G\left(2^j|D^s(\psi_j - \psi_{j+1})| \right) \frac{\mathrm{d}y\,\mathrm{d}x}{|x-y|^n}
	\end{equation*}
	converges. Thus, for every $\varepsilon>0$, there exists a $j_{\varepsilon} \in \mathbb{N}$ such that
	\begin{equation*}
		\sum_{j=j_\varepsilon}^{\infty} \int_\Omega \int_{\mathbb{R}^n} G\left(2^j|D^s(\psi_j - \psi_{j+1})| \right) \frac{\mathrm{d}y\,\mathrm{d}x}{|x-y|^n} < \frac{\varepsilon}{4}.
	\end{equation*}
We define $\varphi_j = 2^j(\psi_j-\psi_{j+1})$ and set
\begin{align*}
	&E_j^+:=\{ x \in \Omega: \varphi_j(x) > 1 \},\\
	&E_j^-:=\{ x \in \Omega : \varphi_j(x) < -1\}.
\end{align*}
Then by definition of $(s, G)$-capacity, we have
\begin{align*}
	&\mathrm{cap}_{s, G}(E_j^+, \Omega) \leq \varrho_{W^{s, G}(\mathbb{R}^n)}(\varphi_j) \leq 2\varrho_{V^{s, G}(\Omega)}(\varphi_j), \\
	&\mathrm{cap}_{s, G}(E_j^-, \Omega) \leq \varrho_{W^{s, G}(\mathbb{R}^n)}(-\varphi_j) \leq 2\varrho_{V^{s, G}(\Omega)}(\varphi_j).
\end{align*}
By putting
\begin{equation*}
	E_{\varepsilon}=\bigcup_{j=j_{\varepsilon}}^{\infty} \left(E_j^+\cup E_j^-\right)
\end{equation*}
and applying \Cref{thm-capacity}~(iii), we obtain that
\begin{equation*}
	\mathrm{cap}_{s, G}(E_{\varepsilon}, \Omega)\leq 4\sum_{j=j_{\varepsilon}}^{\infty} \varrho_{V^{s, G}(\Omega)}(\varphi_j) < \varepsilon.
\end{equation*}
Moreover, since
\begin{equation*}
	|\psi_j-\psi_k| \leq \sum_{l=j}^{k-1}2^{-l} \leq 2^{1-j} \quad \text{in $\Omega \setminus E_{\varepsilon}$}
\end{equation*}
for $k \geq j \geq j_\varepsilon$, $\psi_j$ converges uniformly in $\Omega \setminus E_\varepsilon$. Clearly, it also converges q.e.\ in $\Omega$ to a function $u$ which is $(s, G)$-quasicontinuous in $\Omega$.
\end{proof}

As a consequence of \Cref{thm-quasi} we obtain the following theorem, which enables us to analyze the pointwise behavior of Sobolev functions.

\begin{corollary}\label{cor-quasicont}
Let $u \in V^{s, G}_0(\Omega)$. There exists a function $v \in V^{s, G}_0(\Omega)$ such that $v$ is $(s, G)$-quasicontinuous in $\Omega$ and $u=v$ a.e.\ in $\mathbb{R}^n$.
\end{corollary}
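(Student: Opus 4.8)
\emph{Proof proposal.} The plan is to deduce this directly from \Cref{thm-quasi}. Since $u \in V^{s, G}_0(\Omega) = \overline{C_c^\infty(\Omega)}^{V^{s, G}(\Omega)}$, I would first fix a sequence $\psi_j \in C_c^\infty(\Omega)$ with $\psi_j \to u$ in $V^{s, G}(\Omega)$. In particular $(\psi_j)$ is a Cauchy sequence in $V^{s, G}(\Omega)$, so \Cref{thm-quasi} applies and produces a subsequence $\psi_{j_k}$ together with an $(s, G)$-quasicontinuous function $v \in V^{s, G}_0(\Omega)$ such that $\psi_{j_k} \to v$ pointwise $(s, G)$-quasieverywhere in $\Omega$ (and quasiuniformly). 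Thus $v$ already has all the properties required of the sought representative except that it remains to identify it with $u$ almost everywhere.

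To make this identification I would use two observations. First, convergence in $V^{s, G}(\Omega)$ implies convergence in $L^G(\Omega)$, since $\|\cdot\|_{L^G(\Omega)}$ is a summand of $\|\cdot\|_{V^{s, G}(\Omega)}$; hence $\psi_{j_k} \to u$ in $L^G(\Omega)$, and after passing to a further subsequence $\psi_{j_k} \to u$ pointwise a.e.\ in $\Omega$. Second, every set of $(s, G)$-capacity zero is Lebesgue-null: for a compact $K \subset \Omega'$ and any admissible $w \in W(K, \Omega')$ one has, by the fractional Poincar\'e inequality (\Cref{thm-Poincare}), \Cref{lem-G}, and $G(1)=1$, that $|K| \leq \int_{\Omega'} G(|w|)\,\mathrm{d}x \leq C\,\varrho_{W^{s, G}(\mathbb{R}^n)}(w)$, which can be made arbitrarily small; the case of a general set then follows from the inner and outer regularity built into \Cref{def-capacity}. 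Consequently the exceptional set for the quasieverywhere convergence $\psi_{j_k} \to v$ has Lebesgue measure zero, so off a null subset of $\Omega$ we have $u = \lim_k \psi_{j_k} = v$, i.e.\ $u = v$ a.e.\ in $\Omega$.

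Finally, since both $u$ and $v$ lie in $V^{s, G}_0(\Omega)$, they vanish a.e.\ in $\mathbb{R}^n \setminus \Omega$ (cf.\ \cite[Remark~2.2]{KLL23}), and therefore $u = v$ a.e.\ in $\mathbb{R}^n$, as claimed. The only genuinely delicate point is the bookkeeping that reconciles the quasieverywhere convergence coming from \Cref{thm-quasi} with the almost-everywhere convergence coming from $L^G$-convergence, which rests on the fact that capacity-zero sets are Lebesgue-null; the remainder is a routine unwinding of the definitions of $V^{s, G}_0(\Omega)$ and of $(s, G)$-quasicontinuity.
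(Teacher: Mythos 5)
Your proof is correct and fills in the details that the paper leaves implicit (the paper merely asserts the corollary "as a consequence of" \Cref{thm-quasi}). The key point you rightly isolate is that the q.e.\ pointwise limit $v$ from \Cref{thm-quasi} must be identified with the norm limit $u$, and the reconciliation via "capacity-zero implies Lebesgue-null" (a routine consequence of \Cref{thm-Poincare} and $G(1)=1$, as you note) is exactly what is needed; this is the standard route and matches the intended derivation.
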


\subsection{\texorpdfstring{$\mathcal{L}$}{L}-potential}

We define the $\mathcal{L}$-potential by using $\mathcal{L}$-harmonic functions.

\begin{definition}
	Let $\Omega \subset \mathbb{R}^n$ be a bounded open set and let $K \subset \Omega$ be a compact set. Let $\psi: \mathbb{R}^n \to [0,1]$ be such that $\psi \in C_c^{\infty}(\Omega)$ and $\psi = 1$ on $K$. The $\mathcal{L}$-harmonic function in $\Omega \setminus K$ with $u-\psi \in V^{s, G}_0(\Omega\setminus K)$ is called the \emph{$\mathcal{L}$-potential of $K$ in $\Omega$} and denoted by $\mathfrak{R}(K, \Omega)$.
\end{definition}

By the comparison principle (see Lemma~5.2 in Kim--Lee~\cite{KL23} for instance), it is immediate that the definition of $\mathfrak{R}(K, \Omega)$ is independent of the particular choice of $\psi$.

We now propose useful properties of the $\mathcal{L}$-potential. For this purpose, we introduce a generalized notion of $(s,G)$-capacity associated with the operator $\mathcal{L}$, namely,
\begin{equation*}
	\mathcal{C}(K, \Omega):=\inf_{v \in W(K, \Omega)} \mathcal{E}(v),
\end{equation*}
where 
\begin{equation*}
	\mathcal{E}(v) :=\int_{\mathbb{R}^n}\int_{\mathbb{R}^n} G(|D^sv|) \,\frac{k(x, y)}{|x-y|^n}\,\mathrm{d}y\,\mathrm{d}x.
\end{equation*}
Note that $\mathcal{C}(K, \Omega)=\mathrm{cap}_{s,G}(K, \Omega)$ when $k(x,y) \equiv1$. By modifying Lemma~2.16 in Kim--Lee--Lee~\cite{KLL23}, one can easily obtain the following lemma.

\begin{lemma}\label{lem-potential}
	Let $\Omega \subset \mathbb{R}^n$ be a bounded open set and let $K \subset \Omega$ be a compact set.
	\begin{enumerate}[(i)]
		\item $\mathfrak{R}(K, \Omega)$ is admissible for $\mathrm{cap}_{s,G}(K, \Omega)$. In particular, $\mathfrak{R}(K, \Omega) \in W_0^{s, G}(\Omega)$.
		
		\item Let $\widetilde{W}(K, \Omega) := \lbrace v \in C_c^{\infty}(\Omega): v = 1 \text{ in a neighborhood of } K \rbrace$, then
		\begin{equation*}
			\mathcal{C}(K, \Omega)=\inf_{v \in \widetilde{W}(K, \Omega)} \mathcal{E}(v).
		\end{equation*}
		
		\item $\mathfrak{R}(K, \Omega)$ is a minimizer of $\mathcal{E}$ among all functions belonging to $W_0(K, \Omega)$, i.e.,
		\begin{equation*}
			\mathcal{E}(\mathfrak{R}(K, \Omega))=\mathcal{C}(K, \Omega).
		\end{equation*}
		
		\item $\mathfrak{R}(K, \Omega)$ is a supersolution of $\mathcal{L}u=0$ in $\Omega$.
	\end{enumerate}
\end{lemma}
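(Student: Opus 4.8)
The plan is to establish the four items in the stated order, as each builds on the previous. Throughout I regard $\mathfrak{R}=\mathfrak{R}(K,\Omega)$ as a function on all of $\mathbb{R}^n$ by extending it to equal $\psi$ on $K\cup(\mathbb{R}^n\setminus\Omega)$, so that $\mathfrak{R}=1$ on $K$, $\mathfrak{R}=0$ outside $\Omega$, and, by the comparison principle applied with the constant sub/supersolutions $0$ and $1$, $0\le\mathfrak{R}\le1$ everywhere. For (i): since $\mathfrak{R}-\psi\in V^{s,G}_0(\Omega\setminus K)$, choose $\varphi_j\in C_c^\infty(\Omega\setminus K)$ with $\varphi_j\to\mathfrak{R}-\psi$ in $V^{s,G}(\Omega\setminus K)$. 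Each $\psi+\varphi_j\in C_c^\infty(\Omega)$ equals $1$ on $K$ (as $\varphi_j$ vanishes on a neighborhood of $K$), hence $\psi+\varphi_j\in W(K,\Omega)$. Since the difference $\mathfrak{R}-(\psi+\varphi_j)=(\mathfrak{R}-\psi-\varphi_j)\mathbf{1}_{\Omega\setminus K}$ is supported in $\Omega\setminus K$ and its $V^{s,G}(\Omega\setminus K)$-modular tends to $0$, the elementary bound $\varrho_{W^{s,G}(\mathbb{R}^n)}(h)\le 2\varrho_{V^{s,G}(\Omega\setminus K)}(h)$ for $h$ supported in $\Omega\setminus K$, together with the fractional Poincar\'e inequality (\Cref{thm-Poincare}), gives $\psi+\varphi_j\to\mathfrak{R}$ in $W^{s,G}(\mathbb{R}^n)$. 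Thus $\mathfrak{R}\in W_0(K,\Omega)$, which is precisely the admissibility asserted in (i).

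For (ii), only the inequality $\inf_{\widetilde W(K,\Omega)}\mathcal{E}\le\mathcal{C}(K,\Omega)$ is nontrivial. Given $v\in W(K,\Omega)$ and small $\varepsilon\in(0,1/2)$, set $w=\min\{(v-\varepsilon)_+/(1-2\varepsilon),\,1\}$: this is Lipschitz, compactly supported in $\Omega$, identically $1$ on the open neighborhood $\{v>1-\varepsilon\}\supset K$ of $K$, and satisfies $|D^sw|\le(1-2\varepsilon)^{-1}|D^sv|$ pointwise, whence $\mathcal{E}(w)\le(1-2\varepsilon)^{-q}\mathcal{E}(v)$ by \Cref{lem-G}. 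Mollifying $w$ yields $w_\rho\in\widetilde W(K,\Omega)$ for $\rho$ small, and $\mathcal{E}(w_\rho)\to\mathcal{E}(w)$ because $w_\rho\to w$ in $W^{s,G}(\mathbb{R}^n)$ and the modular $\mathcal{E}$ is continuous along norm-convergent sequences (by the doubling property $G(2t)\le2^qG(t)$ from \Cref{lem-G}). Letting $\rho\to0$ and then $\varepsilon\to0$ gives $\inf_{\widetilde W(K,\Omega)}\mathcal{E}\le\mathcal{E}(v)$, and taking the infimum over $v$ proves (ii).

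For (iii), fix $\psi$ so that $\psi\equiv1$ on a neighborhood of $K$ (allowed since $\mathfrak{R}$ is independent of the choice of $\psi$). Since $\mathfrak{R}\in W_0(K,\Omega)$ by (i) and $\mathcal{C}(K,\Omega)=\inf_{W_0(K,\Omega)}\mathcal{E}$ (by the same approximation used for $\mathrm{cap}_{s,G}$, together with the continuity of $\mathcal{E}$), we get the lower bound $\mathcal{C}(K,\Omega)\le\mathcal{E}(\mathfrak{R})$. For the reverse, let $v\in\widetilde W(K,\Omega)$; then $v$ and $\psi$ both equal $1$ near $K$ and both are compactly supported in $\Omega$, so $v-\psi\in C_c^\infty(\Omega\setminus K)\subset V^{s,G}_0(\Omega\setminus K)$, i.e.\ $v$ is a competitor in the Dirichlet problem defining $\mathfrak{R}$. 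As the $\mathcal{L}$-harmonic function minimizes $\mathcal{E}$ among all $w$ with $w-\psi\in V^{s,G}_0(\Omega\setminus K)$ (the \emph{Dirichlet principle}, a consequence of the convexity of $G$ and the uniqueness in \Cref{thm-DP}), we obtain $\mathcal{E}(\mathfrak{R})\le\mathcal{E}(v)$; taking the infimum over $v$ and invoking (ii) gives $\mathcal{E}(\mathfrak{R})\le\mathcal{C}(K,\Omega)$. Hence $\mathcal{E}(\mathfrak{R})=\mathcal{C}(K,\Omega)$, and since $\mathfrak{R}\in W_0(K,\Omega)$ attains this value, $\mathfrak{R}$ minimizes $\mathcal{E}$ over $W_0(K,\Omega)$.

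For (iv), fix a nonnegative $\varphi\in C_c^\infty(\Omega)$. For every $t>0$ the function $\mathfrak{R}+t\varphi$ again lies in $W_0(K,\Omega)$ (approximate by $\psi+\varphi_j+t\varphi\in W(K,\Omega)$), so (iii) gives $\mathcal{E}(\mathfrak{R}+t\varphi)\ge\mathcal{E}(\mathfrak{R})$ for all $t>0$. The map $t\mapsto\mathcal{E}(\mathfrak{R}+t\varphi)$ is convex, hence its right derivative at $t=0$ exists and is nonnegative; differentiating under the integral sign identifies this derivative with $\mathcal{E}(\mathfrak{R},\varphi)$. Therefore $\mathcal{E}(\mathfrak{R},\varphi)\ge0$ for every nonnegative $\varphi\in C_c^\infty(\Omega)$, and since $\mathfrak{R}\in W^{s,G}(\mathbb{R}^n)\subset W^{s,G}_{\mathrm{loc}}(\Omega)$ with $\mathfrak{R}_-\equiv0\in L^g_s(\mathbb{R}^n)$, this is exactly the statement that $\mathfrak{R}$ is a supersolution of $\mathcal{L}u=0$ in $\Omega$. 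I expect the main difficulty to be the bookkeeping in (iii) that bridges the PDE definition of $\mathfrak{R}$ (its $\mathcal{L}$-harmonicity) with the variational definition of $\mathcal{C}$: one must carefully pick $\psi$ to be $1$ near $K$, exploit the $\psi$-independence, and track the Orlicz homogeneity constants together with the $\Delta_2$-based continuity of $\mathcal{E}$ throughout — issues that are invisible in the pure-power case but here hinge on the structural assumptions \eqref{eq-pq}.
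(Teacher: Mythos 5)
Your proof is correct and complete. Note that the paper itself does not give a proof of this lemma — it simply defers to Lemma 2.16 in Kim--Lee--Lee~\cite{KLL23} — so there is no in-paper argument to compare against. Your route is the standard one behind that cited result, transferred to the Orlicz setting: in (i), approximation through $\psi+\varphi_j$ with $\varphi_j\in C_c^\infty(\Omega\setminus K)$ and the symmetry trick $\varrho_{W^{s,G}(\mathbb{R}^n)}(h)\le 2\varrho_{V^{s,G}(\Omega\setminus K)}(h)$ for $h$ supported in $\Omega\setminus K$; in (ii), the Lipschitz truncation $\min\{(v-\varepsilon)_+/(1-2\varepsilon),1\}$ followed by mollification; in (iii), combining (i), (ii), and the Dirichlet principle for the convex modular $\mathcal{E}$; in (iv), one-sided perturbations $\mathfrak{R}+t\varphi$ together with convexity of $t\mapsto\mathcal{E}(\mathfrak{R}+t\varphi)$. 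The two spots where the Orlicz structure could bite — the homogeneity factor $(1-2\varepsilon)^{-q}$ from \Cref{lem-G}, and the $\Delta_2$-based continuity of the modular along norm-convergent sequences — are both addressed correctly.
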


We close the section with the following lemma, which investigates the pointwise behavior of the $\mathcal{L}$-potentials.

\begin{lemma}\label{lem-quasirepresentative}
Let $\Omega \subset \mathbb{R}^n$ be a bounded open set and let $K \subset \Omega$ be a compact set. Then $\mathfrak{R}(K, \Omega)$ has a representative $v$, which is $(s, G)$-quasicontinuous in $\Omega$. Moreover, $v \geq 1$ q.e.\ on $K$.
\end{lemma}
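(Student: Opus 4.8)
The plan is to exhibit the quasicontinuous representative $v$ as a q.e.\ pointwise limit of the approximating functions coming from the construction of $\mathfrak{R}(K,\Omega)$, and then to use the comparison principle (together with the lower semicontinuity of superharmonic functions) to pin down its values on $K$. First I would recall from \Cref{lem-potential}~(i) that $\mathfrak{R}(K,\Omega)\in W_0^{s,G}(\Omega)$; more precisely, writing $u=\mathfrak{R}(K,\Omega)$, we have $u-\psi\in V^{s,G}_0(\Omega\setminus K)$ for the fixed cut-off $\psi\in\widetilde W(K,\Omega)$, so $u$ itself lies in $W_0(K,\Omega)=\overline{W(K,\Omega)}^{W^{s,G}(\mathbb R^n)}$. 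Hence there is a sequence $\psi_j\in W(K,\Omega)\subset C_c^\infty(\Omega)$ with $\psi_j\to u$ in $W^{s,G}(\mathbb R^n)$, and (after extracting a subsequence) $\psi_j\to u$ a.e.\ in $\mathbb R^n$. Since $u\in V^{s,G}(\Omega)$ and $\psi_j$ is then Cauchy in $V^{s,G}(\Omega)$ as well (the $L^G(\Omega)$ part is controlled via the fractional Poincaré inequality \Cref{thm-Poincare}), \Cref{thm-quasi} applies: passing to a further subsequence, $\psi_{j_k}$ converges $(s,G)$-quasiuniformly and pointwise q.e.\ in $\Omega$ to an $(s,G)$-quasicontinuous function $v\in V^{s,G}_0(\Omega)$. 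Because $\psi_{j_k}\to u$ a.e.\ and $\psi_{j_k}\to v$ q.e.\ (hence a.e., since sets of $(s,G)$-capacity zero are Lebesgue-null by \Cref{lem-cap}), we conclude $v=u$ a.e.\ in $\mathbb R^n$, so $v$ is the desired quasicontinuous representative of $\mathfrak{R}(K,\Omega)$.

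It remains to show $v\ge 1$ q.e.\ on $K$. Here I would use that each $\psi_j\ge 1$ on $K$ together with the quasiuniform convergence. Fix $\varepsilon>0$; by \Cref{thm-quasi} there is an open $D\subset\Omega$ with $\mathrm{cap}_{s,G}(D,\Omega)<\varepsilon$ and $\psi_{j_k}\to v$ uniformly on $\Omega\setminus D$. Since $\psi_{j_k}\ge 1$ on $K\setminus D$, uniform convergence gives $v\ge 1$ on $K\setminus D$; thus $\{x\in K:v(x)<1\}\subset D$, whence $\mathrm{cap}_{s,G}(\{x\in K:v(x)<1\},\Omega)<\varepsilon$. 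As $\varepsilon>0$ is arbitrary and $K$ is bounded, $\{x\in K:v(x)<1\}$ has $(s,G)$-capacity zero, i.e.\ $v\ge 1$ q.e.\ on $K$.

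The step requiring the most care is checking that the hypotheses of \Cref{thm-quasi} are genuinely met — namely that the approximating sequence $\psi_j\in C_c^\infty(\Omega)$ (guaranteed to converge to $u$ in the seminorm $[\cdot]_{W^{s,G}(\mathbb R^n)}$ by the definition of $W_0(K,\Omega)$) is actually Cauchy in the full norm of $V^{s,G}(\Omega)$. The $[\cdot]_{V^{s,G}(\Omega)}$-part is immediate since $\varrho_{V^{s,G}(\Omega)}\le\varrho_{W^{s,G}(\mathbb R^n)}$, and the $L^G(\Omega)$-part follows from \Cref{thm-Poincare} applied to the differences $\psi_j-\psi_k\in V^{s,G}_0(\Omega)$. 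A secondary, more conceptual point is the consistency of the two limits: the a.e.\ limit $u$ from the $W^{s,G}$-approximation and the q.e.\ limit $v$ from \Cref{thm-quasi} must coincide, which is why one extracts a single common subsequence and invokes the fact that $(s,G)$-capacity zero implies Lebesgue measure zero — a consequence of the capacity estimate \Cref{lem-cap}, since $G(t)>0$ for $t>0$ forces any set of positive measure to have positive capacity. Everything else is routine bookkeeping with the already-established properties of $\mathfrak{R}(K,\Omega)$ and of $(s,G)$-capacity.
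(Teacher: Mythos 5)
Your plan is essentially a correct unpacking of the paper's own (very terse) proof, which simply cites \Cref{cor-quasicont} and \Cref{lem-potential}. You correctly use \Cref{lem-potential}~(i) to place $\mathfrak{R}(K,\Omega)$ in $W_0(K,\Omega)$, verify that the approximating sequence from $W(K,\Omega)$ is Cauchy in $V^{s,G}(\Omega)$ (the seminorm comparison plus \Cref{thm-Poincare}), invoke \Cref{thm-quasi} to obtain a quasicontinuous limit $v$, and use the quasiuniform convergence together with $\psi_j\ge 1$ on $K$ to conclude $v\ge 1$ q.e.\ on $K$. This last step is the one detail the paper leaves implicit, and you supply it correctly.

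One minor misattribution: you justify \emph{``$(s,G)$-capacity zero implies Lebesgue measure zero''} by citing \Cref{lem-cap}. That lemma only gives a two-sided estimate for the capacity of a closed ball relative to a concentric ball; it does not by itself compare capacity with measure for arbitrary sets. The correct justification is the parenthetical you already give, made precise: if a compact $K_0\subset\Omega$ has $|K_0|>0$ and $w\in W(K_0,\Omega)$, then $\int_\Omega G(|w|)\,\mathrm dx\ge G(1)\,|K_0|=|K_0|>0$, and the fractional Poincar\'e inequality (\Cref{thm-Poincare}) converts this into a strictly positive lower bound on $\varrho_{W^{s,G}(\mathbb R^n)}(w)$, hence on $\mathrm{cap}_{s,G}(K_0,\Omega)$; inner regularity of Lebesgue measure then handles general Borel sets. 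So the ingredient you need is \Cref{thm-Poincare}, not \Cref{lem-cap}. Everything else is sound and matches the paper's intended argument.
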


\begin{proof}
It follows from \Cref{cor-quasicont} and \Cref{lem-potential}.
\end{proof}

\section{Wiener criterion}\label{sec-wiener}

In this section, we establish the Wiener criterion, \Cref{thm-Wiener}, which provides a necessary and sufficient condition for a boundary point to be regular. We mainly follow the argument presented in Kim--Lee--Lee~\cite{KLL23} and use regularity estimates obtained in \Cref{sec-preliminaries} and \Cref{sec-upper} to prove \Cref{thm-Wiener}.

\subsection{The sufficient condition}

In this section, the proof of the sufficiency part of \Cref{thm-Wiener} is provided, namely, we prove that a boundary point $x_0 \in \partial \Omega$ is regular if the Wiener integral diverges. Here, the Wiener integral refers to the integral on the left-hand side of \eqref{eq-Wiener}.

Throughout the section, we fix a boundary point $x_0 \in \partial \Omega$ and assume that the Wiener integral diverges. Let $\vartheta \in V^{s, G}(\Omega) \cap C(\mathbb{R}^n)$ and let $u$ be the unique harmonic function in $\Omega$ such that $u-\vartheta \in V^{s, G}_0(\Omega)$. Since $u=\vartheta$ a.e.\ outside $\Omega$, we may assume that $u \equiv \vartheta$ outside $\Omega$. We will prove that the equality \eqref{eq-regular} holds, or equivalently,
\begin{equation}\label{eq-claim-regular}
	\lim_{\rho \to 0}\sup_{\Omega \cap B_{\rho}(x_0)}u \leq \vartheta(x_0) \quad \text{and} \quad 	\lim_{\rho \to 0} \inf_{\Omega \cap B_{\rho}(x_0)}u \geq \vartheta(x_0).
\end{equation}
It is enough to prove the first inequality of \eqref{eq-claim-regular} by symmetry. We argue by contradiction; let us assume
\begin{equation*}
	L:=\lim_{\rho \to 0}\sup_{\Omega \cap B_{\rho}(x_0)}u>\vartheta(x_0).
\end{equation*} 
Note that $L$ is finite by \Cref{thm-loc-bdd}. We choose any $l \in \mathbb{R}$ satisfying $\vartheta(x_0) <l <L$, then the continuity of $\vartheta$ shows that there exists a sufficiently small radius $r_{\ast}>0$ such that $l \geq \sup_{D_r(x_0)} \vartheta$ for any $r \in (0, r_{\ast})$, where
\begin{equation}\label{eq-Dr}
D_r(x_0) := \overline{B_r(x_0)} \setminus \Omega.
\end{equation}
Let $M_l(r):=\sup_{B_r(x_0)}(u-l)_+$, then it follows from $\lim_{r \to 0}M_l(r)=L-l>0$ that $M_l(r) \geq L-l>0$ for any $r>0$. We now define a function
\begin{equation*}
u_{l, r}:=M_l(r)-(u-l)_+,
\end{equation*}
which is a supersolution of $\mathcal{L}u_{l, r}=0$ in $\Omega$. Note that $(u_{l, r})_m^-=u_{l, r}$ since $(u-l)_+=0$ in $B_r(x_0) \setminus \Omega$. We begin with the following lemma, which is obtained by using a Caccioppoli-type estimate and the weak Harnack inequality.

\begin{lemma}\label{lem-grad-est}
There exists a constant $C = C(n, p, q, s, \Lambda, L-l) > 0$ such that
\begin{equation}\label{eq-grad-est}
	\begin{split}
		&\frac{1}{M_l(4\rho)} \int_{B_{3\rho/2}(x_0)} \int_{B_{3\rho/2}(x_0)} G(|D^su_{l, 4\rho}|) \,\frac{\mathrm{d}y \,\mathrm{d}x}{|x-y|^n}\\
		&\leq C \rho^{n-s} \left( g\left(\frac{M_{l}(4\rho)-M_l(\rho)}{\rho^s}\right)+g\left(\frac{\mathrm{Tail}_g((u_{l, 4\rho})_-; x_0, 4\rho )}{\rho^s}\right) \right)
	\end{split}
\end{equation}
for any $\rho \in (0, r_{\ast}/4)$.
\end{lemma}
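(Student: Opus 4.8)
The plan is to follow the strategy of the proof of \Cref{lem-lower-key}: test the weak supersolution inequality for $u_{l,4\rho}$ against a function built from $M_l(4\rho)-u_{l,4\rho}=(u-l)_+$ and a cut‑off, extract the local Dirichlet energy via the growth bounds \eqref{eq-pq}, and absorb the remaining terms by means of \Cref{lem-aux} and the weak Harnack inequality \Cref{thm-WHI}.

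First I would record the structural facts about $v:=u_{l,4\rho}$. It is a supersolution of $\mathcal{L}v=0$ in $\Omega$ with $v\ge 0$ in $B_{4\rho}(x_0)$, and since $u=\vartheta\le l$ on $\overline{B_{4\rho}(x_0)}\setminus\Omega$ (because $4\rho<r_\ast$) one has $(u-l)_+\equiv 0$ and hence $v\equiv M_l(4\rho)$ there; thus $v_m^-=v$ with $m=\essinf_{B_{4\rho}\setminus\Omega}v=M_l(4\rho)$, so the ``up to the boundary'' estimates \Cref{thm-WHI} and \Cref{lem-aux} are available for $v$ on $B_{4\rho}(x_0)$. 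Moreover $|D^sv|=|D^s(u-l)_+|$ and $\essinf_{B_\rho(x_0)}v=M_l(4\rho)-M_l(\rho)$. Next I would fix $\eta\in C_c^\infty(B_{2\rho}(x_0))$ with $0\le\eta\le 1$, $\eta\equiv 1$ on $B_{3\rho/2}(x_0)$ and $|\nabla\eta|\le C/\rho$, and test $\mathcal{E}(v,\varphi)\ge 0$ with $\varphi=(M_l(4\rho)-v)\eta^q=(u-l)_+\eta^q\ge 0$; this is admissible since \eqref{eq-supersolution} holds for nonnegative $\varphi\in V^{s,G}_0(\Omega)$ and $(u-l)_+\eta^q$ lies in this class because $(u-l)_+$ vanishes on $\overline{B_{4\rho}}\setminus\Omega$ while $u-\vartheta\in V^{s,G}_0(\Omega)$. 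Expanding $D^s\varphi$ by the Leibniz‑type identity for $D^s$, using $D^sv=-D^s(u-l)_+$, and applying \eqref{eq-pq} together with symmetrization to the resulting ``diagonal'' term yields
\begin{align*}
\int_{B_{3\rho/2}(x_0)}\int_{B_{3\rho/2}(x_0)} G(|D^sv|)\,\frac{\mathrm{d}y\,\mathrm{d}x}{|x-y|^n}
&\le C\int_{\mathbb{R}^n}\int_{\mathbb{R}^n} g(|D^sv|)\,\big((u-l)_+(x)\lor(u-l)_+(y)\big)\,|D^s\eta^q|\,\frac{\mathrm{d}y\,\mathrm{d}x}{|x-y|^n}.
\end{align*}

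The remaining step is to split the right‑hand side according to whether both of $x,y$ lie in $B_{4\rho}(x_0)$ or not. On the inner part I would bound $(u-l)_+\le M_l(4\rho)$ and $|D^s\eta^q|\le C|D^s\eta|$ and apply \Cref{lem-aux} to the nonnegative supersolution $v$, using that the relevant infimum is $\essinf_{B_\rho(x_0)}v=M_l(4\rho)-M_l(\rho)$; this produces precisely $C\,M_l(4\rho)\rho^{n-s}\big(g((M_l(4\rho)-M_l(\rho))/\rho^s)+g(\mathrm{Tail}_g(v_-;x_0,4\rho)/\rho^s)\big)$. On the tail part — one variable outside $B_{4\rho}(x_0)$, so $v_-$ may be nonzero and $|x-y|\gtrsim\rho$ — I would use the sign of $D^sv/|D^sv|$ against the nonnegative weight $(u-l)_+\eta^q$ to discard the favourable half, and on the remaining half use $|D^sv|\lesssim (M_l(4\rho)+v_-(y))/|x-y|^s$, $(u-l)_+(y)\lesssim M_l(4\rho)+v_-(y)$, the dilation estimates of \Cref{lem-G}, and the definition of $\mathrm{Tail}_g$ to bound this contribution again by the same right‑hand side; dividing through by $M_l(4\rho)$ then finishes the proof.

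I expect the tail estimate to be the main obstacle: the geometric splitting and the choice of the radius $R$ in \Cref{lem-aux} must be arranged so that the intermediate‑range interactions (where $v$ still ranges over $[0,M_l(4\rho)]$ but $v_-$ already vanishes) get swallowed by the weak‑Harnack‑type bound of \Cref{lem-aux} rather than estimated crudely, leaving only the genuinely far contributions, which involve $v_-$ and hence the tail $\mathrm{Tail}_g(v_-;x_0,4\rho)$. Throughout, the fact that $v$ is a supersolution only in $\Omega$ causes no extra difficulty, as it is handled uniformly through the identity $v_m^-=v$, exactly as in \Cref{thm-WHI}, \Cref{lem-aux} and the proof of \Cref{lem-lower-key}.
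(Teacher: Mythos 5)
Your overall strategy matches the paper's: test the supersolution inequality for $u_{l,4\rho}$ with $\varphi=(u-l)_+\eta^q$, expand $D^s\varphi$, extract the Dirichlet energy on the left, and control the right-hand side by a weak-Harnack-type estimate plus a tail term. The main issue is the direct invocation of \Cref{lem-aux}. As stated, \Cref{lem-aux} requires its function to be a supersolution on the \emph{entire} ball $B_R$, whereas $u_{l,4\rho}$ is a supersolution only in $\Omega$ (it is merely constant on $\overline{B_{4\rho}}\setminus\Omega$, which does not by itself make it a supersolution across $\partial\Omega$). You flag that ``$v$ is a supersolution only in $\Omega$ causes no extra difficulty'', and this is true \emph{in spirit} — the identity $v_m^-=v$ makes the up-to-the-boundary Caccioppoli and weak Harnack estimates available — but it does mean you cannot quote \Cref{lem-aux} and instead must re-derive its conclusion in the boundary setting. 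This is exactly what the paper does: it repeats, inline, the core step of \Cref{lem-aux} (with $\beta\in(-1,0)$ playing the role of $-\tau$), using the Caccioppoli estimate of \cite[Lemma~3.6]{KL23} and \Cref{thm-WHI} in their up-to-the-boundary forms. A second, smaller mismatch is the ball geometry: \Cref{lem-aux} gives the local integral over $B_{3R/4}$ with infimum over $B_{R/2}$, which cannot be simultaneously calibrated to your desired $B_{4\rho}$-scale integral together with $\essinf_{B_\rho}$; the paper avoids this by choosing the cut-off on $B_{13\rho/8}$, splitting the form at $B_{7\rho/4}$, and directly applying \Cref{thm-WHI} on $B_{4\rho}$ with $\tau_1,\tau_2$ chosen to produce $B_{2\rho}$, $B_{7\rho/4}$ and $B_\rho$.

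One more minor inconsistency: after the Leibniz expansion you symmetrize and take absolute values, arriving at a bound with $g(|D^sv|)\big((u-l)_+(x)\lor(u-l)_+(y)\big)|D^s\eta^q|$; at that point the sign of $D^sv/|D^sv|$ is already gone, so the subsequent ``use the sign against the nonnegative weight to discard the favourable half'' is vacuous. That said, your handling of the far contribution (bounding $g(|D^sv|)$ by $g$ of $M_l(4\rho)/|x-y|^s$ plus $g$ of $v_-(y)/|x-y|^s$, and $(u-l)_+(y)\lesssim M_l(4\rho)+v_-(y)$) is sound and corresponds to the paper's treatment of $I_2$. So: correct strategy and the right ingredients, but the appeal to \Cref{lem-aux} as a black box needs to be replaced by a re-derivation in the boundary setting, as the paper does.
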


\begin{proof}
We write $B_r= B_r(x_0)$ for simplicity. Let $\eta \in C_c^{\infty}(B_{13\rho/8})$ be a cut-off function satisfying $\eta \equiv 1$ on $\overline{B_{3\rho/2}}$, $0 \leq \eta \leq 1$, and $|\nabla \eta| \leq C/\rho$. Since $u_{l, 4\rho}$ is a supersolution in $\Omega$, we obtain by using $\varphi = (u-l)_+ \eta^q \in V^{s, G}_0(\Omega)$ as a test function
\begin{equation*}
	\begin{split}
		0
		&\leq \int_{B_{7\rho/4}}\int_{B_{7\rho/4}} g(|D^su_{l, 4\rho}|)\frac{D^su_{l, 4\rho}}{|D^su_{l, 4\rho}|} D^s\varphi  \frac{k(x,y)}{|x-y|^n}\,\mathrm{d}y\,\mathrm{d}x\\
		&\quad + 2\int_{B_{7\rho/4}}\int_{\mathbb{R}^n \setminus B_{7\rho/4}} g(|D^su_{l, 4\rho}|)\frac{D^su_{l, 4\rho}}{|D^su_{l, 4\rho}|} \frac{\varphi(x)}{|x-y|^s} \frac{k(x,y)}{|x-y|^n}\,\mathrm{d}y\,\mathrm{d}x.
	\end{split}
\end{equation*}
The equality $\varphi(x) - \varphi(y) = (u(x)-l)_+ (\eta^q(x) - \eta^q(y)) - (u_{l, 4\rho}(x) - u_{l, 4\rho}(y)) \eta^q(y)$ yields
\begin{equation*}
	\begin{split}
		&\frac{1}{M_l(4\rho)} \int_{B_{3\rho/2}} \int_{B_{3\rho/2}} G(|D^su_{l, 4\rho}|) \,\frac{\mathrm{d}y \,\mathrm{d}x}{|x-y|^n} \\
		&\leq C \int_{B_{7\rho/4}} \int_{B_{7\rho/4}} g(|D^su_{l, 4\rho}|)
		\frac{(u(x)-l)_+}{M_l(4\rho)} |D^s\eta^q| \frac{k(x,y)}{|x-y|^n}\,\mathrm{d}y \,\mathrm{d}x \\
		&\quad + C \int_{B_{7\rho/4}}\int_{\mathbb{R}^n \setminus B_{7\rho/4}} g(|D^su_{l, 4\rho}|)\frac{D^su_{l, 4\rho}}{|D^su_{l, 4\rho}|}
		\frac{\varphi(x)}{M_l(4\rho)|x-y|^s} \frac{k(x,y)}{|x-y|^n}\,\mathrm{d}y \,\mathrm{d}x\\
		&=: I_1 + I_2.
	\end{split}
\end{equation*}
Thus, it is enough to estimate $I_1+I_2$ by the right-hand side of \eqref{eq-grad-est}.

Let us first estimate $I_1$. Since $(u(x)-l)_+ \leq M_l(4\rho)$ and $|\eta^q(x)-\eta^q(y)| \leq C\rho^{-1}|x-y|$, we estimate $I_1$ as
\begin{equation*}
	I_1 \leq \frac{C}{\rho^s}\int_{B_{7\rho/4}} \int_{B_{7\rho/4}} g(|D^su_{l, 4\rho}|) \left( \frac{|x-y|}{\rho}\right)^{1-s}
	 \frac{\mathrm{d}y \,\mathrm{d}x}{|x-y|^{n}}.
\end{equation*}
We define $v=(u_{l, 4\rho} + d)/\rho^s$, with $d>0$ to be determined later, and let $m=\inf_{B_\rho} v$. By applying \eqref{eq-alg} with $a=|D^su_{l, 4\rho}|$, $b=(|x-y|/\rho)^{1-s}$, and
\begin{equation*}
\varepsilon=\bar{g}^{-\beta}(m) \fint_{v(y)}^{v(x)} \frac{\bar{g}^{\beta}(t)}{t} \,\mathrm{d}t,
\end{equation*}
we obtain for any $\beta \in (-1, 0)$
\begin{equation*}
g(|D^su_{l, 4\rho}|) \left( \frac{|x-y|}{\rho}\right)^{1-s} \leq q \bar{g}^{-\beta}(m) G(|D^su_{l, 4\rho}|) \fint_{v(y)}^{v(x)} \frac{\bar{g}^{\beta}(t)}{t} \,\mathrm{d}t + C g(\varepsilon^{-1}) \left( \frac{|x-y|}{\rho} \right)^{(1-s)p},
\end{equation*}
where $C=C(p, q)$ is a positive constant. Since
\begin{equation*}
	\fint_{v(y)}^{v(x)} \frac{\bar{g}^{\beta}(t)}{t} \,\mathrm{d}t  \geq \frac{\bar{g}^{\beta}(v(x))}{v(x)} \land \frac{\bar{g}^{\beta}(v(y))}{v(y)} \geq \left(\frac{v(x)}{\bar{g}^{\beta}(v(x))}+\frac{v(y)}{\bar{g}^{\beta}(v(y))}\right)^{-1}
\end{equation*}
we have
\begin{align*}
g(\varepsilon^{-1})
&\leq C \bar{g} \left( \frac{\bar{g}^{-\beta}(v(x))}{\bar{g}^{-\beta}(m)} v(x) \right) + C \bar{g}\left( \frac{\bar{g}^{-\beta}(v(y))}{\bar{g}^{-\beta}(m)} v(y) \right) \\
&\leq C \bar{g}^{\beta(q-1)}(m) \left( \bar{g}^{1-\beta(q-1)}(v(x)) + \bar{g}^{1-\beta(q-1)}(v(y))\right).
\end{align*}
Consequently,
\begin{align*}
I_1
&\leq \frac{C}{\rho^s} \bar{g}^{-\beta}(m) \int_{B_{7\rho/4}} \int_{B_{7\rho/4}} G(|D^su_{l, 4\rho}|) \fint_{v(y)}^{v(x)} \frac{\bar{g}^{\beta}(t)}{t} \,\mathrm{d}t \,\frac{\mathrm{d}y \,\mathrm{d}x}{|x-y|^{n}} \\
&\quad + \frac{C}{\rho^s}\bar{g}^{\beta(q-1)}(m) \int_{B_{7\rho/4}} \int_{B_{7\rho/4}} \bar{g}^{1-\beta(q-1)}(v(x)) \left( \frac{|x-y|}{\rho} \right)^{(1-s)p} \frac{\mathrm{d}y \,\mathrm{d}x}{|x-y|^{n}}.
\end{align*}
Assume that we have taken $d=\mathrm{Tail}_g((u_{l, 4\rho})_-; x_0, 4\rho)$, then
\begin{equation*}
g(\mathrm{Tail}_g((u_{l, 4\rho}+d)_{-}; x_{0}, 2\rho)/\rho^{s}) \leq Cg(d/\rho^s) \leq Cg(m).
\end{equation*}
Thus, by applying Lemma~3.6 in Kim--Lee~\cite{KL23} we obtain
\begin{equation*}
I_1 \leq \frac{C}{\rho^s} \bar{g}^{-\beta}(m) \int_{B_{2\rho}} \bar{g}^{1+\beta}(v) \,\mathrm{d}x + \frac{C}{\rho^s}\bar{g}^{\beta(q-1)}(m) \int_{B_{7\rho/4}} \bar{g}^{1-\beta(q-1)}(v) \,\mathrm{d}x.
\end{equation*}
If we choose $\beta \in (-1, 0)$ and assume in addition that $\beta$ is sufficiently close to 0 so that $1-\beta(q-1) \in (0, n/(n-sp))$ when $sp<n$, then the weak Harnack inequality (\Cref{thm-WHI}) can be employed to conclude
\begin{equation*}
I_1 \leq C \rho^{n-s} g(m) \leq C \rho^{n-s} \left( g\left(\frac{M_{l}(4\rho)-M_l(\rho)}{\rho^s}\right)+g\left(\frac{\mathrm{Tail}_g((u_{l, 4\rho})_-; x_0, 4\rho )}{\rho^s}\right) \right),
\end{equation*}
where $C=C(n, p, q, s, \Lambda)$ is a positive constant.

Let us next estimate $I_2$. Since
\begin{equation*}
g(|D^su_{l, 4\rho}|)\frac{D^su_{l, 4\rho}}{|D^su_{l, 4\rho}|} \leq C g\left( \frac{u_{l, 4\rho}(x)}{|x-y|^s} \right) + C g\left( \frac{(u_{l, 4\rho})_-(y)}{|x-y|^s} \right)
\end{equation*}
and $\varphi(x) \leq M_l(4\rho) \eta^q(x)$, we have
\begin{align*}
I_2
&\leq C \int_{B_{13\rho/8}} \int_{B_{7\rho/4}^c} g\left( \frac{u_{l, 4\rho}(x)}{\rho^s} \right) \frac{\mathrm{d}y\,\mathrm{d}x}{|x-y|^{n+s}} + C \int_{B_{13\rho/8}} \int_{B_{7\rho/4}^c} g\left(  \frac{(u_{l, 4\rho})_-(y)}{|x-y|^s} \right) \frac{\mathrm{d}y \,\mathrm{d}x}{|x-y|^{n+s}} \\
&\leq C \int_{B_{13\rho/8}} g\left( \frac{u_{l, 4\rho}}{\rho^s} \right) \mathrm{d}x + C \rho^{n-s} g\left( \frac{\mathrm{Tail}_g((u_{l, 4\rho})_-; x_0, 4\rho)}{\rho^s} \right).
\end{align*}
By using \Cref{thm-WHI}, we arrive at
\begin{equation*}
I_2 \leq C \rho^{n-s} \left( g\left(\frac{M_{l}(4\rho)-M_l(\rho)}{\rho^s}\right)+g\left(\frac{\mathrm{Tail}_g((u_{l, 4\rho})_-; x_0, 4\rho )}{\rho^s}\right) \right).
\end{equation*}
The desired result \eqref{eq-grad-est} now follows from the estimates for $I_1$ and $I_2$.
\end{proof}

The following is a key lemma for the proof of the sufficiency of \Cref{thm-Wiener}.

\begin{lemma}\label{lem-key}
	There exists a constant $C=C(n, p, q, s, \Lambda, L-l)>0$ such that
	\begin{equation*}
		\mathrm{cap}_{s, G}(D_{\rho}(x_0), B_{2\rho}(x_0)) \leq 
		C \rho^{n-s} \left( g\left(\frac{M_{l}(4\rho)-M_l(\rho)}{\rho^s}\right)+g\left(\frac{\mathrm{Tail}_g((u_{l, 4\rho})_-; x_0, 4\rho )}{\rho^s}\right) \right)
	\end{equation*}
for any $\rho \in (0, r_{\ast}/4)$, where $D_\rho(x_0)$ is given by \eqref{eq-Dr}.
\end{lemma}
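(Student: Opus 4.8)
The plan is to realise the capacity $\mathrm{cap}_{s,G}(D_{\rho}(x_0),B_{2\rho}(x_0))$ through the $\mathcal{L}$-potential $\mathfrak{R}:=\mathfrak{R}(D_{\rho}(x_0),B_{2\rho}(x_0))$ of \Cref{sec-capacity} and to bound its generalised energy $\mathcal{E}(\mathfrak{R})$ by the right-hand side of \eqref{eq-grad-est} via \Cref{lem-grad-est}. Write $B_r=B_r(x_0)$. By \Cref{lem-potential} and \Cref{lem-quasirepresentative}, $\mathfrak{R}$ is admissible for $\mathrm{cap}_{s,G}(D_{\rho},B_{2\rho})$, satisfies $\mathcal{E}(\mathfrak{R})=\mathcal{C}(D_{\rho},B_{2\rho})$, is a supersolution of $\mathcal{L}v=0$ in $B_{2\rho}$ and is $\mathcal{L}$-harmonic in $B_{2\rho}\setminus D_{\rho}$, and its quasicontinuous representative obeys $0\le\mathfrak{R}\le1$, $\mathfrak{R}=1$ q.e.\ on $D_{\rho}$ and $\mathfrak{R}=0$ outside $B_{2\rho}$. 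Since $k\ge\Lambda^{-1}$, one has $\mathrm{cap}_{s,G}(D_{\rho},B_{2\rho})\le\Lambda\,\mathcal{C}(D_{\rho},B_{2\rho})=\Lambda\,\mathcal{E}(\mathfrak{R})$, so it suffices to estimate $\mathcal{E}(\mathfrak{R})$.

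Because $\rho<r_\ast$, on $D_{\rho}=\overline{B_\rho}\setminus\Omega$ one has $u=\vartheta\le l$, hence $(u-l)_+\equiv0$ and $u_{l,4\rho}\equiv M_l(4\rho)$ there. I would fix $\zeta\in C_c^\infty(B_{2\rho})$ with $0\le\zeta\le1$, $\zeta\equiv1$ on $\overline{B_{3\rho/2}}$ and $|\nabla\zeta|\le C/\rho$, and set $\psi:=\min\{u_{l,4\rho}/M_l(4\rho),1\}_+\,\zeta$, which satisfies $0\le\psi\le1$, $\psi\equiv1$ on $D_{\rho}$ and $\supp\psi\subset B_{2\rho}$; since $\psi$ agrees q.e.\ with the boundary data of $\mathfrak{R}$ on $D_{\rho}$ and on $\mathbb{R}^n\setminus B_{2\rho}$, the quasicontinuity machinery of \Cref{sec-capacity} gives $\mathfrak{R}-\psi\in V^{s,G}_0(B_{2\rho}\setminus D_{\rho})$. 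Testing the $\mathcal{L}$-harmonicity of $\mathfrak{R}$ in $B_{2\rho}\setminus D_{\rho}$ against $\mathfrak{R}-\psi$ and using $pG\le tg$ from \eqref{eq-pq} yields
\[
\mathcal{E}(\mathfrak{R})\le\tfrac1p\,\mathcal{E}(\mathfrak{R},\mathfrak{R})=\tfrac1p\,\mathcal{E}(\mathfrak{R},\psi)\le\tfrac1p\iint_{\mathbb{R}^n\times\mathbb{R}^n}g(|D^s\mathfrak{R}|)\,|D^s\psi|\,\frac{k(x,y)}{|x-y|^n}\,\mathrm{d}y\,\mathrm{d}x.
\]
The crucial feature of this $\psi$ is that on $B_{3\rho/2}\times B_{3\rho/2}\subset B_{4\rho}\times B_{4\rho}$, where $0\le u_{l,4\rho}\le M_l(4\rho)$ and $\zeta\equiv1$, we have \emph{exactly} $D^s\psi=D^su_{l,4\rho}/M_l(4\rho)$, so that no derivative of the cut-off $\zeta$ enters. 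Applying \eqref{eq-alg} with $a=|D^s\mathfrak{R}|$, $b=|D^su_{l,4\rho}|/M_l(4\rho)$ and small $\varepsilon>0$, integrating over $B_{3\rho/2}\times B_{3\rho/2}$ against $k(x,y)|x-y|^{-n}$, and absorbing the resulting $\varepsilon\,\mathcal{E}(\mathfrak{R},\mathfrak{R})\le\varepsilon q\,\mathcal{E}(\mathfrak{R})$ into the left-hand side, the contribution of this region is, by \eqref{eq-pq} and \Cref{lem-G} together with $L-l\le M_l(4\rho)<\infty$ (finiteness by \Cref{thm-loc-bdd}) and the elementary bound $G(t/\lambda)\le C(q,L-l)\lambda^{-1}G(t)$ valid for $\lambda\ge L-l$, at most $\tfrac{C}{M_l(4\rho)}\iint_{B_{3\rho/2}\times B_{3\rho/2}}G(|D^su_{l,4\rho}|)\,|x-y|^{-n}\,\mathrm{d}y\,\mathrm{d}x$ with $C=C(p,q,L-l)$; by \Cref{lem-grad-est} this is bounded by the right-hand side of \eqref{eq-grad-est}.

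It remains to control the part of $\iint g(|D^s\mathfrak{R}|)|D^s\psi|\,k(x,y)|x-y|^{-n}$ over pairs $(x,y)$ with at least one coordinate outside $B_{3\rho/2}$, and this is the main obstacle. Here $\supp\psi\subset B_{2\rho}$, $0\le\psi\le1$, and $\mathfrak{R}$ vanishes outside $B_{2\rho}$; using \eqref{eq-alg} once more to remove an absorbable $\varepsilon$-multiple of $\mathcal{E}(\mathfrak{R},\mathfrak{R})$, one is left with a boundary-layer term over the annulus $B_{2\rho}\setminus B_{3\rho/2}$ and a long-range term over $(B_{2\rho})\times(\mathbb{R}^n\setminus B_{2\rho})$. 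The annular term I would estimate through a Caccioppoli inequality for the $\mathcal{L}$-harmonic function $\mathfrak{R}$ on $B_{2\rho}\setminus D_{\rho}$ (\Cref{lem-Caccioppoli}) combined with $0\le\mathfrak{R},\psi\le1$ and the weak Harnack inequality (\Cref{thm-WHI}) for $u_{l,4\rho}$, as in the proof of \Cref{lem-grad-est}; the long-range term is handled by dyadically decomposing $\mathbb{R}^n\setminus B_{2\rho}$, using $|y-x_0|\asymp|x-y|$ on each annulus and $u_{l,4\rho}\le M_l(4\rho)$ on $B_{4\rho}$, and it produces precisely the tail contribution $g\big(\mathrm{Tail}_g((u_{l,4\rho})_-;x_0,4\rho)/\rho^s\big)$ on the right-hand side of \eqref{eq-grad-est}. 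Assembling the three contributions and invoking $\mathrm{cap}_{s,G}(D_{\rho},B_{2\rho})\le\Lambda\,\mathcal{E}(\mathfrak{R})$ gives the claim. The delicate point is to keep the exact cancellation $\mathcal{E}(\mathfrak{R},\mathfrak{R})-\mathcal{E}(\mathfrak{R},\psi)=0$ intact throughout, so that the outer contributions are genuinely controlled by the tail and by the energy $\mathcal{E}(\mathfrak{R})$ itself, rather than by the much larger quantity $\rho^{n}G(\rho^{-s})$ that a naive capacitary test function would produce and that would be useless for the dyadic summation carried out in the Wiener criterion.
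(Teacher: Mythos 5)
Your route through the $\mathcal L$-potential is genuinely different from the paper's, but it is both over-engineered and has a real gap. The paper simply observes that $v_{l,4\rho}:=\eta\,u_{l,4\rho}/M_l(4\rho)$ with $\eta\in C_c^\infty(B_{3\rho/2})$, $\eta\equiv1$ on $\overline{B_\rho}$, is admissible for $\mathrm{cap}_{s,G}(D_\rho,B_{2\rho})$ and computes $\varrho_{W^{s,G}(\mathbb R^n)}(v_{l,4\rho})$ directly: a triangle-inequality/product-rule split gives a cut-off term and a $D^su_{l,4\rho}$ term; the cut-off term, after the elementary kernel integral, reduces to $C\rho^{n-s}\fint_{B_{3\rho/2}}g(u_{l,4\rho}/\rho^s)\,\mathrm dx$, which the weak Harnack inequality (\Cref{thm-WHI}) with $\delta=1$ controls by $C\rho^{n-s}\bigl(g((M_l(4\rho)-M_l(\rho))/\rho^s)+g(\mathrm{Tail}_g((u_{l,4\rho})_-;x_0,4\rho)/\rho^s)\bigr)$; the $D^su_{l,4\rho}$ term is exactly what \Cref{lem-grad-est} bounds. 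Your closing worry that a ``naive capacitary test function would produce $\rho^nG(\rho^{-s})$'' is therefore misplaced: that bad bound comes only from replacing $u_{l,4\rho}/M_l(4\rho)$ by $1$ in the cut-off term, and the weak Harnack inequality is precisely the tool that prevents this. The whole $\mathcal L$-potential detour, the identity $\mathcal E(\mathfrak R,\mathfrak R)=\mathcal E(\mathfrak R,\psi)$, and the absorption of $\varepsilon\mathcal E(\mathfrak R,\mathfrak R)$ buy you nothing: after Young's inequality you are back to estimating $\iint G(|D^s\psi|)$, which is the direct computation you set out to avoid.

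Beyond this inefficiency, two steps in your argument do not go through with the tools of the paper. First, you claim $\mathfrak R-\psi\in V^{s,G}_0(B_{2\rho}\setminus D_\rho)$ because the two functions ``agree q.e.\ on $D_\rho$ and on $\mathbb R^n\setminus B_{2\rho}$''; but \Cref{sec-capacity} provides only \Cref{thm-quasi} and \Cref{cor-quasicont}, not the converse-type theorem that a quasicontinuous $V^{s,G}(\mathbb R^n)$-function vanishing q.e.\ on the complement of an open set belongs to the $V^{s,G}_0$ of that set. Without such a result (which exists in classical nonlinear potential theory but is not in this paper), the energy identity you rely on is not justified. Second, your plan for the annular term --- a Caccioppoli inequality for $\mathfrak R$ --- would reintroduce exactly the $\rho^nG(\rho^{-s})$ bound you wanted to avoid, since the only quantitative information about $\mathfrak R$ available at that stage is $0\le\mathfrak R\le1$. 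The correct object to estimate in the annulus is $G(|D^s\psi|)$, not $G(|D^s\mathfrak R|)$, and it is controlled by the weak Harnack inequality for $u_{l,4\rho}$ as in the paper's proof, not by a Caccioppoli inequality for the potential.
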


\begin{proof}
Let us write $B_{\rho}=B_{\rho}(x_0)$ and $D_{\rho}=D_{\rho}(x_0)$ for simplicity. Let $\eta \in C_c^{\infty}(B_{3\rho/2})$ be a cut-off function such that $\eta \equiv 1$ on $\overline{B_{\rho}}$, $0 \leq \eta \leq 1$, and $|\nabla \eta| \leq C/\rho$. Since $v_{l, 4\rho}:= \eta u_{l, 4\rho}/M_l(4\rho)$ is admissible for $\mathrm{cap}_{s, G}(D_\rho, B_{2\rho})$, we have
\begin{equation*}
\mathrm{cap}_{s, G}(D_{\rho}, B_{2\rho}) \leq \varrho_{W^{s, G}(\mathbb{R}^n)}(v_{l, 4\rho}) \leq 2 \int_{B_{3\rho/2}} \int_{\mathbb{R}^n} G(|D^s v_{l, 4\rho}|) \,\frac{\mathrm{d}y \,\mathrm{d}x}{|x-y|^n} =:I.
\end{equation*}
The triangle inequality and \Cref{lem-G} yield
\begin{equation*}
I \leq C \int_{B_{3\rho/2}} \int_{\mathbb{R}^n} G\left( |D^s\eta| \frac{u_{l, 4\rho}(x)}{M_l(4\rho)} \right) \frac{\mathrm{d}y \,\mathrm{d}x}{|x-y|^n} + C \int_{B_{3\rho/2}} \int_{\mathbb{R}^n} G\left( |D^su_{l, 4\rho}| \frac{\eta(y)}{M_l(4\rho)} \right) \frac{\mathrm{d}y \,\mathrm{d}x}{|x-y|^n}.
\end{equation*}
By using $u_{l, 4\rho} \leq M_l(4\rho)$ and \Cref{lem-G} again, we obtain
\begin{equation*}
	\begin{split}
	&\int_{\mathbb{R}^n} G\left(|D^s\eta| \frac{u_{l, 4\rho}(x)}{M_{l}(4\rho)}\right) \frac{\mathrm{d}y}{|x-y|^n}\\
	& \leq \int_{B_{\rho}(x)} G\left(\frac{|x-y|^{1-s}}{\rho} \frac{u_{l, 4\rho}(x)}{M_{l}(4\rho)}\right) \frac{\mathrm{d}y}{|x-y|^n}+\int_{B_{\rho}^c(x)} G\left(\frac{u_{l, 4\rho}(x)}{|x-y|^s M_{l}(4\rho)}\right) \frac{\mathrm{d}y}{|x-y|^n}\\
	&\leq C\int_{B_{\rho}(x)} G\left(\frac{u_{l, 4\rho}(x)}{\rho^sM_{l}(4\rho)}\right) \frac{\rho^{ps-p}}{|x-y|^{n+ps-p}}\, \mathrm{d}y +C\int_{B_{\rho}^c(x)} g\left(\frac{u_{l, 4\rho}(x)}{\rho^s M_{l}(4\rho)}\right)\frac{\mathrm{d}y}{|x-y|^{n+s}}\\
	&\leq C\rho^{-s}g\left(\frac{u_{l, 4\rho}(x)}{\rho^s M_l(4\rho)}\right).
	\end{split}
\end{equation*}
Thus, it follows from $M_l(4\rho) \geq L-l$ that
\begin{equation*}
I \leq C \rho^{n-s} \fint_{B_{3\rho/2}} g\left(\frac{u_{l, 4\rho}}{\rho^s}\right) \mathrm{d}x + \frac{C}{M_l(4\rho)} \int_{B_{3\rho/2}} \int_{B_{3\rho/2}} G(|D^su_{l, 4\rho}|)\, \frac{\mathrm{d}y \,\mathrm{d}x}{|x-y|^n},
\end{equation*}
where $C=C(n, p, q, s, L-l)$ is a positive constant. We now apply \Cref{thm-WHI} with $\delta=1$ and \Cref{lem-grad-est} to conclude the lemma.
\end{proof}

We are now ready to prove the sufficiency of the Wiener criterion.

\begin{proof}[Proof of the sufficiency of \Cref{thm-Wiener}]
As explained at the beginning of this section, we assume to the contrary that \eqref{eq-claim-regular} holds and find a contradiction. By \Cref{lem-key}, we have
\begin{equation*}
\frac{\mathrm{cap}_{s, G}(D_{\rho}(x_0), B_{2\rho}(x_0))}{\rho^{n-s}} \leq C g\left(\frac{M_{l}(4\rho)-M_l(\rho)}{\rho^s}\right) + C g\left(\frac{\mathrm{Tail}_g((u_{l, 4\rho})_-; x_0, 4\rho )}{\rho^s} \right)
\end{equation*}
for any $\rho \in (0, r_{\ast}/4)$. By using this inequality, we derive an estimate for the Wiener integral:
\begin{equation} \label{eq-Wiener-I12}
	\begin{split}
		&\int_0^{r_\ast/4} \rho^s g^{-1}\left(\frac{\mathrm{cap}_{s, G}(\overline{B_{\rho}(x_0)} \setminus \Omega, B_{2\rho}(x_0)) }{\rho^{n-s}}\right) \frac{\mathrm{d}\rho}{\rho} \\
		&\leq C \int_0^{r_{\ast}/4}\frac{M_l(4\rho)-M_l(\rho)}{\rho}\,\mathrm{d}\rho + C \int_0^{r_{\ast}/4}\frac{\mathrm{Tail}_g((u_{l, 4\rho})_-; x_0, 4\rho)}{\rho}\,\mathrm{d}\rho.
	\end{split}
\end{equation}
On the one hand, one can show that the right-hand side of \eqref{eq-Wiener-I12} is finite by modifying the argument presented in Section~4 in Kim--Lee--Lee~\cite{KLL23} and using \Cref{lem-g-dyda} and the local boundedness (\Cref{thm-loc-bdd}) as in the previous sections. On the other hand, the left-hand side of \eqref{eq-Wiener-I12} diverges by \eqref{eq-Wiener}, which leads us to a contradiction.
\end{proof}

\subsection{The necessary condition}

In this section, we prove the necessity of the Wiener criterion. We basically follow the lines of proofs presented in \cite{KLL23}, where the standard growth case $G(t)=t^p$ was covered by dividing the cases into two cases $p \leq n/s$ and $p > n/s$. However, we encounter a difficulty because such a distinction is not available in the present framework. Moreover, as we pointed out in \Cref{rmk-wiener}~(iii), the case $p>n/s$ in \cite{KLL23} was treated incorrectly. We overcome such a challenge by considering two cases $\mathrm{cap}_{s, G}(\{x\})>0$ and $\mathrm{cap}_{s, G}(\{x\})=0$, and fill the gap by exploiting \Cref{lem-quasirepresentative} regarding $(s, G)$-quasicontinuity.

We first provide a necessary condition for a boundary to be regular in terms of the $\mathcal{L}$-potential. Before we state the result, we consider a lsc-regularization of the $\mathcal{L}$-potential; define $\widehat{\mathfrak{R}}(K, \Omega)$ in $\Omega$ by
\begin{equation*}
\widehat{\mathfrak{R}}(K, \Omega)(x)= 
\begin{cases}
\mathfrak{R}(K, \Omega)(x) &\text{in}~ \mathbb{R}^n \setminus \Omega, \\
\displaystyle\essliminf_{y \to x} \mathfrak{R}(K, \Omega)(y) &\text{in}~ \Omega.
\end{cases}
\end{equation*}
Note that $\widehat{\mathfrak{R}}(K, \Omega)$ in $\Omega$ is superharmonic in $\Omega$ and $\widehat{\mathfrak{R}}(K, \Omega)=\mathfrak{R}(K, \Omega)$ a.e.\ in $\Omega$ by \Cref{thm-lsc} and \Cref{thm-relation}~(i).

\begin{lemma}\label{lem-irr-zero}
	Suppose that the $(s, G)$-capacity of a single point is zero. Let $x_0 \in \partial \Omega$. If there exists $\rho>0$ such that
	\begin{equation}\label{eq-irr}
	\widehat{\mathfrak{R}}(D_\rho(x_0), B_{8\rho}(x_0))(x_0) < 1,
	\end{equation}
	then $x_0$ is irregular.
\end{lemma}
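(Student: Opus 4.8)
The plan is to build a barrier at $x_0$ out of the lsc-regularized $\mathcal{L}$-potential and use it to force a candidate Dirichlet problem whose solution does not attain its boundary value at $x_0$. Assume \eqref{eq-irr} holds, so $w := \widehat{\mathfrak{R}}(D_\rho(x_0), B_{8\rho}(x_0))$ is superharmonic in $B_{8\rho}(x_0)$, equals $\mathfrak{R}(D_\rho(x_0),B_{8\rho}(x_0))$ a.e., is $\geq 1$ q.e.\ on $D_\rho(x_0)$ by \Cref{lem-quasirepresentative}, and satisfies $w(x_0)<1$. First I would observe that because $\mathrm{cap}_{s,G}(\{x_0\})=0$, the set where $w<1$ on $D_\rho(x_0)$ is negligible, so after possibly replacing $\Omega$ near $x_0$ we may treat $w$ as a genuine obstacle: $w\geq 1$ on $\overline{B_\rho(x_0)}\setminus\Omega$ in the relevant sense, while $0\le w\le 1$ in $B_{8\rho}(x_0)$ and $w\equiv 0$ outside $B_{8\rho}(x_0)$ by construction of the $\mathcal{L}$-potential. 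The hypothesis $w(x_0)<1$ is the crucial quantitative input: it says the barrier value at the boundary point is strictly below the value forced on the complement.

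Next I would set up the actual Dirichlet problem exhibiting irregularity. Choose a continuous $\vartheta\in V^{s,G}(\Omega)\cap C(\mathbb{R}^n)$ with $\vartheta = 1$ on $\overline{B_\rho(x_0)}\setminus\Omega$ (extend suitably, e.g.\ so that $0\le\vartheta\le 1$ and $\vartheta$ has compact support in $B_{8\rho}(x_0)$), and let $u$ be the unique harmonic function in $\Omega$ with $u-\vartheta\in V^{s,G}_0(\Omega)$, extended by $\vartheta$ outside $\Omega$. I would then compare $u$ with $w$: since $w$ is superharmonic in $B_{8\rho}(x_0)$ and in particular in $\Omega\cap B_{8\rho}(x_0)$, and since on the relevant boundary data $w \le \vartheta = u$ holds q.e.\ (on $\overline{B_\rho(x_0)}\setminus\Omega$ both are $\ge$ the barrier value, and outside $B_{8\rho}(x_0)$ one has $w=0\le u$), the comparison principle for superharmonic functions (available via \Cref{thm-relation} and the comparison arguments in \cite{KL23}) yields $u\le w$ in a neighborhood of $x_0$ inside $\Omega$, possibly after also handling the part of $\partial\Omega$ outside $B_\rho(x_0)$ by shrinking the neighborhood and using that $u$ is bounded by \Cref{thm-loc-bdd}. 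Consequently
\begin{equation*}
\limsup_{\Omega\ni x\to x_0} u(x) \le \limsup_{\Omega\ni x\to x_0} w(x) \le w(x_0) < 1 = \vartheta(x_0),
\end{equation*}
where the middle inequality is the upper semicontinuity — actually, I would instead use that $w$ coincides a.e.\ with $\mathfrak{R}$ and invoke \Cref{thm-pointwise} to identify $\limsup$ with the pointwise value, or more simply use superharmonicity of $w$ together with the upper bound \Cref{thm-Wolff-upper} to control $w$ near $x_0$. Either way, the boundary limit \eqref{eq-regular} fails, so $x_0$ is irregular.

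The main obstacle I anticipate is the careful handling of the exceptional set where $w<1$ on $D_\rho(x_0)$ and the matching of boundary data on $\partial\Omega\setminus B_\rho(x_0)$ when applying the comparison principle. The comparison principle for superharmonic functions is stated against solutions on relatively compact subdomains (\Cref{def-superharmonic}(iii)), so I would need to approximate $u$ by solutions on $\Omega\cap B_{r}(x_0)$ for $r<\rho$, control the contribution of $u$ on $B_r(x_0)\setminus(\Omega\cup\{w\ge 1\})$ — which is where the capacity-zero hypothesis is essential, since a set of $(s,G)$-capacity zero cannot carry positive mass for the relevant energy and hence does not affect the comparison — and pass to the limit. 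A secondary technical point is justifying that $w(x_0)<1$ is preserved after these modifications; this follows because lowering the obstacle on a capacity-zero set does not change the $\mathcal{L}$-potential, as $\mathfrak{R}$ is determined by its energy-minimizing property in \Cref{lem-potential}(iii) and $(s,G)$-quasicontinuous representatives agree q.e.\ (\Cref{cor-quasicont}, \Cref{lem-quasirepresentative}).
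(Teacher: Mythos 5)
Your overall plan—compare a candidate solution with the lsc-regularized potential $w = \widehat{\mathfrak{R}}(D_\rho,B_{8\rho})$ and exploit $w(x_0)<1$—is in the right spirit, but the direct comparison you propose fails, and the point where the capacity-zero hypothesis enters is misidentified.

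The specific gap: you compare $u$ with $w$ in $\Omega\cap B_{8\rho}$ and claim $u\le w$ there, checking the boundary inequality only on $D_\rho$ and on $\mathbb{R}^n\setminus B_{8\rho}$. But for a nonlocal comparison on $\Omega\cap B_{8\rho}$ you need $w\ge u$ q.e.\ on \emph{all} of $\mathbb{R}^n\setminus(\Omega\cap B_{8\rho})$, and this fails on $\Omega\setminus B_{8\rho}$: there $w\equiv 0$ (the potential vanishes outside $B_{8\rho}$), while $u$ is the $\mathcal{L}$-harmonic function in $\Omega$ with data $\vartheta$, and the nonlocal equation propagates the positive boundary values near $x_0$ into all of $\Omega$, so $u>0$ there in general. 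You also need $w\ge\vartheta$ on $(B_{8\rho}\setminus\overline{B_\rho})\setminus\Omega$, which is not automatic either since $w$ is just the harmonic extension on that annulus. ``Shrinking the neighborhood'' does not help: because of nonlocality, the exterior data on $\Omega\setminus B_{8\rho}$ cannot be ignored by localizing.

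What the paper does instead, and where the capacity-zero hypothesis is actually used: it fixes the datum $\vartheta$ to equal $3/2$ on $\overline{B_{r/2}}\setminus\Omega$, taper to $0$ in $(B_r\setminus\overline{B_{r/2}})\setminus\Omega$, and vanish on the rest of $\mathbb{R}^n\setminus\Omega$, then compares the resulting solution $v_r$ with $\tfrac12 + u_\rho$ rather than with $u_\rho$ alone. The buffer $\tfrac12$ is there precisely to absorb the mismatch on $\Omega\setminus B_{8\rho}$ (where $u_\rho=0$); the paper shows $v_r\le\tfrac12$ there for $r$ small by a local boundedness + Poincar\'e + capacity chain, bounding $v_r\le\tfrac32\,w_r$ (where $w_r$ is the $\mathcal{L}$-potential of $D_r$ in $B_R$, $R=\mathrm{diam}\,\Omega$) and using $\varrho_{W^{s,G}(\mathbb{R}^n)}(w_r)=\mathrm{cap}_{s,G}(D_r,B_R)\le\mathrm{cap}_{s,G}(\overline{B_r},B_R)\to 0$ as $r\to 0$. \emph{This} convergence to zero is exactly the statement that a single point has $(s,G)$-capacity zero; it is not used to handle an exceptional set on $D_\rho$ (that is handled by \Cref{lem-quasirepresentative} for any $K$). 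The comparison $v_r\le\tfrac12+u_\rho$ then gives $\liminf_{\Omega\ni x\to x_0}v_r\le\tfrac12+u_\rho(x_0)<\tfrac32=\vartheta(x_0)$, proving irregularity. Your proposal is missing both the buffer mechanism and the quantitative capacity decay that makes the buffer work.
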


We will see that \eqref{eq-irr} does not hold for any $\rho>0$ when the $(s, G)$-capacity of a single point is positive. This will improve \Cref{lem-irr-zero}; see \Cref{lem-irr}.

\begin{proof}
For simplicity, we write $B_\rho=B_\rho(x_0)$, $D_\rho=D_\rho(x_0)$, and $u_\rho=\widehat{\mathfrak{R}}(D_\rho, B_{8\rho})$. If \eqref{eq-irr} holds for some $\rho > 0$, then it holds for $\rho' < \rho$ as well by the comparison principle; see Lemma~5.2 in Kim--Lee~\cite{KL23} for instance. Thus, we may choose $\rho_0>0$ sufficiently small so that $\Omega\cap \partial B_{8\rho} \neq \emptyset$ for all $\rho < \rho_0$. Let us fix $\rho < \rho_0$ and $r < \rho$. Let $\vartheta \in C^{\infty}_c(\mathbb{R}^n)$ be a function such that $\vartheta\equiv 3/2$ on $\overline{B_{r/2}} \setminus \Omega$, $0 \leq \vartheta<3/2$ in $(B_r \setminus \overline{B_{r/2}})\setminus \Omega$, and $\vartheta \equiv 0$ on the remaining part of $\mathbb{R}^n \setminus \Omega$. Let $v_r$ be the harmonic function in $\Omega$ such that $v_r-\vartheta \in V_0^{s,G}(\Omega)$. We claim that there exists $r_0 = r_0(n, p, q, s, \Lambda, \rho, \mathrm{diam}(\Omega)) > 0$ such that
\begin{equation}\label{eq-comparison}
v_r \leq \frac{1}{2}+u_{\rho}
\end{equation}
in $\Omega\cap B_{8\rho}$ in the weak sense for all $r < r_0$. Indeed, by following the argument given in the proof of Lemma~5.5 in Kim--Lee--Lee~\cite{KLL23}, one can easily check that \eqref{eq-comparison} holds in $\mathbb{R}^n \setminus \Omega$. Thus, it is enough to show \eqref{eq-comparison} in $\Omega \setminus B_{8\rho}$. Note that $u_\rho=0$ outside $B_{8\rho}$. Let us fix a point $z \in \Omega \setminus B_{8\rho}$ and consider a ball $B_{\rho}(z)$. Since $B_r(x_0) \cap B_{\rho}(z) = \emptyset$ and $\vartheta=0$ in $(\mathbb{R}^n \setminus B_r(x_0)) \setminus \Omega$, we have $\vartheta=0$ in $B_{\rho}(z) \setminus \Omega$. By applying the local boundedness (\Cref{thm-loc-bdd}) to $v_r$ in $B_{\rho}(z)$ and using H\"older's inequality, we obtain
	\begin{equation}\label{lb}
	\esssup_{B_{\rho/2}(z)} v_r \leq \varepsilon\, \mathrm{Tail}_g(v_r; z, \rho/2) + C(\varepsilon) \rho^s G^{-1}\left( \fint_{B_{\rho}(z)}G\left(\frac{v_r}{\rho^s}\right) \,\mathrm{d}x \right),
	\end{equation}
for any $\varepsilon>0$. 
where $\gamma=2(q-1)n/s_0>0$. In order to estimate the terms on the right-hand side of \eqref{lb}, we introduce the $\mathcal{L}$-potential $w$ of $D_r$ in $B_R$, where $R$ is the diameter of $\Omega$. By the comparison principle, $v_r \leq \frac{3}{2}w$ a.e.\ in $\mathbb{R}^n$. Since $0 \leq w \leq 1$, we have
	\begin{equation} \label{eq-vr-tail}
		\mathrm{Tail}_g(v_r; z, \rho/2) \leq  C\mathrm{Tail}_g(w; z, \rho/2) \leq C.
	\end{equation}
Moreover, by \Cref{thm-Poincare} we obtain
\begin{equation}\label{eq-poincare}
\int_{B_{\rho}(z)} G\left(\frac{v_r}{\rho^s}\right) \,\mathrm{d}x \leq C \int_{B_R(x_0)} G\left(\frac{w}{\rho^s}\right) \,\mathrm{d}x \leq C\left(\frac{R}{\rho} \right)^{sq} \varrho_{W^{s, G}(\mathbb{R}^n)}(w).
\end{equation}
Furthermore, we observe that
	\begin{equation} \label{eq-capacity}
			\varrho_{W^{s, G}(\mathbb{R}^n)}(w)
			=\mathrm{cap}_{s, G}(D_r, B_R)\leq \mathrm{cap}_{s, G}(\overline{B_r}, B_R)=:\varphi(r),
	\end{equation}
where $\lim_{r\to 0}\varphi(r)=0$ due to the assumption that a single point is of $(s, G)$-capacity zero. By combining \eqref{lb}, \eqref{eq-vr-tail}, \eqref{eq-poincare} and \eqref{eq-capacity}, we arrive at
	\begin{equation*}
		\esssup_{B_{\rho/2}(z)}v_r \leq 
	C_1 \varepsilon + C_2(\varepsilon) \rho^s G^{-1} \left( \frac{1}{\rho^n} \left( \frac{R}{\rho} \right)^{sq} \varphi(r) \right).
	\end{equation*}
	We take $\varepsilon = 1/(4C_1)$ and then take $r_0 = r_0(n, p, q, s, \Lambda, \rho, R)>0$ sufficiently small so that $C_2(\varepsilon) \rho^s G^{-1} (\rho^{-n-sq}R^{sq}\varphi(r)) \leq 1/2$. Then we conclude $\esssup_{B_{\rho/2}(z)}v_r \leq 1/2$ for all $r < r_0$, from which \eqref{eq-comparison} follows.

The comparison principle (see Lemma~5.2 in Kim--Lee~\cite{KL23} for instance) now shows that
	\begin{equation*}
		\liminf_{\Omega \ni x \to x_0} v_r <\frac{3}{2}=\vartheta(x_0),
	\end{equation*}
	which concludes that the boundary point $x_0$ is irregular.
\end{proof}

The following lemma is an improvement of \Cref{lem-irr-zero}, namely, we drop the assumption that $(s, G)$-capacity of a single point is zero.

\begin{lemma}\label{lem-irr}
If \eqref{eq-irr} holds for some $\rho>0$, then $x_0$ is irregular.
\end{lemma}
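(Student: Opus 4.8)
The plan is to prove \Cref{lem-irr} by distinguishing two cases according to whether a single point carries zero or positive $(s,G)$-capacity; this is a well-defined dichotomy since $\mathrm{cap}_{s,G}$ (defined with $k\equiv1$) is translation invariant. In the first case, $\mathrm{cap}_{s,G}(\{x\})=0$, the hypothesis of \Cref{lem-irr-zero} is satisfied and that lemma already shows $x_0$ is irregular, so nothing more is needed. In the second case, $\mathrm{cap}_{s,G}(\{x\})>0$, I would show that \eqref{eq-irr} can never hold -- that is, $\widehat{\mathfrak R}(D_\rho(x_0),B_{8\rho}(x_0))(x_0)\ge 1$ for every $\rho>0$ -- so that the implication in \Cref{lem-irr} is vacuously true. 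This is exactly the improvement over \Cref{lem-irr-zero} announced in the paragraph preceding it.

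For the second case, fix $\rho>0$ and write $K=D_\rho(x_0)=\overline{B_\rho(x_0)}\setminus\Omega$ and $U=B_{8\rho}(x_0)$. Since $x_0\in\partial\Omega$ we have $x_0\in\overline{B_\rho(x_0)}$ and $x_0\notin\Omega$, hence $x_0\in K$. By \Cref{lem-quasirepresentative}, $\mathfrak R(K,U)$ has an $(s,G)$-quasicontinuous representative $v$ with $v\ge1$ quasieverywhere on $K$; since $\{x_0\}$ has positive $(s,G)$-capacity, $x_0$ does not lie in the exceptional set, so $v(x_0)\ge1$. On the other hand, $\widehat{\mathfrak R}(K,U)$ is superharmonic in $U$ and coincides a.e.\ in $U$ with $\mathfrak R(K,U)=v$, so applying \Cref{thm-pointwise} to the superharmonic function $\widehat{\mathfrak R}(K,U)$ at the interior point $x_0\in U$ gives
\begin{equation*}
\widehat{\mathfrak R}(K,U)(x_0)=\essliminf_{y\to x_0}\widehat{\mathfrak R}(K,U)(y)=\essliminf_{y\to x_0}v(y).
\end{equation*}
Thus it remains to verify $\essliminf_{y\to x_0}v(y)\ge v(x_0)$, after which $\widehat{\mathfrak R}(K,U)(x_0)\ge v(x_0)\ge1$ and the second case is done.

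To establish $\essliminf_{y\to x_0}v(y)\ge v(x_0)$, the clean route is to identify $\widehat{\mathfrak R}(K,U)$ and $v$ \emph{quasi}everywhere, not merely almost everywhere: both are $(s,G)$-quasicontinuous in $U$ (for $\widehat{\mathfrak R}(K,U)$ one uses that superharmonic functions, in particular $\mathcal L$-potentials, are $(s,G)$-quasicontinuous, building on \Cref{thm-quasi} and \Cref{cor-quasicont}) and they agree a.e.\ in $U$, hence off a set of $(s,G)$-capacity zero; since $\{x_0\}$ has positive $(s,G)$-capacity, $\widehat{\mathfrak R}(K,U)(x_0)=v(x_0)$. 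The auxiliary facts needed here are standard but must be set up carefully in the Orlicz framework: that two $(s,G)$-quasicontinuous functions equal a.e.\ are equal q.e., and that a set of $(s,G)$-capacity zero has Lebesgue measure zero -- the latter follows from the definition of $\mathrm{cap}_{s,G}$ together with the fractional Poincar\'e inequality \Cref{thm-Poincare}, since for any admissible $v$ one has $|E|=\int_E G(1)\,\mathrm dx\le\int_U G(|v|)\,\mathrm dx\le C(U)\,\varrho_{W^{s,G}(\mathbb R^n)}(v)$, whence $|E|\le C(U)\,\mathrm{cap}_{s,G}(E,U)$ after taking the infimum. Alternatively one argues directly with the essential $\liminf$: using $(s,G)$-quasicontinuity, choose a decreasing sequence of open sets $D_k\subset U$ with $\mathrm{cap}_{s,G}(D_k,U)\to0$ and $v|_{U\setminus D_k}$ finite and continuous; for large $k$ one has $x_0\notin D_k$ (positive capacity of $\{x_0\}$), so for each $\eta>0$ the set $\{v\le v(x_0)-\eta\}$ meets a suitable small ball about $x_0$ only inside $D_k$, whose Lebesgue measure is controlled by $\mathrm{cap}_{s,G}(D_k,U)$ and hence tends to $0$; letting $k\to\infty$ and then $\eta\to0$ gives the claim.

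I expect the main obstacle to be precisely this last step: passing from ``a.e.'' to ``q.e.'' for the $\mathcal L$-potential and thereby showing that its lsc-regularization $\widehat{\mathfrak R}(K,U)$ is the correct representative at the positive-capacity point $x_0$. It rests on the $(s,G)$-quasicontinuity of superharmonic functions and on the interplay between $(s,G)$-capacity and Lebesgue measure, both of which require the groundwork of \Cref{thm-quasi}, \Cref{cor-quasicont}, \Cref{lem-quasirepresentative} and \Cref{thm-Poincare}. Once this identification is in hand, Case~1 via \Cref{lem-irr-zero} together with the vacuous Case~2 yields the lemma.
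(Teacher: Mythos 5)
Your case split (whether singletons have zero or positive $(s,G)$-capacity), the use of \Cref{lem-quasirepresentative}, and the observation that positive singleton capacity upgrades the conclusion $v\geq1$ from q.e.\ to everywhere on $K$ all match the paper's proof. The gap is in how you close Case~2. The paper's key observation -- which you miss -- is that positive singleton capacity makes a quasicontinuous function genuinely \emph{continuous} near $x_0$: by translation invariance and monotonicity of $\mathrm{cap}_{s,G}$ there is a uniform threshold $c_0>0$ for singleton capacities on a compact neighborhood of $x_0$, so for $\varepsilon<c_0$ the open exceptional set $D$ in the definition of quasicontinuity cannot meet that neighborhood (any point of $D$ would force $\mathrm{cap}_{s,G}(D,U)\geq c_0$). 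With $v$ continuous near $x_0$ and $\widehat{\mathfrak R}=v$ a.e., \Cref{thm-pointwise} gives $\widehat{\mathfrak R}(K,U)(x_0)=\essliminf_{y\to x_0}\widehat{\mathfrak R}(y)=\essliminf_{y\to x_0}v(y)=v(x_0)\geq1$ immediately, with no need to compare $\widehat{\mathfrak R}$ and $v$ quasieverywhere.

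Both of your substitute arguments for the step $\essliminf_{y\to x_0}v(y)\geq v(x_0)$ have genuine gaps. The ``clean route'' asserts that $\widehat{\mathfrak R}(K,U)$ is $(s,G)$-quasicontinuous, but this is nowhere established; \Cref{thm-quasi} and \Cref{cor-quasicont} produce \emph{some} quasicontinuous representative of a $V^{s,G}_0$-function, and two representatives agreeing a.e.\ need not agree q.e.\ unless both are already known to be quasicontinuous -- which is precisely what you would be trying to prove for $\widehat{\mathfrak R}$. The alternative route also fails at the last step: $|\{v\leq v(x_0)-\eta\}\cap B_\delta(x_0)|\leq|D_k|\leq C\,\mathrm{cap}_{s,G}(D_k,U)$ shows this set has \emph{small} Lebesgue measure, not \emph{zero} measure, which is what controlling $\essinf_{B_\delta(x_0)}v$ requires; moreover the radius $\delta$ that makes the inclusion hold depends on $k$, so letting $k\to\infty$ and then $\eta\to0$ does not produce a fixed ball in which the bad set has measure zero.
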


\begin{proof}
By \Cref{lem-irr-zero}, it is enough to prove that \eqref{eq-irr} does not hold for any $\rho>0$ when the $(s, G)$-capacity of a single point is positive. Assume that every single point has a positive $(s, G)$-capacity. By appealing \Cref{lem-quasirepresentative}, we may choose a representative $v_\rho$ of $\widehat{\mathfrak{R}}(D_\rho(x_0), B_{8\rho}(x_0))$ which is $(s, G)$-quasicontinuous in $B_{8\rho}(x_0)$ and not less than 1 q.e.\ on $D_\rho(x_0)$. Since a single point is of positive capacity, we deduce that $v_\rho$ is continuous in $B_{8\rho}(x_0)$ and $v_\rho \geq 1$ on $D_\rho(x_0)$. This in particular implies that $v_\rho(x_0) \geq 1$. Thus, we have
\begin{equation*}
\widehat{\mathfrak{R}}(D_\rho(x_0), B_{8\rho}(x_0))(x_0) = \liminf_{x \to x_0} \widehat{\mathfrak{R}}(D_\rho(x_0), B_{8\rho}(x_0))(x) = \liminf_{x \to x_0} v_\rho(x) = v_\rho(x_0) \geq 1,
\end{equation*}
implying that \eqref{eq-irr} does not hold.
\end{proof}

With the help of \Cref{lem-irr}, we finally prove the necessary part of \Cref{thm-Wiener}.

\begin{proof} [Proof of the necessity of \Cref{thm-Wiener}]
We prove that a boundary point $x_0 \in \partial \Omega$ is irregular if the Wiener integral converges, that is,
\begin{equation}\label{eq-wieint}
\int_0^1 t^s g^{-1}\left(\frac{\mathrm{cap}_{s, G}(D_t, B_{2t}) }{t^{n-s}}\right) \frac{\mathrm{d}t}{t} < \infty,
\end{equation}
where we denote $B_t=B_t(x_0)$ and $D_t=D_t(x_0)$. Recall that $D_t$ is defined as in \eqref{eq-Dr}. Let $u_\rho=\widehat{\mathfrak{R}}(D_\rho, B_{8\rho})$ and let $\mu_\rho=\mathcal{L}u_\rho$ be the Riesz measure in $B_{8\rho}$, then an application of \Cref{thm-Wolff-upper} to $u_\rho$ in $B_{4\rho}$ yields
	\begin{equation} \label{pt}
		u_{\rho}(x_0) \leq C \left( \inf_{B_{2\rho}}u_{\rho} + {\bf W}_{s,G}^{\mu_{\rho}}(x_0, 4\rho) + \mathrm{Tail}_g(u_{\rho}; x_0, 2\rho) \right).
	\end{equation}
By \Cref{lem-irr}, it is enough to show that the right-hand side of \eqref{pt} is less than one for some small $\rho>0$.

For the first term on the right-hand side of \eqref{pt}, we claim that
	\begin{equation}\label{eq-necessary-cliam}
		\liminf_{\rho \to 0} \inf_{B_{2\rho}}u_{\rho}=0.
	\end{equation}
	We may assume without loss of generality that $\lambda_\rho :=\inf_{B_{2\rho}} u_\rho>0$ and define $v_\rho(x) = \min\lbrace u_\rho(x)/\lambda_{\rho}, 1 \rbrace$. By testing the equation $\mathcal{L}u_{\rho}=\mu_{\rho}$ with $v_{\rho}$, we obtain
	\begin{align} \label{comp1}
		\mathcal{E}(u_{\rho}, v_{\rho})=\int_{B_{8\rho}} v_{\rho} \,\mathrm{d}\mu_\rho \leq  \mu_{\rho}(\overline{B_{\rho}})
	\end{align}
	since the support of $\mu_{\rho}$ is contained in $\overline{B_{\rho}}$. Moreover, by using
	\begin{equation*}
		g(|D^sv_{\rho}|) |D^sv_{\rho}| \leq
		g(|D^s(u_{\rho}/\lambda_{\rho})|)
		\frac{D^su_{\rho}}{|D^su_{\rho}|} D^sv_{\rho} \leq \lambda_\rho^{-(q-1)} g(|D^su_\rho|) \frac{D^su_{\rho}}{|D^su_{\rho}|} D^sv_{\rho},
	\end{equation*}
	we obtain
	\begin{equation} \label{eq:vv}
		\mathcal{E}(v_{\rho}, v_{\rho}) \leq \lambda_\rho^{-(q-1)} \mathcal{E}(u_{\rho}, v_{\rho}).
	\end{equation}
	Furthermore, we observe that $v_\rho$ is admissible for $(s, G)$-capacity of $\overline{B_\rho}$ with respect to $B_{8\rho}$, and hence
	\begin{equation} \label{comp2}
		\mathrm{cap}_{s,G}(\overline{B_{\rho}}, B_{8\rho}) \leq \varrho_{W^{s, G}(\mathbb{R}^n)}(v_\rho) \leq \Lambda \mathcal{E}(v_{\rho}, v_{\rho}).
	\end{equation}
	By combining \eqref{comp1}, \eqref{eq:vv}, \eqref{comp2}, and \Cref{lem-cap}, we obtain
	\begin{equation}\label{eq-mu-rho}
		\rho^{n}G(\rho^{-s}) \leq C \lambda_{\rho}^{-(q-1)} \mu_{\rho}(\overline{B_{\rho}}).
	\end{equation}
	Since $u_\rho \in V^{s, G}_0(B_{8\rho})$ by \Cref{lem-potential}~(i), its Riesz measure $\mu_\rho$ belongs to the dual space $(V^{s, G}_0(B_{8\rho}))^\ast$. Then a standard argument as in Lemma~5.1 in Kim--Lee--Lee~\cite{KLL23} shows that
\begin{equation}\label{eq-mecap}
		\mu_\rho(E) \leq C \mathrm{cap}_{s,G} (D_\rho \cap E, B_{8\rho})
	\end{equation}
	for every compact set $E \subset B_{8\rho}$. Thus, \eqref{eq-mecap} with $E= \overline{B_\rho}$ and \eqref{eq-mu-rho} implies that
	\begin{equation*}
		\lambda_{\rho}^{q-1} \leq C \frac{\mathrm{cap}_{s, G}(D_{\rho}, B_{2\rho})}{\rho^{n-s}g(\rho^{-s})}.
	\end{equation*}
If there is a constant $\alpha \in (0,1)$ such that
	\begin{equation*}
		\liminf_{\rho \to 0}\frac{\mathrm{cap}_{s, G}(D_{\rho}, B_{2\rho})}{\rho^{n-s}g(\rho^{-s})} \geq 2\alpha,
	\end{equation*}
then we can find a constant $\rho_0>0$ such that $\mathrm{cap}_{s, G}(D_{\rho}, B_{2\rho}) \geq \alpha\rho^{n-s}g(\rho^{-s})$ for all $\rho \in (0, \rho_0)$, or
	\begin{equation*}
		 \rho^s g^{-1}\left(\frac{\mathrm{cap}_{s, G}(D_{\rho}, B_{2\rho}) }{\rho^{n-s}}\right) \frac{1}{\rho} \geq \frac{C\alpha^{1/(p-1)}}{\rho} \quad\text{for all } \rho \in (0, \rho_0),
	\end{equation*}
	which contradicts to \eqref{eq-wieint}. Thus, we have
	\begin{equation}\label{eq-liminf-cap}
	\liminf_{\rho \to 0}\frac{\mathrm{cap}_{s, G}(D_{\rho}, B_{2\rho})}{\rho^{n-s}g(\rho^{-s})}=0,
	\end{equation}
	which in turn implies \eqref{eq-necessary-cliam}.
	
We next estimate the second term on the right-hand side of \eqref{pt}. By using \eqref{eq-mecap}, we have
	\begin{equation*}
		\begin{split}
			{\bf W}^{\mu_{\rho}}_{s, G}(x_0, 4\rho)
			&\leq \int_0^{4\rho} t^s g^{-1} \left(\frac{\mu_\rho(\overline{B_{t}})}{t^{n-s}}\right) \frac{\mathrm{d}t}{t} \\
			&\leq C\int_0^{4\rho} t^s g^{-1} \left(\frac{\mathrm{cap}_{s, G}(D_t, B_{8\rho}) }{t^{n-s}}\right) \frac{\mathrm{d}t}{t}\\
			&\leq C\int_0^{4\rho} t^s g^{-1} \left(\frac{\mathrm{cap}_{s, G}(D_t, B_{2t}) }{t^{n-s}}\right) \frac{\mathrm{d}t}{t}.
		\end{split}
	\end{equation*}
	Thus, it follows from \eqref{eq-wieint} that ${\bf W}_{s, G}^{\mu_{\rho}}(x_0, 4\rho) \to 0$ as $\rho \to 0$.

	It remains to estimate the tail term in \eqref{pt}. Since $u_\rho \in V^{s, G}_0(B_{8\rho})$, we have
	\begin{equation*}
		\mathrm{Tail}_g(u_{\rho}; x_0, 2\rho) =(2\rho)^s g^{-1}\left( (2\rho)^{s} \int_{B_{8\rho} \setminus B_{2\rho}} g\left(\frac{u_\rho(y)}{|y-x_0|^{s}}\right) \frac{\mathrm{d}y}{|y-x_0|^{n+s}} \right).
	\end{equation*}
	It follows from \Cref{lem-potential}~(iii) and the ellipticity condition that
	\begin{equation*}
	\Lambda \varrho_{W^{s, G}(\mathbb{R}^n)}(v) \geq \mathcal{E}(v) \geq \mathcal{E}(u_\rho)
	\end{equation*}
	for all functions $v \in W_0(D_\rho, B_{8\rho})$. By taking the infimum over $v \in W_0(D_\rho, B_{8\rho})$, we obatin
	\begin{equation*}
		\Lambda \, \mathrm{cap}_{s, G}(D_\rho, B_{8\rho}) \geq \frac{1}{\Lambda} \int_{B_{10\rho} \setminus B_{8\rho}} \int_{B_{8\rho} \setminus B_{2\rho}} G(|D^su_{\rho}|) \,\frac{\mathrm{d}y \,\mathrm{d}x}{|x-y|^n}.
	\end{equation*}
	For $x \in B_{10\rho} \setminus B_{8\rho}$ and $y \in B_{8\rho} \setminus B_{2\rho}$, it holds that $|x-y| \leq 18\rho \leq 9|y-x_0|$, and hence
	\begin{equation*}
		\begin{split}
			\mathrm{cap}_{s, G}(D_\rho, B_{8\rho}) 
			&\geq C \int_{B_{10\rho} \setminus B_{8\rho}} \int_{B_{8\rho} \setminus B_{2\rho}} G\left(\frac{u_{\rho}(y)}{|y-x_0|^s}\right) \frac{\mathrm{d}y \,\mathrm{d}x}{|y-x_0|^n} \\
			&\geq C \rho^{n} \int_{B_{8\rho} \setminus B_{2\rho}} G\left(\frac{u_{\rho}(y)}{|y-x_0|^s}\right) \frac{\mathrm{d}y}{|y-x_0|^n}.
		\end{split}
	\end{equation*}
	By applying \eqref{eq-alg}, for any $\varepsilon > 0$, we have
	\begin{equation*}
		\begin{split}
			&\int_{B_{8\rho} \setminus B_{2\rho}} g\left(\frac{u_{\rho}(y)}{|y-x_0|^{s}}\right) \frac{\mathrm{d}y}{|y-x_0|^{n+s}}\\
			&\quad \leq \varepsilon^{-1}  \int_{B_{8\rho} \setminus B_{2\rho}} G\left(\frac{u_{\rho}(y)}{|y-x_0|^s}\right) \frac{\mathrm{d}y}{|y-x_0|^n}+ \int_{B_{8\rho} \setminus B_{2\rho}} g\left(\frac{\varepsilon}{|y-x_0|^s}\right) \frac{\mathrm{d}y}{|y-x_0|^{n+s}}\\
			&\quad \leq C\varepsilon^{-1}\rho^{-n}\mathrm{cap}_{s, G}(D_\rho, B_{8\rho}) +C \rho^{-s}g(\varepsilon\rho^{-s})
		\end{split}
	\end{equation*}
	Therefore, we conclude that
	\begin{align*}
		\mathrm{Tail}_g(u_\rho; x_0, 2\rho) &\leq C\rho^s g^{-1}\left( \varepsilon^{-1} \frac{\mathrm{cap}_{s, p}(D_\rho, B_{2\rho})}{\rho^{n-s}} + g(\varepsilon\rho^{-s}) \right) \\
		&\leq C\rho^s g^{-1}\left( \varepsilon^{-1} \frac{\mathrm{cap}_{s, p}(D_\rho, B_{2\rho})}{\rho^{n-s}}\right)+C\varepsilon.
	\end{align*}
	Taking $\varepsilon$ sufficiently small and then using \eqref{eq-liminf-cap}, we can make $\mathrm{Tail}_g(u_\rho; x_0, 2\rho)$ as small as we want.
	
	Thus, there exists a small $\rho > 0$ such that $u_\rho(x_0) < 1$ as required.
\end{proof}

\noindent
		{\bf Acknowledgments.} The authors would like to express their sincere gratitude to the anonymous referees who provided valuable comments and suggestions on the earlier version, which improved the clarity and understanding of the manuscript.

\begin{appendix}

\section{Algebraic inequalities}\label{sec-ineq}

In this section we provide several algebraic inequalities. The following two lemmas are algebraic inequalities that are used in the proof of the upper potential estimate in \Cref{sec-upper}.

\begin{lemma}\label{lem-ineq1}
Let $v$ be a nonnegative function and $\lambda, \alpha >0$. There exists a constant $C > 0$, depending only on $p$, $q$, and $\alpha$, such that
\begin{align}\label{eq-ineq1}
\begin{split}
|D^sW(v)|^p
&\leq \frac{C}{R^{sp}} \left|\fint_{v(y)}^{v(x)} \left( \frac{\bar{g}(\lambda + t)}{\bar{g}(\lambda)} \right)^{\alpha p} {\bf 1}_{\{v(x) \neq v(y)\}}(x, y) \,\mathrm{d}t\right| \\
&\quad+ \frac{C}{R^{sp}} G(R^s|D^sv|) \left| \fint_{v(y)}^{v(x)} \left( \frac{\bar{g}(\lambda+t)}{\bar{g}(\lambda)} \right)^{\alpha p} \frac{\mathrm{d}t}{G(\lambda +t)}\right|,
\end{split}
\end{align}
where $W(t) = (\bar{g}(\lambda+t)/\bar{g}(\lambda))^{\alpha}-1$.
\end{lemma}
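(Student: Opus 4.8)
The plan is to reduce everything to a pointwise statement about $\phi(t):=\bigl(\bar{g}(\lambda+t)/\bar{g}(\lambda)\bigr)^{\alpha}$, for which $W(t)=\phi(t)-1$. By the symmetry of $|D^sW(v)|$ in $x,y$ we may assume $a:=v(x)\geq b:=v(y)\geq 0$; if $a=b$ both sides of \eqref{eq-ineq1} vanish (the indicator kills the first term and $G(R^s|D^sv|)=G(0)=0$ the second), so assume $a>b$. Then $\phi$ is increasing and differentiable with $\phi'(t)=\alpha\phi(t)\bar{g}'(\lambda+t)/\bar{g}(\lambda+t)$, hence by \eqref{eq-bar-g-pq}
\[
\phi'(t)\leq \alpha(q-1)\frac{\phi(t)}{\lambda+t},\qquad t\geq 0 .
\]
Also $|D^sW(v)|=(\phi(a)-\phi(b))/|x-y|^s$. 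Multiplying \eqref{eq-ineq1} by $R^{sp}$, using $\bigl(\bar{g}(\lambda+t)/\bar{g}(\lambda)\bigr)^{\alpha p}=\phi(t)^p$, and writing $X:=R^s|D^sv|=R^s(a-b)/|x-y|^s$, the inequality to prove becomes
\[
\Bigl(\tfrac{R^s}{|x-y|^s}\Bigr)^p\bigl(\phi(a)-\phi(b)\bigr)^p \leq C\fint_b^a\phi(t)^p\,\mathrm{d}t + C\,G(X)\fint_b^a\frac{\phi(t)^p}{G(\lambda+t)}\,\mathrm{d}t .
\]

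The first and only genuinely delicate step is the elementary inequality
\[
\phi(a)-\phi(b)\leq C(\alpha,q)\,\frac{a-b}{\lambda+a}\,\phi(a),\qquad 0\leq b<a .
\]
If $a-b\geq(\lambda+a)/2$ this is immediate from $\phi(a)-\phi(b)\leq\phi(a)$ (taking $C\geq 2$). If $a-b<(\lambda+a)/2$ then $\lambda+t\geq\lambda+b=(\lambda+a)-(a-b)>(\lambda+a)/2$ for all $t\in[b,a]$, and integrating $\phi'(t)\leq\alpha(q-1)\phi(t)/(\lambda+t)\leq 2\alpha(q-1)\phi(a)/(\lambda+a)$ over $[b,a]$ gives the claim with $C=2\alpha(q-1)$. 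Substituting this into the left-hand side of the displayed goal bounds it by $C\,X^p\phi(a)^p/(\lambda+a)^p$, after regrouping the factors $R^s$ and $a-b$ into $X$.

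It then remains to absorb $C\,X^p\phi(a)^p/(\lambda+a)^p$ into the right-hand side by a dichotomy in the size of $X$. A matching lower bound for the two averages is obtained by restricting the integrals to $[(a+b)/2,a]$, where $\phi(t)\geq\phi\bigl(\tfrac{a+b}{2}\bigr)\geq 2^{-(q-1)\alpha}\phi(a)$ (using $\lambda+\tfrac{a+b}{2}\geq\tfrac{\lambda+a}{2}$ and \Cref{lem-G}) and $G(\lambda+t)\leq G(\lambda+a)$; this yields $\fint_b^a\phi^p\geq c\,\phi(a)^p$ and $\fint_b^a\phi^p/G(\lambda+t)\geq c\,\phi(a)^p/G(\lambda+a)$ with $c=c(\alpha,p,q)>0$. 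Now if $X\leq\lambda+a$ then $X^p/(\lambda+a)^p\leq 1$ and the term is controlled by the first average; if $X>\lambda+a$ then \Cref{lem-G} gives $G(X)\geq(X/(\lambda+a))^pG(\lambda+a)$, i.e.\ $X^p/(\lambda+a)^p\leq G(X)/G(\lambda+a)$, and the term is controlled by $G(X)$ times the second average. Tracking constants shows all dependence is on $p,q,\alpha$ only. The main obstacle is really just the bookkeeping of constants in the elementary inequality and the case split; conceptually, the point is that the cost of the jump $\phi(a)-\phi(b)$ is always absorbed either by the plain average $\fint\phi^p$ (when $R^s|D^sv|$ is small compared with $\lambda+a$) or by the $G(R^s|D^sv|)$-weighted average (when it is large).
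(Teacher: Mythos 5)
Your proof is correct, but it takes a genuinely different route from the paper. The paper proves \eqref{eq-ineq1} in two quick strokes: writing $D^s W(v) = D^s v \cdot \fint_{v(y)}^{v(x)}\phi'(t)\,\mathrm{d}t$ with $\phi(t)=(\bar g(\lambda+t)/\bar g(\lambda))^\alpha$, applying Jensen's inequality to put the $p$-th power inside the average, bounding $\phi'$ via \eqref{eq-bar-g-pq} exactly as you do, and then splitting the resulting integrand $\frac{\phi(t)^p}{G(\lambda+t)}\cdot\frac{G(\lambda+t)}{(\lambda+t)^p}|R^s D^s v|^p$ by Young's inequality with the conjugate pair $H(t)=G(t^{1/p})$, $H^*$, citing a lemma from \cite{KL23} to absorb $H^*(G(\lambda+t)/(\lambda+t)^p)$ into $CG(\lambda+t)$. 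You instead avoid Jensen and the conjugate-Young machinery entirely: you prove the elementary pointwise bound $\phi(a)-\phi(b)\le C\,\frac{a-b}{\lambda+a}\,\phi(a)$, obtain lower bounds on both averages from the subinterval $[(a+b)/2,a]$ using monotonicity of $\bar g$ and \Cref{lem-G}, and finish with a dichotomy on whether $X=R^s|D^s v|$ is larger or smaller than $\lambda+a$, invoking only $G(\mu t)\ge\mu^p G(t)$ for $\mu\ge 1$. Your argument is more elementary and self-contained (no conjugate Young function, no external lemma), at the cost of somewhat heavier bookkeeping; the paper's is more compact but relies on a black-box estimate for $H^*$. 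Both correctly track that the constant depends only on $p,q,\alpha$, and both correctly dispose of the degenerate case $v(x)=v(y)$.
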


\begin{proof}
We may assume without loss of generality that $v(x)>v(y)$. By using Jensen's inequality and \eqref{eq-bar-g-pq}, we have
\begin{align*}
|D^s W(v)|^p
&= \left( D^sv \fint_{v(y)}^{v(x)} \alpha \frac{\bar{g}^{\alpha-1}(\lambda + t) \bar{g}'(\lambda + t)}{\bar{g}^{\alpha}(\lambda)} \,\mathrm{d}t \right)^{p} \\
&\leq \left( \frac{\alpha (q-1)}{R^{s}\bar{g}^{\alpha}(\lambda)} \right)^{p} \fint_{v(y)}^{v(x)} \frac{\bar{g}^{\alpha p}(\lambda + t)}{(\lambda+t)^{p}} |R^s D^sv|^p \,\mathrm{d}t.
\end{align*}
Let us define $H(t):=G(t^{1/p})$. Then by applying Young's inequality and Lemma~2.2 in \cite{KL23}, we obtain
\begin{equation*}
\frac{\bar{g}(\lambda +t)}{(\lambda+t)^{p-1}} |R^s D^sv|^p \leq H^{\ast}\left( \frac{G(\lambda + t)}{(\lambda + t)^{p}} \right) + H(|R^sD^sv|^p) \leq C(G(\lambda + t) + G(R^s|D^sv|)).
\end{equation*}
Thus, the desired inequality \eqref{eq-ineq1} follows by combining the above inequalities.
\end{proof}

\begin{lemma}\label{lem-ineq2}
Let $\lambda >0$ and $\delta>1$. Let $u$ be a function defined in $B_R$ and let $\eta$ be a function satisfying $0\leq \eta\leq 1$. There exist constants $c, C > 0$, depending only on $p$, $q$, and $\delta$, such that
\begin{align}\label{eq-ineq2}
\begin{split}
&g(|D^su|) \frac{D^su}{|D^su|} D^s(\Phi(v)\eta) \\
&\geq c\frac{1}{R^{s}} G(|D^su_+|) \left( \fint_{v(y)}^{v(x)} \left( \frac{\bar{g}(\lambda)}{\bar{g}(\lambda+t)} \right)^{\frac{\delta-1}{q-1}} \frac{\mathrm{d}t}{\lambda + t} \right) (\eta(x) \land \eta(y))^{q} \\
&\quad - C \frac{\bar{g}(\lambda)}{R^{s}} \left( \frac{\bar{g}(\lambda+\max\{v(x), v(y)\})}{\bar{g}(\lambda)} \right)^\delta ((R^{s}|D^s\eta|)^{p} \lor (R^{s}|D^s\eta|)^q) {\bf 1}_{\lbrace v(x)\neq v(y) \rbrace}(x, y),
\end{split}
\end{align}
where $v=u_+/R^s$ and $\Phi(t) = 1-(\bar{g}(\lambda)/\bar{g}(\lambda+t))^{\frac{\delta-1}{q-1}}$.
\end{lemma}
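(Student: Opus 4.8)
The plan is to establish the pointwise inequality \eqref{eq-ineq2} by a case analysis on the ordering and signs of $u(x),u(y)$. The left-hand side of \eqref{eq-ineq2} is symmetric under interchanging $x$ and $y$ (both $g(|D^su|)\tfrac{D^su}{|D^su|}$ and $D^s(\Phi(v)\eta)$ change sign), and so is every term on the right; hence I may assume $u(x)\ge u(y)$. If $u(x)\le 0$, then $v(x)=v(y)=0$, $\Phi(v)\equiv 0$ at both points, $D^su_+=0$, and all terms vanish; if $u(x)=u(y)$, then $D^su=0$ and $\fint_{v(y)}^{v(x)}(\cdots)=0$, so \eqref{eq-ineq2} is again trivial. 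Thus it suffices to treat $u(x)>\max\{u(y),0\}$, in which case $\tfrac{D^su}{|D^su|}=1$, $v(x)>v(y)\ge 0$ (so $\max\{v(x),v(y)\}=v(x)$), and $0\le D^su_+\le D^su$.

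I would then decompose, by adding and subtracting $\Phi(v(y))\eta(x)$,
\begin{equation*}
  D^s(\Phi(v)\eta) = \eta(x)\,D^s\Phi(v) + \Phi(v(y))\,D^s\eta .
\end{equation*}
For the first piece, $\Phi$ is non-decreasing and $v(x)>v(y)$, so $D^s\Phi(v)\ge 0$ and $g(|D^su|)\eta(x)D^s\Phi(v)\ge g(D^su_+)\eta(x)D^s\Phi(v)$ by monotonicity of $g$. Writing $\Phi(v(x))-\Phi(v(y))=\int_{v(y)}^{v(x)}\Phi'(t)\,\mathrm{d}t$, using \eqref{eq-bar-g-pq} to get $\Phi'(t)\ge\frac{(\delta-1)(p-1)}{q-1}\big(\tfrac{\bar g(\lambda)}{\bar g(\lambda+t)}\big)^{\frac{\delta-1}{q-1}}\tfrac{1}{\lambda+t}$, and combining with the scale identity $\frac{v(x)-v(y)}{|x-y|^s}=\frac{D^su_+}{R^s}$ and the convexity bound $g(t)t\ge pG(t)$ from \eqref{eq-pq}, one arrives at a lower bound of the form $c R^{-s}G(|D^su_+|)\big(\fint_{v(y)}^{v(x)}(\cdots)\tfrac{\mathrm{d}t}{\lambda+t}\big)\eta(x)$ with $c=\frac{p(p-1)(\delta-1)}{q-1}$; since $0\le\eta\le1$ forces $\eta(x)\ge(\eta(x)\wedge\eta(y))^{q}$, this already supplies the good term in \eqref{eq-ineq2} (indeed it leaves a small surplus $\propto\eta(x)-\eta(x)^{q}$).

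It then remains to bound the second piece $g(|D^su|)\Phi(v(y))D^s\eta$ from below. If $u(y)\le 0$ then $\Phi(v(y))=\Phi(0)=0$ and there is nothing to do; if $u(y)>0$ and $D^s\eta\ge 0$ the term is nonnegative. The only substantial case is $u(x)>u(y)>0$ with $\eta(x)<\eta(y)$, where $D^su=D^su_+$ and one must absorb $g(D^su_+)\Phi(v(y))|D^s\eta|$. Here I would invoke the Young-type inequality \eqref{eq-alg} with $a=D^su_+$ and $b=\Phi(v(y))|D^s\eta|$, choosing $\varepsilon$ (necessarily depending on $(x,y)$, and strictly positive in this case since $v(x)>v(y)$ makes $\fint_{v(y)}^{v(x)}(\cdots)>0$) so that $\varepsilon g(D^su_+)D^su_+\le\varepsilon q G(D^su_+)$ is swallowed by the surplus from the good term, and then estimating the residual $g\big(\Phi(v(y))|D^s\eta|/\varepsilon\big)\Phi(v(y))|D^s\eta|$ via \Cref{lem-G} together with $\Phi(v(y))\le\Phi(v(x))<1$ and $\bar g(\lambda+v(y))\le\bar g(\lambda+v(x))$, so as to land inside the stated error term $C\frac{\bar g(\lambda)}{R^{s}}\big(\frac{\bar g(\lambda+v(x))}{\bar g(\lambda)}\big)^{\delta}\big((R^{s}|D^s\eta|)^{p}\vee(R^{s}|D^s\eta|)^{q}\big)$. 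I expect this last absorption to be the main obstacle: one has to track carefully which of the two powers $p,q$ of $R^{s}|D^s\eta|$ (and which power of $\bar g(\lambda+v(x))/\bar g(\lambda)$) is produced, according to whether $R^{s}|D^s\eta|$ and the various ratios exceed $1$ or not, and it is precisely here that the dilation estimates of \Cref{lem-G} for $g$, $\bar g$, $G$ and their inverses — in particular the comparison $g(\lambda a)\le\tfrac{q}{p}\lambda^{q-1}g(a)$ coming from \eqref{eq-q} — are used in full.
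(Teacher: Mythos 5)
There is a genuine gap. Your plan handles the ``good'' piece correctly and isolates the right substantial case ($u(x)>u(y)>0$, $\eta(x)<\eta(y)$), but the blanket Young-inequality absorption you propose for $g(D^su_+)\Phi(v(y))|D^s\eta|$ cannot work uniformly. To have $\varepsilon\,q\,G(D^su_+)$ swallowed by the surplus from the first piece, $\varepsilon$ must be taken proportional to $\frac{1}{R^s}\bigl(\fint_{v(y)}^{v(x)}(\cdots)\frac{\mathrm dt}{\lambda+t}\bigr)\eta(x)$, i.e.\ proportional to $\eta(x)$. But $b=\Phi(v(y))|D^s\eta|$ does \emph{not} carry a compensating factor of $\eta(x)$ when $\eta(x)\ll\eta(y)$: there $|D^s\eta|\approx\eta(y)/|x-y|^s$ is bounded away from zero while $\varepsilon\to 0$, so $b/\varepsilon\to\infty$ and the residual $g(b/\varepsilon)\,b$ blows up. The target error term $C\frac{\bar g(\lambda)}{R^s}\bigl(\frac{\bar g(\lambda+v(x))}{\bar g(\lambda)}\bigr)^{\delta}\bigl((R^s|D^s\eta|)^p\vee(R^s|D^s\eta|)^q\bigr)$ stays bounded in this limit, so no choice of constants can close the estimate. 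Conversely, a direct (non-Young) bound of the bad term fails when $\eta(x)\approx\eta(y)$, because then $|\eta(x)-\eta(y)|$ can be arbitrarily small while $g(D^su_+)\Phi(v(y))$ is not. Neither strategy works in all cases.

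What the paper does, and what your proposal is missing, is the dichotomy on the ratio $\eta(x)/\eta(y)$. When $\eta(x)/\eta(y)\in[1/2,2]$ the Young absorption is safe precisely because $|D^s\eta|\lesssim\eta(x)/|x-y|^s$, so $b/\varepsilon$ stays controlled as $\eta(x)\to0$. When $\eta(x)/\eta(y)\notin[1/2,2]$ one instead uses the elementary identity $\eta(x)\vee\eta(y)\le 2|\eta(x)-\eta(y)|$ (equation \eqref{eq-eta} in the paper) together with the split $A=J_1+J_2$ at the level of the two summands of $D^s(\Phi(v)\eta)$; this trades the stray powers of $\eta(y)$ for powers of $|D^s\eta|$, which is exactly what the stated error term can absorb. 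Without this case split your proof attempt stalls, and your own remark that ``this last absorption is the main obstacle'' is the symptom: the obstacle is not bookkeeping of $p$ versus $q$ powers, it is that the single-case Young approach is structurally insufficient. (As a minor point, the paper's proof actually decomposes $D^s(\Phi(v)\eta^q)$ — with $\eta^q$, not $\eta$, and the factor $(\eta(x)\wedge\eta(y))^q$ and the error in $D^s\eta$ come from that; the lemma statement as printed appears to have $\eta$ where $\eta^q$ is intended, and you followed the printed statement, so the mismatch you may have sensed here is a typo in the source, not your error.)
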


\begin{proof}
We may assume $u(x)>u(y)$ without loss of generality. Let $A$ denote the left-hand side of \eqref{eq-ineq2}.

Let us first consider the case $\eta(x)/\eta(y) \in [1/2,2]$. By using \eqref{eq-bar-g-pq}, we have
\begin{align*}
D^s(\Phi(v)\eta)
&= \left( \int_{v(y)}^{v(x)} \Phi'(t) \,\mathrm{d}t \right) \frac{\eta^q(x)}{|x-y|^s} + \Phi(v(y)) D^s(\eta^q) \\
&\geq \frac{(\delta-1)(p-1)}{q-1} \left( \int_{v(y)}^{v(x)} \left( \frac{\bar{g}(\lambda)}{\bar{g}(\lambda+t)} \right)^{\frac{\delta-1}{q-1}} \frac{\mathrm{d}t}{\lambda + t} \right) \frac{\eta^q(x)}{|x-y|^s} \\
&\quad - q\Phi(v(y))(\eta(x) \lor \eta(y))^{q-1} |D^s\eta|.
\end{align*}
Since $\eta(y) \leq 2\eta(x)$ and $u(x)-u(y) \geq u_+(x)-u_+(y)$, we obtain
\begin{align*}
A
&\geq \frac{(\delta-1)p(p-1)}{(q-1)R^{s}} G(D^su_+) \left( \fint_{v(y)}^{v(x)} \left( \frac{\bar{g}(\lambda)}{\bar{g}(\lambda+t)} \right)^{\frac{\delta-1}{q-1}} \frac{\mathrm{d}t}{\lambda + t} \right) \eta^q(x) \\
&\quad - q^{2} 2^{q-1} \bar{g}(D^su) |D^s\eta| \Phi(v(y)) \eta^{q-1}(x) =: I_{1} + I_{2},
\end{align*}
where we used \eqref{eq-pq} and \eqref{eq-bar-g-comp}. We claim that
\begin{equation*}
\bar{g}(D^su) \frac{|D^s\eta|}{\eta(x)} \Phi(v(y)) \leq \varepsilon G(D^su_+) + \bar{g}\left( \frac{1}{\varepsilon} \frac{|D^s\eta|}{\eta(x)} \right) \frac{|D^s\eta|}{\eta(x)}
\end{equation*}
for any $\varepsilon > 0$. Indeed, it is trivial when $u(y) \leq 0$ since $\Phi(v(y))=0$ in this case. If $u(y) > 0$, then it follows from $u(x)-u(y) \leq u_+(x)-u_+(y)$, $\Phi(v(y))\leq 1$, and the inequality \eqref{eq-alg} with $g$ replaced by $\bar{g}$. We now take
\begin{equation*}
\varepsilon = \frac{(\delta-1)p(p-1)}{q^{2}(q-1) 2^{q}R^{s}} \fint_{v(y)}^{v(x)} \left( \frac{\bar{g}(\lambda)}{\bar{g}(\lambda+t)} \right)^{\frac{\delta-1}{q-1}} \frac{\mathrm{d}t}{\lambda + t}
\end{equation*}
so that
\begin{equation*}
I_{2} \geq -\frac{1}{2} I_{1} - q^{2} 2^{q-1} \bar{g}\left( \frac{1}{\varepsilon} \frac{|D^s\eta|}{\eta(x)} \right) |D^s\eta| \eta^{q-1}(x).
\end{equation*}
Since $\eta(x) \leq 1$ and
\begin{equation*}
\varepsilon \geq \frac{c}{R^{s}} \left( \frac{\bar{g}(\lambda)}{\bar{g}(\lambda+v(x))} \right)^{\frac{\delta-1}{q-1}} \frac{1}{\lambda + v(x)},
\end{equation*}
we have from \Cref{lem-G} that
\begin{align*}
&\bar{g}\left( \frac{1}{\varepsilon} \frac{|D^s\eta|}{\eta(x)} \right) \frac{|D^s\eta|}{\eta(x)} \eta^{q-1}(x) \\
&\leq \frac{C}{R^{s}} \bar{g}\left( \left( \frac{\bar{g}(\lambda+v(x))}{\bar{g}(\lambda)} \right)^{\frac{\delta-1}{q-1}} (\lambda + v(x)) \right) ((R^s|D^s\eta|)^p \lor (R^s|D^s\eta|)^q) \\
&\leq \frac{C}{R^{s}} \bar{g}(\lambda) \left( \frac{\bar{g}(\lambda+v(x))}{\bar{g}(\lambda)} \right)^{\delta} ((R^s|D^s\eta|)^p \lor (R^s|D^s\eta|)^q).
\end{align*}
This finishes the proof for the case $\eta(x) / \eta(y) \in [1/2, 2]$.

Let us next consider the case $\eta(x) / \eta(y) \not\in [1/2, 2]$. In this case, we have
\begin{equation}\label{eq-eta}
	\eta(x) \lor \eta(y) \leq 2|\eta(x)-\eta(y)|.
\end{equation}
 We begin with
\begin{equation*}
	A=g(D^su) \frac{\Phi(v(x)) \eta^q(x)}{|x-y|^{s}}-g(D^su) \frac{\Phi(v(y)) \eta^q(y)}{|x-y|^{s}} =: J_1+J_2.
\end{equation*}
We claim that $J_1$ and $J_2$ are estimated from below by the terms on the right-hand side of \eqref{eq-ineq2}, respectively. Indeed, for $J_1$, we use \eqref{eq-bar-g-pq} to obtain
\begin{equation*}
	\int_{v(y)}^{v(x)} \left( \frac{\bar{g}(\lambda)}{\bar{g}(\lambda+t)} \right)^{\frac{\delta-1}{q-1}} \frac{\mathrm{d}t}{\lambda + t} \leq C \int_{v(y)}^{v(x)} \Phi'(t) \,\mathrm{d}t \leq C\Phi(v(x)).
\end{equation*}
Then it follows from \eqref{eq-pq} and $u(x)-u(y) \geq u_+(x)-u_+(y)$ that
\begin{align*}
	J_1 &\geq \frac{C}{R^s}g(D^su) D^su_+ \left( \fint_{v(y)}^{v(x)} \left( \frac{\bar{g}(\lambda)}{\bar{g}(\lambda+t)} \right)^{\frac{\delta-1}{q-1}} \frac{\mathrm{d}t}{\lambda + t} \right) \eta^q(x) \\
	&\geq \frac{C}{R^s}G(D^su_+) 
	\left( \fint_{v(y)}^{v(x)} \left( \frac{\bar{g}(\lambda+t)}{\bar{g}(\lambda)} \right)^{-\frac{\delta-1}{q-1}} \frac{\mathrm{d}t}{\lambda + t} \right) (\eta(x) \land \eta(y))^q,
\end{align*}
as desired.

On the other hand, we utilize \eqref{eq-bar-g-comp}, \Cref{lem-G}, and $\Phi \leq 1$ to have
\begin{equation*}
	g(D^su) \Phi(v(y)) \leq C \bar{g}\left(\frac{R^sv(x)}{|x-y|^s}\right) \Phi(v(y)) \leq C \left(\frac{R^s}{|x-y|^s}\right)^{q-1}\bar{g}(v(x)).
\end{equation*}
Since $\delta>1$, we obtain
\begin{equation*}
\frac{\bar{g}(v(x))}{\bar{g}(\lambda)} \leq \frac{\bar{g}(\lambda+v(x))}{\bar{g}(\lambda)} \leq \left( \frac{\bar{g}(\lambda+v(x))}{\bar{g}(\lambda)} \right)^{\delta},
\end{equation*}
which together with \eqref{eq-eta} yields
\begin{align*}
	J_2 &\geq -C \bar{g}(\lambda) \left(\frac{\bar{g}(\lambda+v(x))}{\bar{g}(\lambda)}\right)^{\delta} \left(\frac{R^s}{|x-y|^s}\right)^{q-1} \frac{\eta^q(y)}{|x-y|^s} \\
	&\geq -C \frac{\bar{g}(\lambda)}{R^s} \left(\frac{\bar{g}(\lambda+v(x))}{\bar{g}(\lambda)}\right)^{\delta} (R^s|D^s\eta|)^q
\end{align*}
for some $C=C(p, q)>0$.
\end{proof}

\begin{lemma}\label{lem-ineq3}
	Let $a_1, a_2, b_1, b_2 \in \mathbb{R}$. Then
\begin{equation}\label{eq-ineq3}
	G(|a_1 \lor a_2-b_1 \lor b_2 |)+G(|a_1 \land a_2-b_1 \land b_2|) \leq G(|a_1-b_1|)+G(|a_2-b_2|).
\end{equation}
\end{lemma}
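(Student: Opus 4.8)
The plan is to use nothing about $G$ beyond convexity. Since $g$ is non-decreasing and nonnegative and $G(t)=\int_0^t g(\tau)\,\mathrm{d}\tau$, the function $G$ is convex and non-decreasing on $[0,\infty)$ with $G(0)=0$; hence the even extension $\phi(\xi):=G(|\xi|)$ is convex on all of $\mathbb{R}$ (the only mildly delicate point is convexity across the origin, which holds because the one-sided derivatives of $\phi$ at $0$ are $\mp g(0+)$ with $-g(0+)\le 0\le g(0+)$). The inequality \eqref{eq-ineq3} then reads
\begin{equation*}
\phi(a_1\lor a_2-b_1\lor b_2)+\phi(a_1\land a_2-b_1\land b_2)\le \phi(a_1-b_1)+\phi(a_2-b_2),
\end{equation*}
so it suffices to prove this for an arbitrary convex $\phi:\mathbb{R}\to\mathbb{R}$.

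Next I would reduce the configuration by symmetry. Both sides are invariant under the simultaneous relabeling $(a_1,b_1)\leftrightarrow(a_2,b_2)$, so I may assume $a_1\le a_2$, whence $a_1\land a_2=a_1$ and $a_1\lor a_2=a_2$. If also $b_1\le b_2$, the left-hand side equals $\phi(a_2-b_2)+\phi(a_1-b_1)$ and the claim holds with equality. The only remaining case is $a_1\le a_2$ and $b_1>b_2$, where $b_1\lor b_2=b_1$ and $b_1\land b_2=b_2$, reducing \eqref{eq-ineq3} to
\begin{equation*}
\phi(a_2-b_1)+\phi(a_1-b_2)\le \phi(a_1-b_1)+\phi(a_2-b_2).
\end{equation*}

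Finally, setting $c=a_1-b_1$, $s=a_2-a_1\ge 0$, and $t=b_1-b_2\ge 0$, this is exactly $\phi(c+s)+\phi(c+t)\le \phi(c)+\phi(c+s+t)$, which I would prove by convexity: when $s+t>0$ one has $c+s=\tfrac{t}{s+t}\,c+\tfrac{s}{s+t}\,(c+s+t)$ and $c+t=\tfrac{s}{s+t}\,c+\tfrac{t}{s+t}\,(c+s+t)$, so applying convexity to each point and adding, the coefficients of $\phi(c)$ and of $\phi(c+s+t)$ each collapse to $1$ (the case $s+t=0$ being trivial). I do not expect any real obstacle here; the only things to handle carefully are organizing the case split so the maxima and minima are evaluated consistently, and recording the convexity of $G(|\cdot|)$ at the origin. (Alternatively, one could observe that $\max$ and $\min$ are $1$-Lipschitz in the sup norm and that the same computation with $\phi(\xi)=|\xi|$ gives the sum bound, so that $\{|a_1\lor a_2-b_1\lor b_2|,\,|a_1\land a_2-b_1\land b_2|\}$ is majorized by $\{|a_1-b_1|,|a_2-b_2|\}$ and one invokes Schur-type monotonicity of $t\mapsto G(t)$; but the direct case analysis above is cleaner.)
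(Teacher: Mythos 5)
Your proof is correct. After the same reduction to the case $a_1\le a_2$, $b_1>b_2$ (the paper uses the symmetric choice $a_1\ge a_2$, $b_1<b_2$), the paper finishes differently: it rewrites the target inequality as the statement that $\varphi(t)=G(|t-b_1|)-G(|t-b_2|)$ is non-decreasing and then verifies $\varphi'\ge 0$ by checking the sign of $g(|t-b_1|)\,\mathrm{sgn}(t-b_1)-g(|t-b_2|)\,\mathrm{sgn}(t-b_2)$ on the three intervals determined by $b_1<b_2$, using only that $g$ is non-decreasing and nonnegative. You instead package the same monotonicity of $g$ as convexity of $\phi(\xi)=G(|\xi|)$ on $\mathbb{R}$ and invoke the standard quadrilateral inequality $\phi(c+s)+\phi(c+t)\le\phi(c)+\phi(c+s+t)$ for $s,t\ge0$. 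The two finishing steps are equivalent in substance but not in form: the paper's is a direct sign check on a derivative (and does not need to address convexity across the origin), while yours is slightly more conceptual, avoids differentiating $|\cdot|$, and would apply verbatim to any convex even function; the minor price is that you must note why $G(|\cdot|)$ is convex at $0$ (which you do, and which in fact is automatic here since $pG(t)\le tg(t)$ with $p>1$ forces $g(0^+)=0$). Either route is perfectly adequate.
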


\begin{proof}
We may assume without loss of generality that $a_1 \geq a_2$. If $b_1 \geq b_2$, then the desired inequality \eqref{eq-ineq3} is trivial. Otherwise, \eqref{eq-ineq3} holds if and only if the function
\begin{equation*}
\varphi(t):= G(|t-b_1|)-G(|t-b_2|)
\end{equation*}
is non-decreasing. Indeed, we have
	\begin{align*}
		\varphi'(t)&=g(|t-b_1|)\frac{t-b_1}{|t-b_1|}-g(|t-b_2|)\frac{t-b_2}{|t-b_2|} \\
		&=\begin{cases}
			g(t-b_1)-g(t-b_2) \quad &\text{if $t \geq b_2$}\\
			g(t-b_1)+g(b_2-t) \quad &\text{if $b_1 \leq t < b_2$}\\
			g(b_2-t)-g(b_1-t) \quad &\text{if $t < b_1$}
		\end{cases}
		\\  &\geq0.
	\end{align*}
\end{proof}

\end{appendix}


\end{document}